\tikzset{
>=stealth',
  punktchain/.style={
    rectangle,
    rounded corners,
    draw=black, thick,
    minimum height=3em,
    text centered,
    on chain},
  line/.style={draw, thick, <-},
  element/.style={
    tape,
    top color=white,
    bottom color=blue!50!black!60!,
    minimum width=8em,
    draw=blue!40!black!90, very thick,
    text width=10em,
    minimum height=3.5em,
    text centered,
    on chain},
  every join/.style={->, thick,shorten >=1pt},
  decoration={brace},
  tuborg/.style={decorate},
  tubnode/.style={midway, right=2pt},
}
\setlist[enumerate,1]{label={\upshape(\arabic*)}}
\setlist[enumerate,2]{label={\upshape(\alph*)},ref=\theenumi\upshape(\alph*)}
\setlist[enumerate,3]{label={\upshape(\roman*)},ref=\theenumi\theenumii\upshape(\roman*)}
\crefname{Prop}{Proposition}{Propositions}
\crefname{Thm}{Theorem}{Theorems}
\crefname{Lem}{Lemma}{Lemmas}
\crefname{enumi}{Case}{Cases}
\def\C{\ensuremath{\mathbb{C}}}
\def\N{\ensuremath{\mathbb{N}}}
\def\P{\ensuremath{\mathbb{P}}}
\def\Q{\ensuremath{\mathbb{Q}}}
\def\R{\ensuremath{\mathbb{R}}}
\def\Z{\ensuremath{\mathbb{Z}}}
\def\wGL2{\ensuremath{\widetilde{\mathrm{GL}}_2^+(\R)}}
\def\alg{\mathrm{alg}}
\def\Amp{\mathrm{Amp}}
\def\Aut{\mathop{\mathrm{Aut}}\nolimits}
\def\ch{\mathop{\mathrm{ch}}\nolimits}
\def\Coh{\mathop{\mathrm{Coh}}\nolimits}
\def\codim{\mathop{\mathrm{codim}}\nolimits}
\def\dim{\mathop{\mathrm{dim}}\nolimits}
\def\Ext{\mathop{\mathrm{Ext}}\nolimits}
\def\ext{\mathop{\mathrm{ext}}\nolimits}
\def\Fix{\mathop{\mathrm{Fix}}}
\def\FixG{\mathop{\mathrm{Fix}(G')}}
\def\For{\mathop{\mathrm{Forg}_{G'}}}
\def\gr{\mathop{\mathrm{gr}}\nolimits}
\def\GL{\mathop{\mathrm{GL}}}
\def\Hal{H^*_{\alg}}
\def\Hilb{\mathop{\mathrm{Hilb}}\nolimits}
\def\HN{\mathop{\mathrm{HN}}\nolimits}
\def\Hom{\mathop{\mathrm{Hom}}\nolimits}
\def\Id{\mathop{\mathrm{Id}}\nolimits}
\def\im{\mathop{\mathrm{im}}\nolimits}
\def\Inf{\mathop{\mathrm{Inf}_{G'}}}
\def\JH{\mathop{\mathrm{JH}}\nolimits}
\def\Ker{\mathop{\mathrm{Ker}}\nolimits}
\def\lcm{\mathop{\mathrm{lcm}}\nolimits}
\def\min{\mathop{\mathrm{min}}\nolimits}
\def\num{\mathop{\mathrm{num}}\nolimits}
\def\Num{\mathop{\mathrm{Num}}\nolimits}
\def\NS{\mathop{\mathrm{NS}}\nolimits}
\def\ord{\mathop{\mathrm{ord}}\nolimits}
\def\Pic{\mathop{\mathrm{Pic}}\nolimits}
\def\Quot{\mathop{\mathrm{Quot}}\nolimits}
\def\rk{\mathop{\mathrm{rk}}}
\def\Sing{\mathop{\mathrm{Sing}}}
\def\SL{\mathop{\mathrm{SL}}\nolimits}
\def\stab{\mathop{\mathrm{stab}}}
\def\Sym{\mathop{\mathrm{Sym}}\nolimits}
\def\Tr{\mathop{\mathrm{Tr}}\nolimits}
\def\td{\mathop{\mathrm{td}}\nolimits}
\def\v{\mathbf{v}}
\def\u{\mathbf{u}}
\def\w{\mathbf{w}}
\def\a{\mathbf{a}}
\def\b{\mathbf{b}}
\def\o{\mathop{\ord(K_S)}\nolimits}
\def\MG13{\ensuremath{{\mathcal M}_{\Gamma_1(3)}}}
\def\tildeMG13{\ensuremath{\widetilde{\mathcal M}_{\Gamma_1(3)}}}
\def\Stab{\mathop{\mathrm{Stab}}}
\def\Stabd{\mathop{\Stab^{\dagger}}}
\def\into{\ensuremath{\hookrightarrow}}
\def\onto{\ensuremath{\twoheadrightarrow}}
\def\blank{\underline{\hphantom{A}}}
\def\Db{\mathrm{D}^{\mathrm{b}}}
\def\pt{[\mathrm{pt}]}
\newtheorem*{rep@theorem}{\rep@title}
\newcommand{\newreptheorem}[2]{%
\newenvironment{rep#1}[1]{%
 \def\rep@title{#2 \ref{##1}}%
 \begin{rep@theorem}}%
 {\end{rep@theorem}}}
\newtheorem{Thm}{Theorem}[section]
\newtheorem{Prop}[Thm]{Proposition}
\newtheorem{PropDef}[Thm]{Proposition and Definition}
\newtheorem{Lem}[Thm]{Lemma}
\newtheorem{thm-int}{Theorem}
\theoremstyle{definition}
\newtheorem{Def-s}[Thm]{Definition}
\newtheorem{Def}[Thm]{Definition}
\newtheorem{Rem}[Thm]{Remark}
\def\C{\ensuremath{\mathbb{C}}}
\def\N{\ensuremath{\mathbb{N}}}
\def\P{\ensuremath{\mathbb{P}}}
\def\Q{\ensuremath{\mathbb{Q}}}
\def\R{\ensuremath{\mathbb{R}}}
\def\Z{\ensuremath{\mathbb{Z}}}
\def\AA{\ensuremath{\mathcal A}}
\def\BB{\ensuremath{\mathcal B}}
\def\CC{\ensuremath{\mathcal C}}
\def\EE{\ensuremath{\mathcal E}}
\def\FF{\ensuremath{\mathcal F}}
\def\GG{\ensuremath{\mathcal G}}
\def\HH{\ensuremath{\mathcal H}}
\def\JJ{\ensuremath{\mathcal J}}
\def\MM{\ensuremath{\mathcal M}}
\def\NN{\ensuremath{\mathcal N}}
\def\OO{\ensuremath{\mathcal O}}
\def\PP{\ensuremath{\mathcal P}}
\def\QQ{\ensuremath{\mathcal Q}}
\def\TT{\ensuremath{\mathcal T}}
\def\ZZ{\ensuremath{\mathcal Z}}
\newcommand{\mor}[1][]{\xrightarrow{#1}}
\newcommand{\isomor}{\mor[\sim]}
\def\MMM{\mathfrak M}
\def\mX{\ensuremath{M_{\sigma',X}(\pi^*\v)}}
\def\mXs{\ensuremath{M^s_{\sigma',X}(\pi^*\v)}}
\def\mS{\ensuremath{M_{\sigma,S}(\v)}}
\def\mSs{\ensuremath{M^s_{\sigma,S}(\v)}}
\newcommand{\todo}[1]{\vspace{5 mm}\par \noindent
\marginpar{\textsc{ToDo}}
\framebox{\begin{minipage}[c]{0.95 \textwidth}
\tt #1 \end{minipage}}\vspace{5 mm}\par}
\newcommand{\ignore}[1]{}
\begin{document}
\author{Howard Nuer}
\title[Stable sheaves on bielliptic surfaces]{Stable sheaves on bielliptic surfaces: from the classical to the modern}
\begin{abstract}
    Bielliptic surfaces are the last family of Kodaira dimension zero algebraic surfaces without a classification result for the Chern characters of stable sheaves.  
    We rectify this and prove such a classification using a combination of classical techniques, on the one hand, and derived category and Bridgeland stability techniques, on the other.  
    Along the way, we prove the existence of projective coarse moduli spaces of objects in the derived category of a bielliptic surface that Bridgeland semistable with respect to a generic stability condition.
    By systematically studying the connection between Bridgeland wall-crossing and birational geometry, we show that for any two generic stability conditions $\tau,\sigma$, the two moduli spaces $M_\tau(\v)$ and $M_\sigma(\v)$ of objects of Chern character $\v$ that are semistable with respect to $\tau$ (resp. $\sigma$) are birational.  
    As a consequence, we show that for primitive $\v$, the moduli space of stable sheaves of class $\v$ is birational to a moduli space of stable sheaves whose Chern character has one of finitely many easily understood ``shapes".
\end{abstract}
\maketitle
\setcounter{tocdepth}{1}
\tableofcontents
\section{Introduction}

Moduli spaces of stable sheaves have attracted great interest from mathematicians and physicists alike for almost forty years now. 
They have been studied using vastly different mathematical disciplines. 
More recently, the development of Bridgeland stability has provided a unifying framework to study moduli spaces of sheaves using tools such as Fourier-Mukai transforms, enumerative and motivic invariants, and the minimal model program.  
The set of geometric Bridgeland stability conditions, $\Stabd(X)$, is a complex manifold admitting a wall-and-chamber for any given Chern character, and there is a strong connection between wall-crossing in $\Stabd(X)$ and both birational transformations on the moduli spaces and wall-crossing formulae for enumerative invariants.

One family of surfaces whose stable sheaves have received little attention are the bielliptic surfaces.  
Indeed, the only previous works dealing specifically with stable sheaves on bielliptic surfaces are \cite{Tak73,Ume75}, which only deal with some of the disriminant zero cases.  
One can also apply \cite[Theorem 1.1]{Bri02} to study certain positive discriminant cases on a few of the families of bielliptic curves.  
In this paper, we apply both classical methods and derived category techniques to classify the Mukai vectors (i.e. Chern characters) of slope-stable sheaves (see \cref{MainThm1:Non-emptinessOfGieseker} for a precise statement).

On the way to achieving this classification, we prove fundamental results about Bridgeland stable objects on bielliptic surfaces and the moduli spaces in which they deform.  
In particular, we prove the existence of projective coarse moduli spaces of Bridgeland semistable objects with respect to a generic Bridgeland stability condition.  
Moreover, we study the singularities and Kodaira dimension of the stable locus of these moduli spaces.  

We continue our investigation of these moduli spaces by studying their birational geometry via wall-crossing.  
More specifically, given a Mukai vector $\v$, a wall $W\subset\Stabd(X)$ for $\v$, and two stability conditions $\sigma_\pm$ in the opposite and adjacent chambers separated by $W$, we seek a precise answer to the question: how are $M_{\sigma_+}(\v)$ and $M_{\sigma_-}(\v)$ related?  
The basic answer, given by \cref{MainThm2:BirationalBridgeland}, is that $M_{\sigma_+}(\v)$ and $M_{\sigma_-}(\v)$ are birational.  

This question, and others associated with it, are motivated by a larger trend in moduli theory.
This program, known as the Hassett-Keel program, attempts to view minimal models of a given moduli space as moduli spaces in and of themselves, just of slightly different objects.
The relation between wall-crossing and the minimal model program studied here has been explored for many other surfaces: K3 surfaces \cite{BM14a,BM14b,MYY14b,MYY14}; Enriques surfaces \cite{Nue14b,NY19}; $\P^2$ \cite{ABCH13,BMW14,CH15,CHW14,LZ13,LZ19}; Hirzebruch and del Pezzo surfaces \cite{BC13,AM17}; and abelian surfaces \cite{MM13,YY14,Yos12}.
Our investigations here are a big step toward completing such a program in the case of bielliptic surfaces.

It has become even more apparent with these and similar results for other surfaces that Bridgeland stability conditions are a crucial tool not only for studying the birational geometry of certain moduli spaces, but also more intrinsic and classical questions about the objects they classify.  

\subsection*{Summary of Results and Techniques}
Let us turn now to stating our main results more precisely along with the tools we use to prove them.  
While we briefly introduce notation, the reader is invited to see \cref{sec:BiellipticSurfaces} for more details.

A smooth projective surface $S$ over $\C$ is said to be bielliptic if its Kodaira dimension $\kappa(S)=0$, $q(S)=H^1(S,\OO_S)=\C$ and $p_g(S)=H^0(S,\OO(K_S))=0$.  
It is known that bielliptic surfaces come in seven families, $\o$ is one of four (finite) values, and the canonical cover $\pi\colon X\to S$ is an abelian surface.  
The name bielliptic comes from the existence of two elliptic fibrations on $S$, one with base an elliptic curve and the other with $\P^1$ as a base.  
It is known that $\Num(S)=\Z[A_0]\oplus\Z[B_0]$, where $A_0^2=B_0^2=0$ and $A_0.B_0=1$.  
Both $A_0$ and $B_0$ are (reductions of) a (multiple-)fiber of one of the two elliptic fibrations on $S$.

The topological invariants of a coherent sheaf (or an object $E\in\Db(S)$ in the bounded derived category of coherent sheaves) are encoded in its Mukai vector $$\v(E):=\ch(E)\sqrt{\td(S)}=\ch(E)\in H^*(S,\Q).$$
We consider the Mukai lattice $$\Hal(S,\Z):=\v(K(S)),$$ where $K(S)$ is the Grothendieck group, along with the induced pairing $\langle\v(E),\v(F)\rangle=-\chi(E,F)$.
For any primitive $\v\in\Hal(S,\Z)$, if we write $\pi^*\v=l(\v)\w\in\Hal(X,\Z)$ with $\w$ primitive, then $l(\v)\mid\o$ (see \cref{primitive}).   
The Bogomolov inequality says that $\v^2\geq 0$ is a necessary condition for the existence of a $\mu$-semistable sheaf. 
Our first main theorem not only proves the converse to this statement, but also gives a precise classification of the Mukai vectors of $\mu$-stable (locally free) sheaves.
\begin{Thm}[\cref{Thm:Non-emptinessOfGieseker}]\label{MainThm1:Non-emptinessOfGieseker}
Let $S$ be a bielliptic surface and $\omega$ a generic polarization with respect to a primitive Mukai vector $\v$ of positive rank.  Let $n\in\N$.
\begin{enumerate}
    \item If $\v^2=0$, then $M^{\mu ss}_\omega(n\v)\ne\varnothing$ with $M^{\mu s}_\omega(n\v)\ne\varnothing$ iff $nl(\v)\mid\o$.
    \item If $\v^2>0$, then $M^{\mu ss}_\omega(n\v)\neq\varnothing$ and there is a $\mu$-stable sheaf in each irreducible component of $M^{\mu ss}_\omega(n\v)$.  Moreover, except when $\o=2$ and $\v=(2,0,-1)e^D$ or $S$ is of Type 1 and $\v=(2,B_0,-1)e^D$, this $\mu$-stable sheaf may be taken to be locally free as long as $\rk(n\v)>1$.  Even in these exceptional cases, there are components which contain $\mu$-stable locally free sheaves.
\end{enumerate}
\end{Thm}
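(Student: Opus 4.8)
The plan is to descend everything from the canonical cover $\pi\colon X\to S$, which is an étale Galois cover with group $G=\langle g\rangle\cong\Z/\o\Z$ acting freely, and to import the classification of semistable sheaves on the abelian surface $X$ (following \cite{Yos12}). Pullback and invariant direct image realize an equivalence $\Coh(S)\simeq\Coh^{G}(X)$ under which $\mu$-stability on $S$ matches $G$-equivariant $\mu$-stability on $X$. Since $\pi$ is étale of degree $\o$ and $c_{1}(K_{S})$ is torsion (hence zero in $H^{*}(S,\Q)$), one has $\langle\pi^{*}\a,\pi^{*}\b\rangle_{X}=\o\,\langle\a,\b\rangle_{S}$; in particular $(\pi^{*}\v)^{2}=\o\,\v^{2}$, so writing $\pi^{*}\v=l(\v)\w$ with $\w$ primitive gives $l(\v)^{2}\w^{2}=\o\,\v^{2}$, whence $\v^{2}$ and $\w^{2}$ have the same sign. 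As $\pi^{*}\v$ is $G$-invariant, so is $\w$, so every translate $(g^{j})^{*}F$ of a sheaf $F$ with $\v(F)=\w$ again has Mukai vector $\w$. The ascent/descent dictionary for free quotients then reads: if $E$ is $\mu$-stable on $S$ then $\pi^{*}E$ is $\mu$-polystable with Jordan--Hölder factors forming a single $G$-orbit $\bigl\{(g^{j})^{*}F\bigr\}_{j\in G/H}$, and conversely a $\mu$-stable $F$ on $X$ with free $G$-orbit descends via $\pi_{*}$ to a $\mu$-stable sheaf on $S$. I also record that $\otimes K_{S}$ acts on each $M^{\mu ss}_{\omega}(n\v)$, since $\v(E\otimes K_{S})=\v(E)\,e^{c_{1}(K_{S})}=\v(E)$.

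For part (1), assume $\v^{2}=0$, so $\w^{2}=0$ and $M_{X}(\w)$ is a two-dimensional abelian variety of stable sheaves carrying a $G$-action. Non-emptiness of $M^{\mu ss}_{\omega}(n\v)$ is immediate: since $l(\v)\mid\o$ by \cref{primitive}, the case $n=1$ produces a $\mu$-stable $E_{1}$ with $\v(E_{1})=\v$ by descending a suitable $F$, and $E_{1}^{\oplus n}$ is $\mu$-polystable of class $n\v$. For the stable locus I run an orbit-type analysis of the $G$-action on $M_{X}(\w)$. Because $\w$ is primitive with $\w^{2}=0$, every stable Jordan--Hölder factor of a semistable sheaf of class $m\w$ has class exactly $\w$; hence a $\mu$-stable sheaf on $S$ of class $n\v$ pulls back to $\bigl(\bigoplus_{j\in G/H}(g^{j})^{*}F\bigr)$ with $\v(F)=\w$, the orbit size $[G:H]$ equals $nl(\v)$, and $[G:H]\mid\o$ forces $nl(\v)\mid\o$. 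Conversely, for each divisor $k\mid\o$ I locate a stable $F\in M_{X}(\w)$ with stabiliser exactly $\langle g^{k}\rangle$ (a finite fixed-point computation on the abelian surface $M_{X}(\w)$) and descend its $G$-orbit along the intermediate cover, producing a $\mu$-stable sheaf of class $n\v$ precisely when $nl(\v)=[G:H]\mid\o$. This yields the stated equivalence.

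For part (2), assume $\v^{2}>0$, hence $\w^{2}>0$, so $\mu$-stable sheaves of class $m\w$ exist on $X$ for every $m\ge1$. Non-emptiness of $M^{\mu ss}_{\omega}(n\v)$ I obtain either by descent or, more robustly, by invoking the existence and projectivity of Bridgeland moduli proved earlier in the paper: for generic $\sigma$ the space $M_{\sigma}(n\v)$ is nonempty, and crossing to the large-volume (Gieseker) chamber yields Gieseker-semistable sheaves, which for generic $\omega$ are $\mu$-semistable. The sheaves on $S$ are organised by orbit type $k\mid\gcd(nl(\v),\o)$, and this stratification, refined by the $\langle\otimes K_{S}\rangle$-action, governs the irreducible components; that each component meets the $\mu$-stable locus follows from a deformation argument showing the strictly semistable locus is proper in each component (equivalently, each orbit-type stratum admits a genuinely stable descent of an $X$-stable sheaf of class $\tfrac{nl(\v)}{k}\w$ with stabiliser of index $k$). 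For local freeness I use that on $X$, for $\mathbf u^{2}>0$ with $\rk\mathbf u>1$ the locally free $\mu$-stable sheaves are dense, and that both $\pi^{*}$ and $\pi_{*}$ preserve local freeness since $\pi$ is étale; transporting a locally free representative through the correspondence gives a locally free $\mu$-stable sheaf on $S$ as soon as $\rk(n\v)>1$. The exceptional classes ($\o=2$ with $\v=(2,0,-1)e^{D}$, or Type~1 with $\v=(2,B_{0},-1)e^{D}$) are the low-rank cases where the relevant component of $M_{X}$ carries only non-locally-free stable sheaves; these I treat by hand, computing the finitely many involved classes and identifying the neighbouring component that does contain a locally free $\mu$-stable sheaf.

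The main obstacle is the equivariant refinement rather than the numerics. Three points demand care. First, a $G$-fixed point of $M_{X}$ need not lift to a $G$-equivariant sheaf: the obstruction lies in a Brauer/group-cohomology class in $H^{2}(G,\C^{\times})$, so one must verify its vanishing (or restrict to the part of the fixed locus where it vanishes) before descending, and this is exactly where the finer structure of the $G$-action on $M_{X}$ enters. Second, one must ensure that $\mu$-stability, which is subtler than Gieseker stability, is preserved in both directions of the correspondence and that the genericity of $\omega$ makes $\mu$- and Gieseker-semistability for class $n\v$ coincide so that no extra walls interfere. Third, the exceptional rank-$2$ classes genuinely fail the locally-free statement on the nose, so the crux there is not non-emptiness but the explicit identification of the component on which a locally free $\mu$-stable sheaf survives.
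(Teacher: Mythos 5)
The fatal gap is in your treatment of part (2), and it is twofold. First, your ``more robust'' fallback --- invoking the existence and projectivity of Bridgeland moduli to get non-emptiness of $M_\sigma(n\v)$ and then walking to the Gieseker chamber --- is circular relative to the paper's logical structure: \cref{thm:projective coarse moduli spaces} only produces a projective coarse moduli space, with no claim that it is nonempty, and the non-emptiness statement \cref{Bridgeland non-empty} is itself deduced \emph{from} \cref{Thm:Non-emptinessOfGieseker} via the motivic invariant of \cref{Motivic invariant}. You cannot run that implication backwards. Second, your primary route, descent from the canonical cover, cannot realize arbitrary classes: $\pi_*$ multiplies rank by $\o$, so any pushforward from $X$ has rank divisible by $\o$, and already for $\v=(q,0,s)$ with $\gcd(q,\o)=1$ and $n=1$ there is no $\u\in\Hal(X,\Z)$ with $\pi_*\u=n\v$ --- no $G$-orbit on $M_{\sigma',X}$ descends to class $n\v$. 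What your proposal is missing is the paper's actual engine for part (2): the relative Fourier--Mukai transforms along the two elliptic fibrations (\cref{relative FM}), which, combined with duals and line-bundle twists and the invariance of the virtual Hodge polynomial under wall-crossing and autoequivalences, reduce an arbitrary primitive $\v$ to the finite table of reduced forms (\cref{FM reduction}); non-emptiness is then checked by hand for those finitely many vectors (explicit extensions, intermediate covers, Umemura, \cref{random case}). Nothing in your sketch substitutes for this reduction.

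Two further points. The structural claims of part (2) --- a $\mu$-stable (and, for $\rk>1$, locally free) sheaf in each component --- do not follow from ``a deformation argument showing the strictly semistable locus is proper''; in the paper they rest on the explicit codimension estimates of \cref{slope stability} (dimension counts for stacks of HN/JH filtrations and Quot schemes of $E^{\vee\vee}/E$), and it is precisely these estimates that detect the two exceptional vectors. Your diagnosis of those exceptions is also wrong: it is not that a component of $M_X$ ``carries only non-locally-free stable sheaves'' (the relevant stable sheaves upstairs are line bundles); rather, the non-locally-free component on $S$ arises from elementary modifications $0\to E\to \pi_*(L)\to\C_x\to 0$ of locally free pushforwards, as in \cref{half-half}. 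On the positive side, your part (1) skeleton --- orbit-type analysis on $M_{\sigma',X}(\w)$, locating points with stabilizer exactly $\langle g^k\rangle$, descending through intermediate covers --- is essentially the paper's argument, and your Brauer-obstruction worry evaporates since $G'$ is cyclic, so $H^2(G',\C^\times)=0$ and every fixed object linearizes (this is exactly how the paper descends fixed points). But your ``finite fixed-point computation'' is doing a lot of unacknowledged work: establishing that the fixed loci $\Fix(g^d)$ are proper, one-dimensional, and nonempty with the right stabilizers is the content of the paper's induction on $\o$ through intermediate bielliptic covers together with the bespoke constructions of \cref{random case}, and without it neither the existence direction of (1) nor the input $\dim\MM_\omega(n\v_0)\leq\lfloor nl(\v_0)/\o\rfloor$ from \cref{isotropic}, which feeds the codimension estimates, is actually established.
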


In \cref{Part:ClassicalStability}, we apply the classical stability theory toolbox to significantly reduce the scope of the analysis  required for the proof of \cref{MainThm1:Non-emptinessOfGieseker}.
To reduce the problem further, we bring in a more general notion of stability in \cref{Part:BridgelandStability}: Bridgeland stability conditions on the bounded derived category $\Db(S)$ of coherent sheaves on $S$.

The stability conditions that we consider in \cref{Part:BridgelandStability} are all contained in the distinguished connected component $\Stabd(S)$ containing geometric stability conditions, those $\sigma$ such that skyscraper sheaves of points are $\sigma$-stable.  
For a fixed $\v\in\Hal(S,\Z)$, we call $\sigma\in\Stabd(S)$ \emph{generic} if $\sigma$ is in some chamber of the wall-and-chamber decomposition of $\Stabd(S)$ with respect to $\v$.  
By applying an idea going back to \cite{BM14a,MYY14}, we use autoequivalences (Fourier-Mukai functors) to reduce our moduli functors to those of classical stability, giving the first claim in following theorem.
\begin{Thm}[\cref{thm:projective coarse moduli spaces,local singularities}]\label{MainThm3:projective coarse moduli spaces}
Let $S$ be a bielliptic surface and $\v\in\Hal(S,\Z)$ with $\v^2\geq 0$.  Assume that $\sigma\in\Stabd(S)$ is generic with respect to $\v$.  Then there exists a nonempty projective variety $M_{\sigma,S}(\v)$ that is a coarse moduli space parametrizing $S$-equivalence classes of $\sigma$-semistable objects with Mukai vector $\v$.  If $\v$ is primitive and $\v^2\geq 3\o$, then $K_{\mS}$ is a torsion Cartier divisor and $\mS$ is normal and Gorenstein, with at worst terminal l.c.i singularities.
\end{Thm}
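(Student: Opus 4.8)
The plan is to treat the two assertions separately: existence and projectivity via a reduction to classical stability, and the local structure via deformation theory on the canonical cover. For the existence, projectivity and coarse-moduli statements I would follow the strategy of \cite{BM14a,MYY14} and use autoequivalences to transport $\sigma$ into the Gieseker chamber. The canonical cover $\pi\colon X\to S$ presents $S$ as a free quotient $[X/G]$ of an abelian surface with $G=\langle K_S\rangle$ cyclic of order $\o$, so that $\Db(S)\simeq\Db([X/G])$ and both the relative Fourier--Mukai transform of an elliptic fibration on $S$ and the descents of $G$-equivariant Fourier--Mukai transforms from $X$ are available as autoequivalences of $\Db(S)$. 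Combining these with shifts and twists by $\Pic(S)$, I would show that the $\Aut(\Db(S))$-orbit of any generic $\sigma$ meets the chamber in which $\sigma$-stability agrees with (twisted) Gieseker stability for a suitable polarization; the corresponding moduli space is a projective coarse moduli space by the classical construction of Gieseker, Maruyama and Simpson, and transporting back along the autoequivalence equips $\mS$ with the same structure for $S$-equivalence classes. Non-emptiness is then automatic: as $\v^2\geq 0$ the Bogomolov inequality holds, so a (twisted) Gieseker-semistable sheaf---hence a $\sigma$-semistable object---exists.

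For the local structure assume $\v$ primitive and $\sigma$ generic, so that every semistable object is stable with $\Aut(E)=\C^{*}$. Near $[E]$ the space $\mS$ is the Kuranishi fibre $\kappa^{-1}(0)$ of the obstruction map $\kappa\colon\Ext^1(E,E)\to\Ext^2(E,E)$, whose leading term is the Yoneda square. Serre duality on $S$ gives $\Ext^2(E,E)\cong\Hom(E,E\otimes K_S)^{\vee}$, which is one-dimensional if $E\cong E\otimes K_S$ and zero otherwise. Hence on the open locus where $E\not\cong E\otimes K_S$ the obstruction vanishes and $\mS$ is smooth of dimension $\v^2+1$, while along the locus $B:=\{[E]:E\cong E\otimes K_S\}=\Fix(\blank\otimes K_S)$ the map $\kappa$ is a single function and $\mS$ is locally the hypersurface $\{\kappa=0\}$ in the smooth space $\Ext^1(E,E)$. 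This already shows $\mS$ to be an l.c.i., hence Cohen--Macaulay and Gorenstein with $K_{\mS}$ Cartier, and confines the singular locus to $B$.

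It remains to control $B$ and the transverse singularity. Objects with $E\cong E\otimes K_S$ are exactly those induced from the cover, $E\cong\pi_{*}F$; in particular $B=\varnothing$ when $l(\v)=1$ (so $\mS$ is then smooth), while for $l(\v)>1$ the locus $B$ is identified, up to the residual $G$-action, with a moduli space of sheaves on $X$, and a dimension count based on $\langle\pi^{*}\v,\pi^{*}\v\rangle=\o\,\v^{2}$ makes $\codim_{\mS}B$ grow linearly in $\v^2$. The local model is governed by the $G$-equivariant Yoneda algebra $\Ext^{*}_{X}(\pi^{*}E,\pi^{*}E)$, whose degree-two product carries the nondegenerate symplectic pairing coming from the Calabi--Yau structure of $X$; computing the rank of the quadratic part of $\kappa$ from this pairing shows that the transverse model is a nondegenerate quadric cone. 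Combining the rank computation with the codimension estimate, the hypothesis $\v^2\geq 3\o$ is exactly what is needed to ensure that $\codim_{\mS}\Sing\mS\geq 3$ and that the transverse quadric has rank $\geq 4$; the singularities are then compound Du Val, hence terminal, and $\mS$ is in particular regular in codimension one and therefore, being $S_2$, normal. Finally, a computation of $K_{\mS}$ in terms of the numerically trivial, order-$\o$ class $K_S$ shows $K_{\mS}$ to be torsion; being Cartier, it is thus a torsion Cartier divisor.

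The step I expect to be the main obstacle is the last one: extracting the precise transverse model along $B$ from the equivariant Ext-algebra on $X$, verifying that the obstruction quadric is sufficiently nondegenerate to force terminality, and keeping the codimension bookkeeping uniform across the seven families and the four values of $\o$, so that the single clean threshold $\v^2\geq 3\o$ suffices. This is exactly where the passage to the abelian cover and its Calabi--Yau structure does the essential work, in close analogy with the Enriques-surface analysis of \cite{Nue14b,NY19}.
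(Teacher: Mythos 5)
The decisive gap is your non-emptiness step. The Bogomolov inequality $\v^2\geq 0$ is only a \emph{necessary} condition for the existence of a semistable sheaf; its converse for bielliptic surfaces is precisely the main classification theorem of this paper (\cref{Thm:Non-emptinessOfGieseker}), which was previously unknown and is proved only via a long induction combining the classical codimension estimates of \cref{isotropic,slope stability}, the Fourier--Mukai reduction to the finite list of \cref{FM reduction}, and explicit constructions such as \cref{random case}. So the sentence ``a (twisted) Gieseker-semistable sheaf --- hence a $\sigma$-semistable object --- exists since $\v^2\geq 0$'' is not available at this stage and would be circular. Moreover, even granting the sheaf-theoretic statement, carrying non-emptiness back to an arbitrary generic $\sigma$ and an arbitrary $\v$ (possibly of non-positive rank, or not a positive vector in the sense of Yoshioka) requires knowing that non-emptiness is preserved both under the autoequivalence and under the wall-crossings separating the image stability condition from the Gieseker chamber of the transformed vector; the paper achieves this through the invariance of the motivic invariant $J_\sigma(\v)$ under wall-crossing and autoequivalences (\cref{Motivic invariant}, deployed in \cref{Bridgeland non-empty}). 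Your proposal contains no substitute for either ingredient.

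Otherwise your route is essentially the paper's, but the key mechanisms are missing or slightly off. Your claim that the $\Aut(\Db(S))$-orbit of $\sigma$ meets a Gieseker-type chamber is exactly the statement that must be proved, and the paper's proof of \cref{thm:projective coarse moduli spaces} rests on two lemmas you do not supply: the density, inside every chamber, of stability conditions $\tau$ admitting a primitive isotropic $\w$ with $l(\w)=\o$ whose central charge lies on the ray of $Z_\sigma(\v)$ (\cref{lem:nearby isotropics 1,lem:nearby isotropics 2} --- a genuinely arithmetic statement, since on a bielliptic surface only classes with $l(\w)=\o$ yield $M_{\tau,S}(\w)\cong S$ and hence an FM autoequivalence, \cref{lem:moduli spaces of fully induced isotropic vectors}); note also that this transform sends $\v$ to a class with $Z_{\omega,\beta}\in\R_{<0}$ rather than into the Gieseker chamber, after which genericity is what upgrades shifts of $\mu_{\omega,\beta}$-semistable sheaves to Gieseker-semistable ones. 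Your singularity analysis is in substance the paper's Yamada-style argument (\cref{global singularities,local singularities}): singular points are pushforwards $\pi_*F$ from the cover, and the local model is a hypersurface whose quadratic part has rank $N$. Two corrections there: the transverse quadric need not be nondegenerate --- the paper only establishes $N\geq 4$, via $N\geq\ext^1(F_e,F_h)>\v^2/\o\geq 3$ when $\v^2\geq 3\o$ (with a parity argument when $\o\mid\v$), combined with Lin's criterion and the stability of terminal singularities under small deformation; and your side claim that $l(\v)=1$ forces $\Fix(\blank\otimes K_S)=\varnothing$ is unjustified, since invariant classes of the form $\sum_k (g^k)^*c_1(F)$ need not be divisible by $\o$ on special covers --- nor is that claim needed for the stated result.
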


Once we have the existence of coarse moduli spaces, we are ready to prove \cref{MainThm1:Non-emptinessOfGieseker} using motivic invariants that are invariant under autoequivalences and wall-crossing.
By applying Fourier-Mukai functors associated to the two elliptic fibrations on $S$, we pick up where classical stability left off and reduce the problem to a finite list of easily checked cases.

Although a detailed study of the singularities of these moduli spaces is unnecessary for the purpose of proving the existence of Gieseker semistable sheaves (except in the isotropic case when it is necessary), we felt it was nevertheless important for the sake of completeness to study these singularities in general. 
To keep the paper moving, however, we relegate to an appendix, our analysis of the outlying moduli spaces of small dimension not covered by the hypothesis in the second statement of \cref{MainThm3:projective coarse moduli spaces} (i.e. when $\v^2<3\o$).

In \cref{Part:WallCrossing}, we refine this wall-crossing analysis by showing that the invariance of these motivic invariants is due to the existence of a birational equivalence between the moduli spaces on opposite sides of any wall.  More generally, we prove the following result.
\begin{Thm}[\cref{Thm:MainTheorem1}]\label{MainThm2:BirationalBridgeland}
Let $\v\in\Hal(S,\Z)$ satisfy $\v^2\geq 0$, and let $\sigma,\tau\in\Stabd(S)$ be generic stability conditions with respect to $\v$ (that is, they are not contained in any wall for $\v$).
\begin{enumerate}
\item  The two moduli spaces $M_{\sigma,S}(\v)$ and $M_{\tau,S}(\v)$ of Bridgeland semistable objects are birational to each other.
\item More precisely, there is a birational map induced by a derived (anti-)autoequivalence $\Phi$ of $\Db(S)$ in the following sense: there exists a common open subset $U\subset M_{\sigma,S}(\v),U\subset M_{\tau,S}(\v)$ such that for any $u\in U$, the corresponding objects $E_u\in M_{\sigma,S}(\v)$ and $F_u\in M_{\tau,S}(\v)$ satisfy $F_u=\Phi(E_u)$.  
\end{enumerate}

\end{Thm}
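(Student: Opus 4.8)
The plan is to reduce the comparison of $M_{\sigma,S}(\v)$ and $M_{\tau,S}(\v)$ to the crossing of a single wall, and then to treat separately the generic case, where the identity functor already suffices, and the totally semistable case, where the geometry of the abelian cover forces a genuine Fourier--Mukai (or dual Fourier--Mukai) (anti-)autoequivalence. By \cref{MainThm3:projective coarse moduli spaces} the spaces $M_{\sigma,S}(\v)$ are projective, and the set of $\sigma$-semistable objects of class $\v$ together with their $S$-equivalence classes is constant as $\sigma$ ranges over a fixed chamber of the wall-and-chamber decomposition for $\v$; hence $M_{\sigma,S}(\v)$ is canonically independent of the choice of $\sigma$ within a chamber. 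Since $\Stabd(S)$ is connected and the walls for $\v$ are locally finite, I would join $\sigma$ and $\tau$ by a path crossing the walls transversally in finitely many points and compose the resulting birational maps; this reduces both statements to the case in which $\sigma=\sigma_+$ and $\tau=\sigma_-$ occupy the two chambers adjacent to a single wall $W$, with a generic $\sigma_0\in W$ between them.

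To the wall $W$ I associate the primitive rank-two sublattice $\HH_W\subseteq\Hal(S,\Z)$ of classes $\w$ with $Z_{\sigma_0}(\w)\in\R\, Z_{\sigma_0}(\v)$; every $\sigma_0$-semistable object of class $\v$ has all of its Jordan--H\"older factors of class in $\HH_W$. If $W$ is \emph{not} totally semistable, so that some object $E$ of class $\v$ is $\sigma_0$-stable, then since stability is an open condition in $\Stabd(S)$ the object $E$ is $\sigma_\pm$-stable; the locus of $\sigma_0$-stable objects is then a common open subset $U\subseteq M_{\sigma_\pm,S}(\v)$ whose complement---objects that are strictly $\sigma_0$-semistable---is a proper closed subset. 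On $U$ the identity induces $F_u=E_u$, so one takes $\Phi=\id$ and both parts follow at once.

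The crux is the \emph{totally semistable} case, in which every object of class $\v$ is strictly $\sigma_0$-semistable and the identity cannot work. Here I would exploit that $\Hal(S,\Z)$ has signature $(2,2)$ and that, through the \'etale cover $\pi\colon X\to S$, its pairing is controlled by the Mukai pairing of the abelian surface $X$, so that the stable Jordan--H\"older factors of classes in $\HH_W$ are governed by isotropic classes $\w$ with $\w^2=0$---represented by objects moving in two-dimensional families descending from $X$---together with any spherical classes, necessarily represented by rigid objects $R$ with $R\cong R\otimes K_S$, a possibility special to $S$ and absent on $X$. To these I attach their standard autoequivalences: the Fourier--Mukai transform built from a universal family of an isotropic class, possibly composed with the derived dual $\RlHom(-,\OO_S)$ to realize the anti-autoequivalence alternative, and the spherical twist $\ST_R$ in the spherical case. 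The heart of the argument is then to assemble from these a single (anti-)autoequivalence $\Phi$ of $\Db(S)$ whose induced Hodge isometry $\Phi_*$ on $\Hal(S,\Z)$ fixes $\v$ and which carries the chamber of $\sigma_-$ onto that of $\sigma_+$; tracking central charges then shows that $\Phi$ sends a generic $\sigma_-$-stable object of class $\v$ to a $\sigma_+$-stable one, producing the common open $U$ with $F_u=\Phi(E_u)$.

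I expect the main obstacle to lie entirely in this last step: classifying the destabilizing classes in $\HH_W$, verifying that the corresponding (anti-)autoequivalence descends to an autoequivalence of $\Db(S)$ itself rather than merely of $\Db(X)$ and preserves the distinguished component $\Stabd(S)$, and checking that the chosen $\Phi$ genuinely interchanges the two sides of $W$ while fixing $\v$. This is precisely where the bielliptic-specific input enters, namely the explicit description of the spherical and isotropic classes via the abelian cover and the computation of the group of (anti-)autoequivalences of $\Db(S)$.
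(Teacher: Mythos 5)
Your reduction to a single wall crossing and your handling of walls that are not totally semistable match the paper's proof, but the core of your plan for the totally semistable case would fail, for two concrete reasons. First, there are no spherical (or exceptional) objects on a bielliptic surface: by \cite[Proposition 5.5]{Sos13}, as used in \cref{lem:MukaiVectorsOfStableObjects}, every $\sigma$-stable object $E$ satisfies $\v(E)^2\geq 0$, so the spherical twists you propose to assemble simply do not exist on $S$ (your remark that spherical classes are ``special to $S$ and absent on $X$'' is backwards --- they are absent on both). This removes the main tool of the Bayer--Macr\`i-style strategy you are emulating, in which one composes twists and Fourier--Mukai transforms into an (anti-)autoequivalence whose induced isometry fixes $\v$ and carries the chamber of $\sigma_-$ to that of $\sigma_+$; on a bielliptic surface there is no such supply of autoequivalences to draw from, and the paper never performs any chamber-tracking of central charges.

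Second, and more importantly, you never determine \emph{which} walls can be totally semistable, and that classification is the engine of the actual proof. Via the dimension estimates for stacks of Harder--Narasimhan filtrations (\cref{Prop:HN codim}, imported from \cite{NY19}), \cref{Prop:TotallySemistableCodimOne} shows that a totally semistable wall forces an isotropic class $\u\in\HH_W$ with $l(\u)=\o$ and either $\langle\v,\u\rangle=1$, or $\langle\v,\u\rangle=2=\o=l(\v)$ with $\v^2=4$. Granting this, the Fourier--Mukai transform attached to the universal family on the fine moduli space $M_{\sigma_0}(\u)\cong S$ (\cref{lem:moduli spaces of fully induced isotropic vectors,Prop:Uhlenbeck morphism}) converts the wall into the Li--Gieseker--Uhlenbeck wall for rank one or rank two sheaves, where $\sigma_+$-stable objects are shifts $F[1]$ of $\beta$-twisted Gieseker stable sheaves and $\sigma_-$-stable objects are the duals $F^\vee[1]$; the required map is then the explicit anti-autoequivalence $\Phi=(\blank)^{\vee}[2]$ (tensored with a line bundle in the rank two case), which yields an isomorphism $M_{\sigma_+}(\v)\cong M_{\sigma_-}(\v)$ outright. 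A smaller but real gap on your easy side: you assert that in the non-totally-semistable case the strictly $\sigma_0$-semistable locus is proper closed, but birationality needs $\sigma_0$-stable objects in \emph{every} component of $M_{\sigma_\pm}(\v)$, which again follows from the codimension estimates and not from openness of stability alone --- indeed the paper exhibits reducible moduli spaces with one component entirely strictly semistable, and these are exactly the exceptional case $\langle\v,\u\rangle=2$ above (\cref{Lem:TSSRankTwoOrderTwoException}).
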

The proof of the analogous statement to \cref{MainThm2:BirationalBridgeland} for K3 surfaces \cite[Theorem 1.1]{BM14b} relies heavily on the fact that $M_\sigma(\v)$ is a projective hyperk\"{a}hler manifold in the K3 case.  
Our proof uses dimension estimates of stacks of Harder-Narasimhan filtrations that we developed with Yoshioka in \cite{NY19}. 
These estimates allow us to determine what objects in $M_{\sigma_+}(\v)$ are destabilized when crossing the wall $W$, and most importantly the dimension of this locus, purely in terms of a rank two hyperpolic lattice $\HH_W$ associated with $W$.  
This is the content of \cref{classification of walls} which gives a much more refined and detailed classification of the type of birational map in \cref{MainThm2:BirationalBridgeland} in terms of the presence of special isotropic classes in $\HH_W$.

The next result is the prototypical application of a result like \cref{MainThm2:BirationalBridgeland}.  
To study a given moduli space $M_{\sigma,S}(\v)$, we apply a Fourier-Mukai transform $\Phi:\Db(S)\to\Db(S)$ inducing an isomorphism $M_{\sigma,S}(\v)\isomor M_{\Phi(\sigma),S}(\Phi_*(\v))$, where $\Phi_*(\v)$ has a more familiar form, say that of a well-understand type of coherent sheaf. 
Using \cref{MainThm2:BirationalBridgeland}, we know that there is a birational map $M_{\Phi(\sigma),S}(\Phi_*(\v))\dashrightarrow M_{\tau,S}(\Phi_*(\v))$, where $\tau$ is now in the so-called Gieseker chamber so that $M_{\tau,S}(\Phi_*(\v))$ is simply the moduli space of stable sheaves of Mukai vector $\Phi_*(\v)$. 
This argument gives the next result, which generalizes \cite[Theorem 1.1]{Bri98} to arbitrary Mukai vectors.  Indeed, \cite[Theorem 1.1]{Bri98} corresponds to the case $q=1$ and $\v_0=(q,0,s)$ in \cref{MainThm4:FM reduction}.

\begin{Thm}\label{MainThm4:FM reduction} Suppose that $\omega$ is a generic polarization for a primitive Mukai vector $\v=(r,aA_0+bB_0,t)\in\Hal(S,\Z)$ with $\v^2\geq 0$.  Then there is a birational map $$M_\omega(\v)\dashrightarrow M_\omega(\v_0),$$ where $\v_0$ is one of the reduced forms appearing in  \cref{table:IntroFiniteList} and $1\leq\rk(\v_0)\leq r$.  
\end{Thm}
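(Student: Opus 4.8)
The plan is to produce a composite (anti-)autoequivalence $\Phi$ of $\Db(S)$ whose induced isometry of the Mukai lattice carries $\v$ to a reduced vector $\v_0$ of one of the tabulated shapes, and then to assemble the resulting isomorphism of moduli spaces with the birational maps supplied by \cref{MainThm2:BirationalBridgeland}. First I would realize $M_\omega(\v)$ as a Bridgeland moduli space: fixing a stability condition $\tau_{\mathrm{Gies}}\in\Stabd(S)$ in the Gieseker chamber for $\v$, one has $M_\omega(\v)=M_{\tau_{\mathrm{Gies}},S}(\v)$. For any generic $\sigma\in\Stabd(S)$, \cref{MainThm2:BirationalBridgeland}(1) then gives a birational map $M_\omega(\v)\dashrightarrow M_{\sigma,S}(\v)$, all spaces being nonempty and projective by \cref{MainThm3:projective coarse moduli spaces} as $\v^2\geq 0$. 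Since an autoequivalence sends walls to walls, $\Phi(\sigma)$ is generic for $\Phi_*\v=\v_0$ and $\Phi$ induces an isomorphism $M_{\sigma,S}(\v)\isomor M_{\Phi(\sigma),S}(\v_0)$; applying \cref{MainThm2:BirationalBridgeland}(1) once more to connect $M_{\Phi(\sigma),S}(\v_0)$ to its own Gieseker chamber and composing yields
\[
M_\omega(\v)\dashrightarrow M_{\sigma,S}(\v)\isomor M_{\Phi(\sigma),S}(\v_0)\dashrightarrow M_\omega(\v_0).
\]
Because $\Phi_*$ is an isometry it preserves primitivity and the value $\v^2\geq 0$, so $\v_0$ is again primitive with $\v_0^2\geq 0$ and every space above is nonempty.

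Everything therefore reduces to the purely lattice-theoretic task of producing $\Phi$ with $\Phi_*\v=\v_0$ reduced and $1\leq\rk(\v_0)\leq r$. The autoequivalences I would use are the twists $\blank\otimes\OO(D)$, acting on $\Hal(S,\Z)$ by the shear $\v\mapsto\v\,e^D$; the two relative Fourier--Mukai transforms $\Phi_A,\Phi_B$ attached to the two elliptic fibrations of $S$, whose fiber classes are proportional to $A_0$ and $B_0$; and the shift $[1]$ together with the duality $\RlHom(\blank,\OO_S)$ (an anti-autoequivalence), which I would use to control signs and keep the rank positive. Writing $\v=(r,aA_0+bB_0,t)$ and using $A_0^2=B_0^2=0$ and $A_0.B_0=1$, the fiber degrees are $c_1(\v).A_0=b$ and $c_1(\v).B_0=a$. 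The transform $\Phi_A$ acts by an $\SL_2(\Z)$-isometry mixing the pair $(r,b)$ while fixing the $A_0$-component of $c_1$, and symmetrically $\Phi_B$ mixes $(r,a)$, the line-bundle twists shearing $c_1$ and $t$; the explicit matrices are those recorded in \cref{Part:BridgelandStability}.

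With these in hand I would run a Euclidean descent on the rank. First twist by a suitable $\OO(D)$ to replace $b$ by its residue modulo $r$, so that $0\leq b<r$, then apply $\Phi_A$ to interchange the roles of $r$ and $b$, which strictly lowers the rank unless $b=0$; alternating with the analogous step for $(r,a)$ via $\Phi_B$, and composing with $[1]$ whenever needed to keep the rank positive. Since $\rk\in\N$ the descent terminates, and at termination the twists have forced $c_1$ into a short range, so a finite inspection matches $\v_0$ against the entries of \cref{table:IntroFiniteList}. As no step ever raises the rank, we obtain $1\leq\rk(\v_0)\leq r$, while the divisibility $l(\v)\mid\o$ of \cref{primitive}, being an isometry invariant, cuts the list down to exactly the tabulated shapes.

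The hard part will be this last reduction: showing that the descent terminates at precisely the forms of \cref{table:IntroFiniteList} rather than some larger list, and doing so uniformly across the seven families of bielliptic surfaces and the four values of $\o$. This demands the explicit lattice action of $\Phi_A$ and $\Phi_B$ on $\Hal(S,\Z)$ together with a careful but finite case analysis tracking the constraint $l(\v)\mid\o$ and the two exceptional classes flagged in \cref{MainThm1:Non-emptinessOfGieseker}. A secondary subtlety is that the chain of maps above composes to a genuine birational map only if each intermediate moduli space is nonempty, which is exactly what \cref{MainThm3:projective coarse moduli spaces} guarantees for every Mukai vector in the $\Phi$-orbit of $\v$.
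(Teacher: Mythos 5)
Your proposal is, in essence, the paper's own proof: the paper deduces \cref{MainThm4:FM reduction} by combining \cref{FM reduction} --- whose proof is exactly your rank-descent, constructing a chain of line-bundle twists, derived duals (with shifts), and the two relative Fourier--Mukai transforms along $p_A$ and $p_B$ that carries $\v$ to a reduced form $\v_0$ of \cref{table:IntroFiniteList} with $1\leq\rk(\v_0)\leq r$, via a finite case analysis over the seven families --- with the wall-crossing birationality of \cref{Thm:MainTheorem1}, precisely your composition $M_\omega(\v)\dashrightarrow M_{\sigma,S}(\v)\isomor M_{\Phi(\sigma),S}(\v_0)\dashrightarrow M_\omega(\v_0)$. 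One bookkeeping slip worth fixing: the transform along $p_A$ mixes the rank with the fiber degree $d_{p_A}(\v)=c_1(\v).B$, which is proportional to $a$ (the paper's inverse acts as $(R,aA_0+bB_0,s)\mapsto(R-\lambda_S a,\,aA_0+(b-\lambda_S s)B_0,\,s)$), not with $b$, and symmetrically for $p_B$; this relabeling does not affect your strategy.
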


\begin{table}[ht]
\caption{The reduced forms of primitive Mukai vectors}
\begin{tabular}{|c|  c| c| c| c|}
\hline
Type &  $\o$& $\v_0$\\
\hline
1 &  $2$ & $(q,0,s),(2q,qB_0,s)$\\
\hline
2 &  $2$ & $(q,0,s),(2q,qA_0,s),(2q,qB_0,s),(2q,qA_0+qB_0,s)$\\
\hline
3  & $4$ & $(q,0,s),(4q,qB_0,s),(2q,qB_0,s)$\\
\hline
4  & $4$ & $(q,0,s),(2q,qA_0,s),(4q,qB_0,s),(2q,qB_0,s),$\\
        &        &$(4q,2qA_0+qB_0,s),(2q,qA_0+qB_0,s)$\\
\hline
5 & $3$ & $(q,0,s),(3q,qB_0,s)$\\
\hline
6 & $3$ & $(q,0,s),(3q,qA_0,s),(3q,qB_0,s),$\\
         &        &$(3q,qA_0+qB_0,s)$\\
   \hline
7 & $6$ & $(q,0,s),(6q,qB_0,s),(3q,qB_0,s),(2q,qB_0,s),(q,bB_0,s), \frac{1}{3}<\frac{b}{q}<\frac{1}{2}$\\
\hline
\end{tabular}
\label{table:IntroFiniteList}
\end{table}

\subsection*{Open Questions and Further Directions}
In this paper, we have thoroughly explored moduli spaces of stable sheaves on a bielliptic surface from an algebro-geometric point of view (e.g. singularities, Kodaira dimension, birational geometry).  
Nevertheless, a main theme left unexplored in this manuscript is the topology of these moduli spaces.  
Certain basic questions remain unresolved.
For example, how many irreducible components does $\mS$ have?  
What is $\pi_1(\mS)$?  
What is the order of $K_{\mS}$?  
What are $b_i(\mS)$?
We have used Fourier-Mukai transforms and Bridgeland stability to answer questions such as these for Enriques surfaces \cite{NY19}, and we believe similar tools can be fruitfully applied for bielliptic surfaces as well. 
It would also be interesting to see if the results variations of Hodge structure techniques of \cite{Sac} could be generalized to answer some of these questions for bielliptic surfaces.

In addition, we have not explored the Albanese map for $\mS$.  
As $\Pic^0(S)$ is far from trivial, neither is the Albanese map for $\mS$, and on the abelian surface cover, the fiber of the Albanese map has proved to be an important variety to study.  
Indeed, in the abelian surface case, this fiber is a hyperk\"{a}hler manifold.  
It shown in \cite{Yos12} that all minimal models of this fiber are obtained by Bridgeland wall crossing, as conjectured by the Hassett-Keel program.  
It is natural to wonder if the corresponding Albanese map fiber might be equally amenable to such a result in the bielliptic case.

\subsection*{Acknowledgements} 
The author would like to thank Arend Bayer, Izzet Coskun, Antony Maciocia, Emanuele Macr\`{i}, Benjamin Schmidt, K\={o}ta Yoshioka, and Xiaolei Zhao for very helpful discussions related to this article.  

The seed for this project was planted by discussions with Antony Maciocia during the workshop on ``Geometry from Stability Conditions" held at the University of Warwick in February of 2015.  The author would like to thank the university for its warm hospitality and the organizers for arranging such a stimulating environment.  The author was partially supported by the NSF postdoctoral fellowship DMS-1606283, by the NSF RTG grant DMS-1246844, and by the NSF FRG grant DMS-1664215.

\subsection*{Notation}
For a complex number $z\in\C$ we denote its real and imaginary parts by $\Re z$ and $\Im z$, respectively.

We will denote by $\Db(X)$ the bounded derived category of coherent sheaves on a smooth projective variety $X$.  
We will use non-script letters ($E,F,G,\dots$) for objects on a fixed scheme and reserve curly letters ($\EE,\FF,\GG,\dots$) for families of such objects.

For a vector $\v$ in a lattice $\HH$ with pairing $\langle\blank,\blank\rangle$, we abuse notation and write $$\v^2:=\langle\v,\v\rangle.$$
The intersection pairing on a smooth surface $X$ will be denoted by $\blank.\blank$ and the self-intersection of a divisor $D$ by $D^2$.  

\section{Bielliptic surfaces}\label{sec:BiellipticSurfaces}
\subsection{The seven families}
A smooth projective surface $S$ over $\C$ is said to be bielliptic if its Kodaira dimension $\kappa(S)=0$, $q(S)=1$ and $p_g(S)=0$.  It is known that any bielliptic surface is the \'{e}tale quotient $\bar{\pi}:A\times B\to S$ of the product of two elliptic curves $A\times B$ by the action of a finite abelian group $G$ acting on A by translations (so that $A/G$ is still elliptic) and on $B$ in a such a way that $B/G\cong\P^1$.  It follows that the canonical bundle must be torsion with $\o=2,3,4,6$ and that $S$ falls into one of seven families \cite[V.5]{BHPV}.  To simplify notation we let $B\cong\C/(\Z\oplus\Z\omega)$, where $\omega\in\C$ is uniquely determined if it is chosen in the fundamental domain $-\frac{1}{2}\leq\Re\omega<\frac{1}{2},\Im\omega>0,|\omega|\geq 1$ if $\Re\omega\leq 0$ and $|\omega|>1$ if $\Re\omega>0$.  The seven families are described in Table \ref{table:families}.

\begin{table}[!htbp]
\caption{The seven families of bielliptic surfaces}
\begin{tabular}{c c c c c c}
\hline
Type & $\omega$ & $G$ & Action of $G$ on $B$\\
\hline
1 & any & $\Z/2\Z$ & $x\mapsto -x$\\
2 & any & $\Z/2\Z\times\Z/2\Z$ & $x\mapsto -x,x\mapsto x+\epsilon$ with $2\epsilon=0$\\
3 & $i$ & $\Z/4\Z$ & $x\mapsto ix$\\
4 & $i$ & $\Z/4\Z\times\Z/2\Z$ & $x\mapsto ix,x\mapsto x+\frac{1+i}{2}$\\
5 & $e^{\frac{2\pi i}{3}}$ & $\Z/3\Z$ & $x\mapsto e^{\frac{2\pi i}{3}} x$\\
6 & $e^{\frac{2\pi i}{3}}$ & $\Z/3\Z\times\Z/3\Z$ & $x\mapsto e^{\frac{2\pi i}{3}}x,x\mapsto x+\frac{1-e^{\frac{2\pi i}{3}}}{3}$\\
7 & $e^{\frac{2\pi i}{3}}$ & $\Z/6\Z$ & $x\mapsto -e^{\frac{2\pi i}{3}}x$\\

\hline
\end{tabular}
\label{table:families}
\end{table}

The two projections $A\times B\to A$ and $A\times B\to B$ are $G$-equivariant and descend to two fibrations $p_A:S\mor A/G,p_B:S\mor B/G$.  Since $A\mor A/G$ is \'{e}tale, all fibres of $p_A$ are smooth, and a fibre of $p_B$ over a point $[x]\in B/G$ is a multiple of a smooth elliptic curve whose multiplicity is that of the point $[x]$ for the finite map $B\to B/G$.  All smooth fibres of $p_A$ (resp. $p_B$) are isomorphic to $B$ (resp. $A$), so we denote the classes of these fibres by $B$ and $A$, respectively, in $\Num(S)$, $H^2(S,\Z)$ and $H^2(S,\Q)$.  It is well known \cite{Ser90} that $A$ and $B$ span $H^2(S,\Q)$ and satisfy $A^2=B^2=0,AB=|G|$.  We recall some other facts that will be helpful for us:

\begin{Lem}\label{divisor facts}\cite[Lemma 1.3]{Ser90} Let $D\in\Num(S)$ have class $\alpha A+\beta B$ with $\alpha,\beta\in\Q$.  Then:

\begin{enumerate}
\item $\chi(\OO_S(D))=\frac{1}{2}D^2=\alpha\beta|G|$.
\item $D$ is ample if and only if $\alpha>0,\beta>0$.
\item If $D$ is ample then $h^0(\OO_S(D))=\chi(\OO_S(D))$.
\item If $D\geq 0$, then $\alpha\geq 0,\beta\geq 0$.
\end{enumerate}
\end{Lem}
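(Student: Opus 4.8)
The plan is to derive all four statements from the intersection data recorded just above the lemma, namely $A^2=B^2=0$ and $A\cdot B=|G|$, together with two consequences of the bielliptic structure: the canonical class $K_S$ is torsion, hence numerically trivial, and $\chi(\OO_S)=1-q(S)+p_g(S)=1-1+0=0$. With these in hand, statement (1) is pure Riemann--Roch. Applying the surface Riemann--Roch formula $\chi(\OO_S(D))=\chi(\OO_S)+\tfrac12\,D\cdot(D-K_S)$ and using $D\cdot K_S=0$ (numerical triviality of $K_S$) and $\chi(\OO_S)=0$, I get $\chi(\OO_S(D))=\tfrac12 D^2$; expanding $D=\alpha A+\beta B$ and substituting $A^2=B^2=0$, $A\cdot B=|G|$ gives $D^2=2\alpha\beta|G|$, whence $\chi(\OO_S(D))=\alpha\beta|G|$.

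The engine for (2) and (4) is the nefness of $A$ and $B$. Both are represented by (smooth, reduced) fibres: $A$ is the class of a fibre of $p_B\colon S\to B/G\cong\P^1$ and $B$ is the class of a fibre of $p_A\colon S\to A/G$, so each is the pullback of a point, i.e.\ of an ample class, under a morphism to a curve, hence nef; moreover each is represented by an irreducible curve. For (4), if $D\geq 0$ then intersecting with the nef classes $A$ and $B$ gives $0\le D\cdot A=\beta|G|$ and $0\le D\cdot B=\alpha|G|$, so $\alpha,\beta\ge 0$. For (2), the forward implication is immediate: if $D$ is ample then $D\cdot A=\beta|G|>0$ and $D\cdot B=\alpha|G|>0$ since $A,B$ are irreducible curves, forcing $\alpha,\beta>0$. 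For the converse I would invoke the Nakai--Moishezon criterion. By (1), $D^2=2\alpha\beta|G|>0$, and for any irreducible curve $C$ with class $aA+bB$, part (4) applied to the effective class $C$ gives $a,b\ge 0$ not both zero, so $D\cdot C=|G|(\alpha b+\beta a)>0$; hence $D$ is ample.

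Part (3) carries the real content, and I expect it to be the main obstacle. By (1) it suffices to show the higher cohomology $h^1(\OO_S(D))=h^2(\OO_S(D))=0$ for ample $D$. For $h^2$, Serre duality gives $h^2(\OO_S(D))=h^0(\OO_S(K_S-D))$; since $A$ is nef while $(K_S-D)\cdot A=-\beta|G|<0$ (again using $K_S$ numerically trivial), the class $K_S-D$ cannot be effective, so this group vanishes. For $h^1$ I would apply Kodaira vanishing, and the delicate point is precisely the handling of $K_S$: it is numerically trivial but possibly nonzero as a divisor class, so one cannot simply replace it by $0$. The clean way is to note that ampleness depends only on the numerical class, so $D-K_S$, being numerically equivalent to the ample divisor $D$, is itself ample; then writing $D=K_S+(D-K_S)$ and applying Kodaira vanishing to the ample summand yields $h^1(\OO_S(D))=0$. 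Combining, $\chi(\OO_S(D))=h^0(\OO_S(D))$. The only real care needed throughout is to exploit the numerical triviality of $K_S$ consistently --- to kill the $K_S$-correction in Riemann--Roch, to force non-effectivity of $K_S-D$, and to promote $D-K_S$ to an ample divisor via the numerical nature of ampleness rather than via an identity $K_S=0$ that does not hold.
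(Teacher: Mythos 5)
Your proof is correct, but it is worth noting that the paper contains no internal proof to compare against: the lemma is quoted verbatim from Serrano \cite[Lemma 1.3]{Ser90}, so what you have done is reconstruct a self-contained argument for a cited fact. Your reconstruction follows the standard route and is sound at every step: Riemann--Roch plus $\chi(\OO_S)=0$ and the numerical triviality of the torsion class $K_S$ give (1); the nefness of the fibre classes $A$ and $B$ gives (4), which you correctly prove \emph{before} using it in the Nakai--Moishezon verification for the converse of (2) (the forward direction being immediate from intersecting with the irreducible fibres); and for (3) you handle the one genuinely delicate point exactly right --- since $K_S$ is numerically trivial but need not vanish in $\Pic(S)$, you cannot apply Kodaira vanishing to $D$ directly, and your device of writing $D=K_S+(D-K_S)$ and promoting $D-K_S$ to an ample divisor via the numerical nature of ampleness (Nakai--Moishezon depends only on the numerical class) is the correct fix, with the $h^2$ vanishing following from $(K_S-D).A<0$ against the nef class $A$. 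Two minor points of hygiene, neither a gap: in the Nakai--Moishezon step, the claim that an irreducible curve has class $aA+bB$ with $a,b$ not both zero deserves the one-line justification that a curve meets any ample class positively, so its numerical class is nonzero; and since $\alpha,\beta$ are only rational (recall $\Num(S)=\Z[\frac{1}{\ord(K_S)}A]\oplus\Z[\frac{\ord(K_S)}{|G|}B]$), statements (2)--(4) should be read after clearing denominators, which your intersection-theoretic arguments accommodate without change. In short, the proposal is a complete and essentially optimal proof of the cited lemma, and it almost certainly mirrors Serrano's original argument.
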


\begin{table}[ht]
\caption{Topological invariants of the seven families}
\begin{tabular}{c c c c c c}
\hline
Type & $(m_1,\dots,m_s)$ & $H^2(S,\Z)$ & $\o$\\
\hline
1 & $(2,2,2,2)$ & $\Z[\frac{1}{2}A]\oplus \Z[B]\oplus \Z/2\Z\oplus\Z/2\Z$ & 2\\
2 & $(2,2,2,2)$ & $\Z[\frac{1}{2}A]\oplus\Z[\frac{1}{2}B]\oplus\Z/2\Z$ & 2\\
3 & $(2,4,4)$ & $\Z[\frac{1}{4}A]\oplus\Z[B]\oplus\Z/2\Z$ & 4\\
4 & $(2,4,4)$ & $\Z[\frac{1}{4}A]\oplus\Z[\frac{1}{2}B]$ & 4\\
5 & $(3,3,3)$ & $\Z[\frac{1}{3}A]\oplus\Z[B]\oplus\Z/3\Z$ & 3\\
6 & $(3,3,3)$ & $\Z[\frac{1}{3}A]\oplus\Z[\frac{1}{3}B]$ & 3\\
7 & $(2,3,6)$ & $\Z[\frac{1}{6}A]\oplus\Z[B]$ & 6\\
\hline
\end{tabular}
\label{table:cohomology}
\end{table}

\begin{Thm}\label{Pic}\cite[Theorem 1.4, Remark 1.6, Proposition 1.7]{Ser90} Table \ref{table:cohomology} gives the multiplicities $(m_1,\dots,m_s)$ of the singular fibers of $p_B:S\mor \P^1$, the decomposition of $H^2(S,\Z)$, and the order of the canonical bundle $\o$ for each of the seven families of bielliptic surfaces.  At the level of numerical equivalence, we have simply $$\Num(S)=\Z[\frac{1}{\o}A]\oplus\Z[\frac{\o}{|G|}B].$$
\end{Thm}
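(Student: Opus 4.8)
The plan is to extract all three pieces of data directly from the explicit presentation $S=(A\times B)/G$ of \cref{table:families}, handling the multiple fibres, the order of $K_S$, and the integral lattice in turn, and then to deduce the clean numerical statement formally from Poincar\'e duality. I begin with the multiplicities of the singular fibres of $p_B\colon S\to B/G\cong\P^1$. A fibre of $p_B$ over $[y]\in B/G$ is the image of $A\times(Gy)$, so its reduced structure and multiplicity are governed by the stabiliser $G_y$ of $y$ for the $G$-action on $B$; since the translation part of this action is fixed-point free, the stabilisers are cyclic and the multiplicity of the fibre equals the ramification index $m_i=|G_y|$ of the quotient map $q\colon B\to B/G$ at the corresponding branch point (the $A$-translations sitting inside each $G_y$ only alter the reduced fibre, not the multiplicity). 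Riemann--Hurwitz for $q$ gives the constraint $\sum_i(1-\tfrac1{m_i})=2$, and matching this against the fixed-point data of $x\mapsto\zeta x$ on $B\cong\C/(\Z\oplus\Z\omega)$ and its powers for each family produces exactly the lists $(2,2,2,2),(2,4,4),(3,3,3),(2,3,6)$ of \cref{table:cohomology}.

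\textbf{Order of the canonical bundle.} Since $\bar\pi\colon A\times B\to S$ is \'etale with $K_{A\times B}=\OO$, the bundle $K_S$ is the torsion line bundle attached to the character $\chi\colon G\to\C^{\times}$ by which $G$ acts on $H^0(A\times B,\Omega^2)=\C\cdot(dz_A\wedge dz_B)$. As $G$ acts on $A$ by translations, $\chi$ is simply the multiplier of $G$ on $dz_B$, i.e.\ the rotation character $g\mapsto\zeta_g$; hence $\o=\ord(\chi)$ equals the order of $\zeta$, which reads off as $2,3,4,6$ from \cref{table:families}. The fibre analysis above also yields $\o=\lcm_i(m_i)$, which I would cross-check against these values.

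\textbf{The lattice and its torsion.} For the numerical statement I would first note that $p_g(S)=0$ forces $H^2(S,\OO_S)=0$, so the exponential sequence gives $\NS(S)=H^2(S,\Z)$ and hence $\Num(S)=H^2(S,\Z)/\mathrm{tors}$ is unimodular by Poincar\'e duality; moreover $\chi(\OO_S)=0$ together with $K_S$ numerically trivial makes $\chi(\OO_S(D))=\tfrac12 D^2$ (cf.\ \cref{divisor facts}), so $\Num(S)$ is \emph{even}. Being an even unimodular lattice of rank $b_2(S)=2$ that contains the isotropic class $A$, it is the hyperbolic plane $U$, whose isotropic vectors form two lines $\Z e\cup\Z f$ with $e.f=1$. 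The reduced multiple fibres of $p_B$ realise the classes $\tfrac1{m_i}A$, whose $\Z$-span is $\tfrac1{\lcm_i m_i}A\,\Z=\tfrac1{\o}A\,\Z$; combined with the fact that the divisibility of an elliptic fibre class is exactly $\lcm_i(m_i)$, this identifies $e=\frac{1}{\o}A$ as a primitive isotropic generator. Since $B^2=0$ and $B$ is independent of $A$, it lies on the other isotropic line, $B=d f$; then $A.B=|G|$ and $e.f=1$ force $\o d=|G|$, i.e.\ $f=\frac{\o}{|G|}B$, giving $\Num(S)=\Z[\frac1\o A]\oplus\Z[\frac\o{|G|}B]$. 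Finally, the torsion summand of $H^2(S,\Z)$ equals, by the universal coefficient theorem, $\mathrm{Tors}\,H_1(S,\Z)=\mathrm{Tors}\,\pi_1(S)^{\mathrm{ab}}$, which I would compute from the extension $1\to\Z^4\to\pi_1(S)\to G\to1$.

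\textbf{Main obstacle.} The delicate part is the family-by-family bookkeeping for the non-cyclic groups (Types 2, 4, 6): there a nontrivial subgroup of $G$ acts trivially on $A$ and by translation on $B$, so one must correctly track its effect on the reduced fibres and multiplicities of both fibrations, and—most importantly—compute $\mathrm{Tors}\,\pi_1(S)^{\mathrm{ab}}$ from the precise cocycle of the extension. This last computation is what distinguishes the torsion summands $\Z/2\Z\oplus\Z/2\Z$, $\Z/2\Z$, $\Z/3\Z$, and $0$ recorded in \cref{table:cohomology}, and it is the step most likely to require careful case analysis rather than a uniform argument.
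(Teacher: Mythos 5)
Note first that the paper does not prove this statement at all: it is quoted verbatim from Serrano \cite[Theorem 1.4, Remark 1.6, Proposition 1.7]{Ser90}, so your proposal can only be compared with Serrano's argument, which obtains the integral structure of $H^2(S,\Z)$ by direct computation with the $G$-action (explicit divisor classes on intermediate quotients), family by family. Much of your outline is sound: the multiplicities via stabilizers and Riemann--Hurwitz, the identification of $\o$ with the order of the rotation character on $dz_B$, and the reduction of $\Num(S)$ to the hyperbolic plane $U$ via unimodularity ($p_g=0$ plus Poincar\'e duality) and evenness ($\chi(\OO_S)=0$, $K_S$ numerically trivial) are all correct, and for Types 1, 3, 5, 7 your argument closes completely, since there $|G|=\o$ forces $(k,d)=(\o,1)$ in the notation below.

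The genuine gap is the primitivity of $\frac{1}{\o}A$. Your justification --- ``the divisibility of an elliptic fibre class is exactly $\lcm_i(m_i)$'' --- is false as a general principle, and the bielliptic surfaces themselves refute it: the fibration $p_A\colon S\to A/G$ has \emph{no} multiple fibres at all, yet its fibre class $B$ has divisibility $\lambda_S=|G|/\o\in\{2,3\}$ in Types 2, 4, 6 --- this is part of the very statement being proved. What your multiple-fibre argument actually yields is only the inequality $\o\mid k$, where $A=ke$ for a primitive isotropic $e$; writing $B=df$ with $e.f=1$, unimodularity gives $kd=A.B=|G|$, and for Types 2, 4, 6 both $(k,d)=(\o,\lambda_S)$ and $(k,d)=(|G|,1)$ survive every constraint you have imposed. (Indeed, were your ``fact'' applicable to both fibrations it would force $kd=\o\cdot 1\neq|G|$, a contradiction --- so it must fail for at least one of the two fibrations, and nothing in your argument determines which.) Closing the gap requires genuinely new geometric input for the non-cyclic types: one must exhibit an integral class $\frac{1}{\lambda_S}B$ in $\Num(S)$ (equivalently, bound $k\leq\o$), which in \cite{Ser90} comes out of explicit constructions on the intermediate \'etale quotients such as $(A\times B)/H$, with $H<G$ the subgroup acting on $B$ by translations, and not from any formal property of elliptic fibrations; note that the naive candidates fail, e.g.\ the image in $S$ of $\{a\}\times B$ is numerically $B$ itself, not $\frac{1}{\lambda_S}B$. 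Your deferred computation of $\mathrm{Tors}\,\pi_1(S)^{\mathrm{ab}}$ for the torsion summands is a viable route but is, as you say, only a plan; the primitivity step above is the one that, as written, would fail.
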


As they are useful invariants, we define $\lambda_S:=\frac{|G|}{\o},A_0:=\frac{1}{\o}A$, and $B_0:=\frac{1}{\lambda_S}B$.  
In this notation, the above theorem says that 
$$\Num(S)=\Z[A_0]\oplus\Z[B_0].$$
Note that $\lambda_S$ is also the smallest possible degree of a multisection of the elliptic fibration $p_A:S\to A/G$.
The multisection is provided by $A_0$.

\subsection{The algebraic Mukai lattice}
Let $Y$ be a smooth projective surface.  
We define the Mukai vector of an object $E\in\Db(Y)$ by $\v(E):=\ch(E)\sqrt{\td(Y)}\in H^*(Y,\Q)$, and we denote the algebraic Mukai lattice by $\Hal(Y,\Z)$, where $$\Hal(Y,\Z):=\v(K(Y)).$$  
In the cases of interest to us in this paper, $Y$ is either an abelian surface or a bielliptic surface.
In either case $\td(Y)=1$ so that $\v(E)=\ch(E)$.  
It follows that 
\begin{equation}\label{eq:AlgebraicMukaiLattice}
\Hal(Y,\Z) = H^0(Y,\Z) \oplus \Num(Y) \oplus H^4(Y,\Z),
\end{equation} since $\ch_2$ is always an integer because $c_1^2$ is even.  Writing the Mukai vector according to the decomposition \eqref{eq:AlgebraicMukaiLattice}, we get \[\v(E)=(r(E),c_1(E),\ch_2(E)).\]  We define the Mukai pairing $\langle\blank, \blank\rangle$ on $\Hal(Y,\Z)\times \Hal(Y,\Z) \to \Z$ by $\langle \v(E), \v(F)\rangle := - \chi(E, F)$.  
According to the Hierzebruch-Riemann-Roch theorem and the decomposition \eqref{eq:AlgebraicMukaiLattice}, we have
\[
\left\langle (r,c,s),(r',c',s')\right\rangle = c.c' - rs' - r's,
\]
for $(r,c,s),(r',c',s')\in \Hal(Y,\Z)$.

Given a Mukai vector $\v\in \Hal(Y,\Z)$, we denote
its orthogonal complement by
\[
\v^\perp:=\left\{\w\in \Hal(Y,\Z)\colon \langle \v,\w\rangle=0 \right\}.
\]
We call a Mukai vector $\v$ \emph{primitive} if it is not divisible in $\Hal(Y,\Z)$.  

For a bielliptic surface $S=(A\times B)/G$, it is most convenient to consider the canonical cover $X$ of $S$, which is the uniquely defined  \'{e}tale cover $\pi:X\to S$ of order $\o$ such that $\omega_X\cong\OO_X$ and for which $\pi$ is the quotient map for a naturally defined free action of the group $\Z/\o\Z$.
In this case, $X$ is an abelian surface that is an intermediate quotient of $A\times B$ by a subgroup $\Z/\lambda_S\Z\cong H<G$. 
As such, $X$ admits smooth elliptic fibrations $X\to A/H$ and $X\to B/H$ with fibres isomorphic to $B$ and $A$, respectively, and we  use $A_X,B_X$ for the corresponding classes in $\Num(X)$, etc., which satisfy $A_X. B_X=\lambda_S$. 
The action of $\Z/\o\Z$ is identified with the action $G':=G/H$.
We further observe that the quotient map $\pi$ induces an embedding $$\pi^*:\Hal(S,\Z)\into\Hal(X,\Z)$$ such that $\langle\pi^*\v,\pi^*\w\rangle=\o\langle \v,\w\rangle$.
Moreover, $\pi^*$ identifies $\Hal(S,\Z)$ with an index $\frac{\o^2}{\lambda_S}$ sublattice of the $G'$-invariant component of $\Hal(X,\Z)$ (although the cokernel of $\pi^*$ has exponent $\o$). 
Indeed, $\pi^*\v(\pt)=\pi^*(0,0,1)=(0,0,\o)$, and the embedding $\pi^*:\Num(S)\into\Num(X)$ satisfies $$\pi^*(A_0)=A_X,\pi^*(B_0)=\frac{\o}{\lambda_S}B_X.$$  It follows that for a primitive Mukai vector $\v\in\Hal(S,\Z)$, $\pi^*\v$ is divisible by at most $\o$.  We elaborate in the following result:

\begin{Lem}\label{primitive} A Mukai vector $\v=(r,c_1,s)\in\Hal(S,\Z)$ is primitive if and only if $$\gcd(r,c_1,s)=1.$$ For primitive $\v$ with $c_1=aA_0+bB_0$,  we set $$l(\v):=\gcd(r,\pi^*c_1,\o s)=\gcd(r,a,\frac{\o}{\lambda_S}b,\o s).$$  Then $l(\v)\mid \o$ and $\frac{\pi^*\v}{l(\v)}$ is primitive.
\end{Lem}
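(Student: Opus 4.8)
The plan is to dispatch the three assertions in order of increasing difficulty. The equivalence ``$\v$ primitive $\iff\gcd(r,c_1,s)=1$'' is pure lattice theory: by \cref{Pic} we have $\Num(S)=\Z[A_0]\oplus\Z[B_0]$, so $\Hal(S,\Z)=\Z\oplus(\Z[A_0]\oplus\Z[B_0])\oplus\Z$ is free abelian with basis $(1,0,0),(0,A_0,0),(0,B_0,0),(0,0,1)$, and the coordinates of $\v=(r,aA_0+bB_0,s)$ are exactly $(r,a,b,s)$. A vector in a free abelian group is non-divisible precisely when the $\gcd$ of its coordinates in a basis is $1$, so primitivity is equivalent to $\gcd(r,a,b,s)=1$, which is what $\gcd(r,c_1,s)=1$ abbreviates.

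First I would pin down the coordinate form of $\pi^*\v$. Using the values recorded just above the lemma—$\pi^*$ preserves rank, $\pi^*(0,0,1)=(0,0,\o)$, $\pi^*A_0=A_X$, and $\pi^*B_0=\tfrac{\o}{\lambda_S}B_X$—linearity gives $\pi^*\v=\bigl(r,\ aA_X+\tfrac{\o}{\lambda_S}bB_X,\ \o s\bigr)$, whose coordinates in the basis $(1,0,0),(0,A_X,0),(0,B_X,0),(0,0,1)$ of the relevant sublattice of $\Hal(X,\Z)$ are $(r,a,\tfrac{\o}{\lambda_S}b,\o s)$. (Note $\lambda_S\mid\o$ in every type, so $\tfrac{\o}{\lambda_S}b\in\Z$.) This produces the second displayed expression for $l(\v)$.

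Next I would prove $l(\v)\mid\o$ by an elementary prime-by-prime argument that uses only $\tfrac{\o}{\lambda_S}\mid\o$. Fix a prime $p$ with $p^k\,\|\,l(\v)$, $k\geq 1$; then $p\mid r$ and $p\mid a$, so primitivity of $\v$ (that is, $\gcd(r,a,b,s)=1$) forces $p\nmid b$ or $p\nmid s$. If $p\nmid s$, then $p^k\mid\o s$ gives $p^k\mid\o$; if $p\nmid b$, then $p^k\mid\tfrac{\o}{\lambda_S}b$ gives $p^k\mid\tfrac{\o}{\lambda_S}\mid\o$. Either way $p^k\mid\o$, and since this holds for every prime power dividing $l(\v)$ we conclude $l(\v)\mid\o$. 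Equivalently, one can use the cokernel data: $\w:=\pi^*\v/l(\v)$ is $G'$-invariant because $\pi^*\v$ is, the cokernel of $\pi^*$ into the $G'$-invariants has exponent $\o$, so $\o\w=\pi^*\u$ for some $\u\in\Hal(S,\Z)$; then $\pi^*(\o\v)=l(\v)\pi^*\u$ and injectivity of $\pi^*$ yield $\o\v=l(\v)\u$, whence $l(\v)\mid\o$.

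The one genuinely non-formal step—and the one I expect to be the main obstacle—is the primitivity of $\pi^*\v/l(\v)$ in all of $\Hal(X,\Z)$, together with the first equality $l(\v)=\gcd(r,\pi^*c_1,\o s)$ read as the true divisibility of $\pi^*\v$. By construction the $\gcd$ of the integer coordinates $(r,a,\tfrac{\o}{\lambda_S}b,\o s)$ divided by $l(\v)$ is $1$, so $\pi^*\v/l(\v)$ is non-divisible within the sublattice spanned by $(1,0,0),(0,A_X,0),(0,B_X,0),(0,0,1)$; upgrading this to the full lattice requires that this sublattice—equivalently $\langle A_X,B_X\rangle\subset\Num(X)$—be saturated. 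Since $H^0$ and $H^4$ are always saturated and $\w$ is $G'$-invariant, it suffices to saturate $\langle A_X,B_X\rangle$ inside $\Num(X)^{G'}$. For the types with $\lambda_S=1$ (Types $1,3,5,7$) this is automatic, since the Gram matrix $\left(\begin{smallmatrix}0&1\\1&0\end{smallmatrix}\right)$ is unimodular and hence a direct summand; for $\lambda_S>1$ (Types $2,4,6$) one has $A_X.B_X=\lambda_S$, and pairing $\pi^*\v/l(\v)$ against $A_X$ and $B_X$ shows that any hypothetical extra divisor is supported only at primes dividing $\lambda_S$. Excluding these requires the explicit structure of $\Num(X)$ for the canonical cover (or the given index-$\tfrac{\o^2}{\lambda_S}$ and exponent-$\o$ data for $\pi^*$), and this is where I would concentrate the remaining work.
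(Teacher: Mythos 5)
Your treatment of the first two assertions is correct and is essentially the paper's own argument in tidier form: the paper also works in the coordinates $(r,a,\tfrac{\o}{\lambda_S}b,\o s)$ and proves $l(\v)\mid\o$ by writing $r=l(\v)r'$, $a=l(\v)a'$, $\tfrac{\o}{\lambda_S}b=l(\v)b'$, $\o s=l(\v)s'$ and ruling out $\ord_2 l(\v)>\ord_2\o$ and $\ord_3 l(\v)>\ord_3\o$; your ``$p^k\,\|\,l(\v)$ forces $p^k\mid\o$'' version is the same computation organized by prime powers, and your alternative via the exponent-$\o$ cokernel is also valid.

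The genuine gap is exactly where you flagged it, and as written your proposal does not prove the last assertion for Types $2$, $4$, $6$. Coprimality of $(r',a',b',s')$ only yields non-divisibility of $\pi^*\v/l(\v)$ inside the sublattice $\Z\oplus(\Z A_X\oplus\Z B_X)\oplus\Z$ of $\Hal(X,\Z)$; upgrading this to primitivity in $\Hal(X,\Z)$ needs $\Z A_X\oplus\Z B_X$ to be saturated in $\Num(X)$. Your unimodularity argument disposes of $\lambda_S=1$, and your pairing argument correctly localizes the problem at $p=\lambda_S$, but there you stop. You should be aware that the paper's own proof elides the identical point: it ends with the assertion that $(r',a'A_X+b'B_X,s')$ is ``primitive by construction,'' which, taken literally, is only primitivity in the coordinate sublattice. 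So you have not missed an argument that the paper supplies; rather, both proofs need one more line, and the data you mention parenthetically does close it. Indeed, a putative class $\w=\tfrac{1}{p}(r',a'A_X+b'B_X,s')\in\Hal(X,\Z)$ is automatically $G'$-invariant (its $p$-th multiple is invariant and $\Hal(X,\Z)$ is torsion-free, so the invariants form a saturated sublattice), hence lies in $\Hal(X,\Z)^{G'}$. Now the index statement recorded just before \cref{primitive}, $[\Hal(X,\Z)^{G'}:\pi^*\Hal(S,\Z)]=\tfrac{\o^2}{\lambda_S}$, combined with the graded decomposition $\pi^*\Hal(S,\Z)=\Z\oplus\pi^*\Num(S)\oplus\o\Z$ and $[\Z A_X\oplus\Z B_X:\pi^*\Num(S)]=\tfrac{\o}{\lambda_S}$, forces $\Num(X)^{G'}=\Z A_X\oplus\Z B_X$; so $\w$ has integer coordinates in your basis and $p$ divides all of $r',a',b',s'$, a contradiction. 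Alternatively, a purely geometric closure at $p=\lambda_S\in\{2,3\}$: fibre classes of elliptic fibrations with connected fibres on an abelian surface are primitive in $H^2$ (the pullback of $H^1$ of the base is the annihilator of the fibre lattice inside $H^1(X,\Z)$, hence saturated, and the wedge of a basis of a saturated rank-two sublattice is primitive in $\wedge^2H^1$), and evenness of $D^2$ on $X$ applied to $\tfrac{1}{p}(a'A_X+b'B_X)$ gives $p\mid a'b'$, which reduces saturation at $p=\lambda_S$ to primitivity of $A_X$ and $B_X$. Either route completes your outline; without one of them, the cases $\lambda_S>1$ remain unproved.
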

\begin{proof} The statement that $\v$ is primitive if and only if $\gcd(r,c_1,s)=1$ is clear since $s$ is an integer.

Suppose now that $\v$ is primitive and that some prime $p|l(\v)$.  If $p\nmid\o$, then $p$ divides $\gcd(r,a,b,s)=1$, a contradiction.  Thus $l(\v)$ is a power of 2 in Cases 1-4, a power of 3 in Cases 5-6, and of the form $2^i 3^j$ in Case 7.  Now we can write 
$$r=l(\v)r',a=l(\v)a',\frac{\o}{\lambda_S} b=l(\v)b',\mbox{ and }\o s=l(\v) s'$$ for some $r',a',b',c'\in\Z$.  Writing $l(\v)=2^i 3^j$ (with the understanding that $i$ or $j$ vanishes if $2$ or $3$ does not divide $\o$, respectively), suppose that $i>\ord_2(\o)$.  Then it follows that $2|r,2|a,2|b$, and $2|s$, a contradiction to $\gcd(r,a,b,s)=1$, so $i\leq\ord_2(\o)$, and, by symmetry, $j\leq\ord_3(\o)$ as well.  Thus $l(\v)\mid\o$.  

To see the final claim, observe that $$\pi^*\v=(r,aA_X+b\frac{\o}{\lambda_S}B_X,\o s)=l(\v)(r',a'A_X+b'B_X,s'),$$ where $s'=a'b'-c_2$ for an appropriate choice of $c_2$ and $(r',a'A_X+b'B_X,s')$ is primitive by construction.
\end{proof}

In addition to the above lemma, we make the follow observations which will be useful. 
\begin{Lem}\label{lem:intermediate bielliptic order composite}
If $\o$ is composite, with proper divisor $d$, then there is a bielliptic surface $\tilde{S}$ sitting as an intermediate \'{e}tale cover between $S$ and $X$,  $$\pi:X\mor[\tilde{\pi}]\tilde{S}\mor[\pi'] S,$$ such that $\ord(\omega_{\tilde{S}})=\frac{\o}{d}$ and \begin{equation}\label{bielliptic cover 1} \pi'^*A_0=\tilde{A}_0,\pi'^*B_0=\frac{\o}{\ord(\omega_{\tilde{S}})}\tilde{B}_0=d\tilde{B}_0,
\end{equation} where $\tilde{A}_0,\tilde{B}_0$ are the natural generators of $\Num(\tilde{S})$.  
\end{Lem}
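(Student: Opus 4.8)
The plan is to realize $\tilde S$ as an intermediate quotient of the canonical cover $X$ by a subgroup of the cyclic Galois group $G'$, and then to extract the two numerical relations from the two compatible canonical cover descriptions of $X$ (over $S$ and over $\tilde S$) by a short linear algebra argument. First, recall that $\pi\colon X\to S$ is the quotient of the abelian surface $X$ by a free action of $G'\cong\Z/\o\Z$. Fix a generator $\sigma$ of $G'$ and, for the proper divisor $d\mid\o$, let $K:=\langle\sigma^d\rangle$ be the unique subgroup of order $\o/d$. Since $K$ acts freely (being a subgroup of a free action), the quotient $\tilde S:=X/K$ is a smooth projective surface, $\tilde\pi\colon X\to\tilde S$ is étale of degree $\o/d$, and the residual action of $G'/K\cong\Z/d\Z$ realizes $\pi'\colon\tilde S\to S$ as an étale cover of degree $d$ with $\pi=\pi'\circ\tilde\pi$.

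Next I would show that $\tilde S$ is bielliptic with $\ord(\omega_{\tilde S})=\o/d$. Because $\tilde\pi$ is étale we have $\tilde\pi^*\omega_{\tilde S}=\omega_X=\OO_X$, so $\chi(\OO_{\tilde S})=\tfrac{d}{\o}\chi(\OO_X)=0$ and $\omega_{\tilde S}$ is torsion. Its order is read off from the character by which $G'$ acts on the one-dimensional space $H^0(X,\omega_X)$: since $\ord(\omega_S)=\o=|G'|$, this character $\chi_S\colon G'\to\C^\times$ has order $\o$, hence is injective, so its restriction to $K$ has order $|K|=\o/d$. Thus $\omega_{\tilde S}$ is nontrivial torsion of order exactly $\o/d$, giving $p_g(\tilde S)=0$; then $\chi(\OO_{\tilde S})=0$ forces $q(\tilde S)=1$ and $\kappa(\tilde S)=0$, so by the Enriques--Kodaira classification $\tilde S$ is bielliptic and $\tilde\pi$ is its canonical cover. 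The one genuinely nonformal point is exactly this identification---in particular that the order equals $\o/d$ rather than some proper divisor of it---and the character computation is what settles it; alternatively one may verify it case by case by writing $\tilde S=(A\times B)/\tilde H$ for the preimage $\tilde H\subset G$ of $K$ and matching it against the families in Table~\ref{table:families}.

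Now let $\tilde A_0,\tilde B_0$ be the generators of $\Num(\tilde S)$ from \cref{Pic}, and $A_X,B_X$ the fibration classes on $X$. The two elliptic fibrations on $X$ (the projections to $A/H$ and to $B/H$) descend compatibly to both $S$ and $\tilde S$, so the fibration classes of $X$ relative to $\tilde S$ coincide with $A_X,B_X$; applying the general canonical cover relations to $\tilde S$ therefore gives $\tilde\pi^*\tilde A_0=A_X$ and $\tilde\pi^*\tilde B_0=\frac{\ord(\omega_{\tilde S})}{\lambda_{\tilde S}}B_X$, together with $\lambda_{\tilde S}=A_X\cdot B_X=\lambda_S$.

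Finally I would combine these with the known relations $\pi^*A_0=A_X$ and $\pi^*B_0=\frac{\o}{\lambda_S}B_X$ for $S$ via $\pi^*=\tilde\pi^*\circ\pi'^*$. Writing $\pi'^*A_0=p\tilde A_0+q\tilde B_0$ and pushing through $\tilde\pi^*$ yields $pA_X+q\frac{\o/d}{\lambda_S}B_X=A_X$; since $A_X,B_X$ are independent in $\Num(X)_\Q$ this forces $p=1,q=0$, i.e.\ $\pi'^*A_0=\tilde A_0$. Likewise $\pi'^*B_0=r\tilde A_0+s\tilde B_0$ gives $rA_X+s\frac{\o/d}{\lambda_S}B_X=\frac{\o}{\lambda_S}B_X$, whence $r=0$ and $s=d$, that is $\pi'^*B_0=d\tilde B_0$, which is precisely \eqref{bielliptic cover 1}. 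The substantive work is thus concentrated in the middle step (identifying $\tilde S$ and the order of its canonical bundle), while the lattice identities reduce to matching coefficients against the independent classes $A_X,B_X$.
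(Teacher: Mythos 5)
Your construction coincides with the paper's: its entire proof is the single observation that $\tilde{S}=X/\langle g^d\rangle$ is the required surface, which is exactly your quotient by $K=\langle\sigma^d\rangle$. The verifications you supply---freeness of the action, the character computation on $H^0(X,\omega_X)$ pinning down $\ord(\omega_{\tilde{S}})=\frac{\o}{d}$ exactly (using that $\omega_{\tilde S}\cong\pi'^*\omega_S$ corresponds to $\chi_S|_K$ with $\chi_S$ injective), and the coefficient matching against the independent classes $A_X,B_X$ to deduce \eqref{bielliptic cover 1}---are precisely the details the paper leaves implicit, and they are all sound.
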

\begin{proof}
Indeed, if we denote by $g$ a generator of $G'$ then $\tilde{S}=X/\langle g^d\rangle$ is the required bielliptic surface.
\end{proof}

There is a similar observation we will need:
\begin{Lem}\label{lem:intermediate bielliptic lambda bigger than 1}
If $\lambda_S>1$, then there is a bielliptic surface $\tilde{S}$ sitting as an intermediate \'{e}tale cover between $S$ and $A\times B$,  $$\bar{\pi}:A\times B\mor[\tilde{\pi}]\tilde{S}\mor[\pi'] S,$$ such that $\lambda_{\tilde{S}}=1$, $\ord(\omega_{\tilde{S}})=\o$, and \begin{equation}\label{bielliptic cover 2}\pi'^*A_0=\lambda_S\tilde{A}_0,\pi'^*B_0=\tilde{B}_0,
\end{equation}
where $\tilde{A}_0,\tilde{B}_0$ are the natural generators of $\Num(\tilde{S})$.  
\end{Lem}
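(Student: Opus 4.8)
The plan is to realize $\tilde{S}$ as the quotient of $A\times B$ by a complement of the translation subgroup inside $G$, and then to read off the numerical data from the induced morphisms of the two elliptic fibrations. Recall that $X=(A\times B)/H$, where $H<G$ is the subgroup acting on $B$ by translations, so that $|H|=\lambda_S$ and $G/H\cong\Z/\o\Z$ is generated by the rotation part of the $B$-action. The hypothesis $\lambda_S>1$ restricts $S$ to Types $2$, $4$, and $6$, and in each of these families Table~\ref{table:families} presents $G$ as an internal direct product $G=\langle a\rangle\times H$, where $a$ generates the rotation ($x\mapsto-x$, $x\mapsto ix$, or $x\mapsto e^{2\pi i/3}x$) and $\langle a\rangle\cong\Z/\o\Z$; indeed $\langle a\rangle\cap H=\{1\}$ because every nontrivial power of $a$ fixes a point of $B$ whereas the nontrivial elements of $H$ act on $B$ freely. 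I would set $\tilde{H}:=\langle a\rangle$ and $\tilde{S}:=(A\times B)/\tilde{H}$, producing the tower $A\times B\mor[\tilde{\pi}]\tilde{S}\mor[\pi']S$ with $\pi'$ the quotient by $G/\tilde{H}\cong H$. Since $G$ is abelian and acts freely on $A\times B$, one checks in a line that $G/\tilde{H}$ acts freely on $\tilde{S}$, so that $\pi'$ is \'etale of degree $\lambda_S$.

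Next I would pin down the invariants of $\tilde{S}$. Because $\tilde{H}=\langle a\rangle$ acts on $A$ by translations and on $B$ by rotations with $B/\tilde{H}\cong\P^1$, the surface $\tilde{S}$ is again bielliptic (of Type $1$, $3$, or $5$); its translation subgroup is trivial, as no nontrivial power of $a$ is a translation of $B$, so $\lambda_{\tilde{S}}=1$ and $\ord(\omega_{\tilde{S}})=|\tilde{H}|/\lambda_{\tilde{S}}=\o$. By \cref{Pic} this gives $\tilde{A}_0=\frac{1}{\o}\tilde{A}$ and $\tilde{B}_0=\tilde{B}$, where $\tilde{A}$ and $\tilde{B}$ are the fibre classes of $\tilde{p}_B\colon\tilde{S}\to B/\tilde{H}=\P^1$ and $\tilde{p}_A\colon\tilde{S}\to A/\tilde{H}$, respectively.

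For the numerical identities I would use that $\pi'$ is compatible with both fibrations and that each base is covered with degree $\lambda_S$: the fibration $p_A$ sits in a commutative square with $\tilde{p}_A$ and the isogeny $A/\tilde{H}\to A/G$ of degree $[G:\tilde{H}]=\lambda_S$, while $p_B$ sits over the degree-$\lambda_S$ map $B/\tilde{H}\to B/G$ between copies of $\P^1$. Since each base morphism is \'etale over a general point, the $\pi'$-preimage of a general fibre is a disjoint union of $\lambda_S$ reduced fibres, giving $\pi'^*B=\lambda_S\tilde{B}$ and $\pi'^*A=\lambda_S\tilde{A}$. Dividing by the normalizations yields $\pi'^*B_0=\pi'^*(\frac{1}{\lambda_S}B)=\tilde{B}=\tilde{B}_0$ and $\pi'^*A_0=\pi'^*(\frac{1}{\o}A)=\frac{\lambda_S}{\o}\tilde{A}=\lambda_S\tilde{A}_0$, as claimed; as a consistency check, the projection formula gives $\pi'^*A_0.\pi'^*B_0=\lambda_S(A_0.B_0)=\lambda_S=\lambda_S\,\tilde{A}_0.\tilde{B}_0$.

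The only real subtlety is normalization bookkeeping: although \emph{both} general fibres pull back with the same multiplicity $\lambda_S$, the asymmetry in the statement---$\pi'^*A_0=\lambda_S\tilde{A}_0$ against $\pi'^*B_0=\tilde{B}_0$---comes entirely from the different normalizing factors $\frac{1}{\o}$ and $\frac{1}{\lambda_S}$ defining $A_0$ and $B_0$, and it is exactly the reverse of the $\o$-direction cover of \cref{lem:intermediate bielliptic order composite}. Once this is kept straight, the remaining assertions---the existence of the complement $\tilde{H}$ and the values $\lambda_{\tilde{S}}=1$, $\ord(\omega_{\tilde{S}})=\o$---are immediate from the explicit group actions recorded in Table~\ref{table:families}.
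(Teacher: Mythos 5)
Your construction coincides with the paper's own proof: for Types 2, 4, and 6 the paper likewise takes $\tilde{G}=\langle a\rangle$ generated by the rotation ($-1$, $i$, or $e^{\frac{2\pi i}{3}}$, viewing $G$ via its action on $B$ as in Table \ref{table:families}) and sets $\tilde{S}=(A\times B)/\tilde{G}$, identifying it as a bielliptic surface of Type 1, 3, or 5 by citing \cite[p. 589]{GH94}. Your additional verifications — that $\langle a\rangle\cap H=\{1\}$, that $\pi'$ is \'{e}tale of degree $\lambda_S$, and the fibre-pullback computation yielding \eqref{bielliptic cover 2}, with the asymmetry traced to the normalizations $A_0=\frac{1}{\o}A$ and $B_0=\frac{1}{\lambda_S}B$ — are correct and merely spell out what the paper compresses into ``with the required properties.''
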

\begin{proof}
The assumption that $\lambda_S>1$ implies that $S$ is of Type 2,4, or 6.  Then we may take $\tilde{G}$ to be the subgroup of $G$ generated by $-1$,$i$, or $e^{\frac{2\pi i}{3}}$, respectively.  Here we view $G$ via its action on $B$ as in Table \ref{table:families}.  Then by \cite[p. 589]{GH94}, $\tilde{S}=A\times B/\tilde{G}$ is a bielliptic surface of Type 1,3, or 5, respectively, and $A\times B\to S$ factors as $A\times B\mor[\tilde{\pi}]\tilde{S}\mor[\pi'] S$ with the required properties.
\end{proof}

\part{Classical Methods in stability}\label{Part:ClassicalStability}
In this part of the paper, we reduce the proof of \cref{MainThm1:Non-emptinessOfGieseker} by pushing as far as possible some classical methods in the theory of Gieseker and slope stability.  We begin with the results we need from this theory.
\section{Review: Gieseker and slope stability}
We review here the classical notions of stability for sheaves on surfaces.  Let $Y$ be a smooth projective surface over $\C$ and $\omega,\beta\in\NS(Y)_\Q$ with $\omega$ ample.  

\subsection*{Slope stability}
We define the slope function
$\mu_{\omega, \beta}$ on $\Coh Y$ by
\begin{equation} \label{eq:muomegabeta}
\mu_{\omega, \beta}(E) = 
\begin{cases}
\frac{\omega.(c_1(E) - \rk(E)\beta)}{\rk(E)} & \text{if $\rk(E) > 0$,} \\
+\infty & \text{if $\rk(E) = 0$.}
\end{cases}
\end{equation}
This gives a notion of slope stability for sheaves, for which Harder-Narasimhan filtrations exist (see \cite[Section 1.6]{HL10}).  
We refer to this as $\beta$-twisted $\omega$-slope stability and use the notation $\mu_{\omega,\beta}$-stability (or just $\mu$-stability).

An important notion that makes our analysis simpler is that of a generic polarization.  
For any Mukai vector $\v$, there is a locally finite set of walls in the ample cone $\Amp(S)$ corresponding to polarizations $\omega$ for which a $\mu_{\omega,\beta}$-semistable sheaf $E$ with $\v(E)=\v$ contains a subsheaf $F$ of strictly smaller positive rank with $\mu_{\omega,\beta}(F)=\mu_{\omega,\beta}(E)$.  
We say $\omega$ is \emph{generic} (with respect to $\v$) if it lies on no walls for $\v$.
A consequence of this definition (see \cite[Theorem 4.C.3]{HL10}) is that for a $\mu_{\omega,\beta}$-semistable sheaf $E$ and a generic polarization $\omega$, any subsheaf $F\subset E$ of the same slope must satisfy 
$$\frac{c_1(E)}{\rk(E)}=\frac{c_1(F)}{\rk(F)}.$$

\subsection*{Gieseker stability}
For $E\in\Coh(Y)$, we define the \emph{twisted Hilbert polynomial} by
\[
P(E,\beta,m) := \int_Y e^{-\beta+m\omega}\ch(E)\td(Y)=a_2(E,\beta)\frac{m^2}{2}+a_1(E,\beta)m+a_0(E,\beta),
\]
and for a $d$-dimensional sheaf $E$ we define the \emph{twisted reduced Hilbert polynomial} by  $$p(E,\beta,m):=\frac{P(E,\beta,m)}{a_d(E,\beta)}.$$  Note that $a_i(E,\beta)=0$ for $i>d$.
This gives rise to the notion of $\beta$-twisted $\omega$-Gieseker stability for pure sheaves, introduced first in \cite{MW97}.
When $\beta=0$, this is nothing but Gieseker stability.  In the sequel, we will say a sheaf $E$ is \emph{semistable} (resp. \emph{stable}) if it is Gieseker semistable (resp. Gieseker stable).
We refer to \cite[Section 1]{HL10} for basic properties of Gieseker stability.  
In particular, recall that for every $\beta$-twisted $\omega$-semistable sheaf $E$ there exists Jordan-H\"{o}lder filtrations with $\beta$-twisted $\omega$-stable factors of the same twisted reduced Hilbert polynomial as $E$, and each such filtration for $E$ has the same associated graded object $gr^{\JH}(E)$. 
Moreover, two semistable sheaves $E_1$ and $E_2$ are said to be $S$-\emph{equivalent} (i.e. $E_1\sim_S E_2$) if $gr^{\JH}(E_1)\cong gr^{\JH}(E_2)$.  

We also recall that $\mu$-stability and Gieseker stability are related as follows: $$E\mbox{ is }\mu\mbox{-stable}\Rightarrow E\mbox{ is stable}\Rightarrow E\mbox{ is semistable}\Rightarrow E\mbox{ is }\mu\mbox{-semistable}.$$

\subsection*{Moduli spaces of semistable sheaves}
For a fixed Mukai vector $\v\in\Hal(Y,\Z)$, we denote by $\MM_{\omega}^{\beta}(\v)$ the moduli stack of flat families of $\beta$-twisted $\omega$-Gieseker semistable sheaves with Mukai vector $\v$.
By \cite[Section 4]{HL10} and \cite{MW97}, there exists a projective variety $M_{\omega}^{\beta}(\v)$ which is a coarse moduli space parameterizing $S$-equivalence classes of semistable sheaves.  We will have cause to consider the moduli stack $\MM_{\omega}^{\beta,\mu ss}(\v)$ of flat families of $\mu_{\omega,\beta}$-semistable sheaves in which $\MM_{\omega}^{\beta}(\v)$ is an open substack.  Moreover, the open substacks $\MM_{\omega}^{\beta,\mu s}(\v)\subseteq\MM_{\omega}^{\beta,s}(\v)\subseteq \MM_{\omega}^{\beta}(\v)$ parameterizing $\mu$-stable sheaves and stable sheaves, respectively, are $\mathbb{G}_m$-gerbes over the open subsets $M_{\omega}^{\beta,\mu s}(\v)\subseteq M_{\omega}^{\beta, s}(\v)\subseteq M_{\omega}^{\beta}(\v)$.
When $\beta=0$, we will denote the corresponding moduli spaces by $\MM_\omega(\v)$, etc.

\section{Comparing classical stabilities}\label{subsec:SlopeSemistable}
Moduli spaces behave differently depending on whether $\v^2$ vanishes or not, so we separate our investigation into two.

\subsection*{The isotropic case} Let us begin by studying sheaves with vanishing discriminant, that is, sheaves with Mukai vectors $\v$ such that $\v^2=0$, completing the results of \cite{Tak73} and \cite{Ume75}.

\begin{Prop}\label{isotropic} If $\omega$ is a generic polarization with respect to $\v$ such that $\v^2=0$, then 
\begin{enumerate}
\item $M^{\mu ss}_\omega(\v)=M_\omega(\v)$ and any $E\in M^{\mu ss}_\omega(\v)$ is $S$-equivalent to $\bigoplus E_i$ where each $E_i$ is a $\mu$-stable locally free sheaf with $\v(E_i)\in\Q \v$.  
\item Let $\v_0$ be a primitive isotropic Mukai vector.  If $M^s_\omega(n\v_0)\neq\varnothing$, then $nl(\v_0)\mid\o$.  Moreover, when nonempty, $M^s_\omega(n\v_0)$ is smooth and has dimension 1 unless $n=\frac{\o}{l(v_0)}$, when it has dimension 2. 
\item Finally, $\dim\MM_\omega(n\v_0)\leq\left\lfloor\frac{nl(\v_0)}{\o}\right\rfloor$.
\end{enumerate}
\end{Prop}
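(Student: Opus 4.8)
The plan is to bound the stack dimension one point at a time and then optimize over the generic points of its irreducible components. Since $\v^2=0$ forces $\chi(E,E)=-\langle n\v_0,n\v_0\rangle=0$ for every $E$ with $\v(E)=n\v_0$, we have $\ext^1(E,E)=\hom(E,E)+\ext^2(E,E)$. The deformation space of $E$ has dimension at most $\ext^1(E,E)$ and the automorphisms form an open subset of $\End(E)$, so the local dimension of the stack at $[E]$, namely $\dim\mathrm{Def}(E)-\hom(E,E)$, satisfies
$$\dim_{[E]}\MM_\omega(n\v_0)\ \le\ \ext^1(E,E)-\hom(E,E)\ =\ \ext^2(E,E)\ =\ \hom(E,E\otimes K_S),$$
the last step by Serre duality on $S$. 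As $\ext^2$ is upper semicontinuous it attains its minimum at the generic point of each component; hence it suffices to bound $\hom(E,E\otimes K_S)$ for a generic semistable $E$ of each Jordan--Hölder type.

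For the second step I would split off the $\otimes K_S$-orbit structure. By the first part of the proposition the Jordan--Hölder factors of $E$ are $\mu$-stable sheaves of class $k\v_0$ with $kl(\v_0)\mid\o$, and for two such factors $F,F'$ one has $\hom(F,F')=0$ unless $F\cong F'$ and $\ext^2(F,F')=\hom(F',F\otimes K_S)=0$ unless $F'\cong F\otimes K_S$; as $\chi(F,F')=0$, this makes $F$ and $F'$ completely $\Ext$-orthogonal whenever they lie in different orbits of the order-$\o$ action $F\mapsto F\otimes K_S$. A dévissage then forces $E\cong\bigoplus_\iota E_\iota$, a sum over the orbits that occur, with $\ext^2(E,E)=\sum_\iota\ext^2(E_\iota,E_\iota)$. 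Writing $p_\iota$ for the size of the orbit of $F_\iota$ (so that its stabilizer has order $k_\iota l(\v_0)$ and $p_\iota=\o/(k_\iota l(\v_0))$, as comes out of the canonical-cover analysis behind part (2)) and $N_\iota$ for the number of factors of $E_\iota$, the orbit contributes $N_\iota k_\iota\v_0$ to $n\v_0$, whence $\sum_\iota N_\iota/p_\iota=\tfrac{l(\v_0)}{\o}\sum_\iota N_\iota k_\iota=\tfrac{nl(\v_0)}{\o}$. It therefore remains to prove the per-orbit estimate $\ext^2(E_\iota,E_\iota)\le N_\iota/p_\iota$ for generic $E_\iota$; summing and using that $\dim\MM_\omega(n\v_0)$ is an integer then yields $\dim\MM_\omega(n\v_0)\le\lfloor nl(\v_0)/\o\rfloor$.

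For the per-orbit estimate I would pass to the canonical cover $\pi\colon X\to S$ (with $\pi^*\v_0=l(\v_0)\w_0$, $\w_0$ primitive, cf. \cref{primitive}), on which all members $F_\iota\otimes K_S^a$ of one orbit pull back to a single sheaf $\pi^*F_\iota$ of class $(\o/p_\iota)\w_0$. When $E_\iota$ is one full cycle $(N_\iota=p_\iota)$ it is induced from a stable $G_0\in M_X(\w_0)$, i.e. $E_\iota=\pi_*G_0$, and since $\pi^*K_S=\OO_X$ we get $\pi_*G_0\otimes K_S\cong\pi_*G_0$, so
$$\ext^2(\pi_*G_0,\pi_*G_0)=\hom_S(\pi_*G_0,\pi_*G_0)=\hom_X\Big(\bigoplus_{a}\tau_a^*G_0,\;G_0\Big)=\sum_{a}\hom_X(\tau_a^*G_0,G_0),$$
where $\tau_a$ ranges over the deck transformations; for generic $G_0$ only $a=0$ survives and the sum equals $\hom_X(G_0,G_0)=1=N_\iota/p_\iota$. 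The main obstacle is the general case: for unbalanced multiplicities $(p_\iota\nmid N_\iota)$ the generic $E_\iota$ is not a single pushforward, and computing its generic $\ext^2$ — equivalently, controlling how the strictly semistable sheaf $\pi^*E_\iota$ on the abelian surface interacts with the free $\Z/\o\Z$-action — is exactly where the dimension estimates for stacks of Jordan--Hölder filtrations from \cite{NY19} are needed, producing the generic value $\lfloor N_\iota/p_\iota\rfloor\le N_\iota/p_\iota$.
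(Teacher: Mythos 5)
Your proposal only engages part (3) of the proposition: parts (1) and (2) are invoked as established facts, and in fact you use strictly more than their statements. The orbit--stabilizer identity $p_\iota=\o/(k_\iota l(\v_0))$ --- equivalently, that for a stable $F$ of class $k\v_0$ one has $\Hom(F,F\otimes K_S^a)\neq 0$ exactly when $k\,l(\v_0)\mid a$ --- is nowhere in the statement of (2); in the paper it is extracted from the \emph{proof} of (2), which runs through the canonical cover: \cref{lem:WhenIsPullbackStable} gives $\pi^*F\cong\bigoplus_{j=0}^{d-1}(g^j)^*F'$ together with $F\cong F\otimes K_S^{\o/d}$, and the Hodge index theorem plus the Bogomolov inequality force $c_1(F')=(g^j)^*c_1(F')$, hence $\v(F')=\w$ primitive and $d=k\,l(\v_0)$. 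Since the statement to be proved includes (1) and (2), your write-up is circular as it stands, and all of their geometric content (genericity of $\omega$ forcing the stable factors to have vector in $\Q\v$, the double-dual argument for local freeness, the Hodge-index/Bogomolov computation on $X$, and the smoothness/dimension dichotomy at $n=\o/l(\v_0)$) is simply absent.

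Granting (1)--(2), your mechanism for (3) is coherent and genuinely different from the paper's: the pointwise bound $\dim_{[E]}\MM_\omega(n\v_0)\le\ext^1(E,E)-\hom(E,E)=\ext^2(E,E)$ at component-generic points, plus Ext-orthogonality across $K_S$-orbits, replaces the paper's decomposition of the coarse space as $\coprod\prod_i\Sym^{n_i}(M^s_\omega(r_i\v_0))$ and the fibre bound $\dim\phi^{-1}(x)\le -t$ for $\phi\colon\MM_\omega(n\v_0)\to M_\omega(n\v_0)$, which rests on \cite[(3.8)]{KY08}. But the per-orbit estimate $\ext^2(E_\iota,E_\iota)\le N_\iota/p_\iota$ at generic points --- the entire difficulty of (3) --- is asserted, not proved, and your one worked case conflates two strata: if $G_0\in M_X(\w_0)$ is generic, so that only $a=0$ survives in $\sum_a\hom((g^a)^*G_0,G_0)$, then $\pi_*G_0$ is \emph{stable} by \cref{lem:WhenIsPushForwardStable}, hence carries the orbit datum $(N,p)=(1,1)$ with $k=\o/l(\v_0)$, and never lies in the strictly semistable full-cycle stratum $N_\iota=p_\iota>1$ you claim to treat. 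The full-cycle polystable arises only from the special $G_0$ with $(g^{\o/p_\iota})^*G_0\cong G_0$, for which the same adjunction gives $\ext^2(\pi_*G_0,\pi_*G_0)=p_\iota$, well above $N_\iota/p_\iota=1$; so the required drop from $p_\iota$ at the polystable point to $\le 1$ at a generic nonsplit iterated extension is precisely what remains unproved, and gesturing at the filtration-stack estimates of \cite{NY19} without running them (or substituting the \cite{KY08}-type bound that the stack of sheaves $S$-equivalent to a fixed polystable has dimension at most $-1$, which is what the paper actually uses) does not close it.
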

\begin{proof}  
(1)  Let $\v=(lr,lD,s)$ with $\gcd(r,D)=1$.  Choose a Jordan-H\"{o}lder filtration of $E\in M^{\mu ss}_\omega(\v)$ for $\mu$-stability with $\mu$-stable factors $E_i$.  Since $\omega$ is generic, we must have $\v_i=\v(E_i)=(l_i r,l_i D,s_i)$ with $\v_i^2\geq 0$.  Since $$0=\frac{\v^2}{l}=\sum_i\frac{\v_i^2}{l_i},$$ we must have $\v_i^2=0$ for all $i$.  Thus $\frac{D^2}{2r}=\frac{s_i}{l_i}=\frac{\chi(E_i)}{r_i}=\frac{\chi(E)}{r}$ for all $i$, so $E$ is Gieseker semistable as well.  Finally, note that since $E_i^{\vee\vee}$ is a $\mu$-stable locally free sheaf, $\v(E_i^{\vee\vee})^2\geq 0$, so  $$0=\v_i^2=\v(E_i^{\vee\vee})^2+2l_i r \chi(E_i^{\vee\vee}/E_i).$$  Thus $\chi(E_i^{\vee\vee}/E_i)=0$ and $E_i$ is locally free.  It follows that $E$ is locally free as well.

(2) Write $\pi^*\v_0=l(\v_0)\w$ for primitive $\w\in\Hal(X,\Z)$, and suppose that $E\in M^s_\omega(n\v_0)$.  If $\pi^*E\in M^s_{\pi^*\omega,X}(n\pi^*\v_0)$, then we must have $nl(\v_0)=1$ so $n=l(\v_0)=1$.  Otherwise, $\pi^*E$ is strictly semistable, so by \cite{Tak73} (or by the more general result we prove in \cref{lem:WhenIsPullbackStable} below) we have 
\begin{equation}\label{eqn:semistable pull-back}
\pi^*E\cong\bigoplus_{k=0}^{d-1}(g^k)^*F
\end{equation}
for some $1<d\leq\o$ such that $d\mid \o$ and some $\pi^*\omega$-stable $F$ such that the $(g^k)^*F$ are distinct for $k=0,\dots,d-1$.  Recall that $g$ denotes a generator of $G'\cong \Z/\o\Z$.

Now write $\w=(r,c_1,s)$ so that $$n\pi^*\v_0=nl(\v_0)\w=nl(\v_0)(r,c_1,s).$$  From \eqref{eqn:semistable pull-back}, we get  $$n\pi^*\v_0=\sum_{k=0}^{d-1}(g^k)^*(\rk(F),c_1(F),\ch_2(F))=(d\rk(F),\sum_{k=0}^{d-1}(g^k)^*c_1(F),d\ch_2(F)),$$ so $nrl(\v_0)=d\rk(F)$, $\sum_{k=0}^{d-1}(g^k)^*c_1(F)=nl(\v_0)c_1$, and $d\ch_2(F)=nsl(\v_0)$.  From the Hodge index theorem we see that for any $k$, $$c_1(F)^2-c_1(F).(g^k)^*c_1(F)\leq 0,$$ with strict inequality unless $c_1(F)=(g^k)^*c_1(F)$, so we get that $$n^2 l(\v_0)^2c_1^2=d(c_1(F)^2+\sum_{k=1}^{d-1}c_1(F).(g^k)^*c_1(F))\geq d^2c_1(F)^2.$$  Moreover, from the Bogomolov inequality, $$0\leq\Delta(F)=c_1(F)^2-2\rk(F)\ch_2(F)\leq\frac{n^2 l(\v_0)^2 c_1^2}{d^2}-2\left(\frac{nrl(\v_0)}{d}\right)\left(\frac{nsl(\v_0)}{d}\right)=0,$$ as $c_1^2-2rs=\w^2=0$.  It follows that $c_1(F)=(g^k)^*c_1(F)$ for all $k$.  

Thus $F$ is a stable locally free sheaf with respect to $\pi^*\omega$ such that $\v(F)\in\Q_{>0} \w$ and thus $\v(F)=\w$.  Indeed, even if $\pi^*\omega$ lies on a wall with respect to $\w$, $F$ will remain stable for small deformations of $\pi^*\omega$, so $\v(F)$ must be primitive since $\v(F)^2=0$.   It follows that $nl(\v_0)=d$ which divides $\o$, as claimed.

Now let us address the question of smoothness.  From \cite{Tak73}, we know that $M^s_\omega(\v)$ is smooth of dimension $\v^2+1=1$ at $E$ unless $E\cong E\otimes\o$.  But then we would have $E\cong\pi_*(F)$ for some stable $F$.  As $M^s_{\pi^*\omega,X}(\v(F))$ is always at least 2 dimensionial and the map $\pi_*:M^s_{\pi^*\omega,X}(\v(F))^{\circ}\to M^s_\omega(\v)$ is \'{e}tale, such an $E$ cannot be a singular point and instead lies on a smooth 2 dimensional component.  Moreover, in this case $\pi^*E\cong\oplus_{i=0}^{\o}(g^i)^*F$ with $F\ncong(g^i)^*F$ for $0<i<\o$ from which it follows that $nl(\v_0)=\o$.  Thus $M_\omega(n\v_0)$ is indeed always smooth, and it has dimension one unless $n=\frac{\o}{l(\v_0)}$ when it has dimension two.

(3) We have a decomposition as in \cite[Lemma 9.3]{Nue14b},\cite[Proposition 1.6(3)]{Yos16a}  $$M_\omega(n\v_0)=\coprod_{(n_1r_1,\dots,n_t r_t)\in S_n}\prod_i \Sym^{n_i}(M^s_\omega(r_i\v_0)),$$ where $$S_n:=\{(n_1r_1,\dots,n_tr_t)|r_1<r_2<\cdots<r_t,n_1,\dots,n_t\in\Z_{>0},\sum_i n_ir_i=n\}.$$  We consider the moduli stack $\MM_\omega(n\v_0)$ of semistable sheaves of Mukai vector $n\v_0$ and the corresponding morphism $\phi:\MM_\omega(n\v_0)\to M_\omega(n\v_0)$ which identifies all $S$-equivalent sheaves with the unique polystable sheaf in their $S$-equivalence class.  For a point $x\in M_\omega(n\v_0)$  there are $E_j\in M^s_\omega(k_j \v_0)$, with $E_j\ncong E_{j'}(pK_S)$ for any $p\in\Z$ and $j'\neq j$, such that $x=\oplus_{j=1}^t\oplus_p E_j(pK_S)^{\oplus n_{j,p}}$.  It follows that for any $j\neq j'$, $\Hom(E_j,E_{j'}(pK_S))=0$ and $\Ext^2(E_j,E_{j'}(pK_S))=\Hom(E_{j'}(pK_S),E_j(K_S))=0$.  As $\chi(E_j,E_{j'}(pK_S))=0$, we see that $\Ext^1(E_j,E_{j'}(pK_S))=0$ as well.  So each $E\in\phi^{-1}(x)$ decomposes as $E=\oplus_j F_j$ where $F_j$ is $S$-equivalent to $\oplus_p E_j(pK_S)^{\oplus n_{j,p}}$.  One can show as in \cite[(3.8)]{KY08} that for each $j$ the stack of such $F_j$ has dimension at most $-1$.  Thus $\dim\phi^{-1}(x)\leq -t$.  So for $1\leq d<\o$, if we let 
\begin{equation}
\begin{split}
t_1=|\{j\mid\dim M^s_\omega(k_j \v_0)=1\}|\\
t_2=|\{j\mid\dim M^s_\omega(k_j\v_0)=2\}|,
\end{split}
\end{equation}
then $$\dim \MM_\omega(n\v_0)\leq\max_{x\in M_\omega(\v)} \{-t+t_1+2t_2\}=\max_{x\in M_\omega(\v)} t_2\leq n,$$ as $t=t_1+t_2$.  As $\dim M^s_\omega(k\v_0)=2$ implies $k=\frac{\o}{l(\v_0)}$, we thus get $t_2\leq \left\lfloor\frac{nl(\v_0)}{\o}\right\rfloor$, as claimed.
\end{proof}
\begin{Rem}
Once we've introduced Bridgeland stability conditions, we'll be able to prove that $nl(\v_0)\mid\o$ is also sufficient to guarantee that $M_\omega^s(n\v_0)\ne\varnothing$.
\end{Rem}
\subsection*{The case \texorpdfstring{$\v^2>0$}{Lg}}
Now we consider $\v=(nr,nD,s)$ with $\gcd(r,D)=1$ and $\v^2>0$.  Let $\u=(n_0r,n_0 D,s_0)$ be the unique primitive isotropic vector of this form.  Indeed, it is clear from $\u^2=0$ that $s_0=\frac{n_0 D^2}{2r}$.  From  \cref{isotropic}, we know that when non-empty, $M^{\mu ss}_\omega(\u)=M_\omega(\u)$ and consists solely of $\mu$-stable vector bundles.  We record the following observation:

\begin{Lem}\label{divisible} For $\v'=(n'r,n'D,s')$, $r\mid \langle \u,\v'\rangle$.
\end{Lem}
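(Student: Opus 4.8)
This is a direct computation with the Mukai pairing, so the plan is simply to expand $\langle\u,\v'\rangle$ and substitute the isotropy relation for $\u$. The only genuine input needed is the integrality of the last coordinate $s_0$, which holds because $\u$ is an honest lattice vector in $\Hal(S,\Z)$.

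\textbf{Key steps.} First I would recall the Hirzebruch--Riemann--Roch form of the pairing, $\langle(r,c,s),(r',c',s')\rangle=c.c'-rs'-r's$, and apply it to $\u=(n_0r,n_0D,s_0)$ and $\v'=(n'r,n'D,s')$ to obtain
\[
\langle\u,\v'\rangle=n_0n'D^2-n_0rs'-n'rs_0.
\]
Next I would invoke the isotropy of $\u$: since $\u^2=(n_0D)^2-2(n_0r)s_0=0$, dividing by $n_0$ gives $n_0D^2=2rs_0$, which is exactly the relation $s_0=\tfrac{n_0D^2}{2r}$ already recorded before the lemma. Substituting $n_0n'D^2=n'(n_0D^2)=2n'rs_0$ into the expression above yields
\[
\langle\u,\v'\rangle=2n'rs_0-n_0rs'-n'rs_0=n'rs_0-n_0rs'=r\,(n's_0-n_0s').
\]
Finally, I would observe that $n's_0-n_0s'\in\Z$, since $s_0\in\Z$ (as $\u\in\Hal(S,\Z)$) and $s'\in\Z$ (as $\v'\in\Hal(S,\Z)$), so $r\mid\langle\u,\v'\rangle$.

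\textbf{Main obstacle.} There is no serious difficulty here; the whole statement reduces to the factorization displayed above. The one point deserving a word of care is that the cancellation leaves a factor of $r$ multiplying the integer $n's_0-n_0s'$, and it is precisely the integrality of $s_0$ --- equivalently, the fact that $r\mid\tfrac{n_0D^2}{2}$, forced by $\u$ being a primitive \emph{lattice} vector --- that turns the identity $\langle\u,\v'\rangle=r(n's_0-n_0s')$ into the divisibility claim.
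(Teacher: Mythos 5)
Your proof is correct and follows the paper's argument exactly: expand the Mukai pairing via Riemann--Roch, substitute the isotropy relation $n_0D^2=2rs_0$, and factor out $r$ from $n's_0-n_0s'\in\Z$. Your extra remark on the integrality of $s_0$ is fine (it holds simply because $\u\in\Hal(S,\Z)$, as the paper notes when defining $s_0=\tfrac{n_0D^2}{2r}$), but it adds nothing beyond what the paper already records.
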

\begin{proof} Indeed, $\langle \u,\v'\rangle=n'n_0D^2-s'n_0r-n'rs_0$.  Using the fact that $n_0D^2=2rs_0$, we get \begin{equation}\label{eqn: divisible}
\langle \u,\v'\rangle=n'rs_0-s'n_0r=r(n's_0-s'n_0),\end{equation} as claimed.
\end{proof}

Our main result in the positive square case is the following:

\begin{Prop}\label{slope stability} 
Suppose that $\v=(nr,nD,s)\in\Hal(S,\Z)$ is a Mukai vector such that $\v^2>0$ and $\gcd(r,D)=1$.  For a polarization $\omega$ that is generic with respect to $\v$, we have:
\begin{enumerate}
\item $\codim(M_\omega(\v)\backslash M^s_\omega(\v))\geq 2$.
\item $\codim(M^{\mu ss}_\omega(\v)\backslash M_\omega(\v))\geq 1$.
\item $\codim(M^s_\omega(\v)\backslash M^{\mu s}_\omega(\v))\geq 1$.
\item If $nr>1$, then $\codim(M_\omega^{\mu s}(\v)\backslash M^{\mu s,lf}_\omega(\v))\geq 1$ unless $\o=2$ and $\v=(2,0,-1)e^D$ or $S$ is of Type 1 and $\v=(2,B_0,-1)e^D$.
\end{enumerate}
Thus if $M^{\mu ss}_\omega(\v)\neq\varnothing$, then $\dim M^{\mu ss}_\omega(\v)=\v^2+1$ and there is a $\mu$-stable sheaf in each irreducible component.  Moreover, except when $\o=2$ and $\v=(2,0,-1)e^D$ or $S$ is of Type 1 and $\v=(2,B_0,-1)e^D$, this $\mu$-stable sheaf may be taken to be locally free as long as $nr>1$.
\end{Prop}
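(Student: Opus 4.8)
The plan is to induct on $\v^2$: \cref{isotropic} supplies the isotropic input, and for every $\w$ of the same slope as $\v$ with $0<\w^2<\v^2$ we may assume $\dim M^s_\omega(\w)=\w^2+1$. The genericity of $\omega$ is what makes the induction run, since it forces every Jordan--Hölder or Harder--Narasimhan factor of a $\mu_{\omega}$-semistable sheaf of class $\v=(nr,nD,s)$ to have class $(n_ir,n_iD,s_i)$, i.e.\ proportional to $(r,D)$ in rank and first Chern class. All dimension counts then take place in the rank-two sublattice of $\Hal(S,\Z)$ spanned by $(r,D,0)$ and $(0,0,1)$, whose Gram matrix $\left(\begin{smallmatrix}D^2&-r\\-r&0\end{smallmatrix}\right)$ is hyperbolic of signature $(1,1)$. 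For two factors $\v_1,\v_2$ of this slope with $\v_1^2,\v_2^2\ge0$ the reverse Cauchy--Schwarz inequality gives $\langle\v_1,\v_2\rangle\ge\sqrt{\v_1^2\v_2^2}\ge0$, with $\langle\v_1,\v_2\rangle\ge1$ once the $\v_i$ are non-proportional (distinct reduced Hilbert polynomials) and $\langle\v_1,\v_2\rangle\ge2$ when both are anisotropic, since the form on $\Num(S)$ is even so that $\w^2\ge2$ for every anisotropic primitive $\w$. This positivity is the engine behind all four estimates.

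For (1) the complement is the strictly Gieseker-semistable locus. Writing $\v=k\w$ with $\w$ primitive (so $\w^2\ge2$), genericity forces each Jordan--Hölder factor to have class $j\w$ with $1\le j<k$, so a point of $M_\omega(\v)\setminus M^s_\omega(\v)$ is $S$-equivalent to a polystable $\bigoplus_iE_i^{\oplus m_i}$ with distinct stable $E_i$ of class $j_i\w$ and $\sum_im_ij_i=k$. As $S$-equivalence classes carry no extension data, this stratum has dimension $\sum_i\dim M^s_\omega(j_i\w)\le\sum_i(j_i^2\w^2+1)$ by induction; comparing with $\v^2+1=k^2\w^2+1$, the smallest codimension occurs for two distinct factors and equals $2j_1j_2\w^2-1\ge3$, which gives (1).

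For (2) and (3) I would bound the locus $\mathcal Z$ of sheaves destabilized in the relevant sense by running over two-step filtrations $0\to F\to E\to Q\to0$ whose factors are (Gieseker-, resp.\ $\mu$-) stable of the same slope and of non-proportional classes $\v_1,\v_2=\v-\v_1$ (so $0\le\v_i^2<\v^2$ and induction applies). Parametrizing $E$ by $F$, by $Q$, and by the extension class yields
\[
\dim\mathcal Z\le(\v^2-\langle\v_1,\v_2\rangle)+\ext^2(Q,F)+1,
\]
so $\codim\mathcal Z\ge\langle\v_1,\v_2\rangle-\ext^2(Q,F)$, and the Hodge-index bound $\langle\v_1,\v_2\rangle\ge1$ closes the estimate as soon as the correction $\ext^2(Q,F)=\hom(F,Q\otimes K_S)$ vanishes. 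This correction is exactly the main obstacle, and the feature distinguishing bielliptic surfaces from $K3$ and abelian surfaces: because $K_S$ is nontrivial torsion, $\ext^2(E,E)=\hom(E,E\otimes K_S)$ need not vanish, so the moduli spaces can be obstructed and can even jump in dimension (as already in \cref{isotropic}(2)). Here a nonzero map $F\to Q\otimes K_S$ is forced by stability and equality of slopes to be an injection $F\hookrightarrow Q\otimes K_S$, a closed condition; hence $\ext^2(Q,F)=0$ for generic $(F,Q)$ and the locus where it is positive has strictly smaller dimension. To make this rigorous I would pull back along the canonical cover $\pi\colon X\to S$, where $K_X=\OO_X$ and the abelian-surface estimates apply, and descend codimensions along the étale map $\pi$, checking separately that the special locus $\{E\cong E\otimes K_S\}=\{E\cong\pi_*F\}$ has dimension at most $\tfrac{\w^2}{\o}+2\le\w^2+1$ for $\w^2\ge2$ and so never dominates.

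For (4) I would send a non-locally-free $\mu$-stable $E$ to the pair $(E^{\vee\vee},E^{\vee\vee}/E)$, with $E^{\vee\vee}$ locally free $\mu$-stable of class $\v+(0,0,\ell)$ and $\ell=\mathrm{length}(E^{\vee\vee}/E)>0$. The non-locally-free locus is then dominated by $M^{\mu s}_\omega(\v+(0,0,\ell))\times\Quot^\ell$, of dimension at most $\big((\v+(0,0,\ell))^2+1\big)+\ell(nr+1)=\v^2+1-\ell(nr-1)$, whence codimension $\ge nr-1\ge1$ when $nr>1$. This argument breaks precisely when $M^{\mu s}_\omega(\v+(0,0,\ell))$ fails to have its expected dimension, i.e.\ when $\v+(0,0,\ell)$ is isotropic and lands in the dimension-two stratum of \cref{isotropic}(2); a short computation of $\v^2$ and of $l(\v+(0,0,1))$ shows this happens exactly in the two listed exceptions ($\o=2$, $\v=(2,0,-1)e^D$, and $S$ of Type 1, $\v=(2,B_0,-1)e^D$, both with $\v^2=4$), which I would handle by hand to produce locally free $\mu$-stable sheaves in the relevant components. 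Finally, (1)--(3) show the $\mu$-stable locus is dense of dimension $\v^2+1$ and meets every irreducible component, while (4) upgrades a general such sheaf to a locally free one away from the two exceptions, which is the asserted conclusion.
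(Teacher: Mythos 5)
Your proposal has the right skeleton, and in broad outline it matches the paper's own strategy: stratify by filtration data, count dimensions inside the rank-two hyperbolic lattice spanned by $(r,D,0)$ and $(0,0,1)$, use the double-dual sequence for (4), and identify the same two exceptional Mukai vectors. But there is a genuine gap in your treatment of the correction term in (3). Note first that in (2) the correction costs nothing: the HN factors have strictly decreasing reduced Hilbert polynomials, and since $K_S$ is numerically trivial this already forces $\Hom(E^i,E^j(K_S))=0$ for $i<j$ -- no genericity is needed there. The real problem is (3), where your claim that ``$\ext^2(Q,F)=0$ for generic $(F,Q)$ and the jump locus has strictly smaller dimension'' does not close the estimate: what matters is the balance between $m=\hom(E_1,E_2(K_S))$ on the jump locus (which inflates the space of extensions by $m$) and the codimension of that locus, and there are strata where the naive count gives codimension exactly $0$. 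Concretely, take $\v_1=(1,0,-1)e^D$ and $\v_2=n_2(1,0,0)e^D$: for $E_2$ whose polystable graded contains $\OO(L)^{\oplus m}$ one has $\hom(I_x(L),E_2(K_S))=m$, which can be as large as $n_2$, and the resulting estimate gives $\dim\JJ_1^{n_2}(\v_1,\v_2)=\v^2$. The paper rescues this case only by an explicit computation, $\Ext^1(\OO_S(L-K_S),I_x(L))=H^1(S,I_x(K_S))=\C^2$, forcing $n_2\le 2$, followed by direct dimension counts for $n_2=1,2$. The machinery it uses throughout (3) -- the bound $\hom(E_1,E_2(K_S))\le n_2/n_1$ from \cite[Lemma 3.1]{KY08}, the finiteness of possible $E_1^{\vee\vee}$ for fixed $E_2$ combined with $\dim\Quot^k_{E_1^{\vee\vee}}=(n_1r+1)k$, and the isotropic-quotient analysis via \cref{isotropic} and $l(\u)$ -- has no substitute in your sketch: ``injectivity is a closed condition'' does not bound the jump locus, and your fallback of descending codimensions along $\pi\colon X\to S$ is unjustified (Gieseker stability does not match up cleanly under pullback, and the paper does not use the cover in this proof).

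Two secondary gaps. First, all of your codimension conclusions presuppose that every irreducible component of the relevant moduli has dimension at least $\v^2+1$ (equivalently $\v^2$ for the stacks); without this lower bound, a component consisting entirely of destabilized sheaves would make ``$\codim\geq 1$'' vacuous and the final assertion that every component contains a $\mu$-stable (locally free) sheaf fails. The paper gets this from the quotient-stack description of \cite{Yos16a}, which is precisely why it runs the whole argument at the level of stacks; you never invoke it. Second, in (2), when an HN factor is isotropic, $\v_1=n_1'\u$, the parametrizing stack has dimension up to $\lfloor n_1'l(\u)/\o\rfloor$ rather than $\v_1^2+1$, and your reverse Cauchy--Schwarz bound $\langle\v_1,\v_2\rangle\ge 1$ is not enough: in the edge case $l(\u)=\o$ one needs $\langle\u,\v_2\rangle\ge 2$, which the paper extracts arithmetically from \cref{divisible} (namely $r\mid\langle\u,\v_2\rangle$) together with an $\o\mid\langle\u,\v_2\rangle$ contradiction -- an ingredient absent from your proposal. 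Your parts (1) and (4) do track the paper's argument correctly modulo the component-dimension issue above.
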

\begin{proof}  We will prove the corresponding statements for the appropriate moduli stacks.  It will follow from the final statement that the statements hold for the coarse moduli schemes as well.  For a good introduction to this perspective, see \cite{Yos16a}.  In particular, we note from the quotient stack description provided there that for any irreducible component $\MM$ of $\MM^{\mu ss}_\omega(\v)$, $\dim \MM\geq \v^2$.  

(1) Writing $\v=l\v_0$ with $\v_0$ primitive, we proceed by induction on $l$.  When $l=1$ so that $\v$ is primitive, then $\MM_\omega(\v)=\MM_\omega^s(\v)$ so the claim is trivial.  Otherwise, $l>1$ and any $E\in \MM_\omega(\v)\backslash \MM^s_\omega(\v)$ admits a saturated stable subsheaf $E_1\in \MM^s_\omega(l_1\v_0)$ such that $E$ fits into a short exact sequence $$0\to E_1\to E\to E_2\to 0$$ with $E_2\in\MM_\omega(l_2 \v_0)$ and $l_2=l-l_1$.  Denote by $J(l_1,l_2)$ the stack of such extensions and by $J(l_1,l_2)^n$ the substack of those extensions with $\hom(E_1,E_2(K_S))=n$.  Then by \cite[Lemma 5.2]{KY08} $$\dim J(l_1,l_2)^n= \dim\FF(l_1,l_2)^n+\langle l_1 \v_0,l_2 \v_0\rangle+n,$$ where $$\FF(l_1,l_2)^n:=\Set{(E_1,E_2)\in\MM^s_\omega(l_1\v_0)\times\MM_\omega(l_2\v_0)\ |\ \hom(E_1,E_2(K_S))=n}.$$  Of course, $\hom(E_1,E_2(K_S))\leq\frac{\rk E_2(K_S)}{\rk E_1}=\frac{l_2}{l_1}$ by \cite[Lemma 3.1]{KY08} and by induction on $l$, $\MM^s_\omega(l_i \v_0)$ is non-empty of dimension $l_i^2\v_0^2$, so generically $\hom(E_1,E_2(K_S))=0$ and thus \begin{align}
\begin{split}
\dim J(l_1,l_2)&\leq l_1^2\v_0^2+l_2^2\v_0^2-1+\langle l_1\v_0,l_2 \v_0\rangle+\left\lfloor\frac{l_2}{l_1}\right\rfloor\\&\leq \v^2-(l_1 l_2 \v_0^2-\frac{l_2}{l_1}+1). 
\end{split}
\end{align}  As we are assuming that $\v^2>0$, we must have $\v_0^2>0$ and since it must be even, $\v_0^2\geq 2$.  It follows that $$l_1 l_2 \v_0^2-\frac{l_2}{l_1}+1\geq 2l_1 l_2-\frac{l_2}{l_1}+1\geq l_2+1\geq 2,$$ so the statement about the codimension follows from $\dim \MM\geq \v^2$ for any irreducible component $\MM$ of $\MM_\omega(\v)$.

(2) Consider the Harder-Narasimhan filtration of a sheaf $E\in \MM^{\mu ss}_\omega(\v)\backslash \MM_\omega(\v)$, $$0=\HN^0(E)\subset \HN^1(E)\subset \cdots \HN^k(E)=E,$$ where the factors $E^i=\HN^i(E)/\HN^{i-1}(E)$ are semistable of strictly decreasing reduced Hilbert polynomial.  As $\omega$ is generic, $\v_i=\v(E^i)=(n_ir,n_iD,s_i)$ with $\v_i^2\geq 0$.  As $\v_i^2\geq 0=\u^2$, it follows that $\frac{s_i}{n_i}\leq\frac{s_0}{n_0}$, so the condition on the reduced Hilbert polynomials becomes $$\frac{s_0}{n_0}\geq \frac{s_1}{n_1}>\cdots>\frac{s_k}{n_k}.$$  

We consider the substack $\FF^{\HN}(\v_1,\dots,\v_k)\subset \MM^{\mu ss}_\omega(\v)$ of those $E$ having a Harder-Narasimhan filtration as above.  As $\Hom(E^i,E^j(K_S))=0$ for $i<j$, from the strict inequality of reduced Hilbert polynomials, we get by \cite[Lemma 5.3]{KY08} $$\dim \FF^{\HN}(\v_1,\dots,\v_k)=\sum_{i=1}^k \dim \MM_\omega(\v_i)+\sum_{i<j}\langle \v_i,\v_j\rangle.$$  Of course, for $i<j$ we have $$\langle \v_i,\v_j\rangle=n_j\frac{\v_i^2}{n_i}+(s_in_j-s_j n_i)r\geq (s_in_j-s_j n_i)r\geq r\geq 1.$$  If $\v_i^2>0$ for all $i$, then by induction on $n$ we get $$\dim\FF^{\HN}(\v_1,\dots,\v_k)=\v^2-\sum_{i<j}\langle \v_i,\v_j\rangle\leq \v^2-1.$$  If some $\v_i^2=0$, then necessarily $i=1$ and $\v_1=n_1' \u$.  Now $r\mid\langle \u,\v_j\rangle$ by \cref{divisible}, so it follows that $\langle \v_1,\v_j\rangle-n_1'=n_1'(\langle \u,\v_j\rangle-1)\geq n_1'(r-1)\geq0$.  As $\dim\MM_\omega(\v_1)\leq n_1'$,  we get
$$\dim\FF^{\HN}(\v_1,\dots,\v_k)\leq \v^2+n_1'-\sum_{i<j}\langle \v_i,\v_j\rangle\leq \v^2-\sum_{2\leq i<j}\langle \v_i,\v_j\rangle\leq \v^2-1,$$ unless perhaps $k=2$, $\dim\MM_\omega(\v_1)=n_1'$, and $\langle \v_1,\v_2\rangle=n_1'$.  But then $l(\u)=\o$ and $\langle \u,\v_2\rangle=1$, the latter of which forces $r=1$.  The conditions $l(\u)=\o$ and $r=1$ force $\o\mid n_0$ and $n_0\mid s_0$, so from \eqref{eqn: divisible} we see that $\o\mid \langle \u,\v_2\rangle$, a contradiction.  Thus we get the claim about the codimension in either case.

(3) For $E\in\MM^s_\omega(\v)\backslash\MM^{\mu s}_\omega(\v)$ we consider a saturated $\mu$-stable subsheaf $E_1\subset E$ of the same slope.  Then $E_2=E/E_1$ is $\mu$-semistable, $\v_i=(n_ir,n_i D,s_i)$, and $\frac{s_1}{n_1}<\frac{s}{n}<\frac{s_2}{n_2}$, so in particular $\v_1^2>0$.  By induction on $n$, we thus have $\dim\MM^{\mu ss}_\omega(\v_1)=\v_1^2$.  

Let $\JJ(\v_1,\v_2)$ be the substack of $\MM^s_\omega(\v)$ parametrizing those $E$ admitting a saturated $\mu$-stable subsheaf $E_1\subset E$ of Mukai vector $\v_1$.  By \cite[Lemma 5.2]{KY08}, if we define $$\JJ_k^m(\v_1,\v_2):=\Set{E\in \JJ(\v_1,\v_2)\ |\ \hom(E_1,E_2(K_S))=m,\chi(E_1^{\vee\vee}/E_1)=k},$$ then \begin{equation}\label{filtration}\dim\JJ_k^m(\v_1,\v_2)=\dim \NN_k^m(\v_1,\v_2)+\langle \v_1,\v_2\rangle+m,\end{equation} where $$\NN_k^m(\v_1,\v_2):=\Set{(E_1,E_2)\in\MM^{\mu s}_\omega(\v_1)\times\MM^{\mu ss}_\omega(\v_2)\ |\ \hom(E_1,E_2(K_S))=m,\chi(E_1^{\vee\vee}/E_1)=k}.$$  We also let $\NN_k(\v_1,\v_2)=\bigcup_{m>0}\NN_k^m(\v_1,\v_2)$.  

For fixed $E_2\in\MM^{\mu ss}_\omega(\v_2)$, suppose that $\hom(E_1,E_2(K_S))\neq 0$ for some $E_1\in\MM^{\mu s}_\omega(\v_1)$.  Then $E_1^{\vee\vee}$ is a summand in the double-dual of the graded object associated to a Jordan-H\"{o}lder filtration of $E_2(K_S)$.  As such, given a fixed $E_2$, there are only finitely many $E_1^{\vee\vee}$ for which $\hom(E_1,E_2(K_S))\neq 0$.  So the fiber over $E_2$ of the second projection $\pi_2:\NN_k(\v_1,\v_2)\to\MM^{\mu ss}_\omega(\v_2)$, which parametrizes those $E_1$ with $\hom(E_1,E_2(K_S))>0$, is the union over the finite set of such $E_1^{\vee\vee}$ of open subschemes of $\Quot_{E_1^{\vee\vee}}^k$ .  As $\dim\Quot_{E_1^{\vee\vee}}^k=(n_1r+1)k$ \cite[Theorem 6.A.1]{HL10}, $$\dim\pi_2^{-1}(E_2)\leq \dim\Quot_{E_1^{\vee\vee}}^k-\dim \Aut(E_1)=(n_1r+1)k-1.$$

It follows that \begin{align}\label{estimate}
\begin{split}\dim\NN_k(\v_1,\v_2)&\leq\dim\MM^{\mu ss}_\omega(\v_2)+(n_1r+1)k-1\\
&\leq\dim\MM^{\mu ss}_\omega(\v_2)+\dim\MM^{\mu s}_\omega(\v_1)-((n_1r-1)k+1).
\end{split}
\end{align}
Indeed, the last inequality is true because $E_1^{\vee\vee}$ being $\mu$-stable forces $0\leq \v(E_1^{\vee\vee})^2=\v(E_1)^2-2n_1rk$, so $\dim\MM^{\mu s}(\v_1)\geq 2n_1 rk$.  Thus $\NN_k(\v_1,\v_2)$ has codimension at least 1 in $\MM^{\mu s}_\omega(\v_1)\times\MM^{\mu ss}_\omega(\v_2)$.

Now suppose that $\v_2^2>0$.  Then by induction on $n$, $\dim\MM^{\mu ss}_\omega(\v_2)=\v_2^2$ and $\hom(E_1,E_2(K_S))\leq\frac{n_2}{n_1}$, so by \eqref{filtration}, 
\begin{align}
\begin{split}\dim\JJ_k^m(\v_1,\v_2)&=\dim\NN_k^m(\v_1,\v_2)+\langle \v_1,\v_2\rangle+m\\
&\leq\dim\MM^{\mu ss}_\omega(\v_2)+\dim\MM^{\mu s}_\omega(\v_1)-1+\langle \v_1,\v_2\rangle+\left\lfloor\frac{n_2}{n_1}\right\rfloor\\
&\leq\dim\MM^s_\omega(\v)-\left(\langle \v_1,\v_2\rangle+1-\frac{n_2}{n_1}\right)\\
&=\dim\MM^s_\omega(\v)-\left(\frac{n_1}{2n_2}\v_2^2+\frac{n_2}{2n_1}\v_1^2+1-\frac{n_2}{n_1}\right).
\end{split}
\end{align}

As $\v_i^2>0$ and are both even, $\v_i^2\geq 2$, so $$\frac{n_1}{2n_2}\v_2^2+\frac{n_2}{2n_1}\v_1^2+1-\frac{n_2}{n_1}\geq \frac{n_1}{n_2}+\frac{n_2}{n_1}+1-\frac{n_2}{n_1}\geq 1,$$and it follows that every irreducible component of $\MM^s_\omega(\v)$ contains a $\mu$-stable sheaf if $\v_2^2>0$.

If $\v_2^2=0$, then $\v_2=n_2'\u$ and $n_2=n_2' n_0$.  Moreover, if in addition $\hom(E_1,E_2(K_S))>0$, then by Lemma \ref{isotropic} we must have $\v(E_1^{\vee\vee})\in\Q \v_2$, so $\v(E_1^{\vee\vee})=n_1'\u$ and $n_1=n_1' n_0$.  As $E_1^{\vee\vee}$ is $\mu$-stable, $n_1'l(\u)\mid\o$ and $\dim\MM^{\mu ss}_\omega(\v_2)\leq\lfloor\frac{n_2'l(\u)}{\o}\rfloor$ by Proposition \ref{isotropic}, so using \eqref{estimate} we get 
\begin{align}
\begin{split}
\dim\JJ_k^m(\v_1,\v_2)&\leq\frac{n_2'l(\u)}{\o   }+2n_1rk-((n_1r-1)k+1)+\langle \v_1,\v_2\rangle+\frac{n_2}{n_1}\\
&=\dim\MM^s_\omega(\v)-\langle \v_1,\v_2\rangle+\frac{n_2'l(\u)}{\o}-((n_1r-1)k+1)+\frac{n_2}{n_1}\\
&=\dim\MM^s_\omega(\v)-\left(n_2'\left(\langle \v_1,\u\rangle-\left(\frac{l(\u)}{\o}+\frac{1}{n_1'}\right)\right)+k(n_1' n_0 r-1)+1\right)\\
&\leq\dim\MM^s_\omega(\v)-\left(n_2'\left(r-\left(\frac{l(\u)}{\o}+\frac{1}{n_1'}\right)\right)+k(n_1' n_0 r-1)+1\right)\\
&\leq\dim\MM^s_\omega(\v)-1,
\end{split}
\end{align}
where the last inequality holds if $r\geq 2$, since $n_1'l(\u)\mid\o$ implies that $\frac{l(\u)}{\o}+\frac{1}{n_1'}\leq 2$, so the term in parentheses is at least 1.  If $r=1$, then the conditions on $\u$ guarantee that $n_0=1$ and $l(\u)=1$.  If $n_1'>1$, then as $\o\geq 2$, the final inequality still holds.  Otherwise, $n_1'=1$, and using the explicit calculation that $\langle \v_1,\u\rangle=\frac{\v_1^2}{2n_1'}$, we find that the final inequality continues to hold unless $\v_1^2=2$.  

So let us deal separately with the case $r=n_1=n_0=l(\u)=1$ and $\v_1^2=2$.  Then twisting by $D$ we may assume that $\v_1=(1,0,-1)$ and $\v_2=n_2\u=n_2(1,0,0)$.  Thus $\MM^{\mu s}_\omega(\v_1)=\MM^s_\omega(\v_1)$ parametrizes sheaves of the form $I_x(L)$ for a point $x\in S$ and $L\in\Pic^0(S)$, and $\MM^{\mu ss}_\omega(\v_2)$ parametrizes line bundles in $\Pic^0(S)$, stable bundles $E_2$ such that $\pi^*E_2$ is the direct sum of line bundles in $\Pic^0(X)$ \cite[Lemma 2.6]{Ume75}, and iterated extensions of these.  If $m=\hom(I_x(L),E_2(K_S))>0$, then by \cite[Lemma 3.1]{KY08} the evaluation map is injective, $I_x(L)^{\oplus m}\into E_2(K_S)$, and by taking double-duals we get $\OO_S(L)^{\oplus m}$ is subsheaf of $E_2(K_S)$.  Moreover, from $\mu_\omega(\OO_S(L))=\mu_\omega(E_2(K_S))$ and the $\mu$-semistability of $E_2(K_S)$, $\OO_S(L)^{\oplus m}$ must be saturated in $E_2(K_S)$.  Thus $\OO_S(L)$ has to appear at least $m$ times in the polystable sheaf $\phi(E_2(K_S))$, where $\phi:\MM_\omega(\v_2)\to M_\omega(\v_2)$ is the map to the coarse moduli space.  Considering the image of the second projection $\pi_2:\NN_1^m(\v_1,\v_2)\to \MM_\omega^{\mu ss}(\v_2)$, it follows as in part (c) of Lemma \ref{isotropic} that $$\dim\pi_2(\NN^m_1(\v_1,\v_2))\leq\max_{z\in\pi_2(\NN^m_1(\v_1,\v_2))}\{-t+t_1+2t_2\}=\max_{z\in\pi_2(\NN^m_1(\v_1,\v_2))}t_2\leq\left\lfloor\frac{n_2-m}{\o}\right\rfloor,$$ where the last inequality follows because $l(\u)=1$ and $L$ appears $m$ times so that $\o t_2+m\leq n_2$.  Thus $\dim\NN_1^m(\v_1,\v_2)\leq \left\lfloor\frac{n_2-m}{\o}\right\rfloor+2$, so $$\dim\JJ_1^m(\v_1,\v_2)\leq\frac{n_2-m}{\o}+n_2+m+2=2+\frac{\o+1}{\o}n_2+\frac{\o-1}{\o}m< 2n_2+2=\v^2,$$ if $m<n_2$.  Thus again we get $\dim\JJ_1^m(\v_1,\v_2)<\dim\MM^s_\omega(\v)$, as claimed.  If $m=n_2$, then our estimates give $\dim\JJ_1^{n_2}(\v_1,\v_2)=\v^2$, and we necessarily have that $E_2(K_S)=\OO_S(L)^{\oplus n_2}$.  But for an extension $E$ fitting into the short exact sequence \begin{equation}\label{eqn: exceptional case}
0\to I_x(L)\to E\to \OO_S(L-K_S)^{\oplus n_2}\to 0
\end{equation} to be stable we must have $n_2\leq 2$.  Indeed, if $E$ were stable, then $\Hom(\OO_S(L-K_S),E)=0$ as $$\frac{\chi(E)}{n_2+1}=\frac{-1}{n_2+1}<0=\frac{\chi(\OO_S(L-K_S))}{1},$$ so applying $\Hom(\OO_S(L-K_S),-)$ to \eqref{eqn: exceptional case} we get that $$\C^{n_2}=\Hom(\OO_S(L-K_S),\OO_S(L-K_S))^{\oplus n_2}\into\Ext^1(\OO_S(L-K_S),I_x(L))=H^1(S,I_x(K_S))=\C^2,$$ where the final equality is easily seen.  Thus $n_2\leq 2$.  

To finish our analysis of the cases $n_2=1,2$, it is easiest to consider the coarse moduli spaces directly.  If $n_2=1$, then for each $I_x(L)\in M_\omega(\v_1)$, there is a $\P^1$ worth of distinct stable extensions as in \cref{eqn: exceptional case}.  Thus the dimension of the sublocus of such extensions in $M_\omega^s(\v)$ is $$\dim M_{\omega}(\v_1)+1=3+1+1=4<5=\v^2+1\leq\dim M^s_{\omega}(\v).$$  Similarly, if $n_2=2$, then there is a unique such stable extension for each $I_x(L)$, so this sublocus is three dimensional, while $\dim M_{\omega}(\v)\geq \v^2+1=7$.   

Finally, we must consider the case when $\hom(E_1,E_2(K_S))=0$.  But then 
\begin{align}
\begin{split}
\dim\JJ_k^0(\v_1,\v_2)&\leq\dim\MM^{\mu ss}_\omega(\v_2)+\dim\MM^{\mu s}_\omega(\v_1)+\langle \v_1,\v_2\rangle\\
&\leq \frac{n_2'l(\u)}{\o}+\v_1^2+n_2'\langle \v_1,\u\rangle\\
&=\dim\MM^s_\omega(\v)-n_2'\langle \v_1,\u\rangle+\frac{n_2'l(\u)}{\o}\\
&\leq\dim\MM^s_\omega(\v)-n_2'\left(\langle \v_1,\u\rangle-\frac{l(\u)}{\o}\right)<\dim\MM^s_\omega(\v).\\
\end{split}
\end{align}
The last inequality follows since $\frac{l(\u)}{\o}\leq 1\leq r\leq \langle \v_1,\u\rangle$ and equality cannot hold for each inequality.  Indeed, if $r=1$, then we saw above that $n_0=1$, so $l(\u)=1$ as well.

(4) Consider the closed sublocus $\MM_\omega^{\mu s}(\v)\backslash \MM^{\mu s,lf}_\omega(\v)$ of non-locally free sheaves.  From the double-dual short exact sequence for $E\in\MM^{\mu s}_\omega(\v)$, $$0\to E\to E^{\vee\vee}\to E^{\vee\vee}/E\to 0,$$ we see that for a fixed $E^{\vee\vee}$, all such $E$ are parametrized by $\Quot^k_{E^{\vee\vee}}$ of dimension $(nr+1)k$ where $k=\chi(E^{\vee\vee}/E)$.  As $\v(E^{\vee\vee})=\v+(0,0,k)=(nr,nD,s+k)$, we get 
\begin{equation}\label{non-locally free}
\dim\MM^{nlf}_\omega(\v)\leq\max_{k>0}\{\dim\MM^{\mu s}_\omega(nr,nD,s+k)+(nr+1)k\}.
\end{equation}
As $E^{\vee\vee}$ is $\mu$-stable, it follows that  $\v(E^{\vee\vee})^2\geq 0$, so $k\leq\frac{\v^2}{2nr}$ with equality only if $\v(E^{\vee\vee})=n'\u$, $n=n'n_0$, and $n'l(\u)\mid\o$.  As long as $k<\frac{\v^2}{2nr}$, $\v(E^{\vee\vee})^2>0$, so $\dim\MM^{\mu s}_\omega(nr,nD,s+k)=\v(E^{\vee\vee})^2=\v^2-2nrk$ and thus $$\dim\MM^{\mu s}_\omega(nr,nD,s+k)+(nr+1)k=\v^2-2nrk+(nr+1)k=\v^2-(nr-1)k<\dim\MM^{\mu s}_\omega(\v),$$ since we assumed that $nr>1$ and $k>0$.  If $k=\frac{\v^2}{2nr}$, then \begin{equation}\label{worst case 1}\dim\MM^{\mu s}_\omega(nr,nD,s+k)\leq \frac{n'l(\u)}{\o}\leq 1.\end{equation}   Thus \begin{align}\label{worst case 2}
\begin{split}
    \dim\MM^{\mu s}_\omega(\v(E^{\vee\vee}))+(rn+1)k&\leq 1+2rnk+k-rnk=\v^2-((nr-1)k-1)\\
    &=\dim\MM^{\mu s}_\omega(\v)-((nr-1)k-1).
\end{split}\end{align}  
Now $$(nr-1)k-1>0,$$ unless $nr=2$ and $k=1$, and $nr=2$ implies $(n,r)=(2,1)$ or $(1,2)$.  
But if $r=1$, then $n_0=1=l(\u)$ and we only have equality achieved in \eqref{worst case 1} if $\o=2$, which is the case excluded.  
If $\o\neq 2$ then we have strict inequality in \eqref{worst case 1} and thus in \eqref{worst case 2}.  
If instead $n=1$, so that $n'=n_0=1$ as well, and $r=2$, then we see that $l(\u)\mid 2$, so we have strict inequality in \eqref{worst case 1} and \eqref{worst case 2} if $\o>2$.  
If $\o=2$, then we only get equality if $l(\u)=2$ as well.  As $\u=(2,aA_0+bB_0,\frac{ab}{2})$ is primitive with $l(\u)=2$, one and only one of $a,b$ is even, so up to twisting by an appropriate line bundle we see that $\u=(2,B_0,0)$ and $S$ is of Type 1, which was also excluded.  
It follows that in each case considered, every component of $\MM_\omega^{\mu s}(\v)$ contains a locally free sheaf, as required.
\end{proof}

\begin{Rem}\label{half-half} In the two cases excluded in Proposition \ref{slope stability}, the proof above gives an irreducible component consisting of non-locally free sheaves.  Indeed, if $\o=2$, then there is a two-dimensional component of $M^s_\omega(\u)$, $\u=(2,0,0)$ or $(2,B_0,0)$ (for Type 1), coming from the pushforward of non-invariant line bundles in $\Pic^0(X)$.  For each $F\in M^s_\omega(\u)$, we take  all kernels $E$ fitting into $$0\to E\to F\to \C_x\to 0,$$ which are parametrized by $\Quot^1_F$ of dimension 3.  Varying $F$ in this two-dimensional component then gives a five dimensional component of $M^s_\omega(\u-(0,0,1))$ consisting of non-locally free sheaves, as required.  

In each case, there are also components of the moduli space which do contain locally free sheaves.  Indeed, let $\w=(1,0,0)$ or $(1,B_0,0)$, i.e. the Mukai vector of $\OO(L)$ or $\OO(B_0+L)$ for $L\in\Pic^0(S)$.  Note that for a point $x\in S$, $\langle (1,0,0),\v(I_x(M))\rangle=1$ for $M\in M^s_\omega(\w)$ in either case.  Also $\Hom(I_x(L),\OO)=\Ext^2(I_x(L),\OO)=0$ for $L\neq 0$, and $\Hom(I_x(B_0+L),\OO)=0$ for all $x\in S,L\in\Pic^0(S)$ while $\Ext^2(I_x(B_0+L),\OO)=\Hom(\OO,I_x(B_0+L+K_S))=0$ except for when $B_0+L+K_S$ is effective, which only happens for at most one $L\in\Pic^0(S)$, and $x\in B_0+L+K_S$.  Thus for generic $M\in M^s_\omega(\w)$ and generic $x\in S$, $\Ext^1(I_x(M),\OO)=\C$.  Taking the unique non-trivial extension $$0\to\OO\to E\to I_x(M)\to 0,$$ we must have $E$ locally free, as $(x,\OO(M+K_S))$ satisfies the Cayley-Bacharach property since $H^0(M+K_S)=0$ for generic $M$ (see \cite[Theorem 5.1.1]{HL10}).  Thus there are indeed $\mu$-stable locally free sheaves by openness of both of these properties.
\end{Rem}
\section{Making the problem finite}
\cref{slope stability} is a powerful tool for constructing $\mu$-stable sheaves.  Indeed, if we can show that $M^{\mu ss}_\omega(\v)\neq\varnothing$, then every component of $M^{\mu ss}_\omega(\v)$ contains a $\mu$-stable sheaf (which can be taken to be locally free in rank at least two, except for certain components in the two exceptional cases), so $M^{\mu s}_\omega(\v)\ne\varnothing$.  Writing $\v=(nr,nD,s)$ with $\gcd(r,D)=1$, it follows from $\v^2\geq 0$ that to prove $M^{\mu ss}_\omega(\v)\ne\varnothing$ it suffices to prove $M^{\mu ss}_\omega(\v')\ne\varnothing$, where $\v'=(nr,nD,s')$ and $s'=\left\lfloor\frac{nD^2}{2r}\right\rfloor$.  Indeed, for $F\in M^{\mu ss}_\omega(\v')$ stable, any $E$ fitting into the short exact sequence $$0\to E\to F\to Q\to 0$$ is in $M^{\mu ss}_\omega(\v)$, where $Q\in\Quot_F^{s'-s}$ is a 0-dimensional torsion sheaf of length $s'-s$.  To summarize, for fixed $(r,D)$ with $\gcd(r,D)=1$, it suffices to prove $M_\omega^{\mu ss}\left(nr,nD,\left\lfloor\frac{nD^2}{2r}\right\rfloor\right)\ne\varnothing$ for each $n\geq 1$.

Using isotropic vectors we can reduce our work to a finite list of $n$.  Recall that $\u=(n_0r,n_0D,s_0)$ was the unique primitive isotropic Mukai vector in the series.  Let us write $n=n_0n'+q$ for $0\leq q<n_0$.  Then 
$$\v'=\left(nr,nD,\left\lfloor\frac{nD^2}{2r}\right\rfloor\right)=n'\u+\left(qr,qD,\left\lfloor\frac{qD^2}{2r}\right\rfloor\right).$$
If we can show that $M_\omega^{\mu ss}\left(qr,qD,\left\lfloor\frac{qD^2}{2r}\right\rfloor\right)\ne\varnothing$ and $M^{\mu ss}_\omega(\u)\ne\varnothing$, then taking 
$$F\in M^{\mu ss}_\omega\left(qr,qD,\left\lfloor\frac{qD^2}{2r}\right\rfloor\right) \mbox{ and }G\in M^{\mu ss}_\omega(\u),$$
we get $G^{n'}\oplus F\in M_\omega^{\mu ss}(\v')$ as required.
For fixed $(r,D)$, this reduces the construction problem to showing that $M_\omega^{\mu ss}\left(qr,qD,\left\lfloor\frac{qD^2}{2r}\right\rfloor\right)\ne\varnothing$ for $0<q\leq n_0$.

\part{Enter Bridgeland stability and Fourier-Mukai transforms}\label{Part:BridgelandStability}
To go deeper, we will employ the full power of Fourier-Mukai transforms made possible by using Bridgeland stability conditions.    But first we provide a quick introduction.
\section{Bridgeland stability conditions: a brief review}

We begin by giving a brief review of Bridgeland's definition of a stability condition on the derived category $\Db(Y)$ of a smooth projective variety $Y$.

Recall that a Bridgeland stability condition $\sigma$ on $\Db(Y)$ consists of a pair $(Z,\AA)$, where $Z:K_{\num}(Y)\to\C$ is a group homomorphism called the central charge and $\AA\subset\Db(Y)$ is the heart of a bounded $t$-structure on $\Db(Y)$, satisfying the following properties:
\begin{enumerate}
    \item{(\emph{positivity})} For any $0\neq E\in\AA$, $\Im Z(E)\geq 0$ and $\Im Z(E)=0$ implies $\Re Z(E)\leq 0$.  This condition allows us to define a notion of slope-stability on $\AA$ via the slope $\mu(E)=\frac{-\Re Z(E)}{\Im Z(E)}$, where we set $\mu(E)=\infty$ if $\Im Z(E)=0$.  An object $0\neq E\in\AA$ is called $\sigma$-(semi)stable if $\mu(F)(\leq)<\mu(E)$ for all $F\subset E$ in the abelian category $\AA$.
    \item{(\emph{HN-filtrations})} Given this definition of slope-stability, every object $E\in\AA$ admits a Harder-Narasimhan filtration $0=E_0\subset E_1\subset\dots\subset E_n=E$ such that $E_i/E_{i-1}$ is $\sigma$-semistable and $\mu(E_i/E_{i-1})>\mu(E_{i+1}/E_i)$ for all $i$.
    \item{(\emph{support})} There is a constant $C>0$ such that for all $\sigma$-semistable $E\in\AA$, we have $$\lVert E\rVert\leq C|Z(E)|,$$ where $\lVert{*}\rVert$ is a fixed norm on $K_{\num}(Y)_\R$. 
\end{enumerate}

It is not difficult to show that the above Harder-Narasimhan filtrations are in fact unique.  Moreover, the above properties guarantee that every $\sigma$-semistable object $E\in\AA$ of admits a Jordan-H\"{o}lder filtration $0=E_0\subset E_1\subset\dots\subset E_n=E$ where every $E_i/E_{i-1}$ is  $\sigma$-stable of the same slope. Unlike Harder-Narasimhan filtrations, however, Jordan-H\"{o}lder filtrations are not unique.  Nevertheless, the associated graded object $$\gr(E)=\bigoplus_{i=1}^n E_i/E_{i-1}$$ is well-defined and independent of the specific filtration.  We say that two $\sigma$-semistable objects $E$ and $E'$ are $S$-equivalent if $\gr(E)\cong\gr(E')$.  


Bridgeland's main theorem in \cite{Bri98} says that the set of all Bridgeland stability conditions on $\Db(Y)$, denoted by $\Stab(Y)$, admits the structure of a complex manifold.  More precisely:
\begin{Thm}[Bridgeland]\label{thm:BridgelandMain}
The map $$\ZZ:\Stab(Y)\to\Hom(K_{\num}(Y),\C), \qquad (Z,\AA)\mapsto Z,$$
is a local homeomorphism.  In particular, $\Stab(Y)$ is a complex manifold of dimension equal to the rank of $K_{\num}(Y)$.
\end{Thm}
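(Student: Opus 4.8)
The plan is to follow Bridgeland's original deformation argument, adapted to the support-property formulation of a stability condition given above. The first step is to repackage the data $(Z,\AA)$ as an equivalent \emph{slicing} $(Z,\PP)$: the heart $\AA$ together with the $\R$-valued phase determined by $Z$ extends, via the shift functor, to a family of full subcategories $\PP(\phi)\subset\Db(Y)$ of semistable objects of phase $\phi$, with $\AA=\PP((0,1])$. This reformulation is what lets one attach to \emph{every} nonzero $E$ an extremal pair of HN-phases $\phi^+(E),\phi^-(E)$ and a mass $m(E)=\sum_i|Z(A_i)|$ (summing over the HN factors $A_i$ of $E$), and hence define the generalised Bridgeland metric
$$
d(\sigma_1,\sigma_2)=\sup_{0\ne E}\Bigl\{|\phi_1^+(E)-\phi_2^+(E)|,\ |\phi_1^-(E)-\phi_2^-(E)|,\ \bigl|\log\tfrac{m_2(E)}{m_1(E)}\bigr|\Bigr\}
$$
on $\Stab(Y)$. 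A short estimate bounding $\lVert Z_1-Z_2\rVert$ (operator norm relative to the fixed norm on $K_{\num}(Y)_\R$) by a multiple of $d(\sigma_1,\sigma_2)$ then shows that $\ZZ$ is continuous for this topology.

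The technical heart is a deformation lemma: fixing $\sigma=(Z,\PP)$, I would produce an $\epsilon_0>0$ so that every $W\in\Hom(K_{\num}(Y),\C)$ with $\lVert W-Z\rVert<\epsilon_0$ lifts to a \emph{unique} stability condition $\tau=(W,\QQ)$ with $d(\sigma,\tau)<\tfrac14$. Here the support property (axiom (3)) is doing the essential work: it supplies a uniform lower bound $|Z(E)|\ge C^{-1}\lVert E\rVert$ on $\sigma$-semistable $E$, so that smallness of $\lVert W-Z\rVert$ forces $|W(E)-Z(E)|<\sin(\pi\epsilon)\,|Z(E)|$ simultaneously for all semistable $E$; geometrically this is exactly the condition guaranteeing that no semistable object has its phase rotated by more than $\epsilon$. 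The new slicing $\QQ$ is then built by re-slicing with $W$: since the perturbation is small, each $\sigma$-semistable object stays $\tau$-semistable of nearly the same phase, and one reassigns stability on the abelian categories $\PP((\phi,\phi+1])$ (equivalently on the relevant thin quasi-abelian subcategories) using the stability function coming from $W$.

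The main obstacle — and the place where essentially all of Bridgeland's effort lies — is verifying that this re-sliced datum $(W,\QQ)$ genuinely satisfies the axioms of a stability condition, and in particular that Harder–Narasimhan filtrations exist for the perturbed stability function. The difficulty is that the intervals $\PP((\phi,\phi+1))$ are only quasi-abelian, so one must control chains of strict sub- and quotient objects; the finiteness provided by the support property (equivalently local finiteness of $\sigma$) is precisely what rules out the infinite ascending or descending chains of phases that would obstruct existence of HN filtrations. Once uniqueness and existence of the lift $\tau$ are in hand, the deformation lemma exhibits $\ZZ$ as a continuous bijection from the ball $B_{1/4}(\sigma)$ onto the open set $\{W:\lVert W-Z\rVert<\epsilon_0\}\subset\Hom(K_{\num}(Y),\C)$, and the accompanying bound on $d(\sigma,\tau)$ in terms of $\lVert W-Z\rVert$ gives continuity of the inverse; hence $\ZZ$ is a local homeomorphism. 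Finally, since $\Hom(K_{\num}(Y),\C)$ is a complex vector space whose dimension equals the rank of $K_{\num}(Y)$, the charts $\ZZ$ endow $\Stab(Y)$ with the structure of a complex manifold of that dimension.
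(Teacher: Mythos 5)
The paper offers no proof of this statement at all: it is quoted from Bridgeland with a citation, so the only benchmark is Bridgeland's original argument, and your sketch is a faithful outline of exactly that argument in its support-property streamlining (slicings and the generalised metric, the deformation lemma lifting every $W$ with $\lVert W-Z\rVert<\epsilon_0$ to a unique $\tau=(W,\QQ)$ with $d(\sigma,\tau)<\tfrac14$, the support constant $C$ converting $\lVert W-Z\rVert$ small into a uniform phase rotation bound $|W(E)-Z(E)|<\sin(\pi\epsilon)|Z(E)|$ on all $\sigma$-semistable $E$, and HN existence on the thin quasi-abelian categories as the technical core). One sentence needs correcting, however: it is false in general that ``each $\sigma$-semistable object stays $\tau$-semistable of nearly the same phase'' --- the wall-and-chamber structure used throughout this very paper exists precisely because semistability is \emph{not} preserved under deformation, and as written this claim is also in tension with your own next clause, since if semistability were preserved no reassignment would be needed. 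What the re-slicing actually produces, and all the argument requires, is the reverse containment $\QQ(\phi)\subset\PP([\phi-\epsilon,\phi+\epsilon])$: one \emph{defines} $\QQ(\phi)$ as the $W$-semistable objects of the quasi-abelian category $\PP((\phi-\epsilon,\phi+\epsilon))$ and obtains $\tau$-HN filtrations by refining $\sigma$-HN filtrations inside those categories, with the local finiteness supplied by the support property ruling out infinite chains. You should also note that the lifted $\tau$ again satisfies the support property (automatic for small deformations, e.g.\ via the quadratic-form formulation), which is what makes the image of $\ZZ$ open in all of $\Hom(K_{\num}(Y),\C)$; with these corrections your proposal coincides with the cited proof.
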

The topology on $\Stab(Y)$ in the above theorem comes from a natural metric, and the theorem essentially says that $(Z,\AA)$ can be deformed uniquely given a small deformation of its central charge $Z$.  We will also have need for a dual forgetful map $$\eta:\Stab(Y)\to K_{\num}(Y)_\C, \qquad (Z,\AA)\mapsto \Omega,$$ where $\Omega$ is the unique class in $K_{\num}(Y)_\C$ such that $Z(E)=\langle \Omega,E\rangle$ for all $E\in\AA$.  

It is natural to ask how the set of $\sigma$-semistable objects of a given class $\v\in K_{\num}(Y)$ changes as $\sigma$ varies in $\Stab(Y)$.  The following result was proved in \cite[Section 9]{Bri08},\cite[Proposition 3.3]{BM11}, and \cite[Proposition 2.8]{Tod08}:
\begin{Prop}\label{prop:wall-and-chamber}
There exists a locally finite set of walls (real codimension one submanifolds with boundary) in $\Stab(Y)$, depending only on $\v$, with the following properties:
\begin{enumerate}
    \item When $\sigma$ varies within a chamber (a connected component of the complement of the walls), the sets of $\sigma$-semistable and $\sigma$-stable objects of class $\v$ do not change.
    \item When $\sigma$ lies on a single wall $W\subset\Stab(Y)$, then there is a $\sigma$-semistable object of class $\v$ that is unstable in one of the adjacent chambers, and semistable in the other adjacent chamber.
\end{enumerate}
\end{Prop}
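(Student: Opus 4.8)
The plan is to derive the wall-and-chamber structure from the support property, which is the axiom that controls which numerical classes can intervene in the stability analysis. First I would reformulate that axiom in the now-standard way (following Bayer--Macr\`i): the support property is equivalent to the existence of a quadratic form $Q$ on $K_{\num}(Y)_\R$ that is negative definite on the real kernel of $Z$ and satisfies $Q(\v(F))\geq 0$ for every $\sigma$-semistable object $F$. This is precisely the input that turns ``potential destabilizers'' into a finite set.

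Next I would introduce the numerical walls. For a class $\w\in K_{\num}(Y)$ I would set $W_\w:=\{\sigma=(Z,\AA):\Im(Z(\w)\overline{Z(\v)})=0\}$, the locus where $Z(\w)$ and $Z(\v)$ become aligned. Since $\ZZ$ is a local homeomorphism by \cref{thm:BridgelandMain}, the central charge $Z$ varies smoothly with $\sigma$, so $W_\w$ is cut out locally by the single real equation $\Im(Z(\w)\overline{Z(\v)})=0$; hence it is a real codimension-one submanifold, with boundary arising where $Z(\v)$ or $Z(\w)$ degenerates. The set of semistable objects of class $\v$ can only change across such loci, because a semistable $E$ of class $\v$ can only acquire or lose a destabilizing subobject $A$ when $\phi_\sigma(\v(A))$ crosses $\phi_\sigma(\v)$.

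The central step, and the step I expect to be the main obstacle, is local finiteness: near any fixed $\sigma_0$ only finitely many $\w$ support an actual wall. Here is the mechanism. If $E$ is $\sigma$-semistable of class $\v$ and $A\subset E$ has $\phi_\sigma(\v(A))=\phi_\sigma(\v)$, then $A$ and $E/A$ are both semistable of the same phase, so $Q(\v(A))\geq 0$ and $Q(\v-\v(A))\geq 0$; moreover the aligned central charges give $|Z(A)|+|Z(E/A)|=|Z(\v)|$, hence the mass bound $|Z(\v(A))|\leq |Z(\v)|$. Over a small neighborhood of $\sigma_0$ the function $|Z(\cdot)|$ stays comparable to $|Z_{\sigma_0}(\cdot)|$, so $\v(A)$ is forced into the region $\{\w: Q(\w)\geq 0,\ Q(\v-\w)\geq 0,\ |Z_{\sigma_0}(\w)|\leq C\}$. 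The technical heart is showing this region is bounded in $K_{\num}(Y)_\R$: the mass bound controls the two real directions in the image of $Z$, while the negative definiteness of $Q$ on $\ker Z$ together with $Q(\w)\geq 0$ controls the complementary directions. A bounded region contains only finitely many lattice points, so only finitely many $\w$ matter near $\sigma_0$, and local finiteness of the walls follows.

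Finally I would extract the two stated properties. Within a chamber none of these finitely many walls is crossed, so the phase of every relevant sub/quotient class keeps the same order relative to $\phi_\sigma(\v)$; since the Harder--Narasimhan filtration varies continuously, no object of class $\v$ can switch between stable, strictly semistable, and unstable, giving (1). When $\sigma$ lies on a single wall $W_\w$, there exists an object $E$ with a Jordan--H\"older factor whose class is proportional to $\w$ but not to $\v$, so $E$ is strictly $\sigma$-semistable; deforming to the two adjacent chambers moves $\phi(\w)$ past $\phi(\v)$ in opposite directions, destabilizing $E$ on one side while it remains semistable on the other, giving (2).
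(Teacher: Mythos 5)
Your proposal is correct and takes essentially the same route as the paper's source: the paper gives no proof of this proposition, citing instead \cite[Section 9]{Bri08}, \cite[Proposition 3.3]{BM11}, and \cite[Proposition 2.8]{Tod08}, and your argument --- the quadratic-form reformulation of the support property, the mass bound $|Z(\v(A))|\leq |Z(\v)|$ for a destabilizing subobject of aligned phase, and boundedness of the region $\{\w:\ Q(\w)\geq 0,\ |Z_{\sigma_0}(\w)|\leq C\}$ via negative definiteness of $Q$ on $\Ker Z$ --- is precisely the mechanism used there (in the Bayer--Macr\`i formulation). The only point to tighten in a full write-up is uniformity of the support-property constant over the neighborhood of $\sigma_0$ (so that $Q$ at $\sigma_0$ controls objects semistable at nearby $\sigma$), which is exactly what the deformation argument in those references supplies.
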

From the construction of these walls, it is clear that if $\v$ is primitive, then $\sigma$ lies on a wall if and only if there exists a strictly $\sigma$-semistable object of class $\v$.  Moreover, the Jordan-H\"{o}lder filtration of $\sigma$-semistable objects does not change when $\sigma$ remains inside a chamber.

Finally, we note that there are two groups actions on $\Stab(Y)$ \cite[Lemma 8.2]{Bri07}: on the left, the group of autoequivalences $\Aut(\Db(Y))$ acts via $$F(Z,\AA)=(Z\circ F_*^{-1},F(\AA)),$$ for $F\in\Aut(\Db(Y))$ and $F_*$ is the induced automorphism on $K_{\num}(Y)$.  On the right, the universal cover $\wGL2$ of the group $\GL_2^+(\R)$ of matrices with positive determinant acts on the right as a lift of the action of $\GL_2^+(\R)$ on $\Hom(K_{\num}(Y),\C)\cong\Hom(K_{\num}(Y),\R^2)$.

\subsection{Review: Stability conditions via tilting slope-stability}
Now we recall the construction from \cite{AB13,Bri08} of Bridgeland stability conditions on a smooth porjective surface $Y$ over $\C$ .  As in the previous subsection, let $\omega,\beta\in\NS(Y)_{\Q}$ with $\omega$ ample.  Let $\TT(\omega,\beta) \subset \Coh(Y)$ be the subcategory of torsion sheaves and torsion-free sheaves whose HN-filtration factors (with respect to $\mu_{\omega,\beta}$-stability)
have $\mu_{\omega, \beta} > 0$, and $\FF(\omega,\beta)$ the subcategory of torsion-free sheaves with
HN-filtration factors satisfying $\mu_{\omega, \beta} \le 0$.
Next, consider the abelian category
\begin{equation*} 
\AA(\omega,\beta):=\left\{E\in\Db(Y):\begin{array}{l}
\bullet\;\;\HH^p(E)=0\mbox{ for }p\not\in\{-1,0\},\\\bullet\;\;
\HH^{-1}(E)\in\FF(\omega,\beta),\\\bullet\;\;\HH^0(E)\in\TT(\omega,\beta)\end{array}\right\}
\end{equation*}
and the $\C$-linear map
\begin{equation} \label{eq:ZK3}
Z_{\omega,\beta} \colon K_{\num}(Y)\to\C,\qquad E\mapsto\langle\exp{(\beta+\sqrt{-1}\omega)},v(E)\rangle.
\end{equation}

For $Y$ an abelian surface or a bielliptic surface, it follows from \cite[Lemma 6.2, Prop.\ 7.1]{Bri08} or \cite[Corollary 2.1]{AB13},
that the pair $\sigma_{\omega,\beta}=(Z_{\omega,\beta},\AA(\omega,\beta))$ defines a stability condition.
For objects $E \in \AA(\omega, \beta)$, we will denote their phase with respect to $\sigma_{\omega,
\beta}$ by $\phi_{\omega, \beta}(E) = \phi(Z_{\omega,\beta}(E)) \in (0, 1]$.
By using the support property, as proved in \cite[Proposition 10.3]{Bri08}, we can extend the above definition to stability conditions $\sigma_{\omega,\beta}$, for $\omega,\beta\in\NS(Y)_\R$.  We denote by $\Stabd(Y)$ the connected component of $\Stab(Y)$ containing the $\sigma_{\omega,\beta}$.  In fact, up to the action of $\wGL2$, every stability condition in $\Stabd(Y)$ is equal to $\sigma_{\omega,\beta}$ for some $\omega,\beta\in\NS(Y)_\Q$.  The component $\Stabd(Y)$ can also be characterized more intrinsically by two properties.  The first is that for any $\sigma\in\Stabd(Y)$, skyscraper sheaves of points on $Y$ are all $\sigma$-stable of the same slope.  The second is that for all $\sigma\in\Stabd(Y)$, the real and imaginary parts of $\eta(\sigma)$ span a positive definite 2-plane.
\begin{Rem}\label{rem:ObjectsOfInfiniteSlope}
It will be important later to describe the set $$\left\{E\in\AA(\omega,\beta)\colon \Im Z_{\omega,\beta}(E)=0\right\}.$$ Considering the short exact sequence in $\AA(\omega,\beta)$, $$0\to\HH^{-1}(E)[1]\to E\to\HH^0(E)\to 0,$$ and using the definitions of $\TT(\omega,\beta)$ and $\FF(\omega,\beta)$, it is not difficult to show that $\Im Z_{\omega,\beta}(E)=0$ implies that $\HH^{-1}(E)$ is a $\mu_{\omega,\beta}$-semistable sheaf with $\mu_{\omega,\beta}=0$ and $\HH^0(E)$ is a $0$-dimensional torsion sheaf.  
\end{Rem}
Already from the definition of $\AA(\omega,\beta)$, it is clear that there is \emph{some} relation between the its objects and slope stability.  By using the central charge $Z_{\omega,\beta}$ which detects the \textbf{entire} Chern character, including $\ch_2$, we get a very strong connection back to classical stability with the following result:
\begin{Thm}\label{Thm:GiesekerChamber}\cite[Proposition 14.2]{Bri08}
Let $\omega,\beta\in\NS(Y)_\Q$ with $\omega$ ample.  Let $\v\in K_{\num}(Y)$ be the class of a $\beta$-twisted $\omega$-semistable sheaf.  Then there is a chamber $\GG\subset\Stabd(Y)$ for $\v$ such that for any $\sigma\in \GG$, $E\in\Db(Y)$ with $\v(E)=\v$ is $\sigma$-(semi)stable iff $E=F[k]$ for a $\beta$-twisted $\omega$-(semi)stable sheaf $F$ of class $\v$.
\end{Thm}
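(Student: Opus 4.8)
The plan is to prove this by a \emph{large-volume-limit} argument, degenerating $\sigma_{t\omega_0,\beta}$-stability to $\beta$-twisted $\omega_0$-Gieseker stability as $t\to\infty$. Fix an ample $\omega_0\in\NS(Y)_\Q$, keep $\beta$ fixed, and restrict attention to the ray $\omega=t\omega_0$ ($t>0$) inside $\Stabd(Y)$. The candidate for $\GG$ is the chamber containing the tail $\{t\omega_0:t\gg0\}$, and the whole task is to show that this tail lies in a single chamber in which $\sigma$-(semi)stability of objects of class $\v$ agrees with (a shift of) twisted Gieseker (semi)stability. I may assume $\v$ has positive rank, the torsion case being handled identically using the description of infinite-slope objects in \cref{rem:ObjectsOfInfiniteSlope}.

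First I would record the asymptotics of the central charge \eqref{eq:ZK3}. Writing $r=\rk(E)$, $c_1=c_1(E)$ and setting $\omega=t\omega_0$, expansion gives
\[
Z_{t\omega_0,\beta}(E)=\Bigl(\tfrac{r}{2}t^2\omega_0^2+\beta.c_1-\ch_2(E)-\tfrac{r}{2}\beta^2\Bigr)+i\,t\,\omega_0.(c_1-r\beta).
\]
Thus $\Im Z$ is linear and $\Re Z$ quadratic in $t$, so for $r>0$ one has $\cot(\pi\phi_{t\omega_0,\beta}(E))=\Re Z/\Im Z=\frac{\omega_0^2}{2\,\mu_{\omega_0,\beta}(E)}\,t+O(1)$. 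Hence to leading order the phase ordering is governed by the twisted slope $\mu_{\omega_0,\beta}$, and among objects of equal twisted slope the next-order term reproduces precisely the comparison of $\beta$-twisted reduced Hilbert polynomials. This is the computation that identifies the limiting notion of stability with $\beta$-twisted $\omega_0$-Gieseker stability.

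Next I would show that a $\sigma_{t\omega_0,\beta}$-semistable object $E$ of class $\v$ is, for $t\gg0$, a shift of a sheaf. Consider the short exact sequence $0\to\HH^{-1}(E)[1]\to E\to\HH^0(E)\to 0$ in $\AA(\omega,\beta)$. Since $\HH^{-1}(E)\in\FF(\omega,\beta)$ has nonpositive twisted slope, the phase of the subobject $\HH^{-1}(E)[1]$ tends to $1$ as $t\to\infty$, while $\phi_{t\omega_0,\beta}(E)\to 0$; hence $\HH^{-1}(E)[1]$ destabilizes $E$ unless $\HH^{-1}(E)=0$, forcing $E=\HH^0(E)$ to be an honest sheaf. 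Conversely, for a $\beta$-twisted $\omega_0$-Gieseker-semistable sheaf $F$ of class $\v$, the same cohomological analysis shows that any $\AA$-destabilizing subobject would produce a subsheaf or quotient sheaf violating twisted Gieseker semistability once $t$ is large, and the distinction between the semistable and stable versions follows by tracking strict versus non-strict phase inequalities. I expect this to be the main obstacle, since the estimates must be made \emph{uniform} in $t$: the real danger is a family of would-be destabilizers whose phases approach $\phi(E)$ as $t\to\infty$.

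To rule this out and to produce the chamber, I would invoke the support property together with the wall-and-chamber structure of \cref{prop:wall-and-chamber}. The Bogomolov-type discriminant bound satisfied by $\sigma$-semistable objects bounds the Mukai vectors of possible destabilizing subobjects of a fixed class $\v$, so only finitely many of them can give a wall meeting the ray $\{t\omega_0\}$ at finite $t$; beyond the largest such value the tail of the ray lies in a single chamber $\GG$, inside which no strictly semistable phenomena occur except those already forced by twisted Gieseker stability. Combining the three preceding steps yields both implications of the claimed equivalence, with the shift $[k]$ recording whether the Gieseker-semistable sheaf $F$ lies in $\TT(\omega_0,\beta)$ or in $\FF(\omega_0,\beta)[1]$.
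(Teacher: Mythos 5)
The paper gives no proof of this statement at all --- it is quoted verbatim (in twisted form) from Bridgeland, so the only meaningful comparison is with the proof of the cited \cite[Proposition 14.2]{Bri08}, and your reconstruction is exactly that argument: expand $Z_{t\omega_0,\beta}$ along the ray $\omega=t\omega_0$, identify the leading and subleading terms of the phase with the twisted slope $\mu_{\omega_0,\beta}$ and the $\beta$-twisted reduced Hilbert polynomial, kill $\HH^{-1}(E)$ of semistable objects for $t\gg 0$ via the phase of $\HH^{-1}(E)[1]$ tending to $1$, and obtain uniformity in $t$ from the Bogomolov/support-property bound on potential destabilizing classes, which is precisely what prevents walls from accumulating along the ray. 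Your outline is correct, with the same implicit restriction as the cited source (not visible in the paper's loose restatement) that $\Im Z_{t\omega_0,\beta}(\v)\neq 0$, i.e.\ $\mu_{\omega_0,\beta}(\v)\neq 0$: in the slope-zero case the phase comparison degenerates, every object of phase one in $\AA(\omega,\beta)$ is automatically semistable (cf.\ \cref{rem:ObjectsOfInfiniteSlope}), and the biconditional as stated would fail without the shift/twist bookkeeping you allude to in your final sentence.
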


\subsection{Review: Inducing stability conditions}
Although stability conditions can be constructed on the bielliptic surface $S$ intrinsically as in the previous subsection, it will also be useful to consider them as being \emph{induced} from the canonical cover $X$ by the group action $G'$.  Recall that $\pi:X\to S$ denotes covering map of a bielliptic surface $S$ by its abelian canonical cover $X$.  Via the fixed-point free action of $G'$, $\Coh(S)$ is naturally isomorphic to the category of coherent $G'$-sheaves on $X$, $\Coh_{G'}(X)$, where $G'=G/H$, thus giving a natural equivalence of $\Db(S)$ with $\Db_{G'}(X)$.  We make this identification implicitly below.  

In \cite{MMS09} the authors construct two faithful adjoint functors $$\For:\Db_{G'}(X)\rightarrow \Db(X),$$ which forgets the $G'$-sheaf structure, and $$\Inf:\Db(X)\rightarrow \Db_{G'}(X),\qquad\Inf(E):=\oplus_{g\in G'} g^*E.$$  Under the above identifications we have $\For=\pi^*$ and $\Inf=\pi_*$.  Since $G'$ acts on $\Stab(X)$ via the natural action of $\Aut(\Db(X))$ on $\Stab(X)$, we can define $$\Gamma_{X}:=\{\sigma\in \Stab(X):g^*\sigma=\sigma,\text{ for all }g\in G'\}.$$

These functors induce two continuous maps.  The first $(\pi^*)^{-1}:\Gamma_{X}\rightarrow \Stab(\Db(S))$ is given by $Z_{(\pi^*)^{-1}(\sigma)}=Z_{\sigma}\circ \pi^*$ and $\AA_{(\pi^*)^{-1}(\sigma)}=\{E\in\Db(S)\colon \pi^*E\in\AA_{\sigma}\}$, where we use $\pi^*$ also for the morphism between $K$-groups.  The second $(\pi_*)^{-1}:(\pi^*)^{-1}(\Gamma_{X})\rightarrow \Stab(X)$ is defined similarly with $\pi^*$ replaced by $\pi_*$.

We consider the connected component $\Stab^{\dagger}(X)\subset \Stab(X)$ described in the section above.  The following result is relevant to us:

\begin{Thm}\label{induced}\cite[Proposition 3.1]{MMS09} The non-empty subset $\Sigma(S):=(\pi^*)^{-1}(\Gamma_{X}\cap \Stab^{\dagger}(X))$ is open and closed in $\Stab(S)$, and it is embedded into $\Stab^{\dagger}(X)$ as a closed submanifold via the functor $(\pi_*)^{-1}$.  Moreover, the diagram 

\[\begin{CD}
\Gamma_{X}\cap \Stab^{\dagger}(X)@ >(\pi^*)^{-1} >> \Sigma(S)@ >(\pi_*)^{-1} >>\Gamma_{X}\cap\Stab^{\dagger}(X)\\
@VV V  @VV\mathcal Z V  @VV V\\
({K_{\num}(X)}^{G'}_{\C})^{\vee} @>(\pi^*)^{\vee} >>{K_{\num}(S)}_{\C}^{\vee}@>\pi_*^{\vee} >>({K_{\num}(X)}^{G'}_{\C})^{\vee}
\end{CD}\] commutes.

\end{Thm}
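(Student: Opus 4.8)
The plan is to treat this as an instance of the general procedure from \cite{MMS09} for inducing stability conditions along a faithful adjoint pair, specialized to the free finite quotient $\pi\colon X\to S$ with Galois group $G'\cong\Z/\o\Z$; throughout I use the identification $\Db(S)\cong\Db_{G'}(X)$ with $\For=\pi^*$ and $\Inf=\pi_*$. First I would verify that $(\pi^*)^{-1}$ genuinely lands in $\Stab(S)$. For $\sigma=(Z_\sigma,\AA_\sigma)\in\Gamma_X$, the invariance $g^*\sigma=\sigma$ forces $\AA_\sigma$ to be stable under the $G'$-action on $\Db(X)$, so $\AA:=\{E\in\Db(S)\colon\pi^*E\in\AA_\sigma\}$ is the heart of a bounded $t$-structure (using that $\pi^*$ is exact, faithful and finite). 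Positivity and the existence of Harder--Narasimhan filtrations for $(Z_\sigma\circ\pi^*,\AA)$ then transfer from $\sigma$, since $\pi^*$ is exact and faithful and, for invariant $\sigma$, carries $\AA$-semistable objects to $\sigma$-semistable objects of the same slope.

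Next I would establish that $\Sigma(S)$ is open and closed in $\Stab(S)$. The structural input is that $\pi^*$ induces an isomorphism $K_{\num}(S)_\C\isomor K_{\num}(X)^{G'}_\C$ (the integral refinement being the sublattice computation recalled above), so that $(\pi^*)^\vee$ identifies the invariant central charges on $X$ with all central charges on $S$. Consequently $\Gamma_X\cap\Stab^{\dagger}(X)$ and $\Stab(S)$ are complex manifolds of the same dimension, and $(\pi^*)^{-1}$ is a local homeomorphism by applying Bridgeland's theorem (\cref{thm:BridgelandMain}) on both sides with compatible central charges; a local homeomorphism has open image, which gives openness of $\Sigma(S)$. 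For closedness I would use that $\Gamma_X$ is the fixed locus of the finite group $G'$ acting by homeomorphisms, hence closed, and that $\Stab^{\dagger}(X)$ is a connected component: a limit of induced stability conditions inherits a $G'$-invariant descended heart and a central charge still lying in the good component, so it remains induced.

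For the embedding and the commutativity of the diagram I would compute the round trip. Because $\pi$ is étale Galois with group $G'$, one has $\pi^*\pi_*E\cong\bigoplus_{g\in G'}g^*E$, so, writing $\tau=(\pi^*)^{-1}(\sigma)$ and using $g^*\sigma=\sigma$,
\[
Z_{(\pi_*)^{-1}(\tau)}=Z_\tau\circ\pi_*=Z_\sigma\circ\pi^*\circ\pi_*=\o\,Z_\sigma.
\]
Rescaling a central charge by the positive integer $\o$ leaves the heart and every slope unchanged, so $(\pi_*)^{-1}(\tau)=\sigma$ on the nose. Hence $(\pi^*)^{-1}$ is injective with continuous left inverse $(\pi_*)^{-1}$, yielding the claimed closed embedding into $\Stab^{\dagger}(X)$; commutativity of the diagram is then the formal transcription of the identities $Z_{(\pi^*)^{-1}(\sigma)}=Z_\sigma\circ\pi^*$ and $Z_{(\pi_*)^{-1}(\tau)}=Z_\tau\circ\pi_*$ against the dual maps $(\pi^*)^\vee$ and $\pi_*^\vee$.

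I expect the main obstacle to be the support property for the descended stability condition, the remaining axioms passing mechanically through the exact faithful functor $\pi^*$. One must produce a single constant valid for \emph{all} $(Z_\sigma\circ\pi^*,\AA)$-semistable objects on $S$; the point is that such an object pulls back under $\pi^*$ to a $\sigma$-semistable object on $X$ (again invoking invariance of $\sigma$) whose mass and norm are comparable to those downstairs up to the factor $\o$, so the support property on $S$ follows from the one on $X$. A secondary point to record is the nonemptiness asserted in the statement: it suffices to exhibit a single $G'$-invariant geometric stability condition $\sigma_{\omega,\beta}$ on $X$, which exists upon taking $\omega,\beta\in\NS(X)^{G'}_\Q$ with $\omega$ the pullback of an ample class from $S$.
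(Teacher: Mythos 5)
First, a framing remark: the paper does not prove this theorem at all --- it is imported verbatim from \cite[Proposition 3.1]{MMS09} --- so your proposal is really a reconstruction of the MMS09 inducing machinery, which it does follow in outline. Within that outline there is one concrete error and one genuine gap. The error: your own displayed computation gives $Z_{(\pi_*)^{-1}(\tau)}=\o\,Z_\sigma$, and yet you conclude that ``$(\pi_*)^{-1}(\tau)=\sigma$ on the nose'' because rescaling leaves the heart and slopes unchanged. That is false under the conventions you (and the paper) adopt: a stability condition is the \emph{pair} $(Z,\AA)$, and $(\o\,Z_\sigma,\AA_\sigma)$ and $(Z_\sigma,\AA_\sigma)$ are distinct points of $\Stab(X)$, since the map $\mathcal{Z}$ of \cref{thm:BridgelandMain} separates them. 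The round trip is the action of the scaling element $\o\in\R_{>0}\subset\wGL2$, not the identity --- consistently, the bottom row of the stated diagram composes to multiplication by $\o$ on $({K_{\num}(X)}^{G'}_{\C})^{\vee}$, because $\pi^*\circ\pi_*=\sum_{g\in G'}g^*$ acts by $\o$ on invariants. Since this scaling is a homeomorphism preserving $\Gamma_X$ and $\Stabd(X)$, your injectivity and embedding conclusions survive once it is inserted; but as written you are verifying the wrong identity.

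The genuine gap is the closedness of $\Sigma(S)$ in $\Stab(S)$, which is the substantive content of the MMS09 result. Your sentence ``a limit of induced stability conditions inherits a $G'$-invariant descended heart and a central charge still lying in the good component'' asserts exactly what has to be proven. Nor can closedness be extracted from your other ingredients: you cannot write $\Sigma(S)$ as the preimage of the closed set $\Gamma_X\cap\Stabd(X)$ under $(\pi_*)^{-1}$, because that map is only defined \emph{on} $\Sigma(S)$ --- for a non-induced $\tau$ the pair $\left(Z_\tau\circ\pi_*,\{F\colon\pi_*F\in\AA_\tau\}\right)$ need not be a stability condition --- and a continuous left inverse gives a topological embedding, not a closed image. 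What is actually required is a quantitative comparison of the generalized metrics on $\Stab(S)$ and $\Stab(X)$ through the inducing maps: semistability transfers in both directions for invariant/induced conditions (the analogues of \cref{lem:WhenIsPullbackStable,lem:WhenIsPushForwardStable}), masses and phases match via $Z_\tau(\pi_*F)=\o\,Z_\sigma(F)$ and the correspondence of HN filtrations, so that a convergent sequence $\tau_n\to\tau$ with $\tau_n=(\pi^*)^{-1}(\sigma_n)$ forces $(\sigma_n)$ to be Cauchy; its limit then lies in the closed set $\Gamma_X\cap\Stabd(X)$ and is checked to induce $\tau$. Without this step the ``open and closed'' assertion --- precisely the property invoked immediately after the theorem to conclude $\Sigma(S)=\Stabd(S)$ --- is unsupported. (Your treatment of the support property and of nonemptiness via $G'$-invariant $\omega,\beta$ is fine in spirit and matches the standard argument.)
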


In particular, if we choose $\omega,\beta\in\NS(X)^{G'}_{\Q}$, then $\sigma_{\omega,\beta}\in\Gamma_X\cap\Stabd(X)$, and $(\pi^*)^{-1}(\sigma_{\omega,\beta})=\sigma_{\omega',\beta'}$, where $\omega',\beta'\in\NS(S)_{\Q}$ are such that $\pi^*\omega'=\omega$ and $\pi^*\beta'=\beta$.  From Theorem \ref{induced} it follows that $\Sigma(S)=\Stabd(S)$, so we may always relate stability on $S$ to stability on $X$.  We explore this in the next section.

\section{Basic properties of Bridgeland semistable objects on $S$}\label{sec:StableObjects}
We begin by recalling a convenient criterion for determining when an object in $\Db(X)$ descends to $S$ and when an object in $\Db(S)$ is the pushforward of an object from $X$.
\begin{Lem}\cite[Lemma 2.4, Proposition 2.5]{BM01} \label{lem:DescentAndPushforward}
\begin{enumerate}
    \item Let $E\in\Db(S)$.  Then $E\cong\pi_*(F)$ for some $F\in\Db(X)$ if and only if $E\otimes\o\cong E$.
    \item Let $g\in G'$ be a generator.  Then $F\in\Db(X)$ satisfies $F\cong\pi^*E$ for some $E\in\Db(S)$ if and only if $g^*F\cong F$.
\end{enumerate}
\end{Lem}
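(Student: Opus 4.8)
The plan is to carry out everything through the equivalence $\Db(S)\cong\Db_{G'}(X)$ recalled above, under which $\pi^*=\For$ and $\pi_*=\Inf$, where $\Inf(F)=\bigoplus_{g\in G'}g^*F$ carries its tautological $G'$-linearization. I will freely use that $\pi\colon X\to S$ is the quotient by the free action of $G'=\langle g\rangle\cong\Z/\o\Z$, so that $\pi\circ g=\pi$; that $\pi$ is the canonical cover, so $\pi^*K_S\cong\OO_X$; and that tensoring by $K_S$ on $S$ corresponds to twisting the $G'$-linearization by a fixed character $\chi$ generating $\widehat{G'}$.

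The two necessity directions are formal. For (1), if $E\cong\pi_*F$ then the projection formula together with $\pi^*K_S\cong\OO_X$ gives
$$E\otimes K_S\cong\pi_*F\otimes K_S\cong\pi_*(F\otimes\pi^*K_S)\cong\pi_*F\cong E.$$
For (2), if $F\cong\pi^*E$ then, since $g$ is a deck transformation,
$$g^*F\cong g^*\pi^*E\cong(\pi\circ g)^*E\cong\pi^*E\cong F.$$

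The content lies in the two sufficiency directions, which are descent statements along the \'etale $\Z/\o\Z$-cover $\pi$. For (2), given an isomorphism $\lambda\colon g^*F\isomor F$, I would try to upgrade $F$ to a $G'$-linearized object, i.e. an object of $\Db_{G'}(X)\cong\Db(S)$ whose image under $\For=\pi^*$ is $F$. Because $G'$ is cyclic, one sets $\theta_{g^k}:=\lambda\circ g^*\lambda\circ\cdots\circ(g^{k-1})^*\lambda$; these satisfy the cocycle identity automatically except for the requirement that the $\o$-fold composite $\Lambda:=\theta_{g^\o}\colon(g^\o)^*F=F\to F$ be the identity. A priori $\Lambda$ is merely an automorphism of $F$, and rescaling $\lambda$ by $c\in\C^*$ replaces $\Lambda$ by $c^{\o}\Lambda$; thus when $F$ is simple, $\Lambda$ is a scalar and extracting an $\o$-th root (possible over $\C$) normalizes $\Lambda=\id$, producing the linearization and hence $E$ with $\pi^*E\cong F$. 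For (1), the hypothesis $E\otimes K_S\cong E$ says precisely that the linearized object $E$ is isomorphic to its $\chi$-twist, and since the essential image of $\Inf=\pi_*$ is exactly the $\chi$-twist-invariant objects (the ``only if'' in (1) being the easy half of this), the same linearization argument, applied dually, recovers an $F$ with $\pi_*F\cong E$.

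I expect the crux to be exactly the normalization of $\Lambda$ for objects that are \emph{not} simple: then $\Lambda$ need not be scalar, root extraction is unavailable, and one must instead invoke effectivity of descent for the $\Z/\o\Z$-torsor $X\to S$ at the level of the \emph{derived} category, where gluing is not automatic. This is the technical heart supplied by \cite[Lemma 2.4, Proposition 2.5]{BM01}, whose machinery I would cite to conclude both ``if'' directions uniformly rather than reconstructing the linearization theory from scratch.
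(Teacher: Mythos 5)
Your proposal is correct and takes essentially the same approach as the paper, which in fact gives no proof of Lemma~\ref{lem:DescentAndPushforward} at all but quotes it wholesale from \cite{BM01}, precisely where you defer the hard sufficiency directions. Your verification of the two necessity directions (projection formula plus $\pi^*\omega_S\cong\OO_X$, and $\pi\circ g=\pi$), your cocycle construction $\theta_{g^k}=\lambda\circ g^*\lambda\circ\cdots\circ(g^{k-1})^*\lambda$ with scalar normalization in the simple case, and your identification of the non-simple normalization problem as the technical crux supplied by \cite{BM01} are all accurate.
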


Now we turn more specifically to the relationship between stable objects on $S$ and those on $X$.  To that end, let $\sigma\in\Stabd(S)$ satisfy $\sigma=(\pi^*)^{-1}(\sigma')$ for some $\sigma'\in\Gamma_X\cap\Stabd(X)$. We first study when two $\sigma$-stable objects pull back to the same object on $X$:
\begin{Lem}\label{lem:PullBackEqual}
If $\sigma$-stable $E_1$ and $E_2$ in $\Db(S)$ satisfy $\pi^*E_1\cong\pi^*E_2$, then $E_1\cong E_2\otimes \o^i$ for some $i\in\Z$.
\end{Lem}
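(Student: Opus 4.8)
The plan is to produce, by adjunction, a nonzero morphism $E_1 \to E_2 \otimes \omega_S^i$ in $\Db(S)$ for some $i$, and then to upgrade it to an isomorphism using that both objects are $\sigma$-stable of the same phase. Throughout I write $\omega_S = \OO_S(K_S)$, so that $\o^i$ in the statement is $\omega_S^{\otimes i}$, and I recall from \cref{induced} that $\sigma = (\pi^*)^{-1}(\sigma')$ has central charge $Z_\sigma = Z_{\sigma'}\circ\pi^*$ and heart $\AA_\sigma = \{E : \pi^*E \in \AA_{\sigma'}\}$.

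First I would record the two structural inputs. Since $\pi$ is \'etale, $\omega_X \cong \pi^*\omega_S$, and $\omega_X \cong \OO_X$ gives $\pi^*\omega_S \cong \OO_X$; hence $\pi^*(E\otimes\omega_S^i) \cong \pi^*E$ for every $i$ and every $E$. Consequently the autoequivalence $\Phi := (-)\otimes\omega_S$ fixes $\sigma$: it preserves $\AA_\sigma$ because $\pi^*\Phi(E)\cong\pi^*E$ lies in $\AA_{\sigma'}$ exactly when $\pi^*E$ does, and it preserves $Z_\sigma$ because $Z_\sigma(\Phi E) = Z_{\sigma'}(\pi^*\Phi E) = Z_{\sigma'}(\pi^*E) = Z_\sigma(E)$. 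Therefore $\Phi$ carries $\sigma$-stable objects to $\sigma$-stable objects without changing their phase, so each $E_2\otimes\omega_S^i = \Phi^i(E_2)$ is $\sigma$-stable; moreover, using $\pi^*E_1\cong\pi^*E_2$, all of $E_1$ and the $E_2\otimes\omega_S^i$ share the central charge $Z_{\sigma'}(\pi^*E_1)$, hence the same phase $\phi$. The second input is the standard cyclic-cover decomposition $\pi_*\OO_X \cong \bigoplus_{i=0}^{\o-1}\omega_S^i$ (the same fact underlying \cref{lem:DescentAndPushforward}), which via the adjunction $\pi^*\dashv\pi_*$ and the projection formula yields
$$\Hom_{\Db(X)}(\pi^*E_1,\pi^*E_2) \cong \Hom_{\Db(S)}\Bigl(E_1,\bigoplus_{i=0}^{\o-1}E_2\otimes\omega_S^i\Bigr) \cong \bigoplus_{i=0}^{\o-1}\Hom_{\Db(S)}(E_1,E_2\otimes\omega_S^i).$$

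To finish, I would feed the given isomorphism $\pi^*E_1\cong\pi^*E_2$, a nonzero class on the left-hand side, through this decomposition; nonvanishing of the direct sum forces $\Hom_{\Db(S)}(E_1,E_2\otimes\omega_S^i)\neq 0$ for some $i$. Since $E_1$ and $E_2\otimes\omega_S^i$ are $\sigma$-stable of the same phase $\phi$, after a common shift they are simple objects of the abelian category $\PP(\phi)$, so any nonzero degree-zero morphism between them is an isomorphism; thus $E_1\cong E_2\otimes\omega_S^i$, which is the claim. The only genuinely delicate point is the bookkeeping in the middle paragraph: one must check that $\Phi$ literally fixes the pair $(Z_\sigma,\AA_\sigma)$ (not merely up to the $\wGL2$-action) so that it preserves stable objects and their phases on the nose, and that the adjunction isomorphism is one of degree-zero $\Hom$ spaces so that the resulting nonzero map lives in $\PP(\phi)$ rather than in some $\Ext^{>0}$; both follow directly from $\pi^*\Phi\cong\pi^*$ together with the exactness of the finite pushforward $\pi_*$.
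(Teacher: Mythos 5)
Your proof is correct and is essentially the paper's own argument: both hinge on the decomposition $\pi_*\pi^*E_2\cong E_2\otimes\pi_*\OO_X\cong\bigoplus_{i=0}^{\o-1}E_2\otimes\omega_S^{i}$ to extract a nonzero map in some $\Hom(E_1,E_2\otimes\omega_S^{i})$, which stability of both objects at the same phase promotes to an isomorphism. The only cosmetic difference is that you transport the given isomorphism via the adjunction $\pi^*\dashv\pi_*$, whereas the paper pushes the isomorphism forward and then applies $\Hom(E_1,\blank)$, using the identity on $E_1$ as the nonzero class; your extra verification that $(-)\otimes\omega_S$ fixes $(Z_\sigma,\AA_\sigma)$ exactly is a careful spelling-out of what the paper leaves implicit.
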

\begin{proof}
Pushing forward the isomorphism $\pi^*E_1\cong\pi^*E_2$, we get from the construction of the canonical cover that  \begin{align*}
    \bigoplus_{i=0}^{\o-1}E_1\otimes\o^i&\cong E_1\otimes\left(\bigoplus_{i=0}^{\o-1}\o^i\right)\cong E_1\otimes\pi_*\OO_X\cong\pi_*\pi^*E_1\\&\cong\pi_*\pi^*E_2\cong E_2\otimes\pi_*\OO_X\cong\bigoplus_{i=0}^{\o-1}E_2\otimes\o^i.
\end{align*}
Applying $\Hom(E_1,\blank)$, we see that for some $i$, we must have $\Hom(E_1,E_2\otimes\o^i)\neq 0$.  But as $E_1$ and $E_2\otimes\o^i$ are both $\sigma$-stable of the same slope, it follows that $E_1\cong E_2\otimes\o^i$, as required.
\end{proof}

Of course, it is important to understand when a $\sigma$-stable object pulls back to a $\sigma'$-stable object, the subject of our next lemma:

\begin{Lem}\label{lem:WhenIsPullbackStable} Denote by $g$ a generator of $G'$.  If $E\in\Db(S)$ is $\sigma$-semistable, then $\pi^*E$ is $\sigma'$-semistable.  Moreover, for a $\sigma$-stable object $E$, $\pi^*E$ is $\sigma'$-stable of the same slope unless $E\cong E\otimes\o^{\o/d}$ for some $1<d\leq\o$, in which case $\pi^*E\cong \bigoplus^{d-1}_{k=0} (g^k)^*F$, where $F$ is $\sigma'$-stable and the $(g^k)^*F$ are distinct for $k=0,...,d-1$.  
\end{Lem}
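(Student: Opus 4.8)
The plan is to exploit the induced‐stability relation $\sigma=(\pi^*)^{-1}(\sigma')$, which gives $Z_\sigma=Z_{\sigma'}\circ\pi^*$ and hence $\mu_\sigma(E)=\mu_{\sigma'}(\pi^*E)$ for every $E\in\AA_\sigma$, so that $\pi^*$ is slope‐preserving. I would record at the outset two formal consequences of the canonical cover: first, since $\pi^*\o\cong\OO_X$, tensoring by $\o$ fixes $\sigma$ (it preserves both $Z_\sigma$ and $\AA_\sigma$), so each $E\otimes\o^i$ is again $\sigma$-stable of the same slope as $E$; second, $\pi_*\pi^*E\cong\bigoplus_{i=0}^{\o-1}E\otimes\o^i$. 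For the semistability claim, suppose $E$ is $\sigma$-semistable but $\pi^*E$ is not $\sigma'$-semistable. Its Harder–Narasimhan filtration is unique, and since $g^*\sigma'=\sigma'$ and $g^*\pi^*E\cong\pi^*E$, applying $g^*$ produces another HN filtration; by uniqueness the maximal destabilizing subobject is $g^*$-invariant, hence by \cref{lem:DescentAndPushforward} descends to a subobject of $E$ of strictly larger slope, contradicting $\sigma$-semistability. (The same comparison shows that $\pi^*G$ is semistable if and only if $G$ is.)

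Now assume $E$ is $\sigma$-stable, so $\pi^*E$ is $\sigma'$-semistable of some slope $\mu$; let $F\subseteq\pi^*E$ be a $\sigma'$-stable subobject of slope $\mu$ (a Jordan–H\"older factor). Each translate $(g^k)^*F\subseteq (g^k)^*\pi^*E\cong\pi^*E$ is $\sigma'$-stable of slope $\mu$. The image $P$ of $\bigoplus_k(g^k)^*F\to\pi^*E$ is $g^*$-invariant, so it descends to a subobject $E'\subseteq E$; since $P$ is simultaneously a quotient of a slope-$\mu$ semistable object and a subobject of $\pi^*E$, it has slope $\mu$, whence $E'=E$ by stability and $P=\pi^*E$. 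Thus $\pi^*E$ is a quotient of the polystable object $\bigoplus_k(g^k)^*F$, and because the category of $\sigma'$-semistable objects of slope $\mu$ is of finite length, $\pi^*E$ is itself semisimple, with simple factors in the $G'$-orbit of $F$. Writing $d$ for the size of that orbit, $g^*$-invariance of $\pi^*E$ together with transitivity of $\langle g\rangle$ on the orbit forces $\pi^*E\cong\bigl(\bigoplus_{k=0}^{d-1}(g^k)^*F\bigr)^{\oplus m}$ with the $(g^k)^*F$ distinct and $d\mid\o$.

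The crux is to prove $m=1$, which I expect to be the main obstacle. Here I would compute $\End_X(\pi^*E)$ in two ways. On the one hand the decomposition gives $\End_X(\pi^*E)\cong M_m(\C)^{\oplus d}$, on which $G'$ acts by cyclically permuting the $d$ blocks; a $G'$-invariant endomorphism is therefore determined by its $0$-th block, which must be fixed by the generator of the stabilizer of $F$ acting by conjugation by some $h\in\GL_m(\C)$. Hence $\End_S(E)=\End_X(\pi^*E)^{G'}$ is isomorphic to the centralizer of $h$, of dimension at least $m$; but $E$ is $\sigma$-stable, so $\End_S(E)=\C$, forcing $m=1$.

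Finally I would identify the exceptional condition. Using the second computation, $\End_X(\pi^*E)=\Hom_S(E,\pi_*\pi^*E)=\bigoplus_{i=0}^{\o-1}\Hom_S(E,E\otimes\o^i)$, and since each $E\otimes\o^i$ is $\sigma$-stable of the same slope, this has dimension $|H|$, where $H=\{\,i:E\cong E\otimes\o^i\,\}$ is a subgroup of $\Z/\o\Z$. Comparing with the value $d$ coming from $m=1$ gives $|H|=d$, so $H=\langle\o/d\rangle$. Therefore if $d=1$ then $H$ is trivial and $\pi^*E=F$ is $\sigma'$-stable, while if $d>1$ then $\o/d\in H$, i.e.\ $E\cong E\otimes\o^{\o/d}$, which is precisely the exceptional case; in that case $\pi^*E\cong\bigoplus_{k=0}^{d-1}(g^k)^*F$ with the $(g^k)^*F$ distinct, completing the proof.
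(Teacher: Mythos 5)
Your argument is correct, and its first and last steps coincide with the paper's: the semistability claim via uniqueness of the HN filtration and descent is identical, and the final counting $\End_X(\pi^*E)=\bigoplus_{i=0}^{\o-1}\Hom(E,E\otimes\o^i)$ via push--pull adjunction is the same mechanism the paper uses. Where you genuinely diverge is the decisive middle step, namely how one obtains $\pi^*E\cong\bigoplus_{k=0}^{d-1}(g^k)^*F$ with multiplicity one. The paper stays inside the subobject lattice of $\pi^*E$: it runs an induction showing that the sum of the $d$ pairwise non-isomorphic translates $(g^k)^*F$ inside $\pi^*E$ is direct (each new translate meets the previous sum in $0$ by the see-saw principle and stability), so multiplicity never arises as an issue; the resulting $g^*$-invariant subobject descends and equals $\pi^*E$ by stability. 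You instead pass to the finite-length abelian subcategory of $\sigma'$-semistable objects of fixed slope (legitimate here because the stability conditions in play satisfy the support property), observe that $\pi^*E$ is a quotient of a polystable object and hence is polystable of the form $\bigl(\bigoplus_{k=0}^{d-1}(g^k)^*F\bigr)^{\oplus m}$, and then kill $m>1$ by a Clifford-theory style computation: $\End_S(E)=\End_X(\pi^*E)^{G'}$ is the centralizer of a matrix $h\in\GL_m(\C)$ (Skolem--Noether applied to the action of $g^d$ on one isotypic block), which has dimension at least $m$, while stability forces it to equal $\C$. Your route buys two things: the forward and converse implications fall out of a single computation (the paper verifies $\hom(\pi^*E,\pi^*E)=\C^d$ separately when $E\cong E\otimes\o^{\o/d}$), and identifying $H=\{i\colon E\cong E\otimes\o^i\}$ as a subgroup of $\Z/\o\Z$ of order $d$ makes the passage from ``$d$ values of $i$'' to ``$E\cong E\otimes\o^{\o/d}$'' cleaner than in the paper, where the subgroup structure is left implicit; the paper's route, in exchange, is more elementary and avoids both the finite-length input and the endomorphism-algebra bookkeeping. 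One cosmetic point: when forming the image $P$, you should sum the translates of the inclusion over all of $G'$ using the canonical equivariant structure of $\pi^*E$, since $g^*$ applied to the inclusion of $(g^{d-1})^*F$ need not return the original inclusion of $F$ as a subobject; with that convention $g^*P=P$ holds on the nose and the descent to $E'\subseteq E$ is unambiguous.
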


\begin{proof} We show first that $\pi^*E$ is $\sigma'$-semistable if $E$ is $\sigma$-semistable.  If not, we can take the maximal destabilizing subobject $F\subset\pi^*E$ with $\mu_{\sigma'}(F)>\mu_{\sigma'}(\pi^*E)$.  But by its uniqueness, $F$ must be fixed by the action of $G'$, so $F\cong\pi^*E'$ for some $E'\subset E$ by Lemma \ref{lem:DescentAndPushforward}.  But then $\mu_{\sigma}(E')>\mu_{\sigma}(E)$, contradicting the $\sigma$-semistability of $E$.

Now suppose that $E$ is $\sigma$-stable and $\pi^*E$ is strictly $\sigma'$-semistable.  Let $F\subset \pi^*E$ be a proper nontrivial $\sigma'$-stable subobject of the same slope.  Consider the stabilizer of $F$, $\stab(F)\subset G'$.  Since $G'$ is cyclic, $\stab(F)=\langle g^d\rangle$ for some $d\mid\o$.  If $d=1$, then by Lemma \ref{lem:DescentAndPushforward} there is a nontrivial $\sigma$-stable object $E'\subset E$ with $\pi^*E'=F$, so $\mu_\sigma(E')=\mu_\sigma(E)$, contradicting the stability of $E$.

Otherwise, $d>1$, and we note that $(g^k)^*F\subset\pi^*E$ for all $k$.  So consider $\bigoplus_{k=0}^{d-1} (g^k)^*F$.  Then by induction and the $\sigma'$-stability of $(g^k)^*F$, it follows that $\tilde{F}:=\bigoplus_{k=0}^{d-1} (g^k)^*F\subset \pi^*E$.  Indeed, the base case is clear, so suppose that we have already shown that $\bigoplus_{k=0}^{l} (g^k)^*F\subset \pi^*E$.  Then we have the short exact sequence $$0\to \bigoplus_{k=0}^{l} (g^k)^*F\cap (g^{l+1})^*F\to\bigoplus_{k=0}^{l+1} (g^k)^*F\to\bigoplus_{k=0}^{l} (g^k)^*F+(g^{l+1})^*F\to 0,$$ from which it follows that $$\mu_{\sigma'}\left(\bigoplus_{k=0}^{l} (g^k)^*F\cap (g^{l+1})^*F\right)=\mu_{\sigma'}\left(\bigoplus_{k=0}^{l+1} (g^k)^*F\right)=\mu_{\sigma'}\left(\bigoplus_{k=0}^{l} (g^k)^*F+(g^{l+1})^*F\right)=\phi,$$ by the see-saw principle, $\sigma'$-semistability of $\bigoplus_{k=0}^{l+1} (g^k)^*F$ and $\pi^*E$, and the inclusion $\bigoplus_{k=0}^{l} (g^k)^*F+(g^{l+1})^*F\subset\pi^*E$.  But then $\bigoplus_{k=0}^{l} (g^k)^*F\cap (g^{l+1})^*F\subset (g^{l+1})^*F$ and has the same slope, so it must either be 0 or all of $(g^{l+1})^*F$.  The latter is impossible, so we get that the sum $\bigoplus_{k=0}^{l} (g^k)^*F+(g^{l+1})^*F$ is direct as claimed.

Now clearly $\tilde{F}$ satisfies $g^*\tilde{F}\cong\tilde{F}$, so $\tilde{F}\cong\pi^*F'$ for some $F'\subset E$.  If $\tilde{F}\neq\pi^*E$, then $F'$ is a proper subobject of $E$ of the same slope, contradicting the $\sigma$-stability of $E$.  Thus $\bigoplus_{k=0}^{d-1} (g^k)^*F\cong\pi^*E$.  Pushing forward gives $$\bigoplus_{i=0}^{\o-1} E\otimes\o^i\cong\pi_*\pi^*E\cong \bigoplus_{k=0}^{d-1} \pi_*((g^k)^*F)\cong\pi_*(F)^{\oplus d},$$ since $\pi_*(g^*F)\cong\pi_*(F)$.  Now we take $\Hom(E,\blank)$ to obtain 
\begin{align*}\bigoplus_{i=0}^{\o-1} \Hom(E,E\otimes\o^i)&=\Hom(E,\pi_*(F))^{\oplus d}\\
&=\Hom(\pi^*E,F)^{\oplus d}\\
&=\bigoplus_{k=0}^{d-1} \Hom((g^k)^*F,F)^{\oplus d},\end{align*} 
where we have used the fact that $\pi^*$ and $\pi_*$ are adjoint functors.  Of course, $\Hom(E,E)=\Hom(F,F)=\C$ and $\Hom(F,(g^k)^*F)=0$ for $0<k<d$ from stability and the fact that $(g^k)^*F$ are non-isomorphic $\sigma'$-stable objects of the same slope.  It follows that for $d$ values of $i$ (including $i=0$) we have $\Hom(E,E\otimes\o^i)=\C$.  Thus $E\otimes\o^{\o/d}\cong E$.

Conversely, suppose that $E\cong E\otimes\o^{\o/d}$.  Then 
\begin{align*}\Hom(\pi^*E,\pi^*E)
&=\Hom(E,\pi_*\pi^*E)=\Hom(E,\bigoplus_{i=0}^{\o-1} E\otimes\o^i)\\
&=\bigoplus_{i=0}^{\o-1}\Hom(E,E\otimes\o^i)=\bigoplus_{k=0}^{d-1}\Hom(E,E\otimes\o^{k\frac{\o}{d}})\\
&=\C^d,
\end{align*}
so that $\pi^*E$ cannot be stable.
\end{proof}

It is also important to know when $\sigma'$-stable $F\in\Db(X)$ pushes forward to a $\sigma$-stable object.  The proof of the following lemma is precisely dual to that of Lemma \ref{lem:WhenIsPullbackStable}, with $\pi^*$ replaced by $\pi_*$, so we omit the details.
\begin{Lem}\label{lem:WhenIsPushForwardStable}
Let $g\in G'$ be a generator.  If $F\in\Db(X)$ is $\sigma'$-semistable, then $\pi_*(F)$ is $\sigma$-semistable.  Moreover, for a $\sigma'$-stable object $F$, $\pi_*(F)$ is $\sigma$-stable unless $(g^{\o/d})^*F\cong F$ for some $1<d\leq\o$, in which case $\pi_*(F)\cong\bigoplus_{k=0}^{d-1}E\otimes\o^k$, where $E$ is $\sigma$-stable of the same slope and the $E\otimes\o^k$ are distinct for $k=0,\dots,d-1$.
\end{Lem}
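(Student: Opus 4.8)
The plan is to transcribe the proof of \cref{lem:WhenIsPullbackStable} under the formal exchange $\pi^*\leftrightarrow\pi_*$, replacing the $G'$-action by the autoequivalences $g^*$ with the $\Z/\o\Z$-action by the autoequivalences $\blank\otimes\o^i$, and replacing the descent criterion \cref{lem:DescentAndPushforward}(2) by its counterpart \cref{lem:DescentAndPushforward}(1). Before any dualization can proceed, the one structural fact to check is that each $\blank\otimes\o^i$ fixes $\sigma$: since $K_S$ is torsion in $\NS(S)$ its class vanishes in $\NS(S)_\Q$, so $\blank\otimes\o$ acts trivially on $K_{\num}(S)$, hence preserves $Z_\sigma$, and it preserves $\mu_{\omega',\beta'}$-slopes and torsion, hence fixes the heart $\AA(\omega',\beta')$. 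This is exactly the analogue of the hypothesis $g^*\sigma'=\sigma'$ encoded in $\sigma'\in\Gamma_X$, and it is what will drive the uniqueness arguments below. I also record the slope bookkeeping $Z_\sigma(\pi_*F')=Z_{\sigma'}(\pi^*\pi_*F')=\o\,Z_{\sigma'}(F')$, valid for every $F'$ since $Z_\sigma=Z_{\sigma'}\circ\pi^*$ and $\pi^*\pi_*F'\cong\bigoplus_{i=0}^{\o-1}(g^i)^*F'$, so that $\mu_\sigma(\pi_*F')=\mu_{\sigma'}(F')$.

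For semistability I would argue by contradiction as in \cref{lem:WhenIsPullbackStable}. If $\pi_*F$ were not $\sigma$-semistable, its maximal destabilizing subobject $E'\subset\pi_*F$ is unique; since $\pi_*F\otimes\o\cong\pi_*F$ (projection formula, $\pi$ \'etale) and $\blank\otimes\o$ fixes $\sigma$, uniqueness forces $E'\otimes\o\cong E'$. Then \cref{lem:DescentAndPushforward}(1) gives $E'\cong\pi_*F'$, and the monomorphism $\pi_*F'\hookrightarrow\pi_*F$ descends, through the biadjunction $\pi^*\dashv\pi_*\dashv\pi^*$, to a monomorphism $F'\hookrightarrow F$; the slope bookkeeping then gives $\mu_{\sigma'}(F')=\mu_\sigma(E')>\mu_\sigma(\pi_*F)=\mu_{\sigma'}(F)$, contradicting the $\sigma'$-semistability of $F$.

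For the stability dichotomy, suppose $F$ is $\sigma'$-stable but $\pi_*F$ is strictly $\sigma$-semistable, and choose a proper $\sigma$-stable subobject $E\subset\pi_*F$ of the same slope. Because $\pi_*F$ is $\otimes\o$-invariant, each twist $E\otimes\o^k$ is again a subobject of $\pi_*F$; let $d\mid\o$ be the number of distinct such twists. If $d=1$ then $E\otimes\o\cong E$ descends to a proper subobject of $F$ of the same slope, contradicting stability. If $d>1$, I would run the dual of the inductive see-saw argument to show that $\widetilde{E}:=\bigoplus_{k=0}^{d-1}E\otimes\o^k$ is a subobject of $\pi_*F$; being $\otimes\o$-invariant it descends, and the $\sigma'$-stability of $F$ forces $\widetilde{E}\cong\pi_*F$. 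Pulling back gives $\bigoplus_{i=0}^{\o-1}(g^i)^*F\cong\pi^*\pi_*F\cong(\pi^*E)^{\oplus d}$, using $\pi^*(E\otimes\o^k)\cong\pi^*E$; applying $\Hom(F,\blank)$ and comparing dimensions, where the right-hand side contributes $\Hom(\pi_*F,E)^{\oplus d}\cong\C^d$ by the distinctness and stability of the $E\otimes\o^k$, shows that exactly $d$ of the $(g^i)^*F$ are isomorphic to $F$. Hence $\{i:(g^i)^*F\cong F\}=\langle\o/d\rangle$, so $(g^{\o/d})^*F\cong F$ and $\pi_*F\cong\bigoplus_{k=0}^{d-1}E\otimes\o^k$. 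The converse is the short computation $\Hom(\pi_*F,\pi_*F)=\bigoplus_{i=0}^{\o-1}\Hom(F,(g^i)^*F)\cong\C^d$, which shows $\pi_*F$ is not $\sigma$-stable.

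The main obstacle is not any single computation but making the descent-of-a-monomorphism step genuinely dual rather than merely plausible. In \cref{lem:WhenIsPullbackStable} a $G'$-invariant subobject of a pullback descends tautologically through the equivalence $\Db(S)\cong\Db_{G'}(X)$ with $\pi^*=\For$; here I must instead descend a $\otimes\o$-invariant subobject of a pushforward $\pi_*F$. The clean way is to invoke the biadjunction of $\pi^*$ and $\pi_*$ and to identify the descended arrow as the $i=0$ component of $\Hom(\pi_*F',\pi_*F)\cong\bigoplus_{i}\Hom((g^i)^*F',F)$, then verify that it is a monomorphism of the correct slope. Once this identification is in place, everything else is a line-by-line translation of the proof of \cref{lem:WhenIsPullbackStable} under the substitutions above, which is presumably why the author omits the details.
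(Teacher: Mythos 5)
Your dictionary ($\pi^*\leftrightarrow\pi_*$, $g^*\leftrightarrow\blank\otimes\o$, \cref{lem:DescentAndPushforward}(2)$\leftrightarrow$\cref{lem:DescentAndPushforward}(1)) is exactly the duality the paper invokes when it omits this proof, and nearly every step of your transcription is correct: that $\blank\otimes\o$ fixes $\sigma$ (even more directly than your numerical argument, since $\sigma=(\pi^*)^{-1}(\sigma')$ and $\pi^*(E\otimes\o)\cong\pi^*E$ preserve both $Z_\sigma=Z_{\sigma'}\circ\pi^*$ and the induced heart), the slope identity $\mu_\sigma(\pi_*(\blank))=\mu_{\sigma'}(\blank)$, the see-saw induction producing $\tilde{E}\subset\pi_*F$, the count $\Hom(F,\pi^*\pi_*F)\cong\C^d$, and the converse computation.

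The one step that fails as literally stated is the descent mechanism you yourself flag as the main obstacle. A monomorphism $\pi_*F'\into\pi_*F$ does \emph{not} descend to a monomorphism $F'\into F$, and in particular the $i=0$ component of $\Hom(\pi_*F',\pi_*F)\cong\bigoplus_i\Hom((g^i)^*F',F)$ can even vanish: take $F'=g^*F$ with $F$ stable and $g^*F\ncong F$; then $\pi_*F'\cong\pi_*F$ canonically, while $\Hom(g^*F,F)=0$ by stability. The repair is to abandon monomorphisms altogether. First record that $F'$ is $\sigma'$-semistable: $\pi_*$ is exact between the hearts and matches slopes, so a destabilizing subobject of $F'$ would push forward to destabilize $E'=\pi_*F'$, impossible since the maximal destabilizing subobject is semistable. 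Then pick \emph{any} nonzero component $(g^i)^*F'\to F$; this is a nonzero map from a $\sigma'$-semistable object of slope $\mu_\sigma(E')>\mu_\sigma(\pi_*F)=\mu_{\sigma'}(F)$ into a semistable object, which is the desired contradiction. The same correction is needed in the dichotomy: there the nonzero component maps a semistable object into the \emph{stable} $F$ of equal slope and is therefore surjective rather than injective; pushing the surjection forward (using $\pi_*(g^i)^*\cong\pi_*$) gives $\tilde{E}\onto\pi_*F$ (respectively $E=\pi_*F''\onto\pi_*F$ in your $d=1$ branch), and since $\tilde{E}\subseteq\pi_*F$ with all objects of the same phase, additivity of $Z_\sigma$ on Jordan--H\"older factors forces $\tilde{E}=\pi_*F$ (respectively contradicts properness of $E$). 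With this substitution the rest of your transcription, including the $\C^d$ count identifying $\{i\,:\,(g^i)^*F\cong F\}$ with the subgroup generated by $\o/d$, goes through verbatim.
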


We end this section with a quick observation that narrows the scope of those Mukai vectors that correspond to $\sigma$-stable objects.
\begin{Lem}\label{lem:MukaiVectorsOfStableObjects}
Let $\sigma\in\Stabd(S)$ and $E\in\Db(S)$ a $\sigma$-stable object.  Then $\v(E)^2\geq 0$.
\end{Lem}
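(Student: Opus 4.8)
The plan is to reduce the inequality to the abelian surface cover $X$, where the corresponding statement is cleanest. Since $\sigma\in\Stabd(S)=\Sigma(S)$, \cref{induced} provides $\sigma'\in\Gamma_X\cap\Stabd(X)$ with $\sigma=(\pi^*)^{-1}(\sigma')$. The Mukai pairing satisfies $\langle\pi^*\v,\pi^*\w\rangle=\o\langle\v,\w\rangle$, and $\pi^*$ commutes with taking Mukai vectors, so
\[
\v(\pi^*E)^2=\langle\pi^*\v(E),\pi^*\v(E)\rangle=\o\,\v(E)^2.
\]
As $\o>0$, it therefore suffices to prove $\v(\pi^*E)^2\geq 0$. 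By \cref{lem:WhenIsPullbackStable}, $\pi^*E$ is $\sigma'$-semistable on $X$, so the problem becomes: every $\sigma'$-semistable object on the abelian surface $X$ has nonnegative square.

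Next I would reduce from semistable to stable objects on $X$ using the Jordan--H\"older filtration. Write $\v(\pi^*E)=\sum_i m_i\,\v(F_i)$, where the $F_i$ are the distinct $\sigma'$-stable Jordan--H\"older factors, all of a common phase $\phi$, occurring with multiplicities $m_i$. Expanding,
\[
\v(\pi^*E)^2=\sum_i m_i^2\,\v(F_i)^2+\sum_{i\neq j}m_im_j\,\langle\v(F_i),\v(F_j)\rangle.
\]
For $i\neq j$ the factors $F_i,F_j$ are non-isomorphic $\sigma'$-stable objects of the same phase, so $\Hom(F_i,F_j)=0$; since $\omega_X\cong\OO_X$, Serre duality gives $\Ext^2(F_i,F_j)\cong\Hom(F_j,F_i)^*=0$ as well. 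Hence $\langle\v(F_i),\v(F_j)\rangle=-\chi(F_i,F_j)=\ext^1(F_i,F_j)\geq 0$, and all cross terms are nonnegative. Thus the claim reduces to showing $\v(F)^2\geq 0$ for a single $\sigma'$-stable object $F$ on $X$.

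The main obstacle is this last, irreducible base case on the abelian surface. Since $F$ is stable it is simple, so $\hom(F,F)=1$, and Serre duality with $\omega_X\cong\OO_X$ gives $\ext^2(F,F)=\hom(F,F)^*=1$. Therefore
\[
\v(F)^2=-\chi(F,F)=\ext^1(F,F)-2,
\]
so it remains to prove $\ext^1(F,F)\geq 2$. This is exactly the point where one must use that $X$ is an abelian surface: the trace map $\Ext^1(F,F)\to H^1(\OO_X)\cong\C^2$ is surjective (equivalently, the translation and $\Pic^0(X)$ actions produce two independent first-order deformations of $F$), which forces $\ext^1(F,F)\geq 2$ and hence $\v(F)^2\geq 0$. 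I expect the surjectivity of this trace to be the only substantive input; everything preceding it is formal manipulation of the cover $\pi\colon X\to S$ and the behaviour of stability under $\pi^*$.
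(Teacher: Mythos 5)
Your reduction to the canonical cover is a genuinely different route from the paper's. The paper never leaves $S$: by stability $\hom(E,E)=1$, and by Serre duality $\ext^2(E,E)=\hom(E,E\otimes\omega_S)\leq 1$ since $E$ and $E\otimes\omega_S$ are $\sigma$-stable of the same phase and Mukai vector; Riemann--Roch then gives $\v(E)^2=\ext^1(E,E)-\hom(E,E)-\ext^2(E,E)\geq -2$, with equality only if $E$ is spherical, which is impossible by \cite[Proposition 5.5]{Sos13}, and $\v(E)^2=-1$ is excluded because $\Hal(S,\Z)$ is an even lattice. So the paper's proof is a three-line computation on $S$ plus one citation, whereas yours trades the Sosna citation for the classical abelian-surface fact at the cost of extra bookkeeping: passing through \cref{induced} and \cref{lem:WhenIsPullbackStable}, handling strict semistability of $\pi^*E$, and controlling the Jordan--H\"older cross-terms. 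All of that bookkeeping in your write-up is correct (the cross-term vanishing of $\Hom$ and $\Ext^2$ between non-isomorphic stable factors of the same phase is exactly right), and your version has the virtue of making the abelian-surface mechanism visible rather than hidden in a reference.

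There is, however, a genuine flaw in the one step you flag as the substantive input. The trace map $\Ext^1(F,F)\to H^1(\OO_X)$ is \emph{not} surjective for every simple object: for $F=\C_x$ one has $\Ext^1(\C_x,\C_x)\cong T_xX\cong\C^2$, but the trace vanishes on all these classes (translating a point deforms $\C_x$ with constant determinant), so the image is zero. The composition argument $\tr(\alpha\cdot\id_F)=\rk(F)\,\alpha$ proves surjectivity only when $\rk(F)\neq 0$, and rank-zero stable factors genuinely occur in your setting --- skyscrapers are $\sigma'$-stable for every $\sigma'\in\Stabd(X)$ by the very characterization of that component, and they arise as Jordan--H\"older factors of $\pi^*E$ when $E$ is, say, a zero-dimensional torsion object. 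Your parenthetical ``equivalently'' is also not an equivalence: translation deformations are detected by contraction of $H^0(T_X)$ with the Atiyah class, not by the trace. The correct standard input, which repairs your proof verbatim, is that for any nonzero $F\in\Db(X)$ on an abelian surface $\ext^1(F,F)\geq 2\hom(F,F)$: the map $H^0(T_X)\oplus H^1(\OO_X)\to\Ext^1(F,F)$ given by infinitesimal translations and $\Pic^0$-twists has kernel equal to the Lie algebra of the stabilizer $\left\{(x,\alpha)\,:\,T_x^*F\otimes P_\alpha\cong F\right\}\subset X\times\hat{X}$, which is isotropic for the canonical pairing by Mukai's argument and hence at most two-dimensional; see \cite[Section 15]{Bri08}, which the paper itself invokes elsewhere. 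With that substitution for your trace claim, your proof is complete.
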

\begin{proof}
By stability and Serre duality, $\hom(E,E)=1$ and $\ext^2(E,E)=\hom(E,E\otimes\o)\leq 1$.  By Riemann-Roch it follows that $$\v(E)^2=-\chi(E,E)=\ext^1(E,E)-\hom(E,E)-\ext^2(E,E)\geq -2,$$ with equality only if $E$ is a spherical object.  But by \cite[Proposition 5.5]{Sos13}, $S$ does not admit any spherical or exceptional objects.  As $\Hal(S,\Z)$ is an even lattice, $\v(E)^2=-1$ is an impossibility as well, so the  lemma follows.
\end{proof}
\section{Projective coarse moduli spaces for Bridgeland stability on $S$}\label{sec: coarse moduli}
We turn to our first main result in this part.  Namely, we show that there exist projective coarse moduli spaces parametrizing $S$-equivalence classes of $\sigma$-semistable objects of Mukai vector $\v\in\Hal(S,\Z)$ for generic $\sigma\in\Stabd(S)$.  We begin by defining the moduli stack of $\sigma$-semistable objects and then move on to studying the special case of isotropic vectors $\v$ with $l(\v)=\o$.  We conclude this section by proving our first main theorem that these moduli stacks admit projective coarse moduli spaces for $\sigma$ generic with respect to $\v$.
\subsection{Moduli stacks of Bridgeland semistable objects on $S$}
Fix a smooth projective variety $Y$, $\sigma=(Z,\mathcal A)\in \Stabd(Y)$ and $\v\in H_{\alg}^*(Y,\Z)$.  

\begin{Def} 
Define $M_{\sigma,Y}(\v)$ to be the set of $\sigma$-semistable objects $E\in\AA$ of Mukai vector $\v$, and $\MM_{\sigma,Y}(\v)$ to be the 2-functor $$\MM_{\sigma,Y}(\v)\colon(\text{Sch}/\C)\rightarrow (\text{groupoids}),$$ 
which sends a $\C$-scheme $T$ to the groupoid $\MM_{\sigma,Y}(\v)(T)$ whose objects consist of $\mathcal E\in \mathrm{D}_{T\text{-}\mathrm{perf}}(T\times Y)$ satisfying $$\mathcal E_t\in M_{\sigma,Y}(\v)\text{ for all }t\in T.$$
\end{Def}
For smooth projective surfaces, we have the following result about the moduli stack:
\begin{Thm}\cite{Tod08,AlperHLH:ExistenceofModuliSpaces}\label{thm:ModuliStacksAreArtin} Let $Y$ be a smooth projective surface.  For any $\v\in\Hal(Y,\Z)$ and $\sigma\in\Stabd(Y)$, $\MM_{\sigma,Y}(\v)$ is an Artin stack of finite type over $\C$.   Moreover, $\MM_{\sigma,Y}(\v)$ admits a proper good moduli space over $\C$.
\end{Thm}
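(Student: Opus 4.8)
The plan is to obtain the two assertions from two separate bodies of work: the algebraicity and finite-type statement from the theory of moduli of objects in the derived category, and the existence of a proper good moduli space from the general intrinsic machinery of \cite{AlperHLH:ExistenceofModuliSpaces}. In both cases the strategy is to reduce the abstract hypotheses to concrete properties of $\sigma$-semistable objects that are already in hand from the definition of a Bridgeland stability condition.

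For the first assertion, I would begin with the fact, due to the work of Lieblich on moduli of complexes (and used by Toda in \cite{Tod08}), that the stack of universally gluable perfect complexes $E$ on $Y$ satisfying $\Ext^{<0}(E,E)=0$ is an Artin stack locally of finite type over $\C$. Every $\sigma$-semistable object $E$ lies in the heart $\AA$ of a bounded $t$-structure, hence automatically satisfies $\Ext^{<0}(E,E)=0$, so $\MM_{\sigma,Y}(\v)$ is represented by a substack of this ambient stack. To see that this substack is open and of finite type, I would invoke two inputs: the openness of $\sigma$-(semi)stability in flat families, established in \cite{Tod08}, and the boundedness of the family of $\sigma$-semistable objects of fixed class $\v$. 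For boundedness I would use the support property directly, as in \cite{Tod08}; in the concrete case where, after acting by $\wGL2$, $\sigma=\sigma_{\omega,\beta}$ lies in a Gieseker chamber $\GG$, \cref{Thm:GiesekerChamber} identifies the semistable objects with shifts of $\beta$-twisted $\omega$-Gieseker-semistable sheaves of Mukai vector $\v$, whose boundedness is classical. Combining openness with boundedness yields that $\MM_{\sigma,Y}(\v)$ is an Artin stack of finite type over $\C$.

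For the second assertion, I would apply the intrinsic existence criterion of \cite{AlperHLH:ExistenceofModuliSpaces}: an algebraic stack of finite type over $\C$ with affine diagonal admits a separated good moduli space precisely when it is $\Theta$-reductive and S-complete, and this good moduli space is proper once the stack additionally satisfies the existence part of the valuative criterion of properness (every map from the punctured spectrum of a discrete valuation ring extends, after finite base change, over the whole ring). Affineness of the diagonal is formal from the Ext-vanishing that cuts out the ambient stack. The $\Theta$-reductivity condition translates into the existence and uniqueness of Harder-Narasimhan filtrations, which is exactly property (2) in the definition of $\sigma$ together with the uniqueness recorded in the review. S-completeness encodes the uniqueness of the S-equivalence (Jordan-H\"older) representative of a degenerating family, and the valuative criterion of properness is the semistable reduction statement for $\sigma$-semistable objects.

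The main obstacle is the verification of S-completeness and of the valuative criterion of properness, since both amount to a Langton-style semistable reduction carried out for complexes inside the tilted heart $\AA$ rather than for honest coherent sheaves: one cannot simply saturate a subsheaf or pass to a double dual, and must instead run an elementary-modification algorithm within $\AA$ and prove that it terminates, controlling the relevant discrete invariant. This is precisely the technical content supplied by \cite{Tod08} and repackaged into the checkable hypotheses of \cite{AlperHLH:ExistenceofModuliSpaces}, so that once those two references are cited the remaining verifications are formal.
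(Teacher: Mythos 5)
Your proposal is correct and follows essentially the same route as the paper, which itself gives no argument beyond observing that Toda's proof of algebraicity and finite-typeness for K3 surfaces (via Lieblich's stack of universally gluable complexes, openness of semistability, and boundedness from the support property) goes through for any smooth projective surface, and that the proper good moduli space is supplied by the machinery of Alper--Halpern-Leistner--Heinloth. Your sketch simply unpacks the content of those same two citations, including the same delegation of $\Theta$-reductivity, S-completeness, and Langton-style semistable reduction to \cite{AlperHLH:ExistenceofModuliSpaces} and \cite{Tod08}.
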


Although the first statement of Theorem \ref{thm:ModuliStacksAreArtin} was originally proved for K3 surfaces in \cite{Tod08}, its proof goes through for any smooth projective surface.  The second statement follows from the more recent groundbreaking results in \cite{AlperHLH:ExistenceofModuliSpaces}.  In particular, there are proper algebraic spaces parametrizing $S$-equivalence classes of $\sigma$-semistable objects of Mukai vector $\v$.  In specific cases, these proper algebraic spaces are known to be projective varieties.  Indeed, this is known for K3 surfaces \cite{BM14a}, abelian surfaces \cite{MYY14}, and Enriques surfaces \cite{Nue14b}.  We turn now to showing this for bielliptic surfaces as well.

\subsection{Projective coarse moduli spaces isomorphic to $S$}
In this subsection, we show that in certain cases, the coarse moduli space for these moduli stacks is in fact isomorphic to $S$ itself, and thus is indeed projective.

\begin{Lem}\label{lem:moduli spaces of fully induced isotropic vectors}
Let $S$ be a bielliptic surface and $\sigma\in\Stabd(S)$.  If $\v\in\Hal(S,\Z)$ is a primitive isotropic vector with $l(\v)=\o$, then the Fourier-Mukai functor $\Phi_{\EE}$ with kernel given by the universal family $\EE$ induces an isomorphism from the coarse moduli space $M_{\sigma,S}(\v)\isomor S$ such that $\Phi_\EE((0,0,1))=\v$.  
\end{Lem}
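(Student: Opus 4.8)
The plan is to pull the whole problem back to the abelian canonical cover $X$, where moduli of objects of primitive isotropic class are controlled by Mukai's theory, and then to descend along the free $G'$-action. First I would fix $\sigma'\in\Gamma_X\cap\Stabd(X)$ with $(\pi^*)^{-1}(\sigma')=\sigma$ (possible by \cref{induced}) and set $\w:=\pi^*\v/\o$, which is primitive by \cref{primitive} and isotropic since $\langle\pi^*\v,\pi^*\v\rangle=\o\,\v^2=0$. Writing $\widehat X:=M_{\sigma',X}(\w)$, the goal becomes to identify $M_{\sigma,S}(\v)$ with $\widehat X/G'$ and then $\widehat X/G'$ with $S$. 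The bridge is provided by \cref{lem:WhenIsPullbackStable,lem:WhenIsPushForwardStable}: I would show every $\sigma$-semistable $E$ of class $\v$ is in fact $\sigma$-stable and of the form $E\cong\pi_*F$ for a $\sigma'$-stable $F$ of class $\w$ with free $G'$-orbit, and conversely that each such $\pi_*F$ is such an $E$ (note $\v(\pi_*F)=\pi_*\w=\v$, since $\pi_*\pi^*\v=\o\,\v$ while $\pi^*\v=\o\w$). Indeed, for $\sigma$-stable $E$ the class equality $\pi^*\v=\o\w$ forces $d=\o$ in \cref{lem:WhenIsPullbackStable}, the summands of $\pi^*E$ all having class $\w$ by the Hodge-index argument in the proof of \cref{isotropic}; thus $\pi^*E\cong\bigoplus_{k=0}^{\o-1}(g^k)^*F$ with the $(g^k)^*F$ distinct, and combined with \cref{lem:PullBackEqual} this makes $F\mapsto\pi_*F$ descend to a bijection $\widehat X/G'\to M_{\sigma,S}(\v)$.

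The input that keeps the $G'$-orbits free is exactly the hypothesis $l(\v)=\o$. I would argue by contradiction: if some nontrivial $g^{e}$ (with $e\mid\o$, $e<\o$) fixes $F$, then by \cref{lem:DescentAndPushforward} $F\cong\tilde\pi^*\tilde F$ descends to $\tilde S=X/\langle g^{e}\rangle$ (for $e\geq 2$ one of the intermediate bielliptic surfaces of \cref{lem:intermediate bielliptic order composite}, and $\tilde S=S$ when $e=1$), whence $\w=\tilde\pi^*\tilde\w$. Applying $\pi^*=\tilde\pi^*\circ\pi'^*$ and injectivity of $\tilde\pi^*$ gives $\pi'^*\v=\o\,\tilde\w$. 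Using the pullback formula $\pi'^*(r,aA_0+bB_0,s)=(r,a\tilde A_0+be\tilde B_0,es)$ for the degree-$e$ cover $\pi'\colon\tilde S\to S$, divisibility of $\pi'^*\v$ by $\o$ gives $\o\mid r$, $\o\mid a$, $\o\mid be$, $\o\mid es$, and since $e\mid\o$ with $e<\o$ this forces $\tfrac{\o}{e}\mid b$ and $\tfrac{\o}{e}\mid s$; hence $\tfrac{\o}{e}>1$ divides $\gcd(r,a,b,s)=1$, a contradiction. So $G'$ acts freely on $\widehat X$.

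Next I would invoke the Fourier--Mukai theory of moduli of objects of primitive class on abelian surfaces \cite{Yos12,MYY14}: $\widehat X=M_{\sigma',X}(\w)$ is a smooth projective abelian surface, fine, carrying a universal family $\widetilde{\EE}$ whose induced functor $\Phi_{\widetilde{\EE}}\colon\Db(\widehat X)\to\Db(X)$ is an equivalence with $\Phi_{\widetilde{\EE}}((0,0,1))=\w$; crucially, the wall-crossing results of \cite{Yos12} guarantee this for \emph{every} $\sigma'$, not merely generic ones. Because $\sigma'$ and $\w$ are $G'$-invariant, $G'$ acts compatibly on $\widehat X$ and $X$ and $\widetilde{\EE}$ can be $G'$-linearized; as the action on $\widehat X$ is free, $\widetilde{\EE}$ descends to a family $\EE$ on $(\widehat X/G')\times S$ that is exactly the universal family of $M_{\sigma,S}(\v)=\widehat X/G'$, and $\Phi_{\widetilde{\EE}}$ descends to a Fourier--Mukai equivalence $\Phi_{\EE}\colon\Db(\widehat X/G')\to\Db_{G'}(X)=\Db(S)$ with $\Phi_{\EE}((0,0,1))=\v$. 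This simultaneously upgrades the set-theoretic bijection above to an isomorphism of coarse moduli spaces and exhibits $M_{\sigma,S}(\v)$ as a smooth projective surface derived-equivalent to $S$.

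Finally, being derived-equivalent to $S$, the surface $\widehat X/G'$ has the same Hodge numbers and Kodaira dimension, hence is again bielliptic; and by the classification of Fourier--Mukai partners of bielliptic surfaces \cite{BM01} it must be isomorphic to $S$ itself, giving $M_{\sigma,S}(\v)\isomor S$. I expect the main obstacle to be the careful passage between the stacky/good-moduli statements on $X$ and on $S$ --- in particular ensuring fineness and that $M_{\sigma',X}(\w)$ is an abelian surface for \emph{all} $\sigma'$ (resting on \cite{Yos12}) --- together with the last step, where one must identify the Fourier--Mukai partner $\widehat X/G'$ with $S$ rather than merely with some bielliptic surface.
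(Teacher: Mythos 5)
Your proposal follows the same global strategy as the paper's proof: pass to the canonical cover, set $\w=\pi^*\v/\o$, show that $G'$ acts freely on the abelian surface $\widehat{X}=M_{\sigma',X}(\w)$, identify $M_{\sigma,S}(\v)$ with the locus swept out by $\pi_*$, and conclude $M_{\sigma,S}(\v)\cong S$ from the Fourier--Mukai machinery together with the fact that bielliptic surfaces have no nontrivial Fourier--Mukai partners \cite[Proposition 6.2]{BM01}. Two steps are executed differently, both soundly. For freeness, the paper has a one-line argument: if $(g^k)^*F\cong F$ with $0<k<\o$, then $\bigoplus_{i=0}^{k-1}(g^i)^*F$ is $G'$-invariant, hence equals $\pi^*E$ by \cref{lem:DescentAndPushforward}, and the push--pull identity $\o\v(E)=k\pi_*(\w)=k\v$ contradicts primitivity of $\v$; your intermediate-cover divisibility computation via \cref{lem:intermediate bielliptic order composite} and \cref{primitive} reaches the same conclusion but is longer and case-based. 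For completeness, the paper never analyzes $\pi^*E$ for $E\in M_{\sigma,S}(\v)$: it observes that the $\pi_*F$ form a complete irreducible two-dimensional component and invokes Mukai's trick \cite[Theorem 6.1.8]{HL10}, whereas you argue both directions of the correspondence, transplanting the Hodge-index computation from \cref{isotropic}. That transplant works, but in the Bridgeland setting the Bogomolov inequality used there must be replaced by $\v(F)^2\geq 0$ for $\sigma'$-stable objects on the abelian surface $X$, which you should state; also, your appeal to \cref{lem:PullBackEqual} for injectivity of $F\mapsto\pi_*F$ modulo $G'$ should be to its dual statement for $\pi_*$ (proved inside \cref{global singularities}: $\pi_*F\cong\pi_*G$ implies $G\cong(g^k)^*F$ for exactly one $k$).

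The one place you assert more than you prove is the descent step producing $\Phi_\EE$. First, ``$\widetilde{\EE}$ can be $G'$-linearized'' is not automatic: a universal family is canonical only up to twist by line bundles pulled back from $\widehat{X}$, so a priori one only has $(g\times g)^*\widetilde{\EE}\cong\widetilde{\EE}\otimes p_{\widehat{X}}^*L_g$, and linearizing requires killing this cocycle. Second, even granting a linearization, the quotient of $\widehat{X}\times X$ by the free diagonal action is a twisted product, \'{e}tale of degree $\o$ over $(\widehat{X}/G')\times S$, not $(\widehat{X}/G')\times S$ itself; the correct move is to descend $(\id\times\pi)_*\widetilde{\EE}$ on $\widehat{X}\times S$ along the residual $G'$-action on the first factor, where the same twisting line bundles $L_g$ reappear as the obstruction --- and this obstruction is exactly the question of whether $M_{\sigma,S}(\v)$ is a fine moduli space. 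The paper's route sidesteps this by working directly with a universal family $\EE$ on $M_{\sigma,S}(\v)\times S$ and applying \cite[Corollary 2.8]{BM01} to get the equivalence with $\Phi_\EE((0,0,1))=\v$; if you keep your route, you must either kill the cocycle (plausible here, but it needs an argument) or establish fineness independently. Your final identification of $\widehat{X}/G'$ with $S$ via derived invariants and \cite{BM01} then agrees with the paper.
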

\begin{proof}
Let $\sigma'\in\Stabd(X)$ be such that $\sigma=(\pi^*)^{-1}(\sigma')$ and write $\pi^*\v=\o \w$, which is possible by Lemma \ref{primitive}.  Notice that the push-pull formula gives $$\o \v=\pi_*\pi^*\v=\o\pi_*(\w),$$ so $\pi_*(\w)=\v$ in this case.  As there are no walls for $\w$ by \cite[Section 15]{Bri08}, there exists a projective coarse moduli space $M_{\sigma',X}(\w)$ for $\MM_{\sigma',X}(\w)$, which is irreducible and isomorphic to an abelian surface by \cite[Proposition 5.16]{MYY14}.  Let $g$ be a generator for $G'$.  Any object $F\in M_{\sigma',X}(\w)$ that is fixed by $(g^k)^*$ for some $0<k<\o$ would give a $G'$-invariant object $\bigoplus_{i=0}^{k-1}(g^*)^i F\in M_{\sigma',X}(k\w)$ which must be $\pi^*E$ for some $E\in M_{\sigma,S}(\v')$.  By the push-pull formula, $$\o \v'=\pi_*\pi^*\v'=\pi_*(k\w)=k\pi_*(\w)=k\v,$$ a contradiction as $\v$ is primitive and $k<\o$.  It follows that $\pi_*(F)$ is $\sigma$-stable for all $F\in M_{\sigma',X}(\w)$ by Lemma \ref{lem:WhenIsPushForwardStable}.  This gives a complete irreducible component of $M_{\sigma,S}(\v)$ of dimension 2, which must be all of $M_{\sigma,S}(\v)$ by Mukai's trick (see \cite[Theorem 6.1.8]{HL10}).  By \cite[Corollary 2.8]{BM01}, if we let $\EE$ be the universal family for $M_{\sigma,S}(\v)$, then the integral functor $$\Phi_\EE:\Db(M_{\sigma,S}(\v))\to\Db(S)$$ is an equivalence, i.e. a Fourier-Mukai transform, and sends $(0,0,1)$, the Mukai vector of the skyscraper sheaf of a point, to $\v$.  By \cite[Proposition 6.2]{BM01}, we get that $M_{\sigma,S}(\v)\cong S$, as claimed.
\end{proof}
Lemma \ref{lem:moduli spaces of fully induced isotropic vectors} turns out to not only be a special case but also a key tool in proving the existence of projective coarse moduli spaces for arbitrary $\v\in\Hal(S,\Z)$ as we see in the next subsection.
\subsection{Projective coarse moduli spaces for arbitrary $\v\in\Hal(S,\Z)$}
Fix a $\v\in\Hal(S,\Z)$, which we may assume satisfies $\v^2\geq 0$ by Lemma \ref{lem:MukaiVectorsOfStableObjects}.  Recall \cref{prop:wall-and-chamber} which guaranteed a wall-and-chamber decomposition of $\Stabd(S)$ such that, in particular, the $\sigma$-stability and semistability of objects of Mukai vector $\v$ remain constant as $\sigma$ varies within a chamber.  We begin this subsection with a lemma that guarantees that we can choose a nearby stability condition in the same chamber which will allow us to use Lemma \ref{lem:moduli spaces of fully induced isotropic vectors}.
\begin{Lem}\label{lem:nearby isotropics 1}
Let $\v$ be a Mukai vector satisfying either $\v^2>0$ or $\v^2=0$ and $l(\v)<\o$, and let $\sigma$ be a stability condition lying inside a chamber $\CC$ of the wall-and-chamber decomposition with respect to $\v$.  Then $\CC$ contains a dense subset of stability conditions $\tau$ for which there exists a primitive isotropic Mukai vector $\w$ such that:
\begin{enumerate}
\item\label{enum:isotropic same ray} $Z_\tau(\w)$ and $Z_\sigma(\v)$ lie on the same ray in the complex plane.
\item\label{enum:isotropic bielliptic} All $\tau$-semistable objects with Mukai vector $\w$ are stable, and $M_{S,\tau}(\w)\cong S$.
\end{enumerate}
\end{Lem}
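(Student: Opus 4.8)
The plan is to treat the two assertions separately: the moduli-theoretic one is essentially free, and all the content lies in the ray condition. For the claim that all $\tau$-semistable objects of class $\w$ are stable and $M_{\tau,S}(\w)\cong S$, note that as soon as $\w$ is a primitive isotropic vector with $l(\w)=\o$, \cref{lem:moduli spaces of fully induced isotropic vectors} already yields $M_{\tau,S}(\w)\cong S$ for \emph{every} $\tau\in\Stabd(S)$; moreover its proof realizes each point of $M_{\tau,S}(\w)$ as a pushforward $\pi_*F$ of a $\tau'$-stable object on $X$ that is not fixed by $G'$, hence as a $\tau$-stable object, so strict semistability never occurs. Thus it suffices to produce, densely in $\CC$, a stability condition $\tau$ together with a primitive isotropic $\w$ with $l(\w)=\o$ whose central charge $Z_\tau(\w)$ lies on the ray $\rho:=\R_{>0}Z_\sigma(\v)$.

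First I record that $\rho$ is well defined. Writing $\eta(\sigma)=\Omega_\sigma$, the real and imaginary parts of $\Omega_\sigma$ span a positive-definite plane $P_\sigma\subset\Hal(S,\R)$, and $Z_\sigma=\langle\Omega_\sigma,\blank\rangle$ factors through the orthogonal projection to $P_\sigma$. Since $P_\sigma^{\perp}$ is negative definite while $\v^2\geq 0$ and $\v\neq 0$, the class $\v$ cannot lie in $P_\sigma^{\perp}$, so $Z_\sigma(\v)\neq 0$; acting by $\wGL2$ we may assume $\Im Z_\sigma(\v)>0$ and set $\theta_0:=\arg Z_\sigma(\v)\in(0,\pi)$. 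The crux is then a density-of-directions statement. For $\tau\in\CC$ the $\R$-linear map $P_\tau\to\C$, $x\mapsto\langle\Omega_\tau,x\rangle$, is an isomorphism, so $Z_\tau(\w)\in\rho$ exactly when the projection of $\w$ to $P_\tau$ points in one prescribed direction. By \cref{primitive} the primitive isotropic classes with $l(\w)=\o$ lie in the finite-index sublattice $\{\w:\pi^*\w\in\o\,\Hal(X,\Z)\}$, which is again of signature $(2,2)$ and whose null quadric has a rational point (e.g.\ $(0,0,1)$, isotropic with $l=\o$). Hence its rational points are dense in the real null cone, and since every direction of $P_\tau$ is the $P_\tau$-projection of some real isotropic vector (solve $|x_{P_\tau}|^2=-\langle x_{\perp},x_{\perp}\rangle$ using negative-definiteness of $P_\tau^{\perp}$), the arguments $\{\arg Z_\tau(\w)\}$ are dense in $(0,\pi)$ as $\w$ ranges over these classes.

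It remains to upgrade dense arguments to an exact hit by a small perturbation of $\tau$. Fix $\tau_0\in\CC$ and a neighborhood $U\subset\CC$, and choose (by the previous paragraph) a primitive isotropic $\w$ with $l(\w)=\o$ and $\arg Z_{\tau_0}(\w)$ close to $\theta_0$. Because $\eta$ is a local isomorphism by \cref{thm:BridgelandMain}, $\Omega_\tau$ fills an open neighborhood of $\Omega_{\tau_0}$ in $K_{\num}(S)_\C$, so $\langle d\Omega_\tau,\w\rangle$ realizes all of $\C$; and because $\w$ is \emph{isotropic}, $|\w_{P_{\tau_0}}|$ and $|\w_{P_{\tau_0}^{\perp}}|$ are comparable, whence the logarithmic derivative $\langle d\Omega_\tau,\w\rangle/\langle\Omega_\tau,\w\rangle$ is bounded above and below uniformly in the size of $\w$. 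Thus $\tau\mapsto\arg Z_\tau(\w)$ is a submersion near $\tau_0$ whose gradient is bounded below independently of $\w$, so once $\arg Z_{\tau_0}(\w)$ is close enough to $\theta_0$ it attains the value $\theta_0$ at some $\tau\in U$; remaining near $\theta_0$ keeps us on $\rho$ rather than $-\rho$. Such $\tau$ and $\w$ satisfy the ray condition, which together with the first paragraph proves the lemma.

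The main obstacle is the density in the third paragraph, which must be carried out \emph{inside} the divisibility class $l(\w)=\o$ rather than for arbitrary isotropic classes, together with the uniform transversality in the fourth paragraph, where the isotropy of $\w$ is precisely what prevents the argument function from degenerating for lattice vectors with large projection. As a cross-check and alternative, \cref{induced} gives $Z_\sigma(\w)=\o\,Z_{\sigma'}(\tfrac{1}{\o}\pi^*\w)$ and $Z_\sigma(\v)=Z_{\sigma'}(\pi^*\v)$, reducing the whole statement to the corresponding density of primitive isotropic classes on the abelian surface $X$ established in \cite{MYY14}, refined to the $G'$-invariant sublattice.
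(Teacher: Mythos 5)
There is a genuine gap, and it sits exactly where you flagged your ``main obstacle'': the density step in your third paragraph does not actually produce primitive classes with $l(\w)=\o$. You argue that such classes lie in the finite-index sublattice $L:=\{\w\colon\pi^*\w\in\o\,\Hal(X,\Z)\}$, that the null quadric of $L$ has a rational point, and that therefore its rational points are dense in the real null cone. All of that is true, but it is vacuous for your purposes: since $L$ has finite index, \emph{every} rational isotropic ray of $\Hal(S,\R)$ meets $L$ --- if $\w_0$ is the primitive generator in $\Hal(S,\Z)$, then $\frac{\o}{l(\w_0)}\w_0\in L$ --- so ``rational points of $L$'s null quadric are dense'' is equivalent to the trivial statement that rational isotropic rays are dense, and carries no information about the divisibility invariant of the primitive generator. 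Concretely, $(\o,0,0)\in L$ lies on the null quadric of $L$, but the primitive generator $(1,0,0)$ of its ray has $l=1\neq\o$. The quantity $l$ is an invariant of the ray that is \emph{not} detected by membership of some multiple in $L$, so passing to the sublattice does not confine you to the divisibility class you need. The paper fills precisely this hole with a separate arithmetic result (\cref{lem:nearby isotropics 2}): rays of primitive isotropic vectors with $l=\o$ are dense among all primitive isotropic rays, proved by an explicit congruence construction (first reducing to $\gcd(r,\o)=1$ via the approximations $\frac{n\frac{\o}{d_0}D}{\frac{\o}{d_0}nr+1}\to D/r$, then choosing $\tilde n$ with $\tilde n r+1\equiv 0$ modulo suitable powers of $2$ and $3$ and verifying primitivity and $l(\v_0)=\o$ by hand). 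Nothing of this sort follows formally from a rational point on a quadric; it is genuine arithmetic work. Your fallback --- deducing it from density of primitive isotropic classes on $X$ ``refined to the $G'$-invariant sublattice'' --- begs the same question: $\pi^*\Hal(S,\Z)$ has index $\frac{\o^2}{\lambda_S}$ in the invariant lattice, and controlling which invariant classes on $X$ are $\o$ times a pullback of a \emph{primitive} class on $S$ is exactly the missing divisibility statement.

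The remainder of your proposal is sound in outline and parallels the paper: the reduction of condition (2) to the single constraint $l(\w)=\o$ via \cref{lem:moduli spaces of fully induced isotropic vectors} is the same move the paper makes, and your exact-hit step (fix $\w$ with argument close to $\theta_0$, then use that $\ZZ$ is a local homeomorphism by \cref{thm:BridgelandMain} to rotate $Z_\tau(\w)$ onto the fixed ray $\rho=\R_{>0}Z_\sigma(\v)$ by a small deformation staying in $\CC$) is a workable, mildly different packaging of the paper's deformation of $b_\tau$ inside $\v^\perp\cap\w_q^\perp$; your observation that isotropy of $\w$ forces $|\w_{P_\tau}|=|\w_{P_\tau^\perp}|$, so the sensitivity of $\arg Z_\tau(\w)$ is uniform in $\w$, is a correct and even somewhat cleaner way to justify the perturbation. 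But as written the proof is incomplete until the density of rays with $l(\w)=\o$ is established by an argument of the strength of \cref{lem:nearby isotropics 2}.
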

\begin{proof}
We first show that $\CC$ contains a dense subset of stability conditions $\tau$ satisfying only condition \ref{enum:isotropic same ray}, and then we demonstrate that we can restrict to a subset of these stability conditions that is still dense and also satisfies \ref{enum:isotropic bielliptic}.  By using the action of $\wGL2$, we may assume without loss of generality that $Z_\sigma(\v)=-1$ and restrict our attention to stability conditions $\tau$ with $Z_\tau(\v)=-1$.  

Recall that we can view a central charge $Z_\sigma$  dually as the vector $\eta(\sigma)=a_\sigma+b_\sigma i\in\Hal(S,\C)$ such that $Z_\sigma(\blank)=\langle \blank,\eta(\sigma)\rangle$, in which case the real two-plane spanned by $a_\sigma$ and $b_\sigma$ is positive definite.  Consider the quadric $Q\subset\Hal(S,\R)$ defined by $\w^2=0$.  As the Mukai pairing on $\Hal(S,\R)$ has signature  $(2,\rho(S))=(2,2)$ and $b_\sigma^2>0$, the codimension 1 subspace $b_\sigma^\perp=\Im Z_\sigma=0$ has signature $(1,2)$, so there is a real solution $\w_r$ to the pair of equations $\Im Z_\sigma(\w)=0$ and $\w^2=0$.  Since $Q$ certainly has rational points, rational points must be dense in $Q$, so there exists $\w_q\in\Hal(S,\Q)$ arbitrarily close to $\w_r$ with $\w_q^2=0$.  

Suppose that $\v^2>0$.  Then as $\w_q^2=0$, $\v$ and $\w_q$  must be linearly independent, so $\v^\perp\cap \w_q^\perp$ (as well as $\v^\perp\cap \w_r^\perp$) must have codimension 2.    Letting $\w_q$ be sufficiently close to $\w_r$, we may choose $b_\tau\in \v^\perp\cap \w_q^\perp$ sufficiently close to $b_\sigma\in \v^\perp\cap \w_r^\perp$.  Then there exists $\tau=(Z_\tau,\AA_\tau)$ with $\eta(\tau)=a_\sigma+b_\tau i$ close to $\sigma$ such that $\Im Z_\tau(\v)=\Im Z_\tau(\w_q)=0$ and $\Re Z_\tau=\Re Z_\sigma$.  Replacing $\w_q$ by the unique primitive integral class $\w\in\Q\cdot \w_q$ with $\Re Z_\tau(\w)<0$ finishes the proof of condition \ref{enum:isotropic same ray}.  By Lemma \ref{lem:nearby isotropics 2} below, we may assume that $\w$ in fact satisfies $l(\w)=\o$.  

If instead $\v^2=0$ and $l(\v)<\o$, then $\w_q\in\Hal(S,\Z)$ with $l(\w_q)=\o$ is automatically linearly independent from $\v$.  So the same argument goes through to give $\tau$ nearby $\sigma$ such that $\Im Z_\tau(\v)=\Im Z_\tau(\w_q)=0$ and $\Re Z_\tau=\Re Z_\sigma$.

Condition \ref{enum:isotropic bielliptic} now follows from Lemma \ref{lem:moduli spaces of fully induced isotropic vectors} and its proof.
\end{proof}
\begin{Lem}\label{lem:nearby isotropics 2}
For any primitive isotropic Mukai vector $\w=(r,D,\frac{D^2}{2r})$, there is an arbitrarily close isotropic Mukai vector $\v_0$ with $l(\v_0)=\o$.  Here, distance is measured in the hyperbolic metric on the space of rays in $\Hal(S,\Q)$.
\end{Lem}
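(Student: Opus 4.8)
The plan is to parametrize all isotropic rays explicitly and then to locate, densely among them, those satisfying the divisibility constraints that characterize $l(\v_0)=\o$. Write a Mukai vector as $\v_0=(r_0,a_0A_0+b_0B_0,s_0)$; since $A_0^2=B_0^2=0$ and $A_0.B_0=1$ we have $(a_0A_0+b_0B_0)^2=2a_0b_0$, so the isotropy condition $\v_0^2=0$ reads $a_0b_0=r_0s_0$. Equivalently, the matrix $\left(\begin{smallmatrix}r_0 & a_0\\ b_0 & s_0\end{smallmatrix}\right)$ is singular, so the projectivized isotropic quadric in $\P(\Hal(S,\R))$ is exactly the locus of nonzero rank-one $2\times 2$ matrices. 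Every such matrix is a column times a row, and this factorization is unique up to scaling, yielding a homeomorphism onto the projectivized isotropic quadric,
$$\P^1(\R)\times\P^1(\R)\;\longrightarrow\;\{\text{isotropic rays}\},\qquad ([p:q],[m:n])\mapsto \R_{>0}\cdot(pm,pn,qm,qn),$$
under which $\Q$-points of $\P^1\times\P^1$ correspond to the rational isotropic rays.

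First I would record that if $(p,q)$ and $(m,n)$ are coprime integral representatives, then $\gcd(pm,pn,qm,qn)=1$, so $(pm,pn,qm,qn)$ is already the primitive generator of its ray; this is the vector whose invariant $l$ must be computed. By \cref{primitive} we always have $l(\v_0)\mid\o$ for primitive $\v_0$, so to force $l(\v_0)=\o$ it suffices to make $\o$ divide each of $r_0$, $a_0$, $\tfrac{\o}{\lambda_S}b_0$ and $\o s_0$. I claim the conditions $\o\mid p$ and $\lambda_S\mid m$ suffice: they give $\o\mid pm=r_0$ and $\o\mid pn=a_0$, while writing $m=\lambda_S m'$ yields $\tfrac{\o}{\lambda_S}b_0=\o\,qm'$, and $\o s_0$ is divisible by $\o$ automatically. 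Hence every ray with $\o\mid p$ and $\lambda_S\mid m$ produces a primitive isotropic $\v_0$ with $l(\v_0)=\o$.

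It then remains to prove that such rays are dense, so that they accumulate at the ray of the given $\w$. Because the two conditions live in the two separate factors, the problem decouples into two elementary one-dimensional density statements: the $[p:q]\in\P^1(\Q)$ with $\o\mid p$ (in lowest terms) are dense in $\P^1(\R)$, and likewise the $[m:n]$ with $\lambda_S\mid m$. For a finite target ratio one approximates by $[\o p':q]$ with $q$ large, choosing $p'$ near the appropriate value and adjusting it by a bounded amount to keep $\gcd(\o p',q)=1$; the point $[1:0]$ is the limit of $[\o k:1]$ and $[0:1]$ the limit of $[\o:k]$. Feeding approximations of the two coordinates $([p_0:q_0],[m_0:n_0])$ of $\w$ through the continuous parametrization produces primitive isotropic vectors $\v_0$ with $l(\v_0)=\o$ whose rays converge to that of $\w$ in the standard topology of the quadric; since the hyperbolic metric of the statement is a smooth metric on this compact space of rays it induces that topology, giving the approximation in the required metric. (The direction of the ray is fixed at the end by choosing the primitive integral generator on the same side as $\w$, exactly as in the proof of \cref{lem:nearby isotropics 1}.)

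The one genuine subtlety, and the step I would treat most carefully, is the interaction between the arithmetic divisibility and density: imposing $\o\mid r_0$, $\o\mid a_0$ and $\lambda_S\mid b_0$ might a priori cut out only a sparse subset of the quadric. What rescues the argument is precisely the product structure of the isotropic cone, which separates these constraints into the two independent $\P^1$-factors; once that is in place, the primitivity of $(pm,pn,qm,qn)$, the reduction to lowest terms, and the density in each $\P^1$ are all routine.
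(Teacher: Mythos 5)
Your proposal is correct, and it takes a genuinely different route from the paper's proof. The paper fixes the slope $D/r$ and approximates along the one-parameter family $e^{\tilde{n}D/(\tilde{n}r+1)}$, then verifies $l(\v_0)=\o$ by explicit bookkeeping with the prime factorization $\o=2^x3^y$ (writing $D=2^i3^jm_0D_0$, choosing $\tilde{r}$ with $r\tilde{r}\equiv 1$ modulo a suitable prime power, and so on). You instead exploit the fact that the isotropic quadric $\{r_0s_0=a_0b_0\}$ is the Segre $\P^1\times\P^1$ of rank-one matrices, which decouples the divisibility constraints in $l(\v_0)=\gcd(r_0,a_0,\frac{\o}{\lambda_S}b_0,\o s_0)$ into the two independent conditions $\o\mid p$ and $\lambda_S\mid m$, one on each factor; combined with your (correct) observation that $\gcd(pm,pn,qm,qn)=1$ when $(p,q)$ and $(m,n)$ are each coprime, and with $l(\v_0)\mid\o$ from \cref{primitive}, this forces $l(\v_0)=\o$. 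Your route is cleaner --- it avoids the paper's case analysis entirely --- and it proves something slightly stronger: the rays with $l=\o$ are dense in the whole \emph{real} isotropic quadric, which is precisely what \cref{lem:nearby isotropics 1} consumes (the paper instead first approximates the real solution $\w_r$ by a rational isotropic class and only then invokes this lemma). What the paper's computation buys is an explicit approximating sequence along a fixed ray, but that explicitness is never used. Two small points to tighten in your write-up: in the one-dimensional density step you must take the denominator $q$ coprime to $\o$ (e.g., a large prime exceeding $\o$), since $\gcd(\o,q)$ always divides $\gcd(\o p',q)$, so no bounded adjustment of $p'$ can achieve $\gcd(\o p',q)=1$ otherwise; and your Segre map is well defined on \emph{lines} rather than rays (negating $(p,q)$ negates the vector), though your closing remark about choosing the primitive generator on the same side as $\w$ disposes of this, just as the paper's own sign normalization does.
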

\begin{proof}
First we observe that $\w=re^{D/r}$.  Writing $d_0=\gcd(r,\o)$, we note that for any $n\in\Z$ $$\gcd\left(\frac{\o}{d_0}nr+1,\o\right)=1.$$  Thus, if we write $\frac{n\frac{\o}{d_0}D}{\frac{\o}{d_0}nr+1}$ in lowest terms, the denominator is still relatively prime to $\o$, and $\frac{n\frac{\o}{d_0}D}{\frac{\o}{d_0}nr+1}\to D/r$ in $\NS(S)_\Q$ as $n\to \infty$.  Indeed, $$\frac{D}{r}-\frac{n\frac{\o}{d_0}D}{\frac{\o}{d_0}nr+1}=\frac{D}{r(\frac{\o}{d_0}nr+1)}\to 0$$ as $n\to\infty$.  Thus we may assume from the outset that $\gcd(r,\o)=1$ and $\frac{D}{r}$ is written in lowest terms.

Now write $\o=2^x 3^y$ (according to the prime decomposition of $\o$ as in Table \ref{table:cohomology}), and write $D=2^i3^jm_0 D_0$, where $\gcd(6,m_0)=1$ and $D_0=a_0A_0+b_0B_0$ is primitive.  Furthermore, we write $a_0b_0=2^k3^l h$ with $\gcd(6,h)=1$.  Pick a positive integer $\tilde{r}$ such that $r\tilde{r}\equiv 1\pmod {2^{x+i+k}3^{y+j+l}}$.  Defining $\tilde{n}=2^{x+i+k}3^{y+j+l}n-\tilde{r}$, we have $\gcd(\tilde{n},6)=1$ and $\tilde{n}r+1\equiv 0\pmod{2^{x+i+k}3^{y+j+l}}$, so we may write $\tilde{n}r+1=2^{x+i+k}3^{y+j+l}g$ for some $g\in\Z$.

We observe that, as above, $\frac{\tilde{n}D}{\tilde{n}r+1}\to\frac{D}{r}$ as $\tilde{n}\to\infty$, so we just need to show that the primitive integral generator $\v_0$ of the ray $\Q_+e^{\frac{\tilde{n}D}{\tilde{n}r+1}}$ satisfies $l(\v_0)=\o$.  Writing out $e^{\frac{\tilde{n}D}{\tilde{n}r+1}}$, we get \begin{align*}e^{\frac{\tilde{n}D}{\tilde{n}r+1}}&=\left(1,\frac{\tilde{n}2^i3^jm_0D_0}{2^{x+i+k}3^{y+j+l}g},\frac{\tilde{n}^2 2^{2i}3^{2j}m_0^2a_0b_0}{2^{2x+2i+2k}3^{2y+2j+2l}g^2}\right)\\
&=\left(1,\frac{\tilde{n}m_0D_0}{2^{x+k}3^{y+l}g},\frac{\tilde{n}^2m_0^22^k3^lh}{2^{2x+2k}3^{2y+2l}g^2}\right)\\
&=\left(1,\frac{\tilde{n}m_0D_0}{2^{x+k}3^{y+l}g},\frac{\tilde{n}^2m_0^2h}{2^{2x+k}3^{2y+l}g^2}\right).
\end{align*}
Without loss of generality, we may assume that $\gcd(g,\tilde{n}m_0)=1$, and we let $d=\gcd(g,h)$.  Then $$\v_0=\left(2^{2x+k}3^{2y+l}\frac{g^2}{d},2^x 3^y\frac{g}{d}\tilde{n}m_0D_0,\tilde{n}^2m_0^2\frac{h}{d}\right)$$ is certainly integral, and we claim that $\v_0$ is primitive and $l(\v_0)=\o$.  Indeed, $$\gcd(2^x3^y\frac{g}{d}\tilde{n}m_0,\tilde{n}^2m_0^2\frac{h}{d})=\tilde{n}m_0\mbox{ and }\gcd(2^{2x+k}3^{2y+l}\frac{g^2}{d},\tilde{n}m_0)=1,$$ by construction, so primitivity follows.  As $\o=2^x 3^y$ divides both $$2^{2x+k}3^{2y+l}\frac{g^2}{d}\mbox{ and }2^x 3^y\frac{g}{d}\tilde{n}m_0D_0,$$ but $\gcd(\o,\tilde{n}^2m_0^2\frac{h}{d})=1$, we must have $l(\v_0)=\o$, as claimed.
\end{proof}
We now have the precise tools needed to prove our first main theorem:
\begin{Thm}\label{thm:projective coarse moduli spaces}
Let $S$ be a bielliptic surface and $\v\in\Hal(S,\Z)$ with $\v^2\geq 0$.  Assume that $\sigma\in\Stabd(S)$ is generic with respect to $\v$.  Then there exists a projective variety $M_{\sigma,S}(\v)$ that is a coarse moduli space parametrizing $S$-equivalence classes of $\sigma$-semistable objects with Mukai vector $\v$.
\end{Thm}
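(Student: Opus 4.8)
The plan is to leverage the existence, from \cref{thm:ModuliStacksAreArtin}, of a proper good moduli space for $\MM_{\sigma,S}(\v)$ as a proper algebraic space, and to upgrade properness to projectivity by transporting the moduli problem, via a suitable Fourier--Mukai autoequivalence, to a moduli space of $\beta$-twisted $\omega$-Gieseker semistable sheaves, which is projective by the classical theory of \cite{HL10,MW97}. The underlying mechanism is that an autoequivalence $\Phi$ of $\Db(S)$ acts on $\Stabd(S)$ and carries $\sigma$-(semi)stable objects of class $\v$ to $(\Phi\cdot\sigma)$-(semi)stable objects of class $\Phi_*(\v)$, inducing an isomorphism of stacks and hence of good moduli spaces; so it suffices to produce $\Phi$ landing us in a Gieseker chamber. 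The isotropic case $\v^2=0$ with $l(\v)=\o$ is the base case: writing $\v=m\v_0$ with $\v_0$ primitive, \cref{lem:moduli spaces of fully induced isotropic vectors} gives a Fourier--Mukai transform $\Phi_\EE$ with $M_{\sigma,S}(\v_0)\cong S$ and $\Phi_\EE((0,0,1))=\v_0$; then $\Phi:=\Phi_\EE^{-1}$ satisfies $\Phi_*(\v)=(0,0,m)$, and the moduli of $(\Phi\cdot\sigma)$-semistable objects of this class is the moduli of length-$m$ zero-dimensional sheaves, whose $S$-equivalence classes form $\Sym^m(S)$, which is projective.

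For the remaining cases ($\v^2>0$, or $\v^2=0$ with $l(\v)<\o$), I would first invoke \cref{lem:nearby isotropics 1} to choose, inside the chamber $\CC$ containing $\sigma$, a stability condition $\tau$ together with a primitive isotropic Mukai vector $\w$ such that $l(\w)=\o$, the central charges $Z_\tau(\w)$ and $Z_\tau(\v)$ lie on a common ray, every $\tau$-semistable object of class $\w$ is $\tau$-stable, and $M_{\tau,S}(\w)\cong S$. Since $\tau$ and $\sigma$ lie in the same chamber for $\v$, the sets of semistable objects coincide and $M_{\sigma,S}(\v)=M_{\tau,S}(\v)$, so it is enough to treat $\tau$. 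Letting $\EE$ be the universal family realizing $M_{\tau,S}(\w)\cong S$ and $\Phi_\EE$ the associated Fourier--Mukai transform (so $\Phi_\EE((0,0,1))=\w$), I set $\Phi:=\Phi_\EE^{-1}$, an autoequivalence with $\Phi_*(\w)=(0,0,1)$.

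Next I would analyze $\tau':=\Phi\cdot\tau$. Because $\Phi$ carries the entire family of $\tau$-stable objects of class $\w$ to the skyscraper sheaves $\C_x$, these points are $\tau'$-stable, so $\tau'$ is geometric; hence $\tau'\in\Stabd(S)$ and, up to the $\wGL2$-action, $\tau'=\sigma_{\omega,\beta}$ for some $\omega,\beta\in\NS(S)_\Q$ with $\omega$ ample. Since $\Phi_*$ is an isometry, the rank of $\Phi_*(\v)$ is $-\langle\Phi_*(\v),(0,0,1)\rangle=-\langle\v,\w\rangle$, and the common-ray hypothesis together with $\w^2=0$ forces $\langle\v,\w\rangle>0$ (using that $\v$ and $\w$ are not proportional, which holds because $l(\w)=\o$ while $l(\v_0)<\o$ in the isotropic subcase, and automatically when $\v^2>0$); thus $\Phi_*(\v)=-\u_0$ is, up to shift, a class of positive rank. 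Moreover the common-ray condition means $\Phi_*(\v)$ has the same phase as the point class under $\tau'$, i.e. $\Im Z_{\omega,\beta}(\Phi_*(\v))=0$, so $\u_0$ has twisted slope $\mu_{\omega,\beta}(\u_0)=0$. By \cref{rem:ObjectsOfInfiniteSlope} and \cref{Thm:GiesekerChamber}, the $\tau'$-semistable objects of class $\Phi_*(\v)$ are then precisely the shifts $G[1]$ of $\beta$-twisted $\omega$-Gieseker semistable sheaves $G$ of class $\u_0$, and genericity of $\sigma$ for $\v$ transfers to genericity of $\omega$ for $\u_0$. This yields $M_{\sigma,S}(\v)=M_{\tau,S}(\v)\cong M_{\tau',S}(\Phi_*(\v))\cong M_\omega^\beta(\u_0)$, which is projective.

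I expect the main obstacle to be the identification in the final step: showing that, at twisted slope $0$ (the phase of the point class), $\tau'$-semistability of phase-$1$ objects of the fixed class $\Phi_*(\v)$ coincides \emph{exactly} with $\beta$-twisted $\omega$-Gieseker semistability of the corresponding sheaves, i.e. that $\tau'$ (or a nearby condition within the same chamber, using the density asserted in \cref{lem:nearby isotropics 1}) sits in the Gieseker chamber of \cref{Thm:GiesekerChamber}. This requires care with the shift and sign conventions and with checking that walls for $\v$ correspond under $\Phi$ to walls for $\Phi_*(\v)$, so that genericity is genuinely preserved. The remaining verifications --- the rank and slope computations above and the $\Sym^m(S)$ description in the base case --- are routine.
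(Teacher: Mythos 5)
Your proposal is correct and follows essentially the same route as the paper's proof: it uses \cref{lem:nearby isotropics 1} to find $\tau$ and the isotropic $\w$ with $l(\w)=\o$ in the chamber of $\sigma$, the Fourier--Mukai transform from \cref{lem:moduli spaces of fully induced isotropic vectors} to send $\w$ to the point class $(0,0,1)$, and the identification of the transformed moduli problem with a projective twisted Gieseker moduli space, with your explicit base case $\Sym^m(S)$ for $\v=m\v_0$, $l(\v_0)=\o$, being handled implicitly in the paper by the same lemma. The one step you flag as the main obstacle is exactly where the paper does its work, and note that since $Z_{\omega,\beta}(\Phi(\v))\in\R_{<0}$ (so $\Im Z_{\omega,\beta}=0$, phase $1$) one cannot literally invoke the Gieseker-chamber statement of \cref{Thm:GiesekerChamber}; instead the paper argues directly via \cref{rem:ObjectsOfInfiniteSlope} that, because $\w$ does not lie on a wall for $\v$, no skyscraper $\OO_s$ is a stable factor of $\Phi(E)$, so $\Phi(E)[-1]$ is a locally free $\mu_{\omega,\beta}$-semistable sheaf, and conversely genericity of $\tau'$ forces any saturated equal-slope subsheaf $F'\subset E'$ to have $\v(F')$ proportional to $\v(E')$, so that $\mu_{\omega,\beta}$-semistability, twisted Gieseker semistability, and $\tau'$-semistability all coincide for the class $-\Phi(\v)$ and $\Phi\circ[-1]$ gives the desired isomorphism $\MM_{S,\tau}(\v)\isomor\MM_\omega(-\Phi(\v))$ preserving $S$-equivalence.
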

\begin{proof}
Let $\CC$ be the chamber for $\v$ that contains $\sigma$, and consider the isotropic vector $\w$ and the stability condition $\tau\in\CC$ from Lemma \ref{lem:nearby isotropics 1}.  Denote by $\Phi:\Db(S)\to\Db(S)$ the autoequivalence satisfying $\Phi(\w)=(0,0,1)$ (the inverse of the autoequivalence from Lemma  \ref{lem:moduli spaces of fully induced isotropic vectors} associated to the universal family on $M_{S,\tau}(\w)$).  Letting $\tau'=\Phi(\tau)\in\Stab(S)$, we see that for every $F\in M_{S,\tau}(\w)$, $\Phi(F)=\OO_s$ for some $s\in S$ and is $\tau'$-stable, so in fact $\tau'\in\Stabd(S)$.  Therefore, up to acting by $\wGL2$, we can assume that $\tau'=\sigma_{\omega,\beta}$ for some $\omega,\beta\in\NS(S)_\Q$ with $\omega$ ample.

Since $Z_\tau(\v)$ and $Z_\tau(\w)$ lie on the same ray in the complex plane, we have $Z_{\omega,\beta}(\Phi(\v))\in\R_{<0}$.  Moreover, $\tau'$ is still generic with respect to $\Phi(\v)$ by the construction of $\tau$.  By Remark \ref{rem:ObjectsOfInfiniteSlope}, for any $E\in\MM_{S,\tau}(\v)(\C)$, $\Phi(E)$ sits in a short exact sequence in $\AA(\omega,\beta)$, $$0\to\HH^{-1}(\Phi(E))[1]\to\Phi(E)\to\HH^0(\Phi(E))\to0,$$ where $\HH^{-1}(\Phi(E))$ is a $\mu_{\omega,\beta}$-semistable sheaf and $\HH^0(\Phi(E))$ is a $0$-dimensional torsion sheaf.  Since $\w$ is not on a wall for $\v$, $\OO_s$ is not a stable factor of $\Phi(E)$, and thus $\Phi(E)[-1]$ is a $\mu_{\omega,\beta}$-semistable locally free sheaf.

The result follows by showing that $\Phi\circ[-1]$ induces an isomorphism $$\MM_{S,\tau}(\v)\isomor\MM_\omega(-\Phi(\v))$$ between the stack $\MM_{S,\tau}(\v)$ and the moduli stack of $\omega$-Gieseker semistable sheaves.  To see this, suppose that $E'$ is a $\mu_{\omega,\beta}$-semistable sheaf with Mukai vector $-\Phi(\v)$.  If $F'\subset E'$ is a saturated subsheaf of the same slope, then $F'[1]$ is a subobject of $E'[1]$ in $\AA_{\omega,\beta}$ with $\Im Z_{\omega,\beta}(F'[1])=0$, and from the genericity of $\tau'$ with respect to $\Phi(\v)$, $\v(F')$ must be proportional to $\v(E')$.  But then it follows that the $\beta$-twisted Hilbert polynomial of $F'$ is proportional to that of $E'$ (and both of these remain true independent of $\beta$).  Thus $E'$ is $\omega$-Gieseker semistable, and we get the required isomorphism.  As this isomorphism preserves $S$-equivalence, we get a projective coarse moduli space of $S$-equivalence classes of $\tau$-semistable objects of class $\v$, since the same is true of Gieseker stability.  As $\sigma$ and $\tau$ are in the same chamber, this proves the theorem.
\end{proof}
\section{Singularities and Kodaira dimension}\label{sec:Singularities}
Having shown in the previous section that there exists coarse moduli spaces of Bridgeland semistable objects, we describe in this section some of the global and local structure of their singularities, at least in the open locus of stable objects.  In particular, we show that the stable locus is generically smooth, and in high enough dimension these moduli spaces are normal and Gorenstein with only l.c.i. singularities.  

For $\sigma\in\Stabd(S)$, write $\sigma':=\pi_*(\sigma)\in\Stabd(X)$.  Our first result is the following.
\begin{Prop}\label{global singularities} Let $S$ be a bielliptic surface with canonical cover $X$, and set $G'=G$ for Types 1,3,5,7, $G'=G/(\Z/2\Z)$ for Types 2 and 4, and $G'=G/(\Z/3\Z)$ for Type 6 so that $S=X/G'$.  As each $G'$ is cyclic, we denote by $g$ a generator of $G'$.  Then $\mSs$ is singular at $E$ if and only if $E\cong E\otimes\o$ and $E$ does not lie along a smooth exceptional component of dimension two, which occurs if and only if $\v^2=0$ and $\pi^*\v=\o \v(F)$.  The singular locus is the union of the images under $\pi_*$ of finitely many components of the open loci $$M^s_{\sigma',X}(\w)^{\circ}:=\{F\in M^s_{\sigma',X}(\w)|g^*F\ncong F\}$$ with possibly different Mukai vectors $\w$ on $X$.  Its dimension is less than \begin{equation}\label{singular}\frac{1}{\o}\left(\dim\mSs+2\o-1\right).\end{equation}  So $\mSs$ is generically smooth.  The pull-back morphism $\mSs\mor\mXs$ is $\o$-to-one onto the isotropic subvariety $\FixG$, \'{e}tale away from the fixed loci of $-\otimes\o^k$ for $0<k<\o$.  For Types 1 and 2, $\FixG$ is in fact Lagrangian.
\end{Prop}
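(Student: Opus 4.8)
The plan is to read off the local structure of $\mSs$ at a $\sigma$-stable $E$ from its deformation theory and transport the global statements to the smooth holomorphic-symplectic moduli space $\mXs$ on the abelian cover. Since $K_S=\o$, Serre duality gives $\Ext^2(E,E)\cong\Hom(E,E\otimes\o)^\vee$, and because $\o$ is numerically trivial the objects $E$ and $E\otimes\o$ are $\sigma$-stable of equal slope; a nonzero map between them is therefore an isomorphism, so $\ext^2(E,E)=1$ when $E\cong E\otimes\o$ and $\ext^2(E,E)=0$ otherwise. Together with $\hom(E,E)=1$ and Riemann--Roch this yields $\ext^1(E,E)=\v^2+1$ in the second case and $\ext^1(E,E)=\v^2+2$ in the first. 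First I would record that the locus $\{E\not\cong E\otimes\o\}$ is open, smooth, of pure dimension $\v^2+1$.

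For the converse I would invoke equidimensionality: by the Fourier--Mukai reduction of \cref{thm:projective coarse moduli spaces} and the estimates of \cref{slope stability}, the open locus $\{E\not\cong E\otimes\o\}$ is dense in $\mSs$ of dimension $\v^2+1$, so any $E$ with $E\cong E\otimes\o$, whose tangent space has dimension $\v^2+2$, is singular --- unless it lies on a component contained in $\{E\cong E\otimes\o\}$, necessarily smooth of dimension $\v^2+2$. By \cref{lem:DescentAndPushforward} and \cref{lem:WhenIsPullbackStable}, $E\cong E\otimes\o$ forces $E\cong\pi_*F$ with $\pi^*E\cong\bigoplus_{k=0}^{\o-1}(g^k)^*F$, for $F$ $\sigma'$-stable of Mukai vector $\w$ with trivial $G'$-stabilizer; and $\pi_*$ is a finite $\o$-to-one map from an open part of $M^s_{\sigma',X}(\w)$, so this locus has dimension $\w^2+2$. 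A smooth $(\v^2+2)$-dimensional component can occur only if $\w^2=\v^2$; using the inequality $\o\w^2\le\v^2$ below this forces $\v^2=0$, $\w=\pi^*\v/\o$ and $l(\v)=\o$, i.e. exactly $\pi^*\v=\o\v(F)$, where \cref{lem:moduli spaces of fully induced isotropic vectors} identifies the component with $S$, smooth of dimension $2$. This gives the singularity criterion and simultaneously the description of the singular locus as $\bigcup_\w\pi_*\big(M^s_{\sigma',X}(\w)^\circ\big)$, the union being finite since $\sum_k(g^k)_*\w=\pi^*\v$ bounds $\w$.

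For the dimension estimate I would prove the Hodge-type inequality $\o\w^2\le\v^2$. Writing $\eta(\sigma')=a+bi$, the $G'$-action fixes the positive-definite plane $\langle a,b\rangle$ (as $\sigma'$ is $G'$-invariant) and acts on its negative-definite orthogonal complement; decomposing $\w$ accordingly and using $\tfrac1\o\sum_k(g^k)_*\w=\pi^*\v/\o$ together with $(\pi^*\v)^2=\o\v^2$, the non-$G'$-invariant part of $\w$ contributes a non-positive term, so $\w^2\le\v^2/\o$ with equality iff $\w$ is $G'$-invariant. Hence each stratum of the singular locus has dimension $\w^2+2\le\v^2/\o+2=\tfrac1\o(\dim\mSs+2\o-1)$, with strict inequality off the $G'$-invariant ($l(\v)=\o$) locus; in particular the singular locus is a proper closed subset and $\mSs$ is generically smooth.

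Finally, for the pull-back statement I would note that $\pi^*$ is defined wherever $\pi^*E$ is $\sigma'$-stable, i.e. on the complement of the $(-\otimes\o^k)$-fixed loci; there \cref{lem:PullBackEqual} shows its fibres are the $\o$ distinct twists $\{E\otimes\o^i\}$, so it is an étale $\o$-to-one cover onto its image, which by \cref{lem:DescentAndPushforward} equals $\{F:g^*F\cong F\}=\FixG$, of dimension $\v^2+1$. Isotropy is the conceptual crux: the holomorphic symplectic form $\omega$ on $\mXs$ transforms under $g^*$ by the character by which $G'$ acts on $H^{2,0}(X)$, and this character is nontrivial of order $\o$ since $p_g(S)=h^{2,0}(X)^{G'}=0$; as $g$ acts trivially on $T\FixG$, the relation $g^*\omega=\zeta_\o\,\omega$ with $\zeta_\o\ne1$ forces $\omega|_{\FixG}=0$. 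For Types $1$ and $2$ one has $\o=2$, so $\dim\FixG=\v^2+1=\tfrac12((\pi^*\v)^2+2)=\tfrac12\dim\mXs$, whence $\FixG$ is Lagrangian. The main obstacle I anticipate is the dimension bound: controlling $\w^2$ relative to $\v^2$ through the $G'$-equivariant Hodge-index argument and verifying that the finite stratification of the singular locus by the auxiliary vectors $\w$ behaves as claimed.
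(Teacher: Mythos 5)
Your write-up has one genuine gap, at the \'{e}taleness step. From \cref{lem:PullBackEqual} you obtain only that, away from the fixed loci of $-\otimes\o^k$, the fibres of $\pi^*\colon\mSs\to\mXs$ consist of the $\o$ distinct twists $E\otimes\o^i$, and you then conclude ``so it is an \'{e}tale $\o$-to-one cover.'' Finite-to-one does not imply \'{e}tale: you must show the map is unramified, i.e.\ that the differential $d\pi^*\colon\Ext^1(E,E)\to\Ext^1(\pi^*E,\pi^*E)$ is injective, and this is precisely where the paper invests a page of work. Its argument: given a first-order deformation $E'$ of $E$ with $\pi^*E'$ split, push forward to $0\to\bigoplus_k E\otimes\o^k\to\bigoplus_k E'\otimes\o^k\to\bigoplus_k E\otimes\o^k\to 0$ and use $\Hom(E',E\otimes\o^k)=0$ for $0<k<\o$ (stability plus $E\ncong E\otimes\o^k$) to see that any splitting is componentwise, forcing $E'$ itself to split. (A shortcut compatible with your setup: by adjunction $\Ext^1(\pi^*E,\pi^*E)\cong\bigoplus_k\Ext^1(E,E\otimes\o^k)$, and the composite of $d\pi^*$ with projection onto the $k=0$ summand is the identity, since the unit $E\to\pi_*\pi^*E$ is the inclusion of that summand; hence $d\pi^*$ is injective.) A secondary compression: your finiteness claim ``since $\sum_k(g^k)_*\w=\pi^*\v$ bounds $\w$'' is not literally true, as that equation only pins down the invariant part $\pi^*\v/\o$; finiteness needs the extra remark that the anti-invariant part $\w'$ lies in a negative definite lattice with $\w'^2=\w^2-\v^2/\o\geq-\v^2/\o$ (using $\w^2\geq 0$ for stable objects on the abelian cover), so only finitely many $\w'$, hence finitely many $\w$, can occur. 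Your orthogonal decomposition supplies exactly this, but it must be said.

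Everything else is correct, and in two places your route genuinely differs from the paper's. For the bound $\w^2\leq\v^2/\o$, the paper performs an explicit Chern-class computation culminating in \eqref{mukai comparison} and applies the Hodge index theorem to $c_1(F)-(g^k)^*c_1(F)$ against the invariant ample class $\pi^*H$; you instead use $G'$-invariance of $\eta(\sigma')$, self-adjointness of the averaging projector, and negative definiteness of the orthogonal complement of the positive plane. This treats the full Mukai vector at once and makes the equality case ($\w$ invariant, so $\pi^*\v=\o\w$) transparent; it is a clean repackaging that the paper could have used. For isotropy of $\FixG$, the paper chases the compatibility of Mukai's symplectic form with $\pi^*$ and invokes $\Ext^2(E,E)=0$ at smooth points of $\mSs$; you instead use $g^*\omega=\zeta\omega$ with $\zeta\neq 1$ (from $p_g(S)=h^{2,0}(X)^{G'}=0$) together with triviality of $dg^*$ on $T\FixG$ -- the standard non-symplectic automorphism argument, which applies directly to all of $\FixG$ without routing through $\mSs$. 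Finally, your ``$\leq$ with equality only for $G'$-invariant $\w$'' is in fact what the paper's proof establishes (and what \cref{local singularities} later uses, where equality in \eqref{singular} is explicitly entertained), with generic smoothness surviving the boundary cases by parity of $\w^2$; so that bookkeeping point is fine.
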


\begin{proof} Recall that we have the following dimension inequalities \cite[Corollary 4.5.2]{HL10}: \begin{align*}\v(E)^2+1&=\ext^1(E,E)-\ext^2(E,E)\leq\dim_E \mSs\leq\dim T_E\mSs=\ext^1(E,E)\\&=\v^2+1+\hom(E,E\otimes\o),\end{align*} for $E\in\mSs$, since \begin{align*}\ext^1(E,E)&=\v(E)^2+\hom(E,E)+\ext^2(E,E)=\v(E)^2+\hom(E,E)+\hom(E,E\otimes\o)\\
&=\v(E)^2+1+\hom(E,E\otimes\o),\end{align*} by Hirzebruch-Riemann-Roch, Serre duality and stability.  We note further that $\hom(E,E\otimes\o)=0$ or $1$, as $E$ and $E\otimes\o$ are $\sigma$-stable objects of the same Mukai vector so are either isomorphic or have no $\Hom$'s between them.  

Thus if $E\in \mSs$ is a singular point, then the obstruction space, $\Ext^2(E,E)$, does not vanish, so $\ext^2(E,E)=1$ and  $E\cong E\otimes \o$, and we assume this to be the case for the time being.  Then by \cite[Proposition 2.5]{BM98}, $E\cong \pi_*F$ for some $F$ which must be in $\mXs^{\circ}$ by Lemma \ref{lem:WhenIsPushForwardStable}.  It follows that $\pi^*E\cong\displaystyle\bigoplus_{k=0}^{\o-1} (g^k)^*F$.  If the Mukai vector of $F$ is $\v(F)=(r,c_1(F),\frac{1}{2}c_1(F)^2-c_2(F))$, then the rank of $E$ is $\o r$, \begin{equation}\label{c1}\pi^*c_1(E)=\sum_{k=0}^{\o-1}(g^k)^*c_1(F), \mbox{ and }\end{equation} \begin{equation}\label{c2}\o c_2(E)=c_2(\pi^*E)=\sum_{i\neq j}(g^i)^*c_1(F).(g^j)^*c_1(F)+\o c_2(F).\end{equation}  
We also have 
\begin{align}\label{square}
\begin{split}
    \o c_1(E)^2&=(\pi^*c_1(E))^2=\left(\sum_{k=0}^{\o-1}(g^k)^*c_1(F)\right)^2\\
    &=\o c_1(F)^2+2\sum_{i\neq j}(g^i)^*c_1(F).(g^j)^*c_1(F).
\end{split}\end{align}  Thus we have \begin{align*}\v(E)^2&=c_1(E)^2-2(\o r)\left(\frac{1}{2}c_1(E)^2-c_2(E)\right)=(1-\o r)c_1(E)^2+2\o rc_2(E)\\
&=(1-\o r)\left(c_1(F)^2+\frac{2}{\o }\sum_{i\neq j}(g^i)^*c_1(F).(g^j)^*c_1(F)\right)\\&+2r\left(\sum_{i\neq j}(g^i)^*c_1(F).(g^j)^*c_1(F)+\o c_2(F)\right)\\
&=(1-\o r)c_1(F)^2+\frac{2}{\o}\sum_{i\neq j}(g^i)^*c_1(F).(g^j)^*c_1(F)+2r\o c_2(F)\\
&=\o \v(F)^2+\left(\frac{2}{\o }\sum_{i\neq j}(g^i)^*c_1(F).(g^j)^*c_1(F)-(\o -1)c_1(F)^2\right).
\end{align*}  

Expanding \eqref{square} in another way we obtain 
\begin{align*} \left(\sum_{k=0}^{\o-1}(g^k)^*c_1(F)\right)^2&=\sum_{k=0}^{\o-1}(g^k)^*c_1(F).\left(\sum_{j=0}^{\o-1}(g^j)^*c_1(F)\right)\\&=\sum_{k=0}^{\o-1}\left((g^k)^*c_1(F).\sum_{j=0}^{\o-1}(g^j)^*c_1(F)\right)\\
&=\sum_{k=0}^{\o-1}\left(\sum_{j=0}^{\o-1}c_1(F).(g^{j-k})^*c_1(F)\right)\\
&=\sum_{k=0}^{\o-1}\left(\sum_{j=0}^{\o-1}c_1(F).(g^j)^*c_1(F)\right)\\
&=\o\left(c_1(F)^2+\sum_{k=1}^{\o-1}c_1(F).(g^k)^*c_1(F)\right).
\end{align*}

It follows that we may write \begin{equation}\label{mukai comparison}\v(E)^2=\o \v(F)^2+\sum_{k=1}^{\o-1}(c_1(F).(g^k)^*c_1(F)-c_1(F)^2).\end{equation}

Choosing any ample divisor $H$ on $S$, $\pi^*H$ is a $g^*$-invariant ample divisor on $X$, so $c_1(F).\pi^*H=(g^k)^*c_1(F).\pi^*H$ for any $k$, and by the Hodge Index Theorem \begin{equation}\label{hodge}2(c_1(F)^2-c_1(F).(g^k)^*c_1(F))=(c_1(F)-(g^k)^*c_1(F))^2\leq 0\end{equation} with equality if and only if $(g^k)^*c_1(F)=c_1(F)$.  Thus we see that $$\v(E)^2+1=\o(\v(F)^2+2)+\sum_{k=1}^{\o-1}\left(c_1(F).(g^k)^*c_1(F)-c_1(F)^2\right)-(2\o-1).$$  Since $F$ is stable on the abelian surface $X$, the moduli space $M_{\sigma',X}(\v(F))$ is smooth of dimension $\v(F)^2+2$ at $F$.  It follows from \eqref{hodge} that $$\dim M_{\sigma',X}(\v(F))\leq \left\{\begin{array}{l} \frac{1}{\o}(\dim \mSs+2\o-1)\mbox{ if }\dim_E \mSs=\v^2+1\\ \frac{1}{\o}(\dim \mSs+2\o-2)\mbox{ if }\dim_E \mSs=\v^2+2\end{array}\right\},$$ with equality if and only if $c_1(F)=g^*c_1(F)$.  Note that if $E$ is on a component of dimension $\v^2+2$, then the entire component is smooth and $E'\cong E'\otimes\o$ for every other point $E'$ on this component.  

If $E$ is indeed a singular point, then we must have that $E$ is on a component of the expected dimension $\v^2+1$.  Let $B=-\dim_E \mSs-(2\o-1)$ so that from the above inequality we must have $$\sum_{k=1}^{\o-1}\left(c_1(F)^2-c_1(F).(g^k)^*c_1(F)\right)\geq B,\text{ and}$$ $$2B\leq \sum_{k=1}^{\o-1}\left(c_1(F)-(g^k)^*c_1(F)\right)^2\leq 0,$$ where each term is non-positive.  Thus there can only be finitely many numbers $(c_1(F)-(g^k)^*c_1(F))^2$ and for any fixed value there can be only finitely many choices for $c_1(F)$ since $(\pi^*H)^{\perp}$ is a negative definite lattice.  Since $c_2(F)$ is determined by \ref{c2} above, there are only finitely many possible Mukai vectors for $F$. 

Furthermore, by Lemma \ref{lem:WhenIsPushForwardStable} $\pi_*F$ is $\sigma$-stable if and only if $F\ncong(g^k)^*F$ for $0<k<\o$.  Thus the singular locus of $\mSs$ is the image under $\pi_*$ of finitely many $M^s_{\sigma',X}(\v(F))^{\circ}$'s for the finitely many $\v(F)$'s satisfying the above necessary requirements.

Now we show that the push-forward map $\pi_*:M^s_{\sigma',X}(\v(F))^{\circ}\rightarrow \mSs$ is \'{e}tale of degree $\o$.  Indeed, if $\pi_*(F)\cong\pi_*(G)$, then pulling back gives $\displaystyle\bigoplus_{k=0}^{\o-1}(g^k)^*F\cong \displaystyle\bigoplus_{k=0}^{\o-1}(g^k)^*G$, so $$\C^{\o}=\Hom\left(\bigoplus_{k=0}^{\o-1}(g^k)^*F,\bigoplus_{k=0}^{\o-1}(g^k)^*G\right)=\bigoplus_{k=0}^{\o-1}\Hom(F,(g^k)^*G)^{\oplus \o},$$ so $F\cong (g^k)^*G$ for precisely one $k$.  Thus the singular locus is of even dimension and smooth itself.

Suppose now that $E$ lies on a component of dimension $\v^2+2$, and is thus a smooth point.  Then letting $C=-\dim_E \mSs-(2\o-2)$ we again get a similar bound $$2C\leq \sum_{k=1}^{\o-1}(c_1(F)-(g^k)^*c_1(F))^2\leq 0,$$ with each term of the sum nonpositive, and thus again the component $M\subset \mSs$ containing $E$ is the image under $\pi_*$ of some components of $M^s_{\sigma',X}(\v(F))^{\circ}$ for the finitely many $\v(F)$ satisfying this bound.  The argument above shows that this map $\pi_*$ is finite \'{e}tale so that if $\overline{M}\subset M^s_{\sigma',X}(\v(F))^{\circ}$ is one such component, then $$\dim M=\dim \overline{M}\leq \frac{1}{\o}(\dim M+2\o-2).$$  This forces the dimension of $M$ to be 0 or 2.  If $\dim M=2$, then we get equality in this inequality so that $c_1(F)=g^* c_1(F)$ and thus $\pi^*\v=\o \v(F)$.  If $\dim M=0$, then $M$ has a unique stable object $E$ and and $\overline{M}$ has $\o$ objects $(g^k)^*F$, $k=0,\dots,\o-1$, with $c_1(F)^2=c_1(F).(g^k)^*c_1(F)-2$.  But the object $E$ would then be spherical, which is impossible for a bielliptic surface \cite[Proposition 5.5]{Sos13}.

Since $G'$ is cyclic, any object in $\FixG=\mathrm{Fix}(g^*)$ admits a linearization and thus comes from a $G'$-equivariant object.  In other words, $\FixG$ is indeed the image of $\pi^*$.  To see that $\FixG$ is isotropic, recall that we have a commutative diagram,

$$\renewcommand{\arraystretch}{1.3}
\begin{array}[c]{ccccc}
\Ext^1(\pi^*E,\pi^*E)&\times&\Ext^1(\pi^*E,\pi^*E)&\rightarrow&\Ext^2(\pi^*E,\pi^*E)\\
\uparrow &&\uparrow &&\uparrow\\
\Ext^1(E,E)&\times&\Ext^1(E,E)&\rightarrow&\Ext^2(E,E)
\end{array},$$
where the top right arrow is the holomorphic symplectic form on $\mXs$ defined originally for sheaves in \cite{Muk84} and generalized to complexes in \cite[Theorem 3.3]{Ina11}.  Since $\Ext^2(E,E)=0$ for $E\in\mSs_{sm}$, we get that $\FixG$ is isotropic.  

Now we prove that $\pi^*$ is unramified on the claimed locus.  Lieblich and Inaba (in \cite{Lie} and \cite{Ina02}, respectively) generalized the well-known results about the deformation theory of coherent sheaves to complexes of such.  In particular, for $E\in \mSs$ and $F=\pi^*E$, the tangent spaces are $$T_{E}\mSs\cong \Ext^1(E,E),\text{ and }T_{F}\mX\cong \Ext^1(F,F),$$ and the differential is just the natural map $$d\pi^*:\Ext^1(E,E)\mor[\pi^*] \Ext^1(F,F).$$  

We claim that the differential must be injective for those $E$ such that $E\ncong E\otimes \o^k$ for any $0<k<\o$.  Indeed, suppose $E'\in \text{Ext}^1(E,E)$, i.e. $E'$ is an extension $$0\rightarrow E\rightarrow E'\rightarrow E\rightarrow 0.$$  Applying $\Hom(-,E\otimes\o^k)$ and noting that $E$ and $E\otimes\o^k$ are nonisomorphic for $0<k<\o$ and stable of the same phase so that $\Hom(E,E\otimes\o^k)=0$, we must have $\Hom(E',E\otimes\o^k)=0$.  Suppose that $\pi^*E'=0\in\Ext^1(F,F)$, i.e. the short exact sequence $$0\rightarrow F\rightarrow \pi^*E'\rightarrow F\rightarrow 0$$ splits.  But then so does the short exact sequence $$0\rightarrow \pi_*(F)\rightarrow \pi_*(\pi^*E')\rightarrow \pi_*(F)\rightarrow 0.$$  But this is precisely the sequence $$0\rightarrow \bigoplus_{k=0}^{\o} E\otimes\o^k\rightarrow \bigoplus_{k=0}^{\o} E'\otimes\o^k\rightarrow  \bigoplus_{k=0}^{\o} E\otimes\o^k\rightarrow 0.$$  Since $\Hom(E',E\otimes\o^k)=\Hom(E'\otimes\o^k,E)=0$ for $0<k<\o$, it follows that any morphism $$\bigoplus_{k=0}^{\o} E'\otimes\o^k\rightarrow \bigoplus_{k=0}^{\o} E\otimes\o^k$$ must be component wise, and thus any splitting of this short exact sequence induces a splitting of the original exact sequence $$0\rightarrow E\rightarrow E'\rightarrow E\rightarrow 0,$$ proving injectivity and giving the final claim.
\end{proof}

We have dealt with the global nature of $\Sing(\mSs)$ in Proposition \ref{global singularities}, which generalizes results of \cite{Kim98} for stable vector bundles on Enriques surfaces.  The next result, which deals with the local nature of these singularities, can be proven exactly as in \cite{Yamada} for the corresponding result on sheaves:

\begin{Prop}\label{local singularities} Suppose that $S$ is a bielliptic surface, $\v\in\Hal(S,\Z)$, and $\sigma\in\Stab^{\dagger}(S)$.  If $\v^2\geq 3\o$, then $\mSs$ is normal and Gorenstein with only terminal l.c.i. singularities.  Furthermore, if $\codim\Sing(\mSs)\geq 2$ (for example whenever $\v^2\geq 6$), then $\omega_{\mSs}$ is torsion in $\Pic(\mSs)$.  If $S$ is generic then both conclusions hold for all $\v^2>0$ except when $\o=2,3$, $\v^2=2\o$ and $\o\mid\v$, in which case the second conclusion still holds if $\o=3$.
\end{Prop}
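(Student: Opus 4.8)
The plan is to import the local analytic analysis of \cite{Yamada} into the Bridgeland setting, using \cref{global singularities} to verify its hypotheses. First I would recall that, by the deformation theory of objects in $\Db(S)$ (\cite{Ina02},\cite{Lie}), $\mSs$ is analytically locally at a point $E$ the zero fibre of a Kuranishi map $\kappa\colon\Ext^1(E,E)\to\Ext^2(E,E)$. As computed in the proof of \cref{global singularities}, $\ext^2(E,E)=\hom(E,E\otimes\o)$ equals $0$ at smooth points and $1$ exactly at the singular points, where $E\cong E\otimes\o$. Thus the target of $\kappa$ is at most one-dimensional everywhere, so $\mSs$ is cut out by a single equation inside the smooth germ $\Ext^1(E,E)$; hence it is a local complete intersection, and in particular Gorenstein and Cohen--Macaulay.

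The decisive input, which is precisely the local computation of \cite{Yamada} and rests on the holomorphic symplectic form on the abelian cover $X$ transported through $\pi^*E\cong\bigoplus_{k=0}^{\o-1}(g^k)^*F$ as in \cref{global singularities}, is that $\kappa$ is formally equivalent to its quadratic part $q$ and that the radical of $q$ is exactly the tangent space to the smooth variety $\Sing(\mSs)$. Consequently, analytically near $E$, $\mSs$ is the product of a smooth factor with a nondegenerate quadric cone of rank $r=\codim\Sing(\mSs)+1$. Since a rank-$r$ quadric cone is normal precisely when $r\geq 3$ and has terminal singularities precisely when $r\geq 4$, the whole proposition reduces to controlling $\codim\Sing(\mSs)$: the normality and the Gorenstein/terminal assertions follow once $\codim\Sing(\mSs)\geq 2$ and $\geq 3$ respectively. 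I expect this formal reduction of $\kappa$ to a nondegenerate quadric transverse to its radical to be the main obstacle, since it is exactly the point where the smoothness of $X$ and the decomposition of $\pi^*E$ are used, and without it the global dimension bound alone cannot force the rank of $q$ to be large.

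Converting the numerical hypotheses is then routine. \cref{global singularities} gives $\dim\Sing(\mSs)<\frac{1}{\o}(\dim\mSs+2\o-1)=\frac{\v^2}{\o}+2$, so $\codim\Sing(\mSs)>\frac{(\o-1)\v^2}{\o}-1$; for $\v^2\geq 3\o$ the right-hand side exceeds $3\o-4\geq 2$, and since the codimension is an integer this forces $\codim\Sing(\mSs)\geq 3$ for all $\o\geq 2$ (the threshold $3\o$ being sharp only at $\o=2$), whence $r\geq 4$ and $\mSs$ is normal, Gorenstein, with terminal l.c.i. singularities. The same estimate yields $\codim\Sing(\mSs)\geq 2$ as soon as $\v^2\geq 6$. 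For the statement on $\omega_{\mSs}$: once $\codim\Sing(\mSs)\geq 2$ and $\mSs$ is normal, $\Pic(\mSs)\hookrightarrow\Pic(\mSs_{\mathrm{sm}})$ and the Gorenstein dualizing sheaf is reflexive, so it suffices to see that $\omega_{\mSs_{\mathrm{sm}}}$ is torsion; this follows from the relation of $\mSs_{\mathrm{sm}}$ to the abelian cover, the canonical bundle being computed from the universal family and torsion because $\o K_S=0$, exactly as carried out for Enriques surfaces in \cite{NY19}.

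Finally, for generic $S$ I would sharpen \cref{global singularities}. When $\Num(S)=\Z A_0\oplus\Z B_0$ carries no further algebraic classes, the Hodge-index inequalities $(c_1(F)-(g^k)^*c_1(F))^2\leq 0$ used there acquire a definite negative gap unless $c_1(F)$ is $g^*$-invariant; this forces the finitely many Mukai vectors $\v(F)$ that contribute to $\Sing(\mSs)$ to have larger square, shrinking $\Sing(\mSs)$ and pushing $\codim\Sing(\mSs)\geq 3$ down to every $\v^2>0$ outside a short list of borderline configurations. The only genuine case analysis is this borderline family $\o\in\{2,3\}$, $\v^2=2\o$ with $\o\mid\v$, where the refined estimate produces a transverse quadric cone of rank at most $3$: a direct computation of this rank shows $\codim\Sing(\mSs)=2$ when $\o=3$ (rank $3$: canonical but not terminal, yet normal, so $\omega_{\mSs}$ remains torsion) and $\codim\Sing(\mSs)=1$ when $\o=2$ (rank $2$: not normal), which accounts for the stated asymmetry in the exceptional cases.
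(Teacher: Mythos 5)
Your overall frame (hypersurface singularities, hence l.c.i.\ and Gorenstein, normality from regularity in codimension one, Lin's criterion for terminality, and the reflexivity argument for torsion $\omega_{\mSs}$) matches the paper, but the core of your terminality argument has a genuine gap. You claim that the radical of the quadratic part $q$ of the Kuranishi map is \emph{exactly} the tangent space to $\Sing(\mSs)$, so that the rank $N$ of $q$ equals $\codim\Sing(\mSs)+1$ and terminality follows from $\codim\Sing(\mSs)\geq 3$. Only the inequality $N\leq\codim\Sing(\mSs)+1$ is automatic: writing $\kappa\sim t_1^2+\cdots+t_N^2+g(t_{N+1},\dots,t_{d+1})$ with $g\in\mathfrak{m}^3$ (which is what \cite{Yamada} actually provides --- $\kappa$ is \emph{not} formally equivalent to its quadratic part in general), the singular locus may be cut out further by $dg=0$ inside the radical. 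Concretely, the hypersurface $t_1^2+t_2^4+t_3^4+t_4^4=0$ has singular locus of codimension $3$, quadratic rank $N=1$, and fails Lin's criterion since $\tfrac12+\tfrac14+\tfrac14+\tfrac14=1+\tfrac14$ is not a strict inequality; so codimension alone cannot force terminality. Your mechanism is also contradicted by the proposition's own exceptional clause: for $\o=3$, $\v^2=6$, $3\mid\v$, the paper computes $\codim\Sing(\mSs)=2\o-3=3$, yet terminality is precisely what cannot be concluded there (and your ``direct computation'' giving codimension $2$ in that case disagrees with this).

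What the paper does instead, and what your proposal is missing, is a \emph{lower} bound on $N$ coming from the decomposition $\pi^*E\cong\bigoplus_{k}(g^k)^*F$: Yamada's inequality gives $N\geq\ext^1(F_e,F_h)$ for suitable $e\neq h\in G'$. When the dimension estimate \eqref{singular} is strict, this yields $N\geq\ext^1(F_e,F_h)>\v^2/\o\geq 3$, hence $N\geq 4$; when equality holds in \eqref{singular}, then $\o\mid\v$, so $\v^2/\o$ is an \emph{even} integer $\geq 3$, hence $\geq 4$, and again $N\geq 4$. This parity trick is why $\v^2\geq 3\o$ (rather than $4\o$) suffices. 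Note also that your generic-$S$ step runs backwards: genericity gives $\Num(S)_\Q\cong\Num(X)$, so \emph{all} the classes $c_1(F_h)$ coincide automatically, forcing equality in \eqref{singular} and $\o\mid\v$; the surviving borderline $\v^2=2\o$ cases with $\o=4,6$ are then rescued by a supplementary argument summing $\ext^1(F_e,F_{g_1})+\ext^1(F_e,F_{g_2})\geq 4$ over $g_1g_2\neq e$ (carried out in the appendix), which is unavailable exactly when $\o=2,3$ --- the true source of the stated exceptions. Without establishing the ext-group lower bound on $N$, your proof of the main ($\v^2\geq 3\o$) case does not go through.
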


\begin{proof} Although our claim is stronger, the proof follows along the same lines as in \cite{Yamada}, so we will use the notation found therein and indicate the needed modifications.  

 We first observe that since $\mSs$ has only hypersurface singularities, it is normal and Gorenstein as long as it is regular in codimension one.  To prove the terminality of the singularities, it suffices to look at the the completion of the local ring $\OO_{\mSs,E}$ at the point $E$ since the notion of terminal singularities are analytically local \cite[Proposition 4-4-4]{Mat02}.  By \cite[Facts 2.4, 2.5]{Yamada}, this completion looks like $\C[[t_1,\dots,t_{d+1}]]/(t_1^2+...+t_N^2+g(t_{N+1},\dots,t_{d+1}))$, where $d=\v^2+1$, $N$ is an integer between $1$ and $d+1$, and $g\in(t_{N+1},\dots,t_{d+1})^3$.  Moreover, as terminal singularities are stable under small deformations \cite[Proposition 9.1.4]{Ish14},\cite[Corollary 5.3]{Nak04}, it suffices to consider only the quadratic term.  That is, it suffices to prove that $\C[[t_1,\dots,t_{d+1}]]/(t_1^2+...+t_N^2)$ has at worst terminal singularities.  By \cite[Theorem 2]{Lin02}, a hypersurface singularity $t_1^{a_1}+\dots+t_n^{a_n}$ has terminal singularities if and only if $\sum_{i=1}^n\frac{1}{a_i}>1+\frac{1}{\lcm(a_i)}$.  So it suffices to show that $N$ above is at least 4.  

From the proof of \cite[Theorem 1.1]{Yamada}, $N\geq 4$ and $\codim(\Sing(\mSs))\geq 2$ if $\v^2\geq 4\o$.  We nevertheless show that the original dimension condition in \cite{Yamada}, namely $\v^2\geq 3\o$, suffices to guarantee that the singularities are not just canonical but in fact terminal.  To this end, first note  \begin{equation}\label{codim}\dim\mSs-\dim\Sing(\mSs)\geq(\v^2+1)-\frac{1}{\o}(\v^2+2\o)=\frac{\o-1}{\o}\v^2-1\geq 2,\end{equation} is satisfed as long as $\v^2\geq 3(\frac{\o}{\o-1})$.  So the codimension condition on $\Sing(\mSs)$ is satisfied for all $\v^2\geq 6$ as $\frac{\o}{\o-1}\leq 2$.  Furthermore, observe that if equality is not achieved in \eqref{singular}, then \cite[(12)]{Yamada} is a strict inequality so that there exists $e\neq h\in G'$ such that $$N\geq\ext^1(F_e,F_h)>\frac{\v^2}{\o}\geq 3,$$ guaranteeing that $N\geq 4$.  If, however, equality is achieved in \eqref{singular}, then $\o\mid\v$ so that $2\o\mid \v^2$.  As $\frac{\v^2}{\o}\geq 3$ is then an even integer, we must again have $N\geq 4$.

If $S$ is generic, then $\Num(S)_{\Q}\cong\Num(X)$, so if $\Sing(\mSs)\neq\varnothing$, then we have equality in \eqref{singular} as $c_1(F_h)=h^*c_1(F_e)=c_1(F_e)$ for all $h\in G'$.  Thus, $\o\mid\v$.  The only case not covered above is if $(\frac{\v}{\o})^2=2$ so that $\v^2=2\o$.  In this case, $\ext^1(F_e,F_g)=\langle \v(F_e),\v(F_g)\rangle=\v(F_e)^2=(\frac{\v}{\o})^2=2$, so from the previous argument we cannot guarantee that $N\geq 4$.  Nevertheless, see Example \ref{order 4 singular} and Example \ref{order 6 singular} for a proof in these cases.  We also note that $\codim\Sing(\mSs)=(\v^2+1)-(\v(F_e)^2+2)=2\o-3\geq 2$ if $\o>2$.

The statement about the canonical divisor is shown precisely as in \cite[Proposition 8.3.1]{HL10} as long as $\codim\Sing(\mSs)\geq 2$.
\end{proof}
We examine the cases not covered by \cref{local singularities} in the appendix since it would take us too far afield from our main goals to discuss it here.
\section{Moduli of Gieseker stable sheaves and non-emptiness of Bridgeland moduli spaces}\label{sec:ModuliOfGieseker}
While Theorem \ref{thm:projective coarse moduli spaces} guarantees that a projective coarse moduli space exists for Bridgeland semistable objects, and \cref{global singularities,local singularities} describes the singularities of the open stable locus, these results say nothing if the moduli space is empty.  However, just as the question of coarse moduli spaces can be reduced to the same question for Gieseker semistability, we will see in this section that the same is true for the question of non-emptiness of Bridgeland moduli spaces.  After beginning with a crucial result that allows us to reduce the study of topological invariants of $M_{\sigma,S}(\v)$ to those of Gieseker moduli spaces, we return to the existence problem on $S$ for Gieseker semistable sheaves themselves.  
\subsection{Motivic invariants}\label{subsec:MotivicInvariants}
We begin by introducing a specific case of the notion of motivic invariants.  A \emph{motivic invariant} is an invariant $J$ of varieties (over $\C$) such that $J(Y)=J(Z)+J(Y\backslash Z)$ for every variety $Y$ and closed subscheme $Z\subset Y$.  We will primarily be interested in the case that $J$ is the virtual Hodge polynomial.  Let us recall that for a variety $Y$ over $\C$, the cohomology with compact support $H^*_c(Y,\Q)$ has a natural mixed Hodge structure. Let $e^{p,q}(Y):=\sum_k (-1)^k h^{p,q}(H^k_c(Y))$ and $e(Y):=\sum_{p,q}e^{p,q}(Y)x^p y^q$ be the virtual Hodge number and virtual Hodge polynomial, respectively.  For $\sigma\in\Stab(Y)$ and $\v\in\Hal(Y,\Z)$, we define $J_\sigma(\v):=e(M_{\sigma,Y}(\v))$, whenever the coarse moduli scheme $M_{\sigma,Y}(\v)$ exists.

\begin{Lem}\label{Motivic invariant} The motivic invariant $J_\sigma(\v)$ is invariant under wall-crossing and autoequivalences.
\end{Lem}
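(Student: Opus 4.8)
The plan is to prove the two invariances separately, the autoequivalence case being essentially formal and the wall-crossing case requiring a stratification argument.

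\emph{Autoequivalences.} Let $\Phi\colon\Db(S)\to\Db(S)$ be an autoequivalence or anti-autoequivalence, acting on $\Stabd(S)$ and inducing $\Phi_*$ on $\Hal(S,\Z)$. Since $\Phi$ carries $\sigma$-semistable objects of class $\v$ to $\Phi(\sigma)$-semistable objects of class $\Phi_*\v$ and is exact (covariantly, or contravariantly after the shift built into an anti-equivalence), it sends Jordan--H\"older filtrations to Jordan--H\"older filtrations and hence $S$-equivalent objects to $S$-equivalent objects. Thus $E\mapsto\Phi(E)$ induces an isomorphism $\MM_{\sigma,S}(\v)\cong\MM_{\Phi(\sigma),S}(\Phi_*\v)$ of moduli functors, and therefore an isomorphism of the projective coarse moduli spaces supplied by \cref{thm:projective coarse moduli spaces}. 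As $e$ is an isomorphism invariant of varieties, $J_\sigma(\v)=J_{\Phi(\sigma)}(\Phi_*\v)$.

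\emph{Wall-crossing: the stratification.} Let $W$ be a wall for $\v$, let $\sigma_0$ be a generic point of $W$, and let $\sigma_\pm$ lie in the two adjacent chambers. Since semistability is a closed condition in $\Stabd(S)$, every $\sigma_\pm$-semistable object of class $\v$ is $\sigma_0$-semistable, and assigning to such an object the $S$-equivalence class of its $\sigma_0$-Jordan--H\"older graded $\gr_{\sigma_0}(\cdot)$ gives a constructible decomposition of $M_{\sigma_\pm,S}(\v)$ indexed by the $\sigma_0$-polystable type $\tau=(\v_1,m_1;\dots;\v_k,m_k)$ with $\sum_i m_i\v_i=\v$, all $\v_i$ of the same $\sigma_0$-slope. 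On the open stratum where the graded object is already $\sigma_0$-stable (i.e. $k=1$, $m_1=1$), the objects remain stable on both sides of $W$, so this stratum is common to $M_{\sigma_+,S}(\v)$ and $M_{\sigma_-,S}(\v)$ and contributes equally. By the additivity $e(Y)=e(Z)+e(Y\setminus Z)$ it therefore suffices to prove, for each strictly semistable type $\tau$, that the corresponding strata in $M_{\sigma_+,S}(\v)$ and $M_{\sigma_-,S}(\v)$ have the same virtual Hodge polynomial.

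\emph{The stratum comparison (main obstacle).} Fix a type $\tau$. Over a point of $\prod_i M^{s}_{\sigma_0,S}(\v_i)$ (or of the appropriate symmetric products when some $\v_i$ agree) representing a graded object $\bigoplus_i E_i^{\oplus m_i}$, the $\sigma_+$-semistable and $\sigma_-$-semistable objects with this graded are governed by the same finite-dimensional local data: the representations of the $\Ext^1$-quiver of $\{E_i\}$ with relations from the Yoneda products, carrying a King stability parameter $\theta_\pm$ read off from the derivative of the $\sigma$-phase normal to $W$. Thus each fiber is a moduli of $\theta_\pm$-semistable representations of this quiver with fixed dimension vector, and the two strata are two such families over a common base; the crux is that equality of their $e$-contributions is not automatic. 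I would control exactly which objects are destabilized across $W$, and the dimensions of these loci, using the dimension estimates for stacks of Harder--Narasimhan and Jordan--H\"older filtrations developed with Yoshioka in \cite{NY19}: at flop-type walls these identify the removed and inserted loci as projective fibrations of equal relative dimension over a common base, so their contributions cancel, while the totally semistable and isotropic walls require the finer bookkeeping of that paper to balance the contributions. This balancing---as opposed to the purely formal additivity of the previous step---is where the geometry genuinely enters and is the part I expect to be hardest. An appealing alternative would be to phrase the whole argument through the motivic Hall algebra, where the wall-crossing identity is a stability-independent reorganization of the stack of all objects in the fixed heart, reducing the problem to the no-pole/integrality properties of the integration map, which is delicate precisely when $\v$ is non-primitive.
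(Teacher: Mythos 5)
Your autoequivalence half is fine (and agrees in substance with what the paper needs), but your wall-crossing argument is a plan rather than a proof, and it stalls exactly where you predicted: at the stratum comparison. The missing idea is the \emph{symmetry of the Euler pairing}. Since $K_S$ is numerically trivial, $\chi(E,F)=\chi(F,E)$ for all objects (visible directly from $\left\langle (r,c,s),(r',c',s')\right\rangle = c.c'-rs'-r's$), and for $\sigma_0$-stable factors $A_1,A_2$ of the same phase, stability kills $\hom$ and $\ext^2$ in both directions, so $\ext^1(A_2,A_1)=\ext^1(A_1,A_2)$. It is this symmetry, and nothing else, that makes your ``balancing'' of removed and inserted loci work; an argument that never uses it cannot succeed, since the invariance genuinely fails on surfaces where the pairing is asymmetric. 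Beyond this, your proposed mechanism has concrete failure points: at totally semistable walls (which do occur here, see \cref{classification of walls}) the common open stratum of $\sigma_0$-stable objects is empty and stratum-by-stratum matching of projective fibrations breaks down entirely; and your Ext-quiver/King local model is only a formal-local description of the semistable locus near a polystable point --- to extract virtual Hodge polynomials from it you must account for automorphisms and the $\mathbb{G}_m$-gerbe structure, i.e.\ work with stack-valued motivic invariants, which a constructible decomposition of the coarse space does not provide. The dimension estimates of \cite{NY19} bound codimensions of destabilized loci but do not, by themselves, equate $e$-polynomial contributions.

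The paper's actual proof is the route you relegate to an ``appealing alternative'' in your last sentence: it is a short verification that Toda's general Joyce-invariant machinery \cite[Section 5]{Tod08} applies. Concretely, the paper checks that the relevant Hall-type algebra $A(\mathcal{A}_\phi,\Lambda,\chi)$ is \emph{commutative} precisely because $K_S$ is numerically trivial (so the Mukai pairing is symmetric --- the same key fact identified above), and then invokes \cite[Theorem 5.24 and Corollary 5.26]{Tod08} to conclude simultaneously that $J_\sigma(\v)=e(M_{\sigma,S}(\v))$ and that this is unchanged under variation of $\sigma$ and under autoequivalences; no wall-by-wall geometry is needed. So the entire content of the lemma is the verification of one hypothesis in a general theorem, whereas your main line of attack would amount to reproving Joyce--Toda wall-crossing by hand for this heart --- a substantial undertaking for non-primitive $\v$, and one your proposal does not complete.
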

\begin{proof}
We use the construction of the Joyce invariant $J_\sigma(\v)$ of \cite[Section 5]{Tod08}, which is quite general.  In particular, Lemma 5.12 there applies.  Likewise the analogous algebra $A(\mathcal A_{\phi},\Lambda,\chi)$ is still commutative since $K_S$ is numerically trivial and thus the Mukai pairing is commutative.  This and the results above show that \cite[Theorem 5.24 and Corollary 5.26]{Tod08} still apply.  In particular, $J_\sigma(\v)$ is the motivic invariant of the projective coarse moduli space $M_{\sigma,S}(\v)$, and is invariant under autoequivalences and changes in $\sigma$.  
\end{proof}

\subsection{Moduli spaces of Gieseker semistable sheaves on $S$}
With the exception of some early results of Takemoto \cite{Tak73} and Umemura \cite{Ume75} on stable sheaves with isotropic Mukai vector, the main result about stable sheaves on bielliptic surfaces involves relative Fourier-Mukai transforms.  Before discussing this result, we introduce some alternate notation for divisors.  Recall that $S$ admits two elliptic fibrations, $p_A:S\to A/G$, whose fibres are smooth and isomorphic to $B$, and $p_B:S\to B/G\cong\P^1$, which has some multiple fibres but whose smooth fibres are isomorphic to $A$.  Then $A_0$ provides a multisection of $p_A$ with minimal degree $\lambda_S$, and $B_0$ provides a multisection of $p_B$ with degree $\o$.  Define the fibre degree $d_{p_A}(\v)$ (resp. $d_{p_B}(\v)$) of $\v$ with respect to $p_A$ (resp. $p_B$) to be $d_{p_A}(\v):=c_1(\v).B$ (resp. $d_{p_B}(\v):=c_1(\v).A$) (we drop $p_A$ (resp. $p_B$) in the notation when it is clear from the context).  With this notation, any divisor class  $c_1\in\Num(S)$ can be written $c_1=\left(\frac{d_{p_A}(c_1)}{\lambda_S}\right)A_0+\left(\frac{d_{p_B}(c_1)}{\o}\right)B_0$.  The main theorem to-date, proved by Bridgeland in \cite{Bri98}, is the following:
\begin{Thm}[Bridgeland]\label{elliptic sheaves} Suppose that $\omega$ is a generic polarization that is $\v$-suitable with respect to $p_A$, that $r>1$, and that $\gcd(r,d_{p_A}(\v))=1$.  Then $M_\omega(\v)$ is a smooth projective variety birational to $$\Pic^{0}(S)\times\Hilb^{\v^2/2}(S).$$ Furthermore, if $r>a\left(\frac{\v^2}{2}\right)$, then the birational equivalence extends to an isomorphism of varieties, where $a$ is the unique integer $0<a<r$ such that $br-ad_{p_A}(\v)=1$ has an integral solution $b$.  
\end{Thm}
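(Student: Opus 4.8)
The plan is to prove this via the relative Fourier--Mukai transform attached to the isotrivial elliptic fibration $p_A\colon S\to A/G$, following Bridgeland's strategy in \cite{Bri98}. The guiding idea is that when $\gcd(r,d_{p_A}(\v))=1$ one can use this transform, together with twists by line bundles pulled back from the base, to reduce the rank of $\v$ all the way down to one, where the moduli space is transparently a product of a Picard variety and a Hilbert scheme of points. First I would recall the relative Poincar\'e sheaf on $S\times_{A/G}S$ and the induced autoequivalence $\Phi\colon\Db(S)\to\Db(S)$, noting that on Mukai vectors $\Phi$ acts on the fibre data $(r,d_{p_A}(\v))$ essentially through an element of $\SL_2(\Z)$, while leaving the Mukai pairing invariant.

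The core reduction proceeds by the Euclidean algorithm applied to the coprime pair $(r,d_{p_A}(\v))$: composing powers of $\Phi$ with twists by base-pulled-back line bundles produces a derived (anti-)autoequivalence $\Psi$ carrying $\v$ to a Mukai vector $\v_0$ of rank one, and since $\Psi$ preserves the pairing we have $\v_0^2=\v^2$; after a further twist by a line bundle we may arrange $\v_0=(1,0,-\v^2/2)$. The moduli space $M_{\omega'}(\v_0)$ of $\omega'$-stable rank-one torsion-free sheaves of this class is then $\Pic^0(S)\times\Hilb^{\v^2/2}(S)$, since every such sheaf is of the form $I_Z\otimes L$ with $L\in\Pic^0(S)$ and $Z\in\Hilb^{\v^2/2}(S)$. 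The dimensions match: $\dim\Pic^0(S)=q(S)=1$, so the product has dimension $1+\v^2=\v^2+1$, which is exactly $\dim M_\omega(\v)$ (note that $\gcd(r,d_{p_A}(\v))=1$ forces every semistable sheaf of class $\v$ to be stable, so $M_\omega(\v)$ is smooth of the expected dimension).

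To obtain birationality, the key point is that for $\omega$ suitable with respect to $p_A$ in the sense of \cite{Bri98}, the transform $\Psi$ carries a $\Psi$-stable sheaf to a genuine sheaf—one concentrated in a single cohomological degree, i.e. satisfying the relevant weak index theorem (WIT)—on a dense open locus, and there it preserves $\omega'$-Gieseker stability. Over this locus $\Psi$ identifies $M_\omega(\v)$ with an open subset of $M_{\omega'}(\v_0)\cong\Pic^0(S)\times\Hilb^{\v^2/2}(S)$, yielding the asserted birational equivalence. For the final clause I would show that the numerical hypothesis $r>a(\v^2/2)$, with $a$ the unique integer $0<a<r$ solving $br-ad_{p_A}(\v)=1$, is precisely the bound guaranteeing that \emph{every} semistable sheaf in the relevant family satisfies WIT for one fixed index, with no jumping of cohomology; under this condition $\Psi$ induces an isomorphism of moduli functors, so the birational map extends to an isomorphism of varieties.

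The main obstacle is the stability analysis packaged into the first and last steps: namely proving that for suitable $\omega$ the relative Fourier--Mukai transform preserves Gieseker (semi)stability, and extracting the sharp WIT bound $r>a(\v^2/2)$ that upgrades generic preservation to preservation on the whole family. This is the technical heart of \cite{Bri98}, requiring a careful comparison of $\omega$-Gieseker stability with fibrewise stability and a uniform control of the index of transformation; everything else (the Euclidean reduction of the rank, the identification of the rank-one moduli space, and the dimension count) is comparatively formal.
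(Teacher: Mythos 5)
Your proposal is correct and follows essentially the same route the paper points to: the paper does not reprove this theorem but cites it as \cite[Theorem 1.1]{Bri98}, noting that its proof is an application of the relative Fourier--Mukai transform of \cref{relative FM} --- precisely the strategy you outline of using the $\SL_2(\Z)$-action on $(r,d_{p_A}(\v))$ to reduce to rank one, identifying the rank-one moduli space with $\Pic^0(S)\times\Hilb^{\v^2/2}(S)$, and upgrading the birational map to an isomorphism under the WIT-type bound $r>a(\v^2/2)$. One small caveat: smoothness does not follow merely from ``semistable $=$ stable,'' since on a bielliptic surface the obstruction space is $\Ext^2(E,E)\cong\Hom(E,E\otimes K_S)^\vee$; rather, $\gcd(r,d_{p_A}(\v))=1$ rules out $E\cong E\otimes K_S^{k}$ (such a sheaf is pushed forward from an intermediate \'etale cover, forcing a common divisor of $r$ and $d_{p_A}(\v)$), which gives the required $\Ext^2$-vanishing.
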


Here a polarization $\omega$ on a smooth projective surface $Y$ is called generic and $\v$-suitable with respect to a fibration $p:Y\to C$ if it lies in a chamber $\CC$ of $\Amp(Y)_\R$ for $\v$ such that the fiber $p^{-1}(\pt)\in\overline{\CC}$.  A consequence of the definition of $\v$-suitability is that a coherent sheaf $E$ of Mukai vector $v$ that is Gieseker semistable with respect to $\omega$ restricts to a semistable sheaf on the generic fibre of $p$, and conversely, a sheaf of Mukai vector $\v$ restricting to a stable sheaf on the generic fibre of $p$ is Gieseker stable with respect to $\omega$ \cite[Theorem 5.3.2]{HL10}.  In Theorem \ref{elliptic sheaves}, we assume that $\gcd(r,d_{p_A}(\v))=1$, so genericity implies that $\mu$-stablility is equivalent to $\mu$-semistability, which are also equivalent to Gieseker stability.  

The proof of Theorem \ref{elliptic sheaves}, whose more general form for any elliptic surface is \cite[Theorem 1.1]{Bri98}, involves an application of the following result:

\begin{Thm}\cite[Theorem 1.2]{Bri98}\label{relative FM} Suppose that $\omega$ is a generic $\v$-suitable polarization on an elliptic surface $p:Y\to C$ with minimal multisection degree $\lambda_Y$, and that integers $a>0$ and $b$ satisfy $\gcd(a\lambda_Y,b)=1$.  Let $q:\tilde{Y}:=J_Y(a,b)\to C$ be the union of those components of $M_\omega(Y/C)$, the relative moduli space of $\omega$-Gieseker stable pure dimension 1 sheaves on the fibers of $p$,  that contain a rank $a$, degree $b$ vector bundle supported on a smooth fibre of $p$.  Then for an element $$\left(\begin{matrix} c & a\\d & b\end{matrix}\right)\in\SL_2(\Z),$$ such that $\lambda_Y$ divides $d$, there exist tautological sheaves on $Y\times\tilde{Y}$, supported on $Y\times_C\tilde{Y}$, and for each such sheaf $\PP$, the Fourier-Mukai functor $\Phi_{\PP}:\Db(\tilde{Y})\to\Db(Y)$ is an equivalence and satisfies $$\left(\begin{matrix} r(\Phi_{\PP}(E))\\d_{p}(\Phi_{\PP}(E))\end{matrix}\right)=\left(\begin{matrix} c & a\\d & b\end{matrix}\right)\left(\begin{matrix}r(E)\\d_q(E)\end{matrix}\right)$$ for all objects $E\in\Db(\tilde{Y})$.
\end{Thm}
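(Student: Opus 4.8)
The plan is to realise $\Phi_{\PP}$ as a \emph{relative} Fourier--Mukai transform over the base $C$, to verify it is an equivalence by an orthonormality criterion applied to the images of skyscraper sheaves, and finally to read off the action on $(r,d_p)$ by restricting to a smooth fibre, where the whole construction degenerates to the classical transform on an elliptic curve. First I would construct $\tilde Y$ and the kernel. For a smooth fibre $E=p^{-1}(t)$ the hypothesis $\gcd(a\lambda_Y,b)=1$ forces $\gcd(a,b)=1$, so on $E$ Gieseker semistability and stability coincide for sheaves of rank $a$ and degree $b$, and the corresponding moduli space is fine and isomorphic to $E$ by the classification of stable bundles on elliptic curves. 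Carrying this out in families, the relative moduli space $M_\omega(Y/C)$ of fibrewise stable pure-dimension-one sheaves is projective over $C$, and $\tilde Y=J_Y(a,b)$ is the union of those components meeting the locus of rank $a$, degree $b$ bundles on smooth fibres; $\v$-suitability and genericity of $\omega$ ensure that fibrewise stability is the correct notion and that these components form an elliptic fibration $q\colon\tilde Y\to C$. The numerical conditions $\gcd(a\lambda_Y,b)=1$, $\lambda_Y\mid d$, together with $cb-ad=1$ are precisely what is needed for a universal sheaf to exist on $Y\times_C\tilde Y$; pushing it forward along the closed immersion $Y\times_C\tilde Y\hookrightarrow Y\times\tilde Y$ yields the tautological kernels $\PP$, the remaining ambiguity being a twist by a line bundle pulled back from $\tilde Y$.

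Second, to show $\Phi_{\PP}$ is an equivalence I would invoke Bridgeland's criterion for an integral functor between smooth projective varieties: it suffices that the family $\{\Phi_{\PP}(\OO_{\tilde y})\}_{\tilde y\in\tilde Y}$ be orthonormal, namely that $\Hom(\Phi_{\PP}(\OO_{\tilde y_1}),\Phi_{\PP}(\OO_{\tilde y_2})[i])=0$ unless $\tilde y_1=\tilde y_2$ and $0\le i\le 2$, with $\Hom(\Phi_{\PP}(\OO_{\tilde y}),\Phi_{\PP}(\OO_{\tilde y}))=\C$. By construction $\Phi_{\PP}(\OO_{\tilde y})$ is the tautological fibrewise-stable sheaf indexed by $\tilde y$. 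If $\tilde y_1,\tilde y_2$ lie over distinct points of $C$ the supports are disjoint fibres and the vanishing is automatic; if they lie over a common $t$ the required vanishing is exactly the statement that distinct stable sheaves with the same reduced Hilbert polynomial on the fibre admit no nonzero maps, while the self-$\Ext$ computation on a smooth fibre reduces to the fact that the elliptic-curve transform is an equivalence. Full faithfulness then upgrades to an equivalence because $\Phi_{\PP}(\OO_{\tilde y})$ is supported on a fibre $F$ with $F^2=0$, whence $\omega_Y|_F\cong\OO_F$ and $\Phi_{\PP}(\OO_{\tilde y})\otimes\omega_Y\cong\Phi_{\PP}(\OO_{\tilde y})$.

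Third, for the numerical identity I would compute the action of $\Phi_{\PP}$ on topological invariants by Grothendieck--Riemann--Roch for the correspondence $\PP$, and it is cleanest to restrict to a smooth fibre. There $\Phi_{\PP}$ specialises to the elliptic-curve Fourier--Mukai transform attached to the universal rank-$a$, degree-$b$ bundle, whose effect on the charge vector (rank, degree) of a sheaf on the fibre is the standard $\SL_2(\Z)$ action by $\left(\begin{smallmatrix} c & a\\ d & b\end{smallmatrix}\right)$. Since both $r$ and the fibre degree $d_p$ are computed fibrewise, the same matrix governs $\binom{r}{d_p}$ on all of $\Db(\tilde Y)$, giving the claimed formula.

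The main obstacle is the behaviour over the \emph{singular} fibres of $p$. Everything above is transparent on the smooth locus, where one reduces to Atiyah and Mukai, but one must guarantee that $\tilde Y\to C$ is well-behaved over the discriminant, that the universal sheaf extends across bad fibres, and that the orthonormality criterion persists there. I would handle this by observing that the orthonormality and spanning-class conditions are detected on all skyscrapers simultaneously and are closed conditions, so it suffices to check full faithfulness over the dense smooth locus and then propagate; alternatively, the hypotheses of $\v$-suitability and $\lambda_Y\mid d$ are exactly the conditions under which the fibrewise moduli problem remains fine and flat over all of $C$, so the relative construction goes through uniformly. Controlling the singular fibres, rather than the generic computation, is where the genuine difficulty lies.
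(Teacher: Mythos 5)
You should note at the outset that the paper does not prove this statement at all: it is quoted as \cite[Theorem 1.2]{Bri98} and used as a black box, so the only proof to measure your proposal against is Bridgeland's original one. Your overall strategy---fineness of the relative moduli from $\gcd(a\lambda_Y,b)=1$, Bridgeland's strong-simplicity criterion applied to the kernels $\PP_{\tilde y}$, and the numerical action computed on a smooth fibre---is indeed the strategy of \cite{Bri98}, and the smooth-fibre portion of your sketch is sound. (One small misattribution: $\gcd(a\lambda_Y,b)=1$ is what produces a tautological sheaf; the condition $\lambda_Y\mid d$ is not about existence but about which matrices in $\SL_2(\Z)$ with second column $(a,b)^T$ are actually realizable after twisting the kernel by line bundles pulled back from $\tilde{Y}$.)

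The genuine gap sits exactly where you locate the difficulty and then dismiss it: the multiple fibres. Your upgrade from full faithfulness to equivalence rests on the claim that $F^2=0$ forces $\omega_Y|_F\cong\OO_F$. This is false on a multiple fibre: the canonical bundle formula gives $\omega_Y\cong p^*L\otimes\OO_Y\bigl(\sum_i(m_i-1)F_{i,\mathrm{red}}\bigr)$, so $\omega_Y$ restricts to a torsion line bundle of order $m_i$ on $F_{i,\mathrm{red}}$, nontrivial whenever $m_i>1$---and the fibration $p_B$ of a bielliptic surface always has multiple fibres, so this is precisely the case relevant to the present paper. Worse, the same invariance $\PP_{\tilde y}\otimes\omega_Y\cong\PP_{\tilde y}$ is already needed for strong simplicity itself: between distinct points on a common fibre your stability argument kills $\Hom$, but $\Ext^2(\PP_{\tilde y_1},\PP_{\tilde y_2})\cong\Hom(\PP_{\tilde y_2},\PP_{\tilde y_1}\otimes\omega_Y)^\vee$ vanishes only if $\PP_{\tilde y_1}\otimes\omega_Y\ncong\PP_{\tilde y_2}$, which is not automatic when $\omega_Y$ restricts nontrivially. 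Your fallback---that orthonormality is a ``closed condition'' checkable on the dense smooth locus and then propagated---runs backwards: $\hom$-dimensions are upper semicontinuous, so generic vanishing says nothing at special points, and strong simplicity must be verified at \emph{every} point of $\tilde{Y}$, including those over the discriminant. What closes the gap, as in \cite{Bri98}, is the classification of stable sheaves supported on multiple fibres: over a fibre $mF_{\mathrm{red}}$ the parametrized sheaves have Hilbert polynomial matching rank $a$ on the full fibre class, hence, e.g., rank $am$ on $F_{\mathrm{red}}$; since each multiplicity $m_i$ divides $\lambda_Y$, one has $\gcd(am,b)=1$, and a stable bundle on an elliptic curve with coprime rank and degree is determined by its determinant, hence invariant under every torsion twist whose order divides the rank---in particular under $\omega_Y|_{F_{\mathrm{red}}}$ (with a parallel analysis for sheaves living on thickenings of $F_{\mathrm{red}}$). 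Without an argument of this kind, your proof fails precisely at the multiple fibres.
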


By \cite[Proposition 6.2]{BM01}, bielliptic surfaces have no nontrivial FM partners, so $J_S(a,b)\cong S$ for any integers $a>0$ and $b$ with $\gcd(b,a\lambda_S)=1$.  Theorem \ref{elliptic sheaves} results upon considering how stable sheaves transform upon applying the Fourier-Mukai (auto)equivalence of Theorem \ref{relative FM}.  By using Bridgeland stability, we can significantly streamline this approach.

Here we will use Theorem \ref{relative FM} and the results of  \cref{sec: coarse moduli,sec:Singularities} to obtain a more general version of Theorem \ref{elliptic sheaves}:
\begin{Thm}\label{Thm:Non-emptinessOfGieseker}
Let $S$ be a bielliptic surface and $\omega$ a generic polarization with respect to a primitive positive Mukai vector $\v$ of positive rank.  Let $n\in\N$.
\begin{enumerate}
    \item If $\v^2=0$, then $M^{\mu s}_\omega(n\v)\ne\varnothing$ iff $nl(\v)\mid\o$.
    \item If $\v^2>0$, then $M^{\mu ss}_\omega(\v)\neq\varnothing$ and there is a $\mu$-stable sheaf in each irreducible component of $M^{\mu ss}_\omega(\v)$.  Moreover, except when $\o=2$ and $\v=(2,0,-1)e^D$ or $S$ is of Type 1 and $\v=(2,B_0,-1)e^D$, this $\mu$-stable sheaf may be taken to be locally free as long as $\rk(\v)>1$.  Even in these exceptional cases, there are components which contain $\mu$-stable locally free sheaves.
\end{enumerate}
\end{Thm}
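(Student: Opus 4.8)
My plan is to reduce both non-emptiness statements, via Fourier--Mukai equivalences, to cases already settled, and to transport non-emptiness across the reductions using the invariance of the motivic invariant $J_\sigma$ (\cref{Motivic invariant}); the structural assertions in part (2) will then be quoted from \cref{slope stability} and \cref{half-half}. For part (1) the implication $M^{\mu s}_\omega(n\v)\ne\varnothing\Rightarrow nl(\v)\mid\o$ is \cref{isotropic}, so only the converse needs the new Bridgeland input. I would first observe that in the isotropic case a Gieseker-stable sheaf is automatically $\mu$-stable (\cref{isotropic}) and, by \cref{Thm:GiesekerChamber}, is the same as a $\sigma$-stable object for $\sigma$ in the Gieseker chamber; thus, given $d:=nl(\v)\mid\o$, it suffices to build a single $\sigma$-stable object of class $n\v$.

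Writing $\pi^*\v=l(\v)\w$ with $\w$ primitive (\cref{primitive}), we have $\pi^*(n\v)=d\w$, and on the abelian cover $M_{\sigma',X}(\w)$ is a projective abelian surface on which $G'\cong\Z/\o\Z$ acts. The crux is to exhibit a point $F\in M_{\sigma',X}(\w)$ whose $G'$-orbit has size exactly $d$: granting this, \cref{lem:WhenIsPushForwardStable} gives $\pi_*F\cong\bigoplus_{k=0}^{\o/d-1}E\otimes\o^k$ with $E$ a $\sigma$-stable object, and since $\pi_*\w=\frac{\o}{l(\v)}\v$ one computes $\v(E)=\frac{d}{l(\v)}\v=n\v$, as desired. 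To produce such an orbit cleanly I would pass to the intermediate bielliptic cover $\pi'\colon\tilde S\to S$ of degree $d$ from \cref{lem:intermediate bielliptic order composite} (with $\tilde S=X$ when $d=\o$): on $\tilde S$ the class $n\v$ is induced from a primitive isotropic class $\tilde\v$ with $l(\tilde\v)=\ord(\omega_{\tilde S})$, whose moduli space is isomorphic to $\tilde S$ by \cref{lem:moduli spaces of fully induced isotropic vectors} and whose generic member has trivial stabilizer for the deck action, so that its push-forward under $\pi'$ is the required stable object.

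For part (2), the assertions that $\dim M^{\mu ss}_\omega(\v)=\v^2+1$ and that each component carries a $\mu$-stable sheaf, locally free outside the two exceptional vectors, are exactly \cref{slope stability} (with \cref{half-half} treating the exceptional vectors), so only non-emptiness remains. Here I would first apply the reduction of \cref{sec:ModuliOfGieseker}, which for fixed $(r,D)$ leaves only the finitely many vectors $\v'=(qr,qD,\lfloor qD^2/2r\rfloor)$ with $0<q\le n_0$ to treat. Passing to the Gieseker chamber and applying the relative Fourier--Mukai equivalences of \cref{relative FM} for the fibrations $p_A,p_B$ — that is, running the Euclidean algorithm on $(\rk,d_{p_A})$ through the $\SL_2(\Z)$-action — reduces $\v'$ to one of the reduced forms $\v_0$ of \cref{table:IntroFiniteList}, of strictly smaller rank. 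Because the coarse spaces of \cref{thm:projective coarse moduli spaces} are projective varieties, $J_\sigma$ evaluated on them is nonzero precisely when the space is non-empty (the top-dimensional components each contribute a monomial of maximal bidegree to $e$), so the invariance of $J_\sigma$ under autoequivalence and wall-crossing (\cref{Motivic invariant}) makes non-emptiness for $\v'$ equivalent to non-emptiness for $\v_0$.

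It remains to check non-emptiness for the reduced forms of \cref{table:IntroFiniteList} by hand, and this finite case analysis is where I expect the main difficulty to lie. The forms with $c_1\in\{0,qA_0,qB_0,qA_0+qB_0,\dots\}$ should be realized explicitly as push-forwards from $X$ of $G'$-invariant line bundles or stable bundles of the corresponding class, whereas the cases with $\gcd(\rk,d_{p_A})=1$ can instead be settled by \cref{elliptic sheaves}, which already identifies $M_\omega(\v_0)$ up to birationality with $\Pic^0(S)\times\Hilb^{\v_0^2/2}(S)$ and hence exhibits it as non-empty. The delicate points will be to carry this out uniformly across all seven families and, above all, to treat the two exceptional vectors $\v=(2,0,-1)e^D$ (for $\o=2$) and $\v=(2,B_0,-1)e^D$ (Type~1): there one must verify both that the non-locally-free components predicted by \cref{half-half} are genuinely non-empty and that other components nonetheless contain $\mu$-stable locally free sheaves, which \cref{half-half} supplies through the Cayley--Bacharach extension argument. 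Assembling these verifications with the reductions above yields non-emptiness in every case, and the structural statements then follow from \cref{slope stability}.
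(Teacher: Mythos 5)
Your overall architecture (classical reduction to $0<q\le n_0$, Fourier--Mukai plus $J_\sigma$-invariance to reduce to the finite table, explicit checks for the reduced forms, and the full-orbit case $nl(\v)=\o$ via push-forward from $X$) coincides with the paper's, and your computation $\v(E)=\tfrac{d}{l(\v)}\v=n\v$ from \cref{lem:WhenIsPushForwardStable} is correct. The genuine gap is the mechanism you propose for the remaining isotropic cases, where $d:=nl(\v)$ is a \emph{proper} divisor of $\o$: it is numerically impossible. If $\tilde S=X/\langle g^d\rangle$ is the degree-$d$ intermediate cover of \cref{lem:intermediate bielliptic order composite} and $\tilde\v\in\Hal(\tilde S,\Z)$ satisfies $\pi'_*\tilde\v=n\v$, then (the deck group of $\pi'$ acting trivially on $\Hal(\tilde S,\Z)$) one has $\pi'^*(n\v)=d\tilde\v$, whence $\tilde\pi^*\tilde\v=\tfrac{1}{d}\pi^*(n\v)=\w$, the \emph{primitive} class on $X$; so $l(\tilde\v)=1$, never $\ord(\omega_{\tilde S})=\o/d$ unless $d=\o$. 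Concretely, in Type 3 with $\v=(2,B_0,0)$, $n=1$, $d=2$: the unique class pushing forward to $\v$ is $\tilde\v=(1,\tilde B_0,0)$, with $l(\tilde\v)=1\ne 2=\ord(\omega_{\tilde S})$. Hence \cref{lem:moduli spaces of fully induced isotropic vectors} never applies in this range, $M_{\sigma,\tilde S}(\tilde\v)$ is one-dimensional rather than isomorphic to $\tilde S$, and your key claim that the generic member has trivial stabilizer for the deck action cannot be extracted from a dimension count (a one-dimensional moduli space could a priori be entirely fixed; in the paper's $d=\o$ case the count works only because the ambient space is two-dimensional and each $\Fix(g^e)$ is shown to be a one-dimensional image under $\tilde\pi^*$).

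This is precisely where the paper is forced into non-formal case analysis, and your proposal supplies no substitute: the trivial-stabilizer statement is established there using Umemura's classification \cite{Ume75} for $\u=(1,0,0)$, the second intermediate cover \eqref{bielliptic cover 2} of \cref{lem:intermediate bielliptic lambda bigger than 1} for $(2,A_0,0)$ and $(3,A_0,0)$, a component-swapping argument for $2(2,A_0,0)$ in Type 4, and the explicit extension constructions of \cref{random case} for $(2,B_0,0)$ (Type 2) and $(3,B_0,0)$ (Type 6). Note that in these last two cases $d=nl(\u)=1$, so your recipe degenerates entirely: it asks for a $G'$-invariant point of $M^s_{\pi^*\omega,X}(\w)$, which by \cref{lem:DescentAndPushforward} is \emph{equivalent} to the existence of a stable sheaf on $S$ of class $\u$ with stable pullback --- i.e., the statement being proved. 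Your fallback, realizing the reduced forms ``as push-forwards from $X$ of $G'$-invariant line bundles or stable bundles,'' is therefore circular, and moreover conflates descent with push-forward: $\pi_*$ of a $G'$-invariant stable bundle of class $\w$ has Mukai vector $\tfrac{\o}{l(\u)}\u$, not $n\u$, and is never stable by \cref{lem:WhenIsPushForwardStable}; what is needed is the descended object, whose existence is the hard content of \cref{random case}. The remainder of your outline --- the necessity direction via \cref{isotropic}, transport of non-emptiness through $J_\sigma$ (\cref{Motivic invariant}), the structural claims via \cref{slope stability} and \cref{half-half}, and \cref{elliptic sheaves} for coprime fibre degree --- is sound and matches the paper.
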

Here, positivity of $\v\in\Hal(S,\Z)$ is meant in the sense of \cite[Definition 0.1]{Yos01}.  While the existence of rank zero stable sheaves follows from the existence of stable vector bundles on smooth curves, we will focus on the positive rank case.  We will prove \cref{Thm:Non-emptinessOfGieseker} by reducing to a finite number of explicit, easily-handled cases using the following result:

\begin{Prop}\label{FM reduction} Suppose that $\omega$ is a generic polarization for a primitive Mukai vector $\v=(R,aA_0+bB_0,s)$.  Then $$e(M_\omega(\v))=e(M_\omega(\v_0)),$$ where $\v_0$ is one of the reduced forms appearing in Table \ref{table:Exceptions} and $1\leq\rk(\v_0)\leq R$.
\end{Prop}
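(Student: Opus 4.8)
The plan is to transport $\v$ to one of the listed forms by the Fourier--Mukai autoequivalences attached to the two elliptic fibrations $p_A,p_B$, while keeping the virtual Hodge polynomial fixed. The enabling fact is that a bielliptic surface has no nontrivial Fourier--Mukai partner, so $J_S(\alpha,\beta)\cong S$ for every admissible pair by \cite[Proposition 6.2]{BM01}, whence the relative transforms $\Phi_{\PP}$ of \cref{relative FM} are genuine autoequivalences of $\Db(S)$. By \cref{Motivic invariant}, the invariant $e(M_\omega(\blank))$ — identified with the Bridgeland motivic invariant in the Gieseker chamber via \cref{Thm:GiesekerChamber} — is constant along autoequivalences and wall-crossings. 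Hence $e(M_\omega(\v))=e(M_\omega(\v'))$ whenever $\v'=\Phi_{\PP}(\v)$ is obtained by applying some $\Phi_{\PP}$ and then wall-crossing back to a Gieseker chamber, and it suffices to steer $\v=(R,aA_0+bB_0,s)$ into a reduced form.

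Since each $\Phi_{\PP}$ is a derived equivalence it is an isometry of the Mukai lattice, so $\v^2=c_1^2-2Rs$ is preserved and $s=(c_1^2-\v^2)/(2R)$ is determined throughout by $(R,c_1)$; this is why $s$ survives as a free parameter in the table and only $(R,c_1)$ must be reduced. Recording the fibre degrees $d_{p_A}(\v)=a\lambda_S$ and $d_{p_B}(\v)=b\o$, \cref{relative FM} transforms the pair $\binom{r}{d_p}$ by an $\SL_2(\Z)$-matrix whose lower-left entry is divisible by the multisection degree ($\lambda_S$ for $p_A$, $\o$ for $p_B$). A direct computation then shows that the $p_A$-transform acts on $\binom{R}{a}$ through the congruence subgroup $\Gamma^0(\lambda_S)$ of matrices with upper-right entry divisible by $\lambda_S$, and the $p_B$-transform acts on $\binom{R}{b}$ through $\Gamma^0(\o)$.

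The reduction is then a two-variable Euclidean descent: applying the $p_A$- and $p_B$-transforms alternately lowers $R$, and I would choose the matrices so that the rank stays positive, invoking \cref{elliptic sheaves} in the coprime case $\gcd(R,d_{p_A}(\v))=1$ to descend to rank one. When the rank cannot be driven all the way down it stabilizes at a multiple whose factor is a divisor of $\o$, governed by the invariant $l(\v)\mid\o$ of \cref{primitive}; this produces the ranks $q,2q,3q,4q,6q$ in the table. Once $R$ is minimal, the residual freedom in $(a,b)$ is the choice of $\Gamma^0(\lambda_S)$- and $\Gamma^0(\o)$-orbit representative, classified by $\gcd(a,\lambda_S)$ and $\gcd(b,\o)$; normalizing these and using primitivity of $\v$ singles out the admissible shapes $0,\,qA_0,\,qB_0,\,qA_0+qB_0,\dots$. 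In particular the longer lists for Types $2,4,6$ reflect exactly that $\lambda_S>1$ there, so the $A_0$-direction is not collapsed by $\Gamma^0(\lambda_S)$.

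The main obstacle is this last, type-dependent orbit analysis. The actions of $\Gamma^0(\lambda_S)$ and $\Gamma^0(\o)$ neither commute nor individually fix $R$, so one must show that the alternating descent terminates at precisely the stated representatives and not a larger set, and that the coupling between the two fibrations — controlled by the specific values of $\o$ and $\lambda_S$ from \cref{table:cohomology} together with the constraint $l(\v)\mid\o$ — yields exactly the tabulated entries and $1\le\rk(\v_0)\le R$. The Type~$7$ row, where $\o=6$ is composite and an extra family $(q,bB_0,s)$ with $\frac13<\frac bq<\frac12$ persists, is where the bookkeeping is most delicate; there I expect to factor the reduction through the intermediate bielliptic covers of \cref{lem:intermediate bielliptic order composite,lem:intermediate bielliptic lambda bigger than 1}, passing to surfaces of smaller $\o$ or with $\lambda_{\tilde S}=1$.
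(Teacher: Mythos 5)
Your overall strategy is the paper's own --- motivic invariance (\cref{Motivic invariant}, via \cref{Thm:GiesekerChamber}) plus the relative transforms of \cref{relative FM} and a descent on rank --- but the proposal has a genuine gap at exactly the point where the proposition lives: identifying the terminal set of the descent with \cref{table:Exceptions}, and both of your concrete suggestions for closing it fail. First, the residual forms are \emph{not} classified by $\Gamma^0(\lambda_S)$- and $\Gamma^0(\o)$-orbit representatives, and in particular not by $\gcd(a,\lambda_S)$ and $\gcd(b,\o)$: any finite-index subgroup of $\SL_2(\Z)$ has only finitely many orbits on $\P^1(\Q)$ (its cusps), so such a classification would predict finitely many terminal slopes $b/R$, whereas in Type 7 the entire interval $\frac{1}{3}<\frac{b}{R}<\frac{1}{2}$ is terminal --- e.g.\ $(5,2B_0,s)$ and $(7,3B_0,s)$ are both irreducible yet have different $\gcd(b,6)$, and $(2q,qB_0,s)$ realizes every value of $\gcd(q,6)$. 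The reason the group-orbit picture is the wrong frame is that the statement requires $1\leq\rk(\v_0)\leq R$, so only rank-non-increasing compositions count, and these form a restricted monoid, not a group. Concretely, the paper's rank-lowering move in the $b$-direction for $\o=4,6$ is $(\blank)^\vee\circ[1]\circ\Psi^{-1}$, with $\Psi^{-1}(R,aA_0+bB_0,s)=(R-\o b,(a-\o s)A_0+bB_0,s)$, and it lowers the rank only in the window $\frac{R}{\o}<b<\frac{2R}{\o}$; for $\o=6$ this window misses $(\frac{R}{3},\frac{R}{2}]$, and since the normalizations (line bundle twists, and $(\blank\otimes\OO_S(A_0+B_0))\circ(\blank)^\vee$, which sends $b\mapsto R-b$) preserve that region, it is simply stuck --- that, and not any cover, is where the extra Type 7 family comes from.

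Second, the intermediate covers of \cref{lem:intermediate bielliptic order composite,lem:intermediate bielliptic lambda bigger than 1} cannot repair Type 7 in this proposition: $\pi'^*$ and $\pi'_*$ are functors between $\Db(\tilde{S})$ and $\Db(S)$, not (anti-)autoequivalences of $\Db(S)$, so \cref{Motivic invariant} says nothing about them, and they genuinely change the invariant, since $\langle\pi^*\v,\pi^*\w\rangle=\o\langle\v,\w\rangle$ rescales squares and hence the dimensions of the moduli spaces upstairs; the paper uses these covers only to \emph{construct} semistable sheaves in the proof of \cref{Thm:Non-emptinessOfGieseker}, never to compare Hodge polynomials. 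What the proof actually requires, and what the paper supplies, is the explicit type-by-type induction on $R$: fix the two unipotent transforms with $\Phi^{-1}(R,aA_0+bB_0,s)=(R-\lambda_S a,aA_0+(b-\lambda_S s)B_0,s)$ and $\Psi^{-1}$ as above, normalize to $0\leq a,b\leq\frac{R}{2}$ --- a step which itself needs the existence of $\mu$-stable locally free sheaves from \cref{slope stability} in order to dualize, a prerequisite your write-up omits --- and then verify which pairs $(a/R,b/R)$ admit a rank-lowering composition; the terminal locus is $a/R\in\NN_S$ and $b/R\in\{0,\frac{1}{\o}\}\cup\BB_S$ with $\BB_S=[\frac{1}{3},\frac{1}{2}]$ when $\o=6$, which is exactly \cref{table:Exceptions}. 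A smaller misattribution in the same vein: the rank multiples $q,2q,3q,4q,6q$ arise as the denominators of these terminal slopes, not from the invariant $l(\v)\mid\o$ of \cref{primitive}; the Type 7 family $(q,bB_0,s)$ realizes various values of $l$.
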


\begin{table}[ht]
\caption{The reduced forms of primitive Mukai vectors}
\begin{tabular}{|c| c| c| c| c| c|}
\hline
Type & $\lambda_S$ & $\o$& $\v_0$\\
\hline
1 & $1$ & $2$ & $(q,0,s),(2q,qB_0,s)$\\
\hline
2 & $2$ & $2$ & $(q,0,s),(2q,qA_0,s),(2q,qB_0,s),(2q,qA_0+qB_0,s)$\\
\hline
3 & $1$ & $4$ & $(q,0,s),(4q,qB_0,s),(2q,qB_0,s)$\\
\hline
4 & $2$ & $4$ & $(q,0,s),(2q,qA_0,s),(4q,qB_0,s),(2q,qB_0,s),$\\
   &        &        &$(4q,2qA_0+qB_0,s),(2q,qA_0+qB_0,s)$\\
\hline
5 & $1$ & $3$ & $(q,0,s),(3q,qB_0,s)$\\
\hline
6 & $3$ & $3$ & $(q,0,s),(3q,qA_0,s),(3q,qB_0,s),$\\
   &        &        &$(3q,qA_0+qB_0,s)$\\
   \hline
7 & $1$ & $6$ & $(q,0,s),(6q,qB_0,s),(3q,qB_0,s),(2q,qB_0,s),(q,bB_0,s), \frac{1}{3}<\frac{b}{q}<\frac{1}{2}$\\
\hline
\end{tabular}
\label{table:Exceptions}
\end{table}
\begin{proof}
From \cref{Motivic invariant}, the equality of virtual Hodge polynomials will follow if we can apply an autoequivalence $F$ of $S$ whose cohomological action takes $\v$ to one of the reduced Mukai vectors $\v_0$ of Table \ref{table:Exceptions}.  Indeed, we may take $\sigma$ such that $M_{\sigma,S}(\v)\cong M_\omega(\v)$, so $e(M_\omega(\v))=e(\mS)$.  Then applying the autoequivalence $F$, we get 
$$e(\mS)=e(M_{F(\sigma),S}(F(\v))=e(M_{F(\sigma),S}(\v_0))=e(M_\omega(\v_0)),$$ 
where the final equality comes from moving $F(\sigma)$ to the Gieseker chamber for $\v_0$.  In addition to tensoring by line bundles, we will apply the derived dual, as well as two relative Fourier-Mukai transforms $\Phi$ and $\Psi$ corresponding to the matrix $\left(\begin{matrix}1&1\\0&1\end{matrix}\right)$ relative to $p_A$ and $p_B$, respectively, as in Theorem \ref{relative FM}.  According to the description in Theorem \ref{relative FM}, \begin{align*}&\Phi(0,0,1)=(0,\lambda_SB_0,1),\Phi(1,0,0)=(1,yB_0,w),\\ &\Phi(0,A_0,0)=(\lambda_S,A_0+xB_0,t),\Phi(0,B_0,0)=(0,zB_0,u),\end{align*} for $y,w,x,t,z,u\in\Z$.  Using the fact that $\Phi$ acts on $\Hal(S,\Z)$ by an isometry, we get that $w=0,x=\lambda_St,y=t,u=0$, and $z=1$.  Replacing $\Phi$ by $(-\otimes\OO_S(-tB_0))\circ\Phi$ if necessary, we may in-fact assume that $t=0$ so that \begin{align*}&\Phi(0,0,1)=(0,\lambda_SB_0,1),\Phi(1,0,0)=(1,0,0),\\ &\Phi(0,A_0,0)=(\lambda_S,A_0,0),\Phi(0,B_0,0)=(0,B_0,0).\end{align*}
Similarly, we may assume that $\Psi$ satisfies \begin{align*}&\Psi(0,0,1)=(0,\o A_0,1),\Psi(1,0,0)=(1,0,0),\\ &\Psi(0,A_0,0)=(0,A_0,0),\Psi(0,B_0,0)=(\o,B_0,0).\end{align*}  As we will actually apply $\Phi^{-1}$ and $\Psi^{-1}$, it is worth explicitly stating that they satisfy \begin{align*}&\Phi^{-1}(R,aA_0+bB_0,s)=(R-\lambda_S a,aA_0+(b-\lambda_S s)B_0,s),\\&\Psi^{-1}(R,aA_0+bB_0,s)=(R-\o b,(a-\o s)A_0+bB_0,s).\end{align*}

Our proof will proceed by induction on the rank of $\v=(R,aA_0+bB_0,s)$.
We can tensor $\v'=(qr,qD,s')$ by $\OO_S(-\left\lfloor\frac{a}{r}\right\rfloor A_0-\left\lfloor\frac{b}{r}\right\rfloor B_0)$ to assume that $0\leq a,b<R$.  
Moreover, by \cref{slope stability}, if $qr>1$ there is always a locally free $\mu$-stable sheaf in the moduli space, so applying $(-\otimes\OO_S(A_0+B_0))\circ(-)^\vee$ allows us to assume further that $0\leq a,b\leq\frac{r}{2}$.  
We will show that the reduced forms $\v_0$ are the end results of our algorithm.   

If $a=b=0$, then we are in the first reduced Mukai form appearing in Table \ref{table:Exceptions}.  Otherwise, if, say, $a>0$, then we may assume that $\frac{R}{\lambda_S}\leq a\leq\frac{R}{2}$ (in which case $\lambda_S>1$). Indeed, if $R>\lambda_Sa$, we can apply $\Phi^{-1}$ $\lfloor\frac{R}{\lambda_S a}\rfloor$-times to reduce to a Mukai vector $\v'=\left(R',aA_0+\left(b-\lambda_S\left(\lfloor\frac{R}{\lambda_S a}\rfloor\right)s\right)B_0,s\right)$ with $R'\leq\lambda_S a<R$.  By induction, $$e(M_\omega(\v))=e(M_\omega(\v'))=e(M_\omega(\v_0))$$ for some $\v_0$ in Table \ref{table:Exceptions} with $1\leq\rk(\v_0)\leq R'$, as required.  Thus, we may assume from the outset that either $a=0$, $a=\frac{R}{\lambda_S}$, $a=\frac{R}{2}$, or $\frac{R}{\lambda_S}<a<\frac{R}{2}$.  

If $\frac{R}{\lambda_S}<a\leq\frac{R}{2}$, then $\lambda_S=3$, in which case $S$ is of Type 6.  We apply the (anti-)autoequivalence $(\blank)^\vee\circ[1]\circ\Phi^{-1}\circ\left(\blank\otimes \OO_S(A_0)\right)\circ (\blank)^\vee$, which sends  $$(R,n(aA_0+bB_0),s)\mapsto\v'=(2R-3a,(R-a)A_0+(-b-3T)B_0,-T).$$ where $T=s-b$.  As we assume that $\frac{R}{3}<a\leq\frac{R}{2}<\frac{2R}{3}$,  it follows that $0<2R-3a<R$.
By induction we again get $e(M_\omega(\v))=e(M_\omega(\v_0))$ for $\v_0$ in Table \ref{table:Exceptions} with $\rk(\v_0)<R$.  To summarize, define the set $\NN_S$ by 
$$\NN_S:=\begin{cases}
\{0\} &\lambda_S=1\\
\{0,\frac{1}{\lambda_S}\} & \lambda_S>1
\end{cases}.
$$
Then we may assume $a/R\in\NN_S$.

Similarly, by performing the same procedure with $\Psi^{-1}$ replacing $\Phi^{-1}$ and $\blank\otimes\OO_S(B_0)$ replacing $\blank\otimes\OO_S(A_0)$, we may assume that either $b\in\{0,\frac{R}{\o},\frac{R}{2}\}$, or $\frac{R}{\o}<b<\frac{R}{2}$.  We break up the study of the region $\frac{R}{\o}<b\leq\frac{R}{2}$ into cases according to $\o$.  We note that $\o>2$.  

Suppose first that $\o=3$ and  $\frac{R}{3}<b\leq\frac{R}{2}<\frac{2R}{3}$. Then $0<2R-3b<R$, and applying $(\blank)^\vee\circ[1]\circ\Psi^{-1}\circ(\blank\otimes\OO_S(B_0))\otimes(\blank)^\vee$ sends
$$(R,aA_0+bB_0,s)\mapsto(2R-3b,(-a-3 s)A_0+(R-b)B_0,-s).$$  This again gives $e(M_\omega(\v))=e(M_\omega(\v_0))$ with $\v_0$ as in Table \ref{table:Exceptions} and $\rk(\v_0)<R$ by induction.  

Suppose next that $\o=4,6$ and   $\left(\frac{1}{\o}\right)R<b<\left(\frac{2}{\o}\right) R$.  Then $0<\o b-R<R$, and applying $(\blank)^\vee\circ[1]\circ\Psi^{-1}$ sends 
$$(R,aA_0+bB_0,s)\mapsto(\o b-R,(a-\o s)A_0+b B_0,-s),$$ which gives $e(M_\omega(\v))=e(M_\omega(\v_0))$ with $\v_0$ as in \cref{table:Exceptions} and $\rk(\v_0)<R$ by induction.

To summarize, let $\BB_S$ be the set defined by 
$$\BB_S:=\begin{cases}
[\frac{1}{3},\frac{1}{2}] &\o=6\\
\frac{1}{2} & \o=4\\
\varnothing & \o=2,3
\end{cases}.
$$
Then we get the result by induction unless $\frac{a}{R}\in\NN_S$ and $\frac{b}{R}\in\{0,\frac{1}{\o}\}\bigcup \BB_S$.  But then $\v$ itself is as in \cref{table:Exceptions}.
\end{proof}

\begin{Rem}\label{rem: Bridgeland case} The case $\gcd(R,d_{p_A})=1$ (or $\gcd(R,d_{p_B})=1$) corresponds to the situation considered in Theorem \ref{elliptic sheaves}.  Indeed, the proof of \cref{FM reduction} shows that then $$e(M_{\omega}(\v))=e(M_{\omega}(1,0,-\v^2/2)).$$  Of course, $M_{\omega}(1,0,-\v^2/2)$ parametrizes sheaves of the form $I_Z\otimes L$, where $Z$ is a 0-dimensional subscheme of length $\v^2/2$ and $L\in\Pic^0(S)=\Ker(c_1:\Pic(S)\to\NS(S))$, so in fact $M_\omega(1,0,-\v^2/2)\cong \Pic^{0}(S)\times \Hilb^{\v^2/2}(S)$.  Theorem \ref{elliptic sheaves} states the stronger fact that these two moduli spaces are in fact birational for suitable polarizations, which certainly implies the equality of Hodge polynomials.  We will see later that a similar stronger version of Proposition \ref{FM reduction} holds, using wall-crossing in the space of Bridgeland stability conditions.
\end{Rem}

Now we have everything we need to classify the Mukai vectors of $\mu$-stable sheaves on $S$.  

\begin{proof}[Proof of \cref{Thm:Non-emptinessOfGieseker}]
Write $\v=(qr,qD,s)$ where $\gcd(r,D)=1$.  Using classical methods, we have shown that to prove the theorem, it suffices to prove that for any fixed $(r,D)=(r,aA_0+bB_0)$ we have $M_\omega^{\mu ss}\left(qr,qD,\left\lfloor\frac{qD^2}{2r}\right\rfloor\right)\ne\varnothing$ for $0<q\leq n_0$.

We can rephrase \cref{FM reduction} as saying that $e(M_\omega(\v))=e(M_\omega(qr,qD,s))$ where $(r,D)$ with $\gcd(r,D)=1$ is one of the finitely many pairs in \cref{table:Exceptions}.

\begin{table}[ht]
\caption{The finitely many pairs $(r,D)$}
\begin{tabular}{|c| c| c| c| c| c|}
\hline
Type & $\lambda_S$ & $\o$& $(r,D)$\\
\hline
1 & $1$ & $2$ & $(1,0),(2,B_0)$\\
\hline
2 & $2$ & $2$ & $(1,0),(2,A_0),(2,B_0),(2,A_0+B_0)$\\
\hline
3 & $1$ & $4$ & $(1,0),(4,B_0),(2,B_0)$\\
\hline
4 & $2$ & $4$ & $(1,0),(2,A_0),(4,B_0),(2,B_0),$\\
   &        &        &$(4,2A_0+B_0),(2,A_0+B_0)$\\
\hline
5 & $1$ & $3$ & $(1,0),(3,B_0)$\\
\hline
6 & $3$ & $3$ & $(1,0),(3,A_0),(3,B_0),$\\
   &        &        &$(3,A_0+B_0)$\\
   \hline
7 & $1$ & $6$ & $(1,0),(6,B_0),(3,B_0),(2,B_0),(r,bB_0), \frac{1}{3}<\frac{b}{r}<\frac{1}{2}$\\
\hline
\end{tabular}
\label{table:Exceptions1}
\end{table}

For any pair $(r,D)$ in \cref{table:Exceptions1} with $D^2=0$ we have $n_0=1$, while the remaining pairs have $n_0=\lambda_S$.  Thus it suffices to consider the finite number of primitive isotropic Mukai vectors $\u=(n_0r,n_0D,s_0)$ appearing in \cref{isotropic exceptions} as well as the exceptional Mukai vectors $\bar{\v}=(qr,qD,\left\lfloor\frac{q}{n_0}\right\rfloor)$ for $1\leq q<n_0$ which appear in \cref{base case}. 
\begin{table}[ht]
\caption{The reduced forms of primitive isotropic vectors}
\begin{tabular}{|c| c| c| c| c| c|}
\hline
Type & $\lambda_S$ & $\o$& $\u$\\
\hline
1 & $1$ & $2$ & $(1,0,0),(2,B_0,0)$\\
\hline
2 & $2$ & $2$ & $(1,0,0),(2,A_0,0),(2,B_0,0),(4,2A_0+2B_0,1)$\\
\hline
3 & $1$ & $4$ & $(1,0,0),(4,B_0,0),(2,B_0,0)$\\
\hline
4 & $2$ & $4$ & $(1,0,0),(2,A_0,0),(4,B_0,0),(2,B_0,0)$\\
   &        &        &$(8,4A_0+2B_0,1),(4,2A_0+2B_0,1)$\\
\hline
5 & $1$ & $3$ & $(1,0,0),(3,B_0,0)$\\
\hline
6 & $3$ & $3$ & $(1,0,0),(3,A_0,0),(3,B_0,0),$\\
   &        &        &$(3,2B_0,0),(9,3A_0+3B_0,1),$\\
\hline
7 & $1$ & $6$ & $(1,0,0),(6,B_0,0),(3,B_0,0),(2,B_0,0),(r,bB_0,0),\frac{1}{3}<\frac{b}{r}<\frac{1}{2}$\\
\hline
\end{tabular}
\label{isotropic exceptions}
\end{table}
\begin{table}[ht]
\caption{Fundamental Mukai vectors for $\mu$-semistability}
\begin{tabular}{|c| c| c| c| c| c|}
\hline
Type & $\lambda_S$ & $\o$& $\bar{\v}$\\
\hline
2 & $2$ & $2$ & $(2,A_0+B_0,0)$\\
\hline
4 & $2$ & $4$ & $(4,2A_0+B_0,0),(2,A_0+B_0,0)$\\
\hline
6 & $3$ & $3$ & $(3,A_0+B_0,0),(6,2(A_0+B_0),0)$\\
  \hline
\end{tabular}
\label{base case}
\end{table}

As the isotropic case is more involved, we first prove that $M_\omega^{\mu ss}(\bar{\v})\ne\varnothing$ for all $\bar{\v}$ in \cref{base case}.

For $\bar{\v}=(2,A_0+B_0,0)$ in Types 2 and 4, since $\langle (1,A_0,0),(1,B_0,0)\rangle=1$ and $$\Hom(\OO(A_0),\OO(B_0))=\Ext^2(\OO(A_0),\OO(B_0))=0,$$ we must have $\Ext^1(\OO(A_0),\OO(B_0))=\C$ so that there is a unique non-split extension $$0\to \OO(B_0)\to E\to\OO(A_0)\to 0.$$  For $\omega=A_0+2B_0$, the usual argument shows that such an $E$ is $\mu_\omega$-stable of Mukai vector $(2,A_0+B_0,0)$, so this must be true for generic $\omega$ as well.  

For the remaining Mukai vector in Type 4, namely $(4,2A_0+B_0,0)$, we note that we may pull back by an intermediate bielliptic cover as in \eqref{bielliptic cover 1} to produce $\mu$-semistable sheaves via push-forward of $\mu$-stable sheaves of Mukai vector $(2,\tilde{A}_0+\tilde{B}_0,0)$.

It remains to consider Type 6.  For $\bar{\v}=(3,A_0+B_0,0)$, we consider the generic extension $E$ in $\Ext^1(F,\OO(B_0))$, where $F\in M^s_\omega(2,A_0,0)$, which is non-empty, smooth, and one-dimensional by \cref{isotropic}.  Then for generic $\omega$ such that the slope of the first entry of $\Ext^1(-,-)$ is larger than the slope of the second (note that for $\omega=A_0+B_0$ they are equal), we get that $E$ is $\mu_\omega$-stable by \cite[Lemma 6.1 and Lemma 6.2]{CH15}.  Note that $E$ is locally free of Mukai vector $(3,A_0+B_0,0)$ and $E^{\oplus 2}$ is a $\mu_\omega$-semistable locally free sheaf of Mukai vector $\bar{\v}=(6,2(A_0+B_0),0)$, as required.

It remains to prove the isotropic case of the theorem.  The necessity of the condition $nl(\v)\mid\o$ was shown in \cref{isotropic}.  It remains to prove the existence direction.  We begin the proof of the converse direction by showing that if $nl(\v)=\o$, then $M_\omega^s(n\v)$ is smooth of dimension two and non-empty.  Indeed, if $\pi^*\v=l(\v)\w$, then $M^s_{\pi*\omega,X}(\w)$ must be non-empty since $\w$ is primitive and isotropic.  As $g^*\w=\w$, $G':=\Z/\o\Z$ acts on $M^s_{\pi^*\omega,X}(\w)$, and we will show that the open subset $$M^s_{\pi^*\omega,X}(\w)^\circ:=\Set{F\in M^s_{\pi^*\omega,X}(\w)\ | \ (g^i)^*F\ncong F,\mbox{ for all }0<i<\o}$$ is non-empty.  If $(g^d)^*F\cong F$ for some $F\in M^s_{\pi^*\omega,X}(\w)$ and a proper divisor $1<d$ of $\o$, then we can take an intermediate bielliptic surface $\tilde{S}$ with the same canonical cover $X$  such that $\ord(K_{\tilde{S}})=\frac{\o}{d}$ with notation as in \eqref{bielliptic cover 1}.  From $(g^d)^*F\cong F$, there exists $\tilde{E}\in M^s_{\pi'^*\omega,\tilde{S}}(\tilde{\v})$ such that $\tilde{\pi}^*\tilde{E}\cong F$ and thus $l(\tilde{\v})=1$.  From $g^*F\ncong F$ we must also have that $\tilde{E}\ncong\pi'^*E$ for any $E$ on $S$.  Setting $$\Fix(g^d):=\Set{F\in M^s_{\pi^*\omega,X}(\w)\ |\ (g^d)^*F\cong F},$$ we see that $\Fix(g^d)\subset M^s_{\pi^*\omega,X}(\w)$ is a proper closed one-dimensional subscheme since it is the image under the finite map $\tilde{\pi}^*$ of $M^s_{\pi'^*\omega,\tilde{S}}(\tilde{\v})$.  Indeed, if  $\tilde{E}\in M^s_{\pi'^*\omega,\tilde{S}}(\tilde{\v})$ lied on a two-dimensional component, then $\tilde{E}\cong\tilde{E}\otimes\OO_{\tilde{S}}(K_{\tilde{S}})$ and thus $\tilde{\pi}^*\tilde{E}$ could not be stable by \cref{lem:WhenIsPullbackStable}.  Ranging over all divisors $d>1$ of $\o$, we see that $M^s_{\pi^*\omega,X}(\w)^\circ$ is indeed non-empty.  But then $\pi_*(F)$ is stable for all $F\in M^s_{\pi^*\omega,X}(\w)^\circ$ by \cref{lem:WhenIsPushForwardStable}.  From the push-pull formula, we have $$\o\v =\pi_*\pi^*\v=l(\v)\pi_*(\w),$$ so $\pi_*(\w)=\frac{\o}{l(\v)}\v=n\v$ and thus $M^s_\omega(n\v)$ is smooth, two-dimensional, and non-empty, as claimed.
 
We now complete the proof of the existence direction by showing non-emptiness of $M^s_\omega(n\v)$ when $n$ is a proper

divisor of $\frac{\o}{l(\v)}$, and it suffices to do this for $\v=\u$ appearing in \cref{isotropic exceptions}.  

We begin with the case $l(\u)>1$.  Notice that $\o$ thus must either be 4 or 6, so $n=1$ and $l(\u)=2$ or 3 (the latter only in Type 7).  By inspection of Table \ref{isotropic exceptions} we see that there is a degree $l(\u)$ intermediate \'{e}tale bielliptic cover $\tilde{S}\mor[\pi']S$ such that $\pi'^*\u=l(\u)\tilde{\u}$ as in \eqref{bielliptic cover 1}.  By induction on $\o$ we may assume we have shown that $M_{\pi'^*H,\tilde{S}}(\tilde{\u})$ is non-empty, smooth, and 1-dimensional.  Since $\u$ is primitive, no bundle in $M_{\pi'^*\omega,\tilde{S}}(\tilde{\u})$ is $\overline{g}$-invariant, where $\overline{g}$ is the image of $g\in G'$ in the quotient group $G'/\langle g^{l(\u)}\rangle$ that defines $S$ as a quotient of $\tilde{S}$.  Thus $(\pi')_*(M_{\pi'^*\omega,\tilde{S}}(\tilde{\u}))\subset M_\omega(\u)$, so the latter is non-empty.

Now we treat the case $l(\u)=1$.  When $\u=(1,0,0)$, Umemura describes $M^s_\omega(n\u)$ in \cite[Theorem 2.15]{Ume75} and shows that in particular $M^s_\omega(n\u)\neq\varnothing$ if and only if $n\mid\o$.  It remains to consider the other cases where $l(\u)=1$ and $n$ is a proper divisor of $\o$.  By inspection of Table \ref{isotropic exceptions}, the other primitive isotropic Mukai vectors with $l(\u)=1$ occur in Type 2 (for $\u=(2,A_0,0)$, $(2,B_0,0)$), in Type 4 (for $\u=(2,A_0,0)$), and in Type 6 (for $\u=(3,A_0,0)$,  $(3,B_0,0)$).\footnote{In Type 7, if $\u=(r,bB_0,0)$ with $l(\u)=1$, then $\gcd(r,d_{p_B}(\u))=\gcd(r,6b)=1$, so by \cref{elliptic sheaves} we may reduce to the case $\u=(1,0,0)$.}  

Using the second type of intermediate bielliptic cover as in \eqref{bielliptic cover 2}, we can pull the Mukai vectors $(2,A_0,0)$ and $(3,A_0,0)$ back to a non-primitive multiple of $\tilde{\u}$, where $\tilde{\u}=(1,\tilde{A}_0,0)$.  Again we have $\pi'_*(M^s_{\pi'^*\omega,\tilde{S}}(\tilde{'u}))\subset M^s_\omega(\u)$, so the latter is non-empty.  In Type 4, we must also show that $M_\omega(2\u)\ne\varnothing$ for $v_0=(2,A_0,0)$.  But in this case $\pi'^*\u=2(1,\tilde{A}_0,0)=2\tilde{\u}$, and after twisting by $\OO_{\tilde{S}}(-\tilde{A}_0)$ we may identify $M_{\pi'^*\omega,\tilde{S}}(2\tilde{\u})$ with $M_{\pi'^*\omega,\tilde{S}}(2,0,0)$ which is isomorphic to two disjoint copies of $\Pic^0(S')$ by \cite[Theorem 2.15]{Ume75}, where $S'$ is the Type 1 intermediate bielliptic surface between $A\times B$ and $\tilde{S}$.  These two copies of $\Pic^0(S')$ parametrize bundles whose determinants have linear equivalence classes differing by the torsion element in $H^2(\tilde{S},\Z)$.  Moreover, the subgroup $H<G$ acts on $\tilde{S}$ and switches these two components.  Letting $h$ be the generator of $H$ and $\tilde{E}$ be on one component of $M^s_{\pi'^*\omega,\tilde{S}}(2\tilde{\u})$, we must have $h^*\tilde{E}$ on the other component, so $\tilde{E}\ncong h^*\tilde{E}$ and thus $\pi'_*(\tilde{E})\in M^s_\omega(2\u)$, which is therefore nonempty as claimed.

It only remains to consider the Mukai vectors $(2,B_0,0)$ in Type 2 and $(3,B_0,0)$ in Type 6.  But this is the content of Lemma \ref{random case} below.  

\end{proof}
\begin{Lem}\label{random case} Suppose that $S$ is of Type 2 and $\u=(2,B_0,0)$ or $S$ is of Type 6 and $\u=(3,B_0,0)$.  Then $M^s_\omega(\u)\neq\varnothing$.
\end{Lem}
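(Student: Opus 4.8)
The plan is to descend a stable bundle from the canonical cover $\pi\colon X\to S$. In both cases $\u=(\o,B_0,0)$, and \cref{primitive} gives $l(\u)=\gcd(\o,0,\tfrac{\o}{\lambda_S},0)=\gcd(\o,1)=1$, so $\w:=\pi^*\u=(\o,B_X,0)$ is primitive; it is isotropic since $\w^2=\o\,\u^2=0$ (note $B_X^2=0$ as $B_X$ is a fibre class). As $X$ is an abelian surface, $M^s_{\pi^*\omega,X}(\w)=:Y$ is a nonempty abelian surface of dimension $\w^2+2=2$, exactly as in the proof of \cref{lem:moduli spaces of fully induced isotropic vectors}. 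Let $g$ generate $G'\cong\Z/\o\Z$; since $g^*\w=\w$, the group $G'$ acts on $Y$. By \cref{lem:DescentAndPushforward} any $\mu$-stable $F$ on $X$ with $g^*F\cong F$ is of the form $F\cong\pi^*E$, and then $\v(E)=\u$ (injectivity of $\pi^*$) while $E$ is $\mu$-stable because $\pi^*E=F$ is: a subsheaf of $E$ of slope $\ge\mu_\omega(E)$ would pull back to a destabilizing subsheaf of $F$, by the easy direction of the argument in \cref{lem:WhenIsPullbackStable}. Hence it suffices to exhibit a single $g$-invariant $\mu$-stable sheaf $F$ on $X$ with $\v(F)=\w$, i.e.\ a $g$-fixed point of $Y$ lying in the stable locus.

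Next I would write down a canonical such $F$ as a nonsplit extension of $g$-invariant line bundles, using that $B_X$ is a fibre class with $B_X^2=0$. For Type $2$ (rank $2$), Riemann--Roch and Serre duality on $X$ give $\ext^1(\OO_X(B_X),\OO_X)=h^1(\OO_X(-B_X))=1$, so there is a unique nonsplit extension $0\to\OO_X\to F\to\OO_X(B_X)\to 0$, which is indecomposable with $\v(F)=(2,B_X,0)=\w$. Because $g^*$ fixes $\OO_X$, $\OO_X(B_X)$ and the one-dimensional space $\Ext^1(\OO_X(B_X),\OO_X)$, the object $g^*F$ is again \emph{the} nonsplit extension of $\OO_X(B_X)$ by $\OO_X$, so $g^*F\cong F$ automatically. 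For Type $6$ (rank $3$) I would build an analogous canonical $g$-invariant bundle as an iterated extension of $g$-invariant line bundles, e.g.\ extending the Type-$2$-style rank-two bundle by $\OO_X$, after twisting by a suitable $g$-invariant line bundle so that the relevant one-dimensional $\Ext$-space carries the trivial $g$-character and the unique nonsplit extension is again $g$-invariant.

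The main step, and the principal obstacle, is to verify that this canonical $F$ is $\mu$-stable for generic $\omega$, since $\v$ is isotropic and the candidate could a priori split off an equal-slope subsheaf. I would argue in two stages. First, strict $\mu$-semistability is excluded by genericity together with primitivity of $B_X$: an equal-slope saturated subsheaf $F'\subset F$ of rank $\rho$ with $0<\rho<\o$ would satisfy $\omega.(\o\,c_1(F')-\rho\,B_X)=0$, hence $\o\,c_1(F')=\rho\,B_X$ in $\Num(X)$ for generic $\omega$, which is impossible because $B_X$ is primitive. Thus $\mu$-semistability will already imply $\mu$-stability. Second, to rule out a destabilizing sub-line-bundle $\OO_X(D')$ with $\mu_\omega(D')>\mu_\omega(F)$, I would use the extension structure: such a $D'$ must map nonzero to the top quotient, forcing $D'=B_X-N$ with $N$ effective, and its lift is obstructed by the nonvanishing of the image of the extension class under $\Ext^1(\OO_X(B_X),\OO_X)\to\Ext^1(\OO_X(D'),\OO_X)$, which holds for generic $\omega$ by a direct cohomological computation. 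Granting $\mu$-stability of $F$, the descent $E$ constructed in the first paragraph is $\mu$-stable with $\v(E)=\u$, proving $M^s_\omega(\u)\ne\varnothing$. I expect the rank-three Type $6$ case to absorb most of the work, since there the destabilizing analysis must treat rank-two subsheaves in addition to rank-one ones, and arranging $g$-invariance of the iterated extension requires the extra twist noted above.
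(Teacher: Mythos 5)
Your reduction to finding a $g$-fixed stable point of $M^s_{\pi^*\omega,X}(\w)$ is sound in outline, but the step that was supposed to produce such a point fails: the claim that ``$g^*$ fixes $\OO_X(B_X)$'' is false. What $g^*$ fixes is only the \emph{numerical} class $B_X$. The line bundle $\OO_X(B_X)$, with $B_X$ an actual fibre divisor, is the pullback $q^*\mathcal{L}$ of a degree-one line bundle $\mathcal{L}$ on the base elliptic curve $A/H$ of $q\colon X\to A/H$, and $g$ acts on $A/H$ by translation $t_a$ by an element $a$ of exact order $\o$ (recall $G\subset A$ acts by translations, so $G'=G/H$ embeds in $A/H$). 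Hence $g^*\OO_X(B_X)=q^*t_a^*\mathcal{L}$, and $t_a^*\mathcal{L}\cong\mathcal{L}$ would force $a=0$, since for a degree-one line bundle on an elliptic curve the map $b\mapsto t_b^*\mathcal{L}\otimes\mathcal{L}^{-1}$ is an isomorphism onto $\Pic^0$; as $q_*\OO_X=\OO_{A/H}$ makes $q^*$ injective on $\Pic$, we conclude $g^*\OO_X(B_X)\not\cong\OO_X(B_X)$. Consequently your canonical extension satisfies $\det(g^*F)=g^*\OO_X(B_X)\neq\OO_X(B_X)=\det F$, so $g^*F\not\cong F$ and nothing descends. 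The same objection hits the Type 6 sketch, where moreover the proposed repair is aimed at the wrong thing: the $g$-character on a one-dimensional $\Ext^1$ is irrelevant, because rescaling an extension class does not change the isomorphism class of the middle term; what must be arranged is $g$-invariance of the sub- and quotient objects themselves.

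A $g$-invariant line bundle in the numerical class $B_X$ does exist, namely $N=\pi^*\OO_S(B_0)$, but proving the input your uniqueness argument needs, $h^0(X,N)=1$, is exactly the opening computation of the paper's proof: $H^0(X,N)=\bigoplus_{i}H^0(S,\OO(B_0+iK_S))=\C$, i.e.\ exactly one twist $B_0+iK_S$ is effective. Once you substitute $N$ for $\OO_X(B_X)$, your $F$ is literally $\pi^*$ of an extension on $S$, and the detour through $X$ buys nothing; the paper therefore works directly on $S$: it takes the unique nonsplit extension $0\to\OO(K_S)\to E\to\OO(B_0)\to 0$ and, in Type 6, the further unique extension $0\to\OO\to F\to E\to 0$ (using $h^1(E(K_S))=1$), and verifies $\mu$-stability at the single polarization $\omega=A_0+B_0$ by pure integrality -- a destabilizing subobject would need $c_1(L).\omega$ strictly between $\tfrac12$ and $1$ (respectively a slope $\tfrac{c_1(G).\omega}{\rk G}$ with $\rk G\le 2$ strictly between $\tfrac13$ and $\tfrac12$), which is impossible -- this being legitimate because isotropic Mukai vectors have no walls, a point your ``generic $\omega$'' obstruction analysis would otherwise have to replace with genuinely harder work.
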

\begin{proof}  We first observe that in either case $\pi^*B_0=B_X$, so $$\C=H^0(X,\OO(B_X))=H^0(S,\pi_*\pi^*\OO(B_0))=\bigoplus_{i=0}^{\o-1}H^0(S,\OO(B_0+iK_S)),$$ so one and only one of $B_0+iK_S$, $0\leq i<\o$ is effective, and it is isolated in its linear system.  For simplicity, say its $B_0$.  Then as $\chi(B_0)=0$ and $h^2(B_0)=h^0(-B_0+K_S)=0$, we must have $h^1(B_0)=1$.  Thus $\Ext^1(\OO(B_0),\OO(K_S))\cong\Ext^1(\OO,\OO(B_0))=\C$.  Let $E$ be the unique non-split extension in $\Ext^1(\OO(B_0),\OO(K_S))$, \begin{equation}\label{ses}0\to \OO(K_S)\to E\to \OO(B_0)\to 0.\end{equation} 

As there are no walls for isotropic Mukai vectors (since $\Delta(v)=v^2$ for our surfaces), we may suppose that $\omega=A_0+B_0$.  If $E$ is not stable, then, as $\u$ is primitive and isotropic and thus cannot be properly semistable, there exists a subbundle $\OO(L)\subset E$ of rank one with $\mu_\omega(L)>\mu_\omega(E)=\frac{1}{2}>0$.  It follows that the composition $\OO(L)\to E\to \OO(B_0)$ is non-zero, so $\mu_\omega(L)\leq\mu_\omega(\OO(B_0))$.  As $E$ is non-split, we must in fact have $\mu_\omega(L)<\mu_\omega(\OO(B_0))=1$.  But it is impossible for the integer $L.\omega$ to be between $1/2$ and 1.  So $E\in M^s_\omega(2,B_0,0)$.  Furthermore, notice that $h^0(E(K_S))=h^1(E(K_S))=1$ from taking cohomology in \eqref{ses}.  

To construct $F\in M^s_\omega(3,B_0,0)$, consider the unique extension in $\Ext^1(E,\OO)=H^1(E(K_S))=\C$, $$0\to \OO\to F\to E\to 0.$$  Again, we may suppose that there is a stable subbundle $G\subset F$ with $\mu_H(G)>\mu_H(F)=\frac{1}{3}$ and $\rk G=1,2$.  Thus the composition $G\to F\to E$ is non-zero so that $\mu_H(G)<\mu_H(E)=\frac{1}{2}$, with strict inequality because the extension is non-split.  This is impossible for a rational number of the form $\frac{c_1(G).H}{\rk G}$ with $\rk G=1,2$.  Thus $F\in M^s_\omega(3,B_0,0)$.
\end{proof}
Having resolved the classification of the Mukai vectors of semistable sheaves, we now make further use of Fourier-Mukai transforms to do the same for the Mukai vectors of $\sigma$-semistable objects.
\begin{Thm}\label{Bridgeland non-empty}
For any $\sigma\in\Stabd(S)$, $\mS\ne\varnothing$ if and only if $\v^2\geq 0$.
\end{Thm}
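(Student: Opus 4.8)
The plan is to prove the two implications separately: the forward implication $\mS\ne\varnothing\Rightarrow\v^2\ge0$ by a signature argument on the Mukai lattice, and the substantive implication $\v^2\ge0\Rightarrow\mS\ne\varnothing$ by reducing to the nonemptiness of Gieseker moduli spaces established in \cref{Thm:Non-emptinessOfGieseker}.

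For the forward implication, suppose $E\in\AA$ is $\sigma$-semistable with $\v(E)=\v$, and let $E_1,\dots,E_k$ be the $\sigma$-stable Jordan--H\"older factors of $E$, so that $\v=\sum_i\v(E_i)$ (with multiplicity) and all the $Z_\sigma(E_i)$ lie on a common ray in $\C$. By \cref{lem:MukaiVectorsOfStableObjects} each $\v(E_i)^2\ge0$. Writing $\eta(\sigma)=a+bi$, the real two-plane $P=\langle a,b\rangle\subset\Hal(S,\R)$ is positive definite and, since the pairing has signature $(2,2)$, its orthogonal complement $P^{\perp}$ is negative definite. Because the $Z_\sigma(E_i)=\langle a,\v(E_i)\rangle+i\langle b,\v(E_i)\rangle$ are positive real multiples of one another, the projections of the $\v(E_i)$ to $P$ all lie on a single ray $\R_{>0}w$ with $w^2>0$; thus $\v(E_i)=\lambda_i w+u_i$ with $\lambda_i>0$ and $u_i\in P^{\perp}$. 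From $0\le\v(E_i)^2=\lambda_i^2 w^2+u_i^2$ we get $\lambda_i\sqrt{w^2}\ge\lVert u_i\rVert$, where $\lVert\cdot\rVert$ is the norm on the negative definite lattice $P^\perp$. Setting $U:=\sum_i u_i$ and $\Lambda:=\sum_i\lambda_i$ and applying the triangle inequality, $-U^2=\lVert U\rVert^2\le\Lambda^2 w^2$, hence $\v^2=\Lambda^2 w^2+U^2\ge0$.

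For the reverse implication I would reduce in three steps. First, reduce to primitive Mukai vectors: writing $\v=n\v_0$ with $\v_0$ primitive and $\v_0^2\ge0$, if $E$ is any $\sigma$-semistable object with $\v(E)=\v_0$ then $E^{\oplus n}$ is $\sigma$-semistable of class $\v$, so it suffices to treat $\v_0$. Second, reduce to generic $\sigma$: if $\sigma$ is generic with respect to $\v_0$, proceed to the third step; otherwise $\sigma$ lies on a wall for $\v_0$, and choosing a generic $\sigma_+$ in an adjacent chamber (these are dense since the walls are locally finite by \cref{prop:wall-and-chamber}), a $\sigma_+$-stable object of class $\v_0$ remains $\sigma$-semistable in the limit, since strict destabilization is an open condition on $\Stabd(S)$. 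Third, for generic $\sigma$ and primitive $\v_0$ with $\v_0^2\ge0$, I would invoke the reduction used in the proof of \cref{thm:projective coarse moduli spaces}: if $\v_0^2=0$ and $l(\v_0)=\o$ then $M_{\sigma,S}(\v_0)\cong S$ by \cref{lem:moduli spaces of fully induced isotropic vectors}; in all other cases \cref{lem:nearby isotropics 1} produces a nearby $\tau$ in the same chamber and an autoequivalence $\Phi$ with $M_{\sigma,S}(\v_0)=M_{\tau,S}(\v_0)\cong M_\omega(-\Phi(\v_0))$, a Gieseker moduli space of a primitive, positive-rank class of the same square $\v_0^2\ge0$. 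By \cref{Thm:Non-emptinessOfGieseker} this Gieseker space contains a $\mu$-stable (hence Gieseker-stable) sheaf, so it is nonempty.

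The main obstacle is the bookkeeping in the third step: one must check that the class $-\Phi(\v_0)$ landing in the Gieseker chamber is genuinely covered by \cref{Thm:Non-emptinessOfGieseker}, i.e. that it has positive rank (guaranteed because the proof of \cref{thm:projective coarse moduli spaces} produces a $\mu_{\omega,\beta}$-semistable \emph{locally free} sheaf), and that in the isotropic case the divisibility $l\mid\o$ needed for nonemptiness holds automatically for the primitive class by \cref{primitive}, so that no empty case slips through. The secondary technical point is the limiting argument in the non-generic case, where one must ensure the chosen object stays in the heart $\AA_\sigma$; this is arranged by taking $\sigma_+$ close enough to $\sigma$ that the phase of the object remains in $(0,1]$, equivalently by first normalizing the phase of $\v_0$ via the $\wGL2$-action.
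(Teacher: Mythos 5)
Your proposal is correct, but it takes a genuinely different route from the paper on both implications, so a comparison is in order. For the existence direction the paper, like you, reduces to primitive $\v$ and generic $\sigma$, but it does \emph{not} re-run the geometric identification of \cref{thm:projective coarse moduli spaces} chamber by chamber. Instead it normalizes $\v$ to a positive vector (in Yoshioka's sense) by autoequivalences and invokes the invariance of the motivic invariant $J_\sigma(\v)$ under wall-crossing and autoequivalences (\cref{Motivic invariant}): computing in the Gieseker chamber of \cref{Thm:GiesekerChamber}, $J_\sigma(\v)=e(M_\omega(\v))\ne 0$ by \cref{Thm:Non-emptinessOfGieseker}, and this nonvanishing propagates to every chamber. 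Your route transports nonemptiness directly through the stack isomorphism $\MM_{S,\tau}(\v_0)\isomor\MM_\omega(-\Phi(\v_0))$ established inside the proof of \cref{thm:projective coarse moduli spaces}; this is softer on machinery (no Joyce-type invariants) but shifts the burden onto the numerical bookkeeping you flag. For the necessity direction, your argument is in fact a clean strengthening of the paper's: the paper reduces to primitive classes at generic $\sigma$, where semistable equals stable, and quotes \cref{lem:MukaiVectorsOfStableObjects}, whereas your projection-plus-triangle-inequality argument in the signature $(2,2)$ lattice applies verbatim to arbitrary, possibly non-primitive $\v$ and to $\sigma$ lying on a wall, with no reduction step at all.

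One justification in your third step is circular as written: you argue that $-\Phi(\v_0)$ has positive rank ``because the proof of \cref{thm:projective coarse moduli spaces} produces a $\mu_{\omega,\beta}$-semistable locally free sheaf,'' but that sheaf is $\Phi(E)[-1]$ for an object $E\in\MM_{S,\tau}(\v_0)$, whose existence is precisely what you are trying to prove. The gap is easily repaired numerically. Write $\Phi(\v_0)=(\rho,c,s)$; the construction gives $\Im Z_{\omega,\beta}(\Phi(\v_0))=0$, $\Re Z_{\omega,\beta}(\Phi(\v_0))<0$, and $\Phi(\v_0)^2=\v_0^2\geq 0$. Setting $\gamma=c-\rho\beta$, the first condition reads $\omega.\gamma=0$, so $\gamma^2\leq 0$ by the Hodge index theorem; if $\rho>0$, then $s\leq\frac{c^2}{2\rho}$ yields $\Re Z_{\omega,\beta}(\Phi(\v_0))\geq-\frac{\gamma^2}{2\rho}+\frac{\rho\,\omega^2}{2}>0$, a contradiction, so $\rho\leq 0$. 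If $\rho=0$, then $\omega.c=0$ together with $c^2=\v_0^2\geq 0$ forces $c=0$, so that $\Phi(\v_0)\in\Z_{>0}\cdot(0,0,1)$ and $\v_0$ would be proportional to the isotropic class $\w$, which is excluded by the linear independence built into \cref{lem:nearby isotropics 1} (in the isotropic case, by $l(\v_0)<\o=l(\w)$). Hence $\rk(-\Phi(\v_0))>0$; since $\Phi$ preserves primitivity and $l(-\Phi(\v_0))\mid\o$ holds automatically by \cref{primitive}, \cref{Thm:Non-emptinessOfGieseker} applies exactly as you intend. The remaining points you flag --- normalizing the phase by the $\wGL2$-action and semistability surviving to the wall by closedness --- match the paper's own treatment and are fine.
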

\begin{proof}
Since we are interested at the moment in semistable objects, it suffices to prove the claim for $\v$ primitive.  Indeed, suppose $\v=m\v_0$ with $\v_0$ primitive.  If $E_0\in M_{\sigma,S}(\v_0)$, then $E=E_0^{\oplus m}\in M_{\sigma,S}(\v)$.  By the remarks preceeding \cite[Lemma 8.2]{Bri07}, semistability is a closed condition, so it also suffices to suppose that $\sigma$ is generic with respect to $\v$ so that every $\sigma$-semistable object of class $\v$ is stable since these remain at least semistable at the boundary.

The necessity of $\v^2\geq 0$ is then just \cref{lem:MukaiVectorsOfStableObjects}, and it remains to prove that this is sufficient.

From the invariance of $J_\sigma(\v)$ under autoequivalences, as in \cref{Motivic invariant}, we can assume that $\v$ is positive in the sense of \cite[Definition 0.1]{Yos01}.  Indeed, if $r\neq 0$, then we can apply $[1]$ to make $r>0$ if necessary.  If $r=0$, then $0\leq \v^2=c_1^2$, so $c_1$ or $-c_1$ is the numerical class of an effective divisor so that we may assume $c_1$ is effective, after applying $[1]$ if necessary.  Thus $\v$ is positive if $s\neq 0$.  Finally, we are reduced to the case $\v=(0,c_1,0)$, where again we may assume that $c_1$ is the numerical class of an effective divisor.  We can tensor with $\OO(D)$ for any $D\in\NS(S)$ such that $D.c_1\neq 0$ (we can even choose $D$ such that $D.c_1=1$ since $\NS(S)$ is unimodular) to obtain a Mukai vector of the form $(0,c_1,D.c_1)$, which is positive.  

Therefore $\v$ is the Mukai vector of a semistable sheaf, and by \cref{Thm:GiesekerChamber}, there is a chamber $\GG$ in $\Stabd(S)$ corresponding to Gieseker stability. Using the invariance of $J_\sigma(\v)$ under changes in $\sigma$, we may take $\sigma\in\GG$ so $J_\sigma(\v)=e(M_\omega(\v))$, the virtual Hodge polynomial of the moduli space $M_\omega(\v)$ of $\omega$-Gieseker semistable sheaves of Mukai vector $\v$.  But this is non-trivial by \cref{Thm:Non-emptinessOfGieseker}.
\end{proof}

\part{Wall-crossing for Bridgeland moduli spaces on bielliptic surfaces}\label{Part:WallCrossing}
Now that we have established the existence of coarse moduli spaces for Bridgeland semistable objects, we move on to studying how these varieties behave as we cross a wall $W\subset\Stabd(S)$.  We will see that crossing $W$ always induces a birational contraction.
Thus the reduction to a finite list of Mukai vectors in \cref{FM reduction} has a geometric explanation. 
Indeed, the motivic invariants of the moduli spaces on opposite sides of a wall are equal because the moduli spaces themselves are birationally equivalent.
This is the content of the main theorem we prove in this part:
\begin{Thm}\label{Thm:MainTheorem1}
Let $\v\in\Hal(S,\Z)$ satisfy $\v^2\geq 0$, and let $\sigma,\tau\in\Stabd(S)$ be generic stability conditions with respect to $\v$ (that is, they are not contained in any wall for $\v$).
\begin{enumerate}
\item\label{enum:MT1-two moduli are birational}  The two moduli spaces $M_{\sigma,S}(\v)$ and $M_{\tau,S}(\v)$ of Bridgeland semistable objects are birational to each other.
\item\label{enum:MT1-birational map given by FM transform} More precisely, there is a birational map induced by a derived (anti-)autoequivalence $\Phi$ of $\Db(S)$ in the following sense: there exists a common open subset $U\subset M_{\sigma,S}(\v),U\subset M_{\tau,S}(\v)$ such that for any $u\in U$, the corresponding objects $E_u\in M_{\sigma,S}(\v)$ and $F_u\in M_{\tau,S}(\v)$ satisfy $F_u=\Phi(E_u)$.  
\end{enumerate}

\end{Thm}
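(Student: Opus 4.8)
The plan is to reduce to a single wall-crossing and then to control, by dimension estimates, the locus of objects that cease to be stable across the wall. Since $\Stabd(S)$ is connected and the wall-and-chamber decomposition for $\v$ is locally finite by \cref{prop:wall-and-chamber}, I can join $\sigma$ to $\tau$ by a path meeting the walls for $\v$ one at a time. As the moduli spaces are projective by \cref{thm:projective coarse moduli spaces} and birationality is transitive, it suffices to prove both claims when $\sigma=\sigma_+$ and $\tau=\sigma_-$ lie in the two chambers adjacent to a single wall $W$, with a fixed generic $\sigma_0\in W$.

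Next I would attach to $W$ the rank-two sublattice $\HH_W\subset\Hal(S,\Z)$ of classes $\w$ with $Z_{\sigma_0}(\w)\in\R\,Z_{\sigma_0}(\v)$, i.e.\ whose central charge lies on the line spanned by $Z_{\sigma_0}(\v)$; its signature is $(1,1)$ since the real and imaginary parts of $\eta(\sigma_0)$ span a positive-definite plane. The key point is that a $\sigma_+$-stable object $E$ of class $\v$ is automatically $\sigma_0$-semistable, and it fails to be $\sigma_-$-stable exactly when its $\sigma_0$-Jordan--H\"{o}lder filtration is nontrivial, in which case all of its $\sigma_0$-stable factors have Mukai vectors in $\HH_W$. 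Thus the destabilized locus $Z_+\subset M_{\sigma_+,S}(\v)$ is the union, over decompositions $\v=\sum_i\a_i$ with $\a_i\in\HH_W$, of images of the stacks of $\sigma_0$-semistable objects assembled from such factors.

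I would then invoke the dimension estimates for stacks of Harder--Narasimhan and Jordan--H\"{o}lder filtrations from \cite{NY19} to bound $\dim Z_+$ in terms of the pairings $\langle\a,\b\rangle$ of classes in $\HH_W$ summing to $\v$. When $\HH_W$ carries no distinguished isotropic class represented by stable objects on $S$, these estimates should yield $\codim Z_+\ge 1$, and by symmetry $\codim Z_-\ge 1$. On the dense open set $U=M_{\sigma_+,S}(\v)\setminus Z_+=M_{\sigma_-,S}(\v)\setminus Z_-$ every object is simultaneously $\sigma_+$- and $\sigma_-$-stable, so an object of $U$ represents a point of each moduli space and the identity on such objects induces an isomorphism onto $M_{\sigma_-,S}(\v)\setminus Z_-$. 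This establishes part (1) and part (2) with $\Phi=\id$, refining the equality of motivic invariants in \cref{Motivic invariant}.

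The main obstacle is the totally semistable case, where $Z_+$ is all of $M_{\sigma_+,S}(\v)$ and no naive common stable locus exists. Since a bielliptic surface admits no spherical objects by \cref{lem:MukaiVectorsOfStableObjects}, such a wall can only arise from an isotropic class $\w_0\in\HH_W$ with $\w_0^2=0$ and $M_{\sigma_0,S}(\w_0)\ne\varnothing$. Here I would use that $M_{\sigma_0,S}(\w_0)$ is again a bielliptic surface, as in the proof of \cref{lem:moduli spaces of fully induced isotropic vectors} (or a relative Jacobian via \cref{relative FM}), to produce a Fourier--Mukai (anti-)autoequivalence $\Phi$ of $\Db(S)$ carrying $\w_0$ to a point class; transporting $\sigma_\pm$ by $\Phi$ converts $W$ into a non-totally-semistable wall for $\Phi_*(\v)$, to which the previous codimension estimate applies, and pulling back produces the common open set $U$ with $F_u=\Phi(E_u)$. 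The genuinely hard part is making the $\HH_W$-indexed dimension count sharp enough to force $\codim Z_\pm\ge 1$ in every non-totally-semistable configuration, and then verifying that after applying $\Phi$ the exceptional loci remain of positive codimension, so that the birational identification survives.
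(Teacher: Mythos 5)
Your overall skeleton is the paper's: join $\sigma$ to $\tau$ by a path meeting the walls for $\v$ one at a time, observe that away from totally semistable walls the $\sigma_0$-stable locus is a nonempty common open subset of $M_{\sigma_+,S}(\v)$ and $M_{\sigma_-,S}(\v)$ (so $\Phi=\Id$ suffices there), and use the lattice $\HH_W$ together with the filtration-stack estimates of \cite{NY19} --- packaged in the paper as \cref{Prop:TotallySemistableCodimOne} and \cref{classification of walls} --- to pin down exactly when a wall is totally semistable. The genuine gap is in your treatment of that totally semistable case. You claim that transporting $\sigma_\pm$ by the Fourier--Mukai autoequivalence $\Xi$ carrying the isotropic class $\w_0$ (with $l(\w_0)=\o$) to the point class $(0,0,1)$ \emph{converts $W$ into a non-totally-semistable wall for the transformed vector}, after which the codimension estimate applies and one pulls back. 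This is false precisely in the case that matters. When $\langle\v,\w_0\rangle=1$ (the Hilbert--Chow case of \cref{classification of walls}), the transformed vector has rank one up to sign and shift, every $\sigma_+$-stable object is of the form $I_Z(L)[1]$, and each such object is strictly $\sigma_0$-semistable via the exact sequence $0\to\OO_Z\to I_Z(L)[1]\to\OO_S(L)[1]\to 0$ in $\PP_{\omega,\beta}(1)$; the wall remains totally semistable after applying $\Xi$, there is no common $\sigma_0$-stable locus on either side, and your reduction never gets off the ground.

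What is needed instead --- and what the paper does --- is an explicit matching of the two chambers rather than a reduction to the non-totally-semistable case. By \cref{Prop:Uhlenbeck morphism}, after applying $\Xi$ the $\sigma_+$-stable objects of class $\v$ are exactly the shifts $F[1]$ of $\beta$-twisted $\omega$-stable sheaves, while the $\sigma_-$-stable objects are exactly the shifted derived duals $F^{\vee}[1]$ of $(-\beta)$-twisted $\omega$-stable sheaves; hence the \emph{anti}-autoequivalence $\Phi(\blank)=(\blank)^{\vee}[2]$ carries $M_{\sigma_+}(\v)$ to $M_{\sigma_-}(\v)$ when $\langle\v,\u\rangle=1$, and $\Phi(\blank)=(\blank)^{\vee}\otimes\OO(L)[2]$ handles the remaining totally semistable case $\langle\v,\u\rangle=2=\o=l(\v)$, $\v^2=4$ of \cref{classification of walls}, a case your proposal does not address separately. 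The duality step cannot be bypassed: no chamber on the $\sigma_-$ side contains objects of the untransformed shape, so the ``common open set'' must be produced by the anti-autoequivalence itself, not by shrinking exceptional loci. A secondary point in the same direction: in the exceptional case the moduli space can be reducible, with one component entirely strictly semistable at $\sigma_0$ and another not (\cref{Lem:TSSRankTwoOrderTwoException}), so even at walls you would call non-totally-semistable, a naive common-dense-open argument needs care componentwise; the explicit identification of both sides again resolves this.
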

\section{Line bundles on moduli spaces, wall-crossing, and birational transformations}\label{sec:LineBundlesWallCrossBirat}
\subsection{The Bayer-Macr\`{i} map} We introduce the first main tool we will use for relating wall-crossing in the Bridgeland stability manifold to birational transformations on the moduli spaces.  We denote by $Y$ either a bielliptic surface $S$ or its abelian canonical cover $X$, and we recall the definition of the Donaldson-Mukai morphism.  Fix a Mukai vector $\v\in\Hal(Y,\Z)$, a line bundle $L\in\Pic(Y)$ with $c_1(L)=c_1(\v)$, a stability condition $\sigma\in\Stabd(Y)$, and a universal family $\EE\in\Db(M_\sigma(\v,L)\times Y)$.\footnote{In case only a quasi-universal family exists, it is easy to adjust the definitions here by dividing by the similitude.  See \cite[Section 4]{BM14a} for more details.}  Then we have the following definition.
\begin{Def} Set $K(Y)_{\v}:=\Set{x\in K(Y)\ |\ \langle\v(x),\v\rangle=0}$.  The Donaldson-Mukai morphism from $K(Y)_\v$ to $\Pic(M_\sigma(\v,L))$ is defined by:  
\begin{equation}\label{eqn:DonaldsonMukai}
\begin{matrix}
\theta_{\v,\sigma}:& K(Y)_\v & \to & \Pic(M_{\sigma}(\v,L))\\
 & x & \mapsto & \det (p_{M_\sigma(\v,L)!}(\EE \otimes p_Y^*(x^{\vee}))).
\end{matrix} 
\end{equation}
More generally, for a scheme $Z$ and a family $\EE\in\Db(Z\times Y)$ over $Z$ of objects in $M_\sigma(\v,L)$, there is a Donaldson-Mukai morphism $\theta_\EE:K(Y)_\v\to\Pic(Z)$ associated to $\EE$, defined as in \eqref{eqn:DonaldsonMukai}, which satisfies $\theta_\EE=\lambda_\EE^*\theta_{\v,\sigma}$, where $\lambda_\EE:Z\to M_{\sigma}(\v,L)$ is the associated classifying map.  See \cite[Section 8.1]{HL10} for more details.

Recal that $Z_\sigma$ can be written as $Z_\sigma(E)=\langle\mho_\sigma,\v(E)\rangle$ for all $E\in\Db(X)$.  Setting 
\begin{equation}\label{eqn:def of xi}
\xi_\sigma:=\Im\frac{ \mho_\sigma}{\langle \mho_\sigma, \v \rangle}
\in \v^\perp,
\end{equation} we define the numerical divisor class
\begin{equation}\label{eqn:def of ell(sigma)}
\ell_\sigma:=\theta_{\v,\sigma}(\xi_\sigma)\in\Num(M_\sigma(\v,L)),
\end{equation}
where we abuse notation by also using $\theta_{\v,\sigma}$ for the extension of \eqref{eqn:DonaldsonMukai} to the Mukai lattice.
\end{Def}

From \cref{prop:wall-and-chamber} it follows that the moduli space $M_\sigma(\v,L)$ and $\EE$ remain constant when varying $\sigma$ in a chamber for $\v$, so for each chamber $\CC$, we get the Bayer-Macr\`{i} map $$l_{\CC}:\CC\to\Num(M_\CC(\v,L)),\;\;\sigma\mapsto \ell_\sigma,$$ where the notation $M_\CC(\v,L)$ denotes the fixed moduli space.  From our proof of the projectivity of $M_{\sigma,S}(\v,L)$ in \cref{thm:projective coarse moduli spaces} along with the argument
in \cite{MYY14} or \cite{BM14a}, we have the following result:
\begin{Thm}[{cf. \cite[Theorem 5.15]{MYY14} in the abelian case}]\label{Thm:NefAmpleDivisor}
For a generic $\sigma$,
$\ell_\sigma=\theta_{\v,\sigma}(\xi_\sigma)$ is an ample divisor on $M_\sigma(\v,L)$.
\end{Thm}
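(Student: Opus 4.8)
The plan is to follow the two-step strategy of \cite{BM14a,MYY14}: first establish the \emph{Positivity Lemma}, which produces a nef class together with a numerical criterion for when it is strictly positive on a curve, and then upgrade ``nef and strictly positive on curves'' to ``ample.'' The first step is purely categorical. The construction of $\ell_\sigma=\theta_{\v,\sigma}(\xi_\sigma)$ and the proof that it is nef rest only on the positivity axiom for the central charge $Z_\sigma$ and on the existence of a (quasi-)universal family over a projective coarse moduli space. Since \cref{thm:projective coarse moduli spaces} supplies exactly the projectivity we need, the arguments of \cite[Section 4]{BM14a} and \cite[Section 5]{MYY14} carry over verbatim to $S$, giving: $\ell_\sigma$ is nef, and for an integral projective curve $C\subset M_{\sigma,S}(\v)$ one has $\ell_\sigma\cdot C=0$ if and only if the objects parametrized by a general point of $C$ are all $S$-equivalent.

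Next I would use genericity to promote this to strict positivity. Because $\sigma$ is generic with respect to $\v$, distinct points of $M_{\sigma,S}(\v)$ represent distinct $S$-equivalence classes (for primitive $\v$ every $\sigma$-semistable object is $\sigma$-stable by \cref{prop:wall-and-chamber}, and in general the coarse space parametrizes $S$-equivalence classes), so no non-constant curve can parametrize a single $S$-equivalence class. By the numerical criterion, $\ell_\sigma\cdot C>0$ for every curve $C$.

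The remaining and principal difficulty is to pass from ``nef and strictly positive on curves'' to ``ample.'' Unlike the K3 and abelian cases, $M_{\sigma,S}(\v)$ has no holomorphic symplectic structure, so the nef-cone arguments of \cite{BM14a,MYY14} are unavailable and positivity on curves alone need not force ampleness. To circumvent this I would descend to the canonical cover: writing $\sigma=(\pi^*)^{-1}(\sigma')$ with $\sigma'\in\Gamma_X\cap\Stabd(X)$ generic, \cref{global singularities} supplies a finite pullback morphism $\pi^*\colon M_{\sigma,S}(\v)\to M_{\sigma',X}(\pi^*\v)$ onto the closed locus $\FixG$ (when $\v$ is primitive and $\sigma$ generic the source is entirely stable, so this is genuinely a morphism of the whole space). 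I would then verify the compatibility $\pi^*\ell_\sigma\equiv c\,\ell_{\sigma'}|_{\FixG}$ for some $c>0$, which amounts to the naturality of the Donaldson--Mukai morphism $\theta$ under $\pi^*$ together with $\pi^*\xi_\sigma\propto\xi_{\sigma'}$ (recall $\langle\pi^*\v,\pi^*\w\rangle=\o\langle\v,\w\rangle$). Granting this, \cite[Theorem 5.15]{MYY14} gives that $\ell_{\sigma'}$ is ample on $M_{\sigma',X}(\pi^*\v)$, hence ample on the closed subvariety $\FixG$; since the pullback of an ample class under the finite morphism $\pi^*$ is again ample, $\ell_\sigma$ is ample on $M_{\sigma,S}(\v)$. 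An alternative route avoiding the cover is to use the identification from \cref{thm:projective coarse moduli spaces}: for the distinguished $\tau$ in the chamber of $\sigma$ there is an (anti-)autoequivalence carrying $M_{\tau,S}(\v)$ to a Gieseker moduli space, on which the corresponding $\ell_\tau$ is the determinantal GIT polarization and hence ample by \cite[Section 8]{HL10}; the difficulty there is instead to propagate ampleness across the entire chamber, using the linearity of the Bayer--Macr\`i map together with the Positivity Lemma.

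I expect the compatibility statement $\pi^*\ell_\sigma\equiv c\,\ell_{\sigma'}|_{\FixG}$ to be the main obstacle. It requires careful bookkeeping of how the universal family on $M_{\sigma,S}(\v)$ behaves under $\pi^*$ (by \cref{lem:WhenIsPullbackStable} each fibre becomes $\bigoplus_{k}(g^k)^*F$), how $\theta$ transforms under this operation, and how the strictly semistable boundary is incorporated when $\v$ is imprimitive. Once this naturality is in place, the ampleness conclusion is automatic, so the entire weight of the proof rests on translating the Donaldson--Mukai class between $S$ and its abelian cover.
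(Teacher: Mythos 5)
Your first two steps (the Positivity Lemma and the genericity argument ruling out curves of $S$-equivalent objects) match the standard setup, but your decisive step rests on a mistaken premise and then contains a genuine gap. The premise: the ampleness arguments of \cite{BM14a} and \cite{MYY14} do \emph{not} use the holomorphic symplectic structure. They pass from nef to ample by the Fourier--Mukai reduction to a (twisted) Gieseker moduli space, where $\ell_\sigma$ is identified with the classical determinantal GIT polarization of \cite[Section 8]{HL10} and \cite{MW97}. That reduction is precisely what the proof of \cref{thm:projective coarse moduli spaces} supplies here: the autoequivalence $\Phi$ built from \cref{lem:moduli spaces of fully induced isotropic vectors} identifies $M_{\sigma,S}(\v)$ with a twisted Gieseker moduli space \emph{for every $\sigma$ in the chamber} (with $\Phi(\sigma)$ of the form $\sigma_{\omega,\beta}$ up to the $\wGL2$-action), so one simply transports $\ell_\sigma$ through $\Phi$ and matches it with the known ample class; there is no separate ``propagation across the chamber'' to worry about. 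This is the paper's proof, and the difficulty you set out to circumvent does not exist.

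The cover route you propose instead breaks down at the application of \cite[Theorem 5.15]{MYY14}, which requires $\sigma'$ to be generic with respect to $\pi^*\v$ --- and this fails in general, no matter how generic $\sigma$ is for $\v$. Since $\sigma'$ is $G'$-invariant, $Z_{\sigma'}(\w)=Z_{\sigma'}(g^*\w)$ for \emph{every} class $\w$ and every invariant $\sigma'$. By \cref{lem:WhenIsPullbackStable}, any $E\in\FixG$ pulls back to $\bigoplus_k (g^k)^*F$, and by \cref{global singularities} there typically exist such $E$ with $c_1(g^*F)\neq c_1(F)$ (these are exactly the singular points of $M^s_{\sigma,S}(\v)$, possible whenever $\rho(X)>2$); then $\pi^*E$ is strictly $\sigma'$-semistable of class $\pi^*\v$ with \emph{non-proportional} Jordan--H\"older factors, for every invariant $\sigma'$. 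Hence the entire image of $\Stabd(S)$ inside $\Stabd(X)$ lies on a wall for $\pi^*\v$, so $\ell_{\sigma'}$ is merely nef, and restricting a nef class to the closed subvariety $\FixG$ and pulling back under your finite morphism leaves you with exactly the ``nef and positive on curves'' class you were trying to upgrade --- the argument is circular at its one load-bearing point. (The same issue appears on the nose when $l(\v)>1$ or when $E\cong E\otimes\o^{k}$ for some $0<k<\o$: the image then meets the strictly semistable locus of $M_{\sigma',X}(\pi^*\v)$ unavoidably.) Your compatibility claim $\pi^*\ell_\sigma\equiv c\,\ell_{\sigma'}|_{\FixG}$ is plausible via the projection formula and $\langle\pi^*\v,\pi^*\w\rangle=\o\langle\v,\w\rangle$, but it cannot rescue the argument once ampleness of $\ell_{\sigma'}$ is unavailable.
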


\subsection{Wall-crossing and Birational transformations} Considering $\ell_\sigma$ as $\sigma$ approaches a wall makes explicit the connection between crossing walls in $\Stabd(Y)$ and birational transformations of the moduli space $M_\sigma(\v,L)$.  Let $\v\in\Hal(Y,\Z)$ with $\v^2>0$, and let $W$ be a wall for $\v$.  We say $\sigma_0\in W$ is \emph{generic} if it does not belong to any other wall, and we denote by $\sigma_+$ and $\sigma_-$ two generic stability conditions nearby $W$ in two opposite adjacent chambers.  Then all $\sigma_\pm$-semistable objects are still $\sigma_0$-semistable, and thus the universal familes $\EE^\pm$ on $M_{\sigma_\pm}(\v)\times Y$ induce nef divisors $l_{\sigma_0,\pm}$ on $M_{\sigma_\pm}(\v)$ by $$l_{\sigma_0,\pm}:=\theta_{\v,\sigma_\pm}(\xi_{\sigma_0}).$$  The main result about $l_{\sigma_0,\pm}$ is the following. It can be proved exactly as in \cite[Thm. 1.4(a)]{BM14a},\cite[Thm. 11.3]{Nue14b}:
\begin{Thm}\label{Thm:WallContraction} Let $\v\in\Hal(Y,\Z)$ satisfy $\v^2>0$, and let $\sigma_\pm$ be two stability conditions in opposite chambers nearby a generic $\sigma_0\in W$.  Then:
\begin{enumerate}
\item The divisors $l_{\sigma_0,\pm}$ are semiample on $M_{\sigma_\pm}(\v)$.  In particular, they induce contractions $$\Sigma^\pm:M_{\sigma_\pm}(\v)\to\overline{M}_\pm,$$ where $\overline{M}_\pm$ are normal projective varieties.  When $Y=X$, an abelian surface, $\overline{M}_\pm$ are irreducible.
\item For any curve $C\subset M_{\sigma_\pm}(\v)$, $l_{\sigma_0,\pm}.C=0$ if and only if the two objects $\EE_c^\pm$ and $\EE_{c'}^\pm$ corresponding to two general points $c,c'\in C$ are S-equivalent.  In particular, the curves contracted by $\Sigma^\pm$ are precisely the curves of objects that are S-equivalent with respect to $\sigma_0$.
\end{enumerate}
\end{Thm}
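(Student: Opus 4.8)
The plan is to follow the strategy of \cite[Theorem 1.4(a)]{BM14a} and \cite[Theorem 11.3]{Nue14b}, whose engine is the Positivity Lemma of Bayer-Macr\`{i}. Most of the hypotheses are already arranged in the discussion preceding the statement: since $\sigma_\pm$ lie in the two chambers adjacent to the generic wall point $\sigma_0$, and semistability is a closed condition (see the remarks before \cite[Lemma 8.2]{Bri07}), every $\sigma_\pm$-semistable object of class $\v$ is $\sigma_0$-semistable. Hence the universal families $\EE^\pm$ on $M_{\sigma_\pm}(\v)\times Y$ are families of $\sigma_0$-semistable objects, and $l_{\sigma_0,\pm}=\theta_{\v,\sigma_\pm}(\xi_{\sigma_0})$ is nef. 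The second assertion of the theorem is then precisely the equality clause of the Positivity Lemma: for a curve $C\subset M_{\sigma_\pm}(\v)$ one has $l_{\sigma_0,\pm}.C=0$ if and only if the objects $\EE^\pm_c$ and $\EE^\pm_{c'}$ at two general points $c,c'\in C$ have the same $\sigma_0$-Jordan-H\"{o}lder factors, i.e. are $S$-equivalent with respect to $\sigma_0$.

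To upgrade nefness to semiampleness I would realize the target as a good moduli space for $\sigma_0$-semistable objects. By \cref{thm:ModuliStacksAreArtin} the stack $\MM_{\sigma_0,Y}(\v)$ admits a proper good moduli space $\overline{M}_\pm$, and identifying $S$-equivalent objects gives a natural classifying morphism $\Sigma^\pm\colon M_{\sigma_\pm}(\v)\to\overline{M}_\pm$ whose fibres are exactly the $\sigma_0$-$S$-equivalence loci identified above. The class $l_{\sigma_0,\pm}$ descends to a class $\overline{l}_\pm$ on $\overline{M}_\pm$, and applying the Positivity Lemma in its ampleness form, as in \cref{Thm:NefAmpleDivisor}, to a (quasi-)universal family over $\overline{M}_\pm$ shows that $\overline{l}_\pm$ is ample; consequently $l_{\sigma_0,\pm}=(\Sigma^\pm)^*\overline{l}_\pm$ is semiample and $\overline{M}_\pm$ is projective and normal. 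The projectivity input here is supplied just as in \cref{thm:projective coarse moduli spaces}: either by a Fourier-Mukai reduction of $\v$ at $\sigma_0$ to a Gieseker moduli problem, or, when $Y=S$, by descent from the abelian cover $X$ through the induced stability conditions of \cref{induced}. When $Y=X$ the source $M_{\sigma_\pm}(\v)$ is irreducible by the irreducibility of moduli on abelian surfaces \cite{MYY14}, so its image $\overline{M}_\pm$ under the surjection $\Sigma^\pm$ is irreducible as well.

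I expect the genuine difficulty to lie in this last step, namely in establishing that $\overline{M}_\pm$ is projective and that $l_{\sigma_0,\pm}$ is the pullback of an \emph{ample} class, rather than merely a nef class with the correct null locus. In the K3 setting of \cite{BM14a} this is extracted from the hyperk\"{a}hler structure of the moduli space, which is unavailable for a bielliptic $S$; instead I would bootstrap from the projectivity already proved for generic stability conditions in \cref{thm:projective coarse moduli spaces} together with the abelian-cover descent, where the analogous contraction and ampleness statements of \cite{MYY14} apply verbatim. Once the projectivity of $\overline{M}_\pm$ and the ampleness of $\overline{l}_\pm$ are secured, the semiampleness of $l_{\sigma_0,\pm}$ and the existence of the contractions $\Sigma^\pm$ follow formally.
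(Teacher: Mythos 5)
Your first paragraph reconstructs exactly what the paper's proof is: the paper proves this theorem by declaring it identical to \cite[Thm.~1.4(a)]{BM14a} and \cite[Thm.~11.3]{Nue14b}, and both the nefness of $l_{\sigma_0,\pm}$ and the curve characterization in part (2) are precisely the Positivity Lemma of Bayer--Macr\`{i} applied to $\EE^\pm$, which are families of $\sigma_0$-semistable objects since $\sigma_\pm$ lie in the chambers adjacent to $W$. So far your proposal and the paper's (cited) argument coincide.

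The gap is in your semiampleness step. You propose to descend $l_{\sigma_0,\pm}$ to the good moduli space $\overline{M}_\pm$ of $\MM_{\sigma_0,Y}(\v)$ and to prove ampleness of the descended class by ``applying the Positivity Lemma in its ampleness form, as in \cref{Thm:NefAmpleDivisor}, to a (quasi-)universal family over $\overline{M}_\pm$.'' No such family exists: points of $\overline{M}_\pm$ are $S$-equivalence classes of $\sigma_0$-semistable objects, and across the strictly semistable locus there is no (quasi-)universal family even \'{e}tale-locally --- this is exactly why strictly semistable points are the hard case in every projectivity argument of this kind. Moreover, \cref{Thm:NefAmpleDivisor} is established only for $\sigma$ generic, and its proof routes through \cref{thm:projective coarse moduli spaces}, which again assumes genericity; since $\sigma_0$ lies \emph{on} the wall, neither can be quoted at $\sigma_0$. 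The mechanism that actually closes this step --- and what the citation of \cite{Nue14b} encodes, transposed from the Enriques/K3 pair to the bielliptic/abelian pair --- is to transport the problem to the canonical cover: by \cref{lem:WhenIsPullbackStable}, $E\mapsto\pi^*E$ defines a morphism $M_{\sigma_\pm,S}(\v)\to M_{\sigma'_\pm,X}(\pi^*\v)$, finite onto its image, under which $l_{\sigma_0,S}$ is identified up to positive scale with the pullback of $l_{\sigma'_0,X}$; the latter is semiample by the abelian-surface contraction theorem of \cite{MYY14}, semiampleness is preserved under pullback, and $\overline{M}_\pm$ is then produced by Stein factorization, which also yields normality and projectivity (and irreducibility when $Y=X$). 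You do gesture at this route in your final paragraph (``the statements of \cite{MYY14} apply verbatim''), but without constructing the comparison morphism, checking that a generic point of $W$ maps to a generic point of a wall for $\pi^*\v$ on $X$, or verifying the compatibility of the Donaldson--Mukai morphisms with $\pi^*$ that makes $l_{\sigma_0}$ pull back correctly. As written, the load-bearing ampleness claim rests on a family that does not exist, so the semiampleness step is genuinely gapped.
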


This theorem leads us to the following definition describing a wall $W$ in terms of the geometry of the induced morphisms $\Sigma^\pm$.
\begin{Def}
We call a wall $W$:
\begin{enumerate}
\item a \emph{fake wall}, if there are no curves contracted by $\Sigma^\pm$;
\item a \emph{totally semistable wall}, if $M_{\sigma_0}^s(\v)=\varnothing$;
\item a \emph{flopping wall}, if we can identify $\overline{M}_+=\overline{M}_-$ and the induced map $M_{\sigma_+}(\v)\dashrightarrow M_{\sigma_-}(\v)$ induces a flopping contraction;
\item a \emph{divisorial wall}, if the morphisms $\Sigma^\pm$ are both divisorial contractions;
\item a \emph{$\P^1$-wall}, if the morphisms $\Sigma^\pm$ are both $\P^1$-fibrations.
\end{enumerate}
\end{Def}

A non-fake wall $W$ such that $\codim(M_{\sigma_\pm}(\v)\backslash M^s_{\sigma_0}(\v))\geq 2$ is necessarily a flopping wall by the arguments in \cite[Thm. 1.4(b)]{BM14a} and \cite[Thm. 11.3]{Nue14b}.  

\section{The hyperbolic sublattice associated to a wall}
Our approach to classifying the birational behavior induced by crossing a given wall $W$ will involve a certain lattice associated to $W$.  From their definition, these walls are associated to the existence of another Mukai vector with the same phase as $\v$, so it is natural to consider the set of these ``extra'' classes, as in the following definition.  As it turns out, this set will contain all of the Mukai vectors we are interested in.
\begin{PropDef}\label{hyperbolic} To a wall $W$, let $\HH_{W}\subset\Hal(S,\Z)$ be the set of Mukai vectors $$\HH_W:=\Set{\w\in\Hal(X,\Z)\ |\ \Im\frac{Z(\w)}{Z(\v)}=0\mbox{ for all }\sigma\in W}.$$
Then $\HH_{W}$ has the following properties:
\begin{enumerate}
\item It is a primitive sublattice of rank two and of signature $(1,-1)$ (with respect to the restriction of the Mukai form).
\item Let $\sigma_+,\sigma_-$ be two sufficiently close and generic stability conditions on opposite sides of the wall $W$, and consider any $\sigma_+$-stable object $E\in M_{\sigma_+,S}(\v)$.  Then any HN-filtration factor $A_i$ of $E$ with respect to $\sigma_-$ satisfies $\v(A_i)\in\HH_{W}$.
\item If $\sigma_0$ is a generic stability condition on the wall $W$, the conclusion of the previous claim also holds for any $\sigma_0$-semistable object $E$ of class $\v$.
\item Similarly, let $E$ be any object with $\v(E)\in\HH_{W}$, and assume that it is $\sigma_0$-semistable for a generic stability condition $\sigma_0\in W$.  Then every Jordan-H\"{o}lder factor of $E$ with respect to $\sigma_0$ will have Mukai vector contained in $\HH_{W}$.
\end{enumerate}
\end{PropDef}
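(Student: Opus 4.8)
The plan is to establish the four claims in sequence, using the central charge constraint that defines $\HH_W$ together with the positivity axiom of a Bridgeland stability condition. First I would set up the linear-algebra picture. For a single $\sigma = (Z,\AA) \in W$, the condition $\Im\frac{Z(\w)}{Z(\v)} = 0$ says that $Z(\w)$ lies on the real line $\R\cdot Z(\v)$ in $\C$; equivalently, $\w$ lies in the kernel of the single real-linear functional $\w \mapsto \Im\frac{Z(\w)}{Z(\v)}$ on $\Hal(S,\R)$. Since $Z$ factors through the numerical Mukai pairing via $\eta(\sigma)$, this functional is $\w\mapsto \langle \w, \xi_\sigma\rangle$ for an appropriate real vector (cf.\ \eqref{eqn:def of xi}), so its kernel is a rational hyperplane once $\sigma$ is rational. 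Imposing the condition for \emph{all} $\sigma \in W$ intersects these hyperplanes; because $W$ is a wall for $\v$ (a genuine real codimension-one locus on which $\v$ acquires strictly semistable objects), the resulting intersection is exactly two-dimensional, and $\HH_W$ is by construction saturated, hence primitive. For the signature claim in (1), I would use the characterization of $\Stabd(S)$ recalled after \cref{induced}: for any $\sigma\in\Stabd(S)$ the real and imaginary parts of $\eta(\sigma)$ span a positive-definite $2$-plane $P$. The span of $\v$ and $\xi_\sigma$ inside $\HH_W\otimes\R$ is naturally the pairing of $\HH_W$ against $P$, giving one positive direction from $\v$ (note $\v^2\geq 0$, and one checks $\v^2>0$ on an honest wall) and, since the total signature of $\Hal(S,\Z)$ is $(2,2)$ with $\HH_W$ containing the positive class $\v$ but being rank two and not contained in a positive-definite subspace, exactly one negative direction; thus $\HH_W$ has signature $(1,-1)$.

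For claim (2), fix generic $\sigma_\pm$ near the wall and a $\sigma_+$-stable $E$ of class $\v$. Passing to $\sigma_-$, take the Harder-Narasimhan filtration with factors $A_i$. I would argue that every $A_i$ has the same phase as $\v$ at the wall itself, i.e.\ $\phi_{\sigma_0}(A_i) = \phi_{\sigma_0}(\v)$: indeed $E$ is $\sigma_0$-semistable (it is $\sigma_+$-stable and $\sigma_+$ is arbitrarily close to $\sigma_0$, so semistability is preserved by the closedness recalled before \cite[Lemma 8.2]{Bri07}), so all its $\sigma_0$-HN factors share the phase of $\v$; and the $\sigma_-$-HN factors refine the $\sigma_0$-semistable pieces. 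Since $\phi_{\sigma_0}(A_i)=\phi_{\sigma_0}(\v)$ means precisely $\Im\frac{Z_{\sigma_0}(\v(A_i))}{Z_{\sigma_0}(\v)}=0$, and this must hold for every $\sigma_0\in W$ by the same argument applied along the wall, we conclude $\v(A_i)\in\HH_W$. Claim (3) is the same argument with $E$ taken $\sigma_0$-semistable directly: its $\sigma_0$-HN filtration is trivial (all of $E$), and one instead reads off the Jordan-Hölder factors, each of which has phase equal to $\phi_{\sigma_0}(\v)$ by definition of Jordan-Hölder, hence lies in $\HH_W$.

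Claim (4) is the converse packaging and is essentially formal once (2)--(3) are in place: if $\v(E)\in\HH_W$ and $E$ is $\sigma_0$-semistable, then by the definition of $S$-equivalence and the Jordan-Hölder filtration recalled in the Bridgeland review section, every JH factor $F_j$ satisfies $\phi_{\sigma_0}(F_j)=\phi_{\sigma_0}(E)=\phi_{\sigma_0}(\v)$, which is exactly the condition $\Im\frac{Z_{\sigma_0}(\v(F_j))}{Z_{\sigma_0}(\v)}=0$; running this over all $\sigma_0\in W$ gives $\v(F_j)\in\HH_W$. The step I expect to be the genuine obstacle is the signature computation in (1): one must rule out the degenerate possibilities that $\HH_W$ is negative semidefinite or degenerate, which requires knowing that the wall is a \emph{real} wall carrying an actual destabilizing subobject of strictly positive self-intersection rather than a fake coincidence of phases. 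I would handle this by invoking the existence, guaranteed by \cref{prop:wall-and-chamber}(2), of a strictly $\sigma_0$-semistable object with a proper destabilizing subobject $A_1$; then $\v$, $\v(A_1)$ are linearly independent elements of $\HH_W$ with $\langle\v,\v(A_1)\rangle>0$ forced by the see-saw/phase inequalities, and the Hodge-index-type argument using the positive-definite $2$-plane $P$ pins the signature to $(1,-1)$.
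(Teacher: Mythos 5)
The paper disposes of this statement in one line, asserting that the proof of \cite[Proposition 5.1]{BM14a} carries over word for word, so your proposal is effectively a blind reconstruction of the Bayer--Macr\`{i} argument. Your treatment of claims (2)--(4) follows the same route they take, but the step you yourself flagged as ``the genuine obstacle'' --- the signature computation in (1) --- contains a real gap, and your proposed fix would fail. Producing linearly independent classes $\v,\v(A_1)\in\HH_W$ with $\langle\v,\v(A_1)\rangle>0$ does \emph{not} pin the signature to $(1,-1)$: the Gram matrix $\left(\begin{smallmatrix}2&1\\1&2\end{smallmatrix}\right)$ (i.e.\ $\v^2=\v(A_1)^2=2$, $\langle\v,\v(A_1)\rangle=1$) is positive definite, and nothing you have said excludes such a configuration; likewise your assertion that $\HH_W$ is ``not contained in a positive-definite subspace'' is precisely what needs proof, since the ambient signature $(2,2)$ leaves plenty of room for positive-definite rank-two sublattices. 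The missing idea, which is the one the cited proof actually uses, is a kernel argument: for $\sigma_0\in W$ the image $Z_{\sigma_0}(\HH_W\otimes\R)$ is a real line, so $Z_{\sigma_0}$ restricted to $\HH_W\otimes\R$ has a one-dimensional kernel; but since $Z_{\sigma_0}(\w)=\langle\eta(\sigma_0),\w\rangle$ and the real and imaginary parts of $\eta(\sigma_0)$ span a positive-definite two-plane in $\Hal(S,\R)$ of signature $(2,2)$ (as used explicitly in the proof of \cref{lem:nearby isotropics 1}), $\Ker Z_{\sigma_0}$ is negative definite. This single observation produces the negative direction inside $\HH_W$, rules out the positive-definite and degenerate cases in one stroke, and combined with $\v^2>0$ gives signature $(1,-1)$. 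You invoke the positive-definite two-plane but never use its consequence for the kernel, which is where all the work lives.

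Two softer spots. First, in (2) the passage from one $\sigma_0$ to all of $W$ is not ``the same argument applied along the wall'': the $\sigma_-$-HN factors of $E$ could a priori change as you slide along $W$ past walls for the classes $\v(A_i)$. The repair is an open-closed argument: the locus in $W$ where $\Im\bigl(Z(\v(A_i))/Z(\v)\bigr)=0$ is closed, and it is open because HN filtrations are locally constant away from the locally finite set of other walls (\cref{prop:wall-and-chamber}); connectedness of $W$ then finishes. The same open-closed mechanism is what makes the rank bound $\rk\HH_W\le 2$ honest --- if three independent classes had aligned central charges, the locus would have real codimension at least two under the local homeomorphism of \cref{thm:BridgelandMain}, contradicting $W$ having codimension one --- rather than the bare assertion that the intersection ``is exactly two-dimensional.'' Second, your (3) conflates (3) with (4): claim (3) asserts the conclusion of (2), about $\sigma_-$-HN factors, for an arbitrary $\sigma_0$-semistable $E$ of class $\v$ (which follows because your proof of (2) only used $\sigma_0$-semistability of $E$), whereas the Jordan--H\"{o}lder statement you prove there is claim (4); the JH argument itself is fine.
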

\begin{proof} The proof of \cite[Proposition 5.1]{BM14a} carries over word for word.
\end{proof}

We would like to characterize the type of the wall $W$, i.e. the type of birational transformation induced by crossing it, in terms of the lattice $\HH_{W}$.  We will find it helpful to also go in the opposite direction as in \cite[Definition 5.2]{BM14a}:
\begin{Def} Let $\HH\subset\Hal(S,\Z)$ be a primitive rank two hyperbolic sublattice containing $\v$.  A \emph{potential wall} $W$ associated to $\HH$ is a connected component of the real codimension one submanifold consisting of those stability conditions $\sigma$ such that $Z_\sigma(\HH)$ is contained in a line.
\end{Def}

We will also have cause to consider two special convex cones in $\HH_{\R}$.  The first is defined as follows (see \cite[Definition 5.4]{BM14a}):

\begin{Def} Given any primitive hyperbolic lattice $\HH\subset \Hal(S,\Z)$ of rank two containing $\v$, denote by $P_{\HH}\subset \HH_{\R}$ the cone generated by classes $\u\in\HH$ with $\u^2\geq 0$ and $\langle \v,\u\rangle>0$.  We call $P_{\HH}$ the \emph{positive cone} of $\HH$, and a class in $P_{\HH}\cap\HH$ a \emph{positive class}.
\end{Def}

The next cone, called the \emph{effective cone} and whose integral classes are \emph{effective classes}, is classified by the following proposition (see \cite[Proposition 5.5]{BM14a} for the analogue in the K3 case):

\begin{Prop} Let $W$ be a potential wall associated to a hyperbolic rank two sublattice $\HH\subset\Hal(S,\Z)$.  For any $\sigma\in W$, let $C_{\sigma}\subset\HH_{\R}$ be the cone generated by classes $\u\in\HH$ satisfying the two conditions $$\Re\frac{Z_{\sigma}(\u)}{Z_{\sigma}(\v)}>0\mbox{ and }\u^2\geq 0.$$  Then this cone does not depend on the choice of $\sigma\in W$, so we may and will denote it by $C_{W}$.  Moreover, it contains $P_{\HH}$.

If $\u\in C_{W}$, then there exists a $\sigma$-semistable object of class $\u$ for every $\sigma\in W$, and if $\u\notin C_{W}$, then for a generic $\sigma\in W$ there does not exist a $\sigma$-semistable object of class $\u$.
\end{Prop}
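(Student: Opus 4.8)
The plan is to follow the strategy of \cite[Proposition 5.5]{BM14a}, replacing the K3-specific inputs by their bielliptic analogues: non-emptiness of Bridgeland moduli (\cref{Bridgeland non-empty}), the bound $\v(E)^2\geq 0$ for stable $E$ (\cref{lem:MukaiVectorsOfStableObjects}), and the control of stable/JH factors on the wall (\cref{hyperbolic}), together with the signature computations that $\Hal(S,\R)$ has signature $(2,2)$ while $\HH$ has signature $(1,-1)$.

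First I would pin down the linear algebra on the wall. Writing $Z_\sigma(\blank)=\langle\eta(\sigma),\blank\rangle$ with $\eta(\sigma)=a_\sigma+b_\sigma i$ spanning a positive-definite $2$-plane $\Pi_\sigma\subset\Hal(S,\R)$, the defining condition of $W$ is that $Z_\sigma(\HH_\R)$ lies on a line; hence the kernel of $Z_\sigma|_{\HH_\R}$ is $\HH_\R\cap\Pi_\sigma^{\perp}$. Since $\Hal(S,\R)$ has signature $(2,2)$ and $\Pi_\sigma$ is positive definite, $\Pi_\sigma^{\perp}$ is negative definite, so this kernel is a negative-definite line. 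Because the two null rays $\{\u\in\HH_\R:\u^2=0\}$ therefore avoid the kernel, the real-valued functional $f_\sigma(\u):=\Re\frac{Z_\sigma(\u)}{Z_\sigma(\v)}$ (genuinely real on $\HH$, as $Z_\sigma(\HH)$ lies on the line through $Z_\sigma(\v)$) is nonzero on each null ray. As $\sigma$ ranges over the connected manifold $W$, the sign of $f_\sigma$ on each null ray cannot change, so $C_\sigma$ is exactly the component of $\{\u^2\geq 0\}\setminus\{0\}$ on which $f_\sigma>0$, independent of $\sigma$; this is $C_W$, and $\v$ lies in its closure since $f_\sigma(\v)=1>0$. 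The inclusion $P_\HH\subseteq C_W$ is then immediate: if $\u^2\geq 0$ and $\langle\v,\u\rangle>0$, then in the hyperbolic lattice $\HH$ the class $\u$ lies in the same component of the positive cone as $\v$, which forces $f_\sigma(\u)>0$.

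For the existence direction, suppose $\u\in C_W$, so $\u^2\geq 0$ and $f_\sigma(\u)>0$. By \cref{Bridgeland non-empty} there is a $\sigma$-semistable object of class $\u$ for every $\sigma\in\Stabd(S)$; the orientation condition $f_\sigma(\u)>0$ (equivalently $Z_\sigma(\u)$ lies on the same ray as $Z_\sigma(\v)$) guarantees that the object produced genuinely has class $\u$ rather than $-\u$, i.e.\ lies in the heart $\AA$. For the non-existence direction, let $\u\notin C_W$ and $\sigma\in W$ generic. If $\u^2\geq 0$ but $f_\sigma(\u)<0$, then for generic $\sigma$ the value $Z_\sigma(\u)$ lies strictly in the open lower half-plane, so by the positivity property (property (1) in the definition of a stability condition) no nonzero object of $\AA$ has class $\u$. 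The remaining case is $\u^2<0$: suppose $E$ is $\sigma$-semistable of class $\u$; by \cref{hyperbolic} every Jordan--H\"older factor $A_i$ of $E$ has $\v(A_i)\in\HH$, each $A_i$ is $\sigma$-stable so $\v(A_i)^2\geq 0$ by \cref{lem:MukaiVectorsOfStableObjects}, and all $A_i$ share the phase of $E$; hence the $\v(A_i)$ lie on a single $Z_\sigma$-ray and in one component of the positive cone of $\HH$. Since same-component positive-cone classes pair non-negatively, $\u=\sum_i\v(A_i)$ satisfies $\u^2\geq 0$, contradicting $\u^2<0$.

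The main obstacle is the bookkeeping at the interface between the lattice geometry and the phase/orientation data: one must take $C_W$ to be the \emph{closed} cone (so null-boundary classes of the correct orientation are included), verify that the genericity of $\sigma\in W$ in the sense of \cref{hyperbolic} is exactly what forces the Jordan--H\"older factors into $\HH$ and places $Z_\sigma(\u)$ off the real axis, and check that the reduction to a positive Mukai vector used in the proof of \cref{Bridgeland non-empty} is compatible with the orientation selected by $f_\sigma>0$. None of these steps is deep, but they are precisely the points where the ``carries over word for word'' principle must be confirmed for the bielliptic surface $S$ rather than for a K3 surface.
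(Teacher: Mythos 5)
Your proposal is correct and follows essentially the same route as the paper, whose entire proof of this statement is the remark that \cite[Proposition 5.5]{BM14a} carries over word for word once \cref{Bridgeland non-empty} is substituted for the K3 non-emptiness theorem: your signature-$(2,2)$ linear algebra on the wall, the use of Proposition and Definition \ref{hyperbolic} together with \cref{lem:MukaiVectorsOfStableObjects} to rule out classes with $\u^2<0$, and the orientation bookkeeping are precisely the ingredients of that transplanted Bayer--Macr\`{i} argument. The only difference is expository: you spell out the details (including the sign/heart issue in invoking \cref{Bridgeland non-empty}, which is stated there only up to shift) that the paper delegates to the citation.
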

\begin{proof} The proof is identical to that of \cite[Proposition 5.5]{BM14a} except that for the statements about the existence of semistable objects we must use \cref{Bridgeland non-empty}.  
\end{proof}
In fact, it is not difficult to see that $P_\HH=C_W$, but we introduce  $C_W$ because of the following remark (see also \cite[Remark 5.6]{BM14a}):
\begin{Rem} From the positivity condition on $\Re\frac{Z_{\sigma}(\u)}{Z_{\sigma}(\v)}$, it is clear that $C_{W}$ contains no line through the origin, i.e. if $\u\in C_{W}$ then $-\u\notin C_{W}$.  Thus there are only finitely many classes in $C_{W}\cap(\v-C_{W})\cap\HH$.

We use this fact to make the following assumption: when we refer to a generic $\sigma_0\in W$, we mean that $\sigma_0$ is not in any of the other walls associated to the finitely many classes in $C_{W}\cap(\v-C_{W})\cap\HH$.  Likewise, $\sigma_{\pm}$ will refer to stability conditions in adjacent chambers to $W$ in this more refined wall-and-chamber decomposition.
\end{Rem}

Finally, we single-out two types of primitive hyperbolic lattices as the nature of our arguments differ greatly between them:

\begin{Def} We say that $W$ is \emph{isotropic} if $\HH_{W}$ contains an isotropic class and \emph{non-isotropic} otherwise.
\end{Def}

\begin{Thm}\label{classification of walls}
Let $\HH\subset \Hal(S,\Z)$ be a primitive hyperbolic rank two sublattice containing $\v$, and let $W\subset\Stabd(X)$ be a potential wall associated to $\HH$.  

The set $W$ is a totally semistable wall if and only if one of the following conditions hold:
\begin{description}
\item[(TSS1)] there exists an isotropic class $\u\in\HH$ with $l(\u)=\o$ and $\langle \v,\u\rangle=1$; or
\item[(TSS2)] there exists a primitive isotropic class $\u\in\HH$ such that  $\langle\v,\u\rangle=2=l(\u)=\o=l(\v)$ and $\v^2=4$.
\end{description}
In addition,
\begin{enumerate}
\item\label{thm:Classification,Divisorial} The set $W$ is a wall inducing a divisorial contraction if one of the following conditions hold:
\begin{description*}
\item[(Hilbert-Chow)] $\v^2\geq 4$ and there exists an isotropic class $\u$ with $\langle \v,\u\rangle=1$ and $l(\u)=\o$; or
\item[(Li-Gieseker-Uhlenbeck, general)] $\v^2\geq 4$ (and $\o\ne 2,3$ if $\v^2=4$) and there exists a primitive isotropic class $\u\in\HH$ with $\langle \v,\u\rangle=2$ and $l(\u)=\o$; or
\item[(Li-Gieseker-Uhlenbeck, $\o=2$)] $\v^2=4$, $\o=2$, $l(\v)=1$, and there exists a primitive isotropic class $\u\in\HH$ with $\langle \v,\u\rangle=2$ and $l(\u)=\o$; or
\item[($\o=2$ exceptional case)] $\v^2=6$ and there exists a primitive isotropic $\u\in\HH$ with $\langle\v,\u\rangle=3$, $l(\u)=\o$ and $3\mid(\v-\u)$.
\item[($\o=3$ exceptional case)] $\v^2=6$ and there exists a primitive isotropic $\u\in\HH$ with $\langle\v,\u\rangle=3=\o=l(\u)$. 
\end{description*} 
\item\label{thm:Classification,Fibration} The set $W$ is a wall inducing a $\P^1$-fibration on $M_{\sigma_+}(\v)$ if there exists a primitive isotropic class $\u\in\HH$ such that  $\langle\v,\u\rangle=2=l(\u)=\o=l(\v)$ and $\v^2=4$.
\item\label{thm:Classification,Flops}
Otherwise, if $\v$ is primitive with $\v^2\geq 4$ and $\v$ can be written as the sum 
$\v = \a_1 + \a_2$ with $\a_i\in P_\HH$ then $W$ induces a small contraction.
\item In all other cases, $W$ is either a fake wall or not a wall at all.
\end{enumerate}

\end{Thm}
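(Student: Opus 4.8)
The plan is to adapt the strategy of \cite[Theorem 5.7]{BM14a} to the bielliptic setting, replacing the hyperk\"ahler dimension count used there with the Harder--Narasimhan stack estimates developed with Yoshioka in \cite{NY19}. The guiding principle, supplied by \cref{Thm:WallContraction}, is that the type of $W$ is governed by two quantities: the codimension in $M_{\sigma_+}(\v)$ of the locus of objects that become strictly $\sigma_0$-semistable, and the existence of positive-dimensional families of $S$-equivalent objects (equivalently, of curves contracted by $\Sigma^\pm$). Since the divisorial, $\P^1$-fibration, and small-contraction cases have already been exhibited, the remaining content is that every configuration of $\HH_W$ not on the list forces either the destabilizing locus to be empty (so $W$ is no wall for $\v$) or the contracted locus to be empty (so $W$ is a fake wall).

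First I would reduce to $\v$ primitive and $\sigma_0$ generic on $W$, so that $W$ is a genuine wall exactly when a strictly $\sigma_0$-semistable object of class $\v$ exists. By \cref{hyperbolic}, the Jordan--H\"older factors of any such object carry Mukai vectors in $C_W\cap\HH_W$, so I would enumerate the destabilizing decompositions $\v=\sum_i\v(A_i)$. The principal dichotomy is whether $\HH_W$ is isotropic. In the non-isotropic case every class of nonnegative square in fact has positive square, and a destabilizing decomposition must take the form $\v=\a_1+\a_2$ with $\a_i\in P_\HH$; this is precisely the hypothesis already handled as a small contraction, and all the listed divisorial and totally semistable conditions (which require an isotropic class) are vacuous. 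Hence if $\v$ is not primitive, or $\v^2<4$, or no such splitting into two positive classes exists, I would argue directly that there is no proper $\sigma_0$-semistable subobject at all, so $W$ is not a wall for $\v$.

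For isotropic walls I would fix a primitive isotropic $\u\in\HH_W$ and stratify by the arithmetic invariants $\langle\v,\u\rangle$ and $l(\u)$, the point being that by \cref{lem:moduli spaces of fully induced isotropic vectors} the case $l(\u)=\o$ produces a moduli space isomorphic to $S$, which is the geometric source of the divisorial and fibration contractions. The listed totally semistable, Hilbert--Chow, Li--Gieseker--Uhlenbeck, and exceptional cases pin down exactly $\langle\v,\u\rangle\in\{1,2,3\}$ with $l(\u)=\o$ together with the small-square constraints $\v^2\in\{4,6\}$; any further decomposition of $\v$ into two positive classes is again the small-contraction case. Outside all of these, I would feed the possible Jordan--H\"older types into the dimension estimate for the stack of Harder--Narasimhan filtrations from \cite{NY19}, controlling the contribution of $\u$ via the pushforward--pullback dictionary of \cref{lem:WhenIsPullbackStable,lem:WhenIsPushForwardStable} and the isotropic bounds of \cref{isotropic}. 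The conclusion I am after is that the destabilized locus then has codimension at least two while no family of $S$-equivalent objects appears, which by the flopping-wall criterion (codimension at least two together with non-fakeness) leaves only a fake wall.

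The hard part will be the dimension bookkeeping in the isotropic case. Unlike the K3 situation, the value of $l(\u)$ and whether the corresponding stable factor on $X$ is fixed by the $G'$-action alter the dimension of the relevant moduli space on $S$ by factors of $\o$, so the codimension count is genuinely sensitive to these invariants. I expect the real obstacle to be the borderline values $\langle\v,\u\rangle=l(\u)=\o\in\{2,3\}$ with $\v^2=2\o$ --- precisely those excluded from the Li--Gieseker--Uhlenbeck divisorial hypotheses --- where the naive estimate only yields codimension one; separating a fake wall from a genuine divisorial contraction there will require the sharper accounting of \eqref{eqn: divisible} together with the isotropic-case analysis of \cref{isotropic,slope stability}.
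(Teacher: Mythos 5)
Your machinery is the paper's: reduction to primitive $\v$ and generic $\sigma_0$, the isotropic/non-isotropic dichotomy (the non-isotropic case is \cref{Prop:HN filtration all positive classes}), the Harder--Narasimhan stack estimates of \cite{NY19} via \cref{Prop:HN codim}, and the Fourier--Mukai reduction of the $l(\u)=\o$ strata to Gieseker/Uhlenbeck geometry through \cref{lem:moduli spaces of fully induced isotropic vectors} and \cref{Prop:Uhlenbeck morphism}; the converse constructions you defer to are exactly \cref{Lem:HilbertChow,Lem:TSSRankTwoOrderTwoException,Lem:CodimOneRankOne,Lem:LGUDivisorial,Lem:rank3counterexample,prop:flops}. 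But your endgame for the residual cases has a genuine flaw: you aim to show that outside the listed configurations ``the destabilized locus has codimension at least two while no family of $S$-equivalent objects appears,'' and conclude fake wall from that. This is false as stated. The enumeration in \cref{Prop:TotallySemistableCodimOne}(2) shows that codimension-\emph{one} destabilized loci persist in several configurations that induce no contraction at all: for instance when $\langle\v,\u\rangle=1$ with $l(\u)<\o$, the strictly semistable locus is a divisor swept out by the \emph{unique} extensions $0\to G\to E\to F\to 0$, since $\ext^1(F,G)=\langle\v-\u,\u\rangle=\langle\v,\u\rangle=1$ by stability, so no curve of $S$-equivalent objects exists and the wall is fake despite the divisor; the same phenomenon occurs for $\v^2=2$ in \cref{prop: fake or non-walls}. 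The fake-versus-divisorial verdict in these strata is decided by an $\ext^1$ computation on a codimension-one locus, not by pushing the codimension estimate to two, and no amount of sharpening of the dimension count can close this.

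A second, related problem is your stratification ``by the arithmetic invariants $\langle\v,\u\rangle$ and $l(\u)$'' of a \emph{single} primitive isotropic class: these invariants do not determine the wall type. \cref{Lem:CodimOneRankOne} shows that with the identical data $\langle\v,\u\rangle=1$, $l(\u)<\o$, the wall $W$ can be a totally semistable fake wall, a fake wall with a non-contracted divisor, or a genuine divisorial contraction, according to whether the \emph{second} isotropic ray $\u'$ of the effective cone satisfies $l(\u')=\o$ and $\langle\v,\u'\rangle\in\{1,2,3\}$. This is why the paper works throughout with both extremal rays $\u_1,\u_2$ of $C_W=P_{\HH}$ (\cref{isotropic lattice}) and runs the estimate \eqref{eq:l=2 case II} over all decompositions $\v=b_1\u_1+b_2\u_2$, rather than over one isotropic class at a time. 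Finally, your prediction of the borderline cases, $\langle\v,\u\rangle=l(\u)=\o\in\{2,3\}$ with $\v^2=2\o$, is incomplete: it captures the $\P^1$-fibration case and the $\o=3$ exceptional divisorial case, but omits the $\o=2$, $\v^2=6$, $\langle\v,\u\rangle=3$ exceptional divisorial contraction (resolved in \cref{Lem:CodimOneRankOne} via the $\P^1$ of extensions of $F\in M_{\sigma_0}(\u_2)$ by $G_1\oplus G_2$ with $G_2\in M^s_{\sigma_0}(2\u_1)$), as well as the $\o=3$, $\v^2=4$, $\langle\v,\u\rangle=2$ configuration excluded from the Li--Gieseker--Uhlenbeck hypothesis, where $M^s_{\sigma_0}(2\u_1)=\varnothing$ changes the analysis entirely (\cref{Lem:LGUDivisorial,prop:flops}). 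Your proposed appeal to \eqref{eqn: divisible} plays no role in the paper's treatment of these cases.
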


\section{Dimension estimates for substacks of Harder-Narasimhan filtrations}
In this section we recall our last tool for studying the connection between the lattice $\HH_W$ and the birational tranformation induced by crossing $W$, namely substacks of Harder-Narasimhan filtrations.  We work over the bielliptic surface $S$ and continue to use the notation $\sigma_\pm$ and $\sigma_0$.

For given Mukai vectors $\v_1,\v_2,\dots,\v_s\in\HH_W$ and $\sigma_0\in W$,
let $\FF(\v_1,\dots,\v_s)$ be the stack of filtrations of $\sigma_0$-semistable objects of Mukai vector $\v=\v_1+\cdots+\v_s$:
for a scheme $T$, 
\begin{equation}\FF(\v_1,\dots,\v_s)(T):=\Set{0 \subset \FF_1 \subset \cdots \subset \FF_s\ | \ \FF_i/\FF_{i-1} \in \MM_{\sigma_0}(\v_i)(T), 1\leq i\leq s,\FF_s\in\MM_{\sigma_0}(\v)(T)}.
\end{equation}
We showed in \cite[Theorem 4.2]{NY19} that $\FF(\v_1,\dots,\v_s)$ is an Artin stack of finite type and there is a natural morphism
\begin{equation*}
\begin{matrix}
\FF(\v_1,\dots,\v_s)&\longrightarrow&\FF(\v_1,\dots,\v_{s-1})\times\MM_{\sigma_0}(\v_s)\\
(0\subset\FF_1\subset\cdots\subset\FF_s)&\longmapsto&((0\subset\FF_1\subset\cdots\subset\FF_{s-1}),\FF_s/\FF_{s-1})
\end{matrix},
\end{equation*} 
and hence a morphism
$$
\Pi:\FF(\v_1,\dots,\v_s) \to \prod_{i=1}^s \MM_{\sigma_0}(\v_i).
$$
We consider the open substack $$\FF(\v_1,\dots,\v_s)^*:=\Pi^{-1}\left(\prod_{i=1}^s(\MM_{\sigma_-}(\v_i)\cap\MM_{\sigma_0}(\v_i))\right)\subset \FF(\v_1,\dots,\v_s)$$ where each $\FF_i/\FF_{i-1}$ is $\sigma_-$-semistable as well, where $\sigma_-$ is sufficiently close to
$\sigma_0$.  The intersections are taken within Lieblich's ``mother of all moduli spaces'' \cite{Lie}.  Assuming further that $\v_1,\dots,\v_s$ are the Mukai vectors of the semistable factors of the Harder-Narasimhan
filtration with respect to $\sigma_-$ of an object $E\in\MM_{\sigma_0}(\v)$, with $\v=\sum_{i=1}^s\v_i$, then the natural map 
\begin{equation}
\begin{matrix}
 \FF(\v_1,\dots,\v_s)^*& \to &\MM_{\sigma_0}(\v)\\
 (0\subset\FF_1\subset\cdots\subset\FF_s)&\mapsto&\FF_s
\end{matrix}   
\end{equation}
 is injective with image the substack of $\MM_{\sigma_0}(\v)$ parameterizing objects with Harder-Narasimhan filtration factors having Mukai vectors
$\v_1,\dots,\v_s$.  In this case, we get the following result:
\begin{Prop}\label{Prop:HN codim} Let $S$ be a bielliptic surface, and suppose that $\v_1,\dots,\v_s$ are the Mukai vectors of the semistable factors of the Harder-Narasimhan filtration with respect to $\sigma_-$ of an object $E\in\MM_{\sigma_+}(\v)$, where $\v=\sum_{i=1}^s\v_i$ satisfies $\v^2>0$.  Then letting $\FF(\v_1,\dots,\v_s)^o:=\FF(\v_1,\dots,\v_s)^*\cap\MM_{\sigma_+}(\v)$, where the intersection is taken in $\MM_{\sigma_0}(\v)$, we have
\begin{equation}\label{eqn:HNFiltrationCodim}\codim\FF(\v_1,\dots,\v_s)^o
\geq\sum_{i=1}^s \left(\v_i^2-\dim\MM_{\sigma_-}(\v_i)\right)+\sum_{i<j}\langle \v_i,\v_j\rangle,
\end{equation}
where the codimension is taken with respect to $\MM_{\sigma_+}(\v)$.
\end{Prop}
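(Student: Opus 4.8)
The plan is to estimate $\dim\FF(\v_1,\dots,\v_s)^o$ from above and $\dim\MM_{\sigma_+}(\v)$ from below, and then subtract. Since the natural map $\FF(\v_1,\dots,\v_s)^*\to\MM_{\sigma_0}(\v)$ is injective and $\FF(\v_1,\dots,\v_s)^o=\FF(\v_1,\dots,\v_s)^*\cap\MM_{\sigma_+}(\v)$ inside $\MM_{\sigma_0}(\v)$, we have $\dim\FF(\v_1,\dots,\v_s)^o\le\dim\FF(\v_1,\dots,\v_s)^*$, so it suffices to bound $\dim\FF(\v_1,\dots,\v_s)^*$. On the other hand, the quotient-stack description of $\MM_{\sigma_+}(\v)$ together with deformation theory shows that every component of $\MM_{\sigma_+}(\v)$ has dimension at least the expected dimension $-\chi(E,E)=\v^2$; combined with nonemptiness from \cref{Bridgeland non-empty}, this gives $\dim\MM_{\sigma_+}(\v)\ge\v^2$ on each component. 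Hence $\codim\FF(\v_1,\dots,\v_s)^o\ge\v^2-\dim\FF(\v_1,\dots,\v_s)^*$.

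To bound $\dim\FF(\v_1,\dots,\v_s)^*$ I would use the recursive fibration structure recorded above. The tower $\FF(\v_1,\dots,\v_i)\to\FF(\v_1,\dots,\v_{i-1})\times\MM_{\sigma_0}(\v_i)$ has fibre over $(\FF_{i-1},A_i)$ the stack of extensions $0\to\FF_{i-1}\to\FF_i\to A_i\to0$; exactly as in \cite[Lemma 5.2]{KY08} and \cite[Theorem 4.2]{NY19}, the relative dimension of this step equals $\ext^1(A_i,\FF_{i-1})-\hom(A_i,\FF_{i-1})$. By Riemann--Roch for the Mukai pairing and Serre duality this is
\[
\ext^1(A_i,\FF_{i-1})-\hom(A_i,\FF_{i-1})=\sum_{j<i}\langle\v_j,\v_i\rangle-\ext^2(A_i,\FF_{i-1}),\qquad \ext^2(A_i,\FF_{i-1})=\hom(\FF_{i-1},A_i\otimes\o).
\]
Restricting to $\FF(\v_1,\dots,\v_s)^*$, whose base is $\prod_i\big(\MM_{\sigma_-}(\v_i)\cap\MM_{\sigma_0}(\v_i)\big)$, an open substack of $\prod_i\MM_{\sigma_-}(\v_i)$, the base contributes at most $\sum_i\dim\MM_{\sigma_-}(\v_i)$.

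The crucial point is the vanishing $\ext^2(A_i,\FF_{i-1})=0$ on all of $\FF(\v_1,\dots,\v_s)^*$. Because $\o=K_S$ is torsion, $c_1(\o)$ is numerically trivial, so $\v(E\otimes\o)=\v(E)$ for every $E$; hence $-\otimes\o$ preserves both the slope function $\mu_{\omega,\beta}$ and the central charge $Z_{\omega,\beta}$, fixing each $\sigma_{\omega,\beta}$ and, via the $\wGL2$-action together with density, all of $\Stabd(S)$. Consequently $A_i\otimes\o$ is $\sigma_-$-semistable of the same phase as $A_i$. On $\FF(\v_1,\dots,\v_s)^*$ the object $\FF_{i-1}$ has $\sigma_-$-Harder--Narasimhan factors of classes $\v_1,\dots,\v_{i-1}$, all of strictly larger phase than $\phi_{\sigma_-}(A_i)=\phi_{\sigma_-}(A_i\otimes\o)$ by the ordering hypothesis $\phi_{\sigma_-}(\v_1)>\cdots>\phi_{\sigma_-}(\v_s)$. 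Therefore $\Hom(\FF_{i-1},A_i\otimes\o)=0$, so $\ext^2(A_i,\FF_{i-1})=0$ identically, the $i$-th relative dimension is exactly $\sum_{j<i}\langle\v_j,\v_i\rangle$, and summing the tower gives
\[
\dim\FF(\v_1,\dots,\v_s)^*\le\sum_{i=1}^s\dim\MM_{\sigma_-}(\v_i)+\sum_{i<j}\langle\v_i,\v_j\rangle.
\]

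Finally I would assemble the two bounds. Using $\v^2=\sum_i\v_i^2+2\sum_{i<j}\langle\v_i,\v_j\rangle$,
\[
\codim\FF(\v_1,\dots,\v_s)^o\ge\v^2-\dim\FF(\v_1,\dots,\v_s)^*\ge\sum_{i=1}^s\big(\v_i^2-\dim\MM_{\sigma_-}(\v_i)\big)+\sum_{i<j}\langle\v_i,\v_j\rangle,
\]
which is the claim. I expect the main obstacle to be the control of the extension spaces, i.e. proving that $\ext^2(A_i,\FF_{i-1})$ vanishes identically so that the relative dimensions do not jump: unlike the K3 case where $\Ext^2$ between distinct-phase semistables vanishes automatically, here one must use that $K_S$ is numerically trivial and that $-\otimes\o$ fixes $\sigma_-$, exactly as in the Enriques treatment of \cite{NY19}. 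A secondary bookkeeping subtlety is matching the open $\sigma_-$- and $\sigma_0$-semistability conditions so that the base and fibres of the tower are the intended stacks.
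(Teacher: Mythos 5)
Your argument is correct and is essentially the paper's own proof: the paper simply cites \cite[Theorem 4.2]{NY19} for the equality $\dim\FF(\v_1,\dots,\v_s)^*=\sum_{i=1}^s\dim\MM_{\sigma_-}(\v_i)+\sum_{i<j}\langle\v_i,\v_j\rangle$, verifies its hypothesis by exactly your Serre-duality argument ($\Hom(E,E'[2])=\Hom(E',E(K_S))=0$ because $K_S$ is numerically trivial and hence twisting by $\o$ preserves phases), and then concludes with the same comparison against $\dim\MM_{\sigma_+}(\v)=\v^2$; your unpacking of the tower of extension stacks is just the proof of the cited theorem. One local slip worth correcting: Riemann--Roch gives $\ext^1(A_i,\FF_{i-1})-\hom(A_i,\FF_{i-1})=\sum_{j<i}\langle\v_j,\v_i\rangle+\ext^2(A_i,\FF_{i-1})$, with a \emph{plus} sign on $\ext^2$, not a minus; this is harmless since you prove the vanishing and use it, but with your sign the vanishing would appear superfluous for the upper bound, whereas it is in fact precisely what makes the dimension formula hold.
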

\begin{proof}
We apply \cite[Theorem 4.2]{NY19}, where the hypothesis of the theorem is met since for $E,E'\in\AA_{\sigma_0}$ with $\phi_{\min}(E')>\phi_{\max}(E)$, Serre duality gives $$\Hom(E,E'[2])=\Hom(E',E(K_S))=0,$$ where the last equality follows since $S$ is numerically $K$-trivial so that $\phi_{\max}(E(K_S))=\phi_{\max}(E)$.

Noting that $\FF(\v_1,\dots,\v_s)^o$ is an open substack of $\FF(\v_1,\dots,\v_s)^*$ by openness of stability, we get that $\dim\FF(\v_1,\dots,\v_s)^o\leq\dim\FF(\v_1,\dots,\v_s)^*$, with equality if and only if the component of $\FF(\v_1,\dots,\v_s)^*$ of largest dimension contains a $\sigma_+$-semistable object.  As $\v^2>0$, $\dim\MM_{\sigma_+}(\v)=\v^2$ by the proofs of \cref{Bridgeland non-empty,Thm:Non-emptinessOfGieseker}, so 
\begin{equation}
    \begin{split}
        \codim\FF(\v_1,\dots,\v_s)^o&=\v^2-\dim\FF(\v_1,\dots,\v_s)^o\geq\v^2-\dim\FF(\v_1,\dots,\v_s)^*\\
        &=\v^2-\left(\sum_{i=1}^s\dim\MM_{\sigma_-}(\v_i)+\sum_{i<j}\langle\v_i,\v_j\rangle\right)\\
        &=\sum_{i=1}^s\left(\v_i^2-\dim\MM_{\sigma_-}(\v_i)\right)+\sum_{i<j}\langle\v_i,\v_j\rangle,
    \end{split}
\end{equation}
where the equality in the second line follows from \cite[Theorem 4.2]{NY19}.  
\end{proof}

We will use \cref{Prop:HN codim} to determine precisely when the locus of strictly $\sigma_0$-semistable objects has small codimension to determine the conditions for $W$ to be totally semistable or divisorial.  We begin, however, with the simplest case of a non-isotropic wall.  We will show that if a potential wall $W$ is non-isotropic, then it is either a flopping wall or not a wall at all.
\begin{Prop}\label{Prop:HN filtration all positive classes}
As above, let $\FF(\v_1,\dots,\v_n)^o$ be the substack of $\MM_{\sigma_+}(\v)$ parametrizing objects with $\sigma_-$ Harder-Narasimhan filtration factors of classes $\v_1,\dots,\v_n$ (in order of descending phase with respect to $\sigma_-$), and suppose that $\v_i^2>0$ for all $i$.  Then $\codim\FF(\v_1,\dots,\v_n)^o> 2$. In particular, if $W$ is an actual wall, then it is a flopping wall.\end{Prop}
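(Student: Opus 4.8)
The plan is to feed the hypothesis $\v_i^2>0$ directly into the codimension estimate of \cref{Prop:HN codim} and then extract a purely lattice-theoretic lower bound from the hyperbolicity of $\HH_W$. First I would invoke the non-emptiness and dimension results: since each $\v_i^2>0$, the proofs of \cref{Bridgeland non-empty,Thm:Non-emptinessOfGieseker} give $\dim\MM_{\sigma_-}(\v_i)=\v_i^2$, so every summand $\v_i^2-\dim\MM_{\sigma_-}(\v_i)$ in \eqref{eqn:HNFiltrationCodim} vanishes. This collapses the estimate to
\[
\codim\FF(\v_1,\dots,\v_n)^o\geq\sum_{i<j}\langle\v_i,\v_j\rangle,
\]
and it remains to show the right-hand side exceeds $2$.

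The key step is a reverse Cauchy--Schwarz inequality inside the rank-two hyperbolic lattice $\HH_W$. Each Harder--Narasimhan factor satisfies $\v_i^2\geq 0$ and has positive mass along the common ray on the wall, so $\Re(Z_{\sigma_0}(\v_i)/Z_{\sigma_0}(\v))>0$; hence every $\v_i$ lies in the positive cone $C_W=P_{\HH_W}$, which contains no line through the origin. Two classes lying in such a pointed timelike cone of a signature $(1,-1)$ lattice satisfy $\langle\v_i,\v_j\rangle>0$ and $\langle\v_i,\v_j\rangle^2\geq\v_i^2\v_j^2$, with equality only when they are proportional; this follows because $\langle\v_i,\v_j\rangle^2-\v_i^2\v_j^2$ is a perfect square in a diagonalizing basis. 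Since $\Hal(S,\Z)$ is even, $\v_i^2\geq 2$, so $\langle\v_i,\v_j\rangle\geq 2$ for every pair. When $n\geq 3$ there are at least three pairs, giving a sum $\geq 6>2$. When $n=2$ the single pairing $\langle\v_1,\v_2\rangle$ cannot attain the equality case, because distinct HN factors have strictly different $\sigma_-$-phases and so are non-proportional as Mukai vectors; thus $\langle\v_1,\v_2\rangle^2>\v_1^2\v_2^2\geq 4$ forces the integer $\langle\v_1,\v_2\rangle\geq 3>2$. This establishes $\codim\FF(\v_1,\dots,\v_n)^o>2$ in all cases.

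For the final assertion I would assemble the strata. In the non-isotropic setting every class of $\HH_W$ has nonzero square, and any $\sigma_-$-semistable object of class in $\HH_W$ has positive square: its $\sigma_-$-stable Jordan--H\"older factors have square $\geq 0$ by \cref{lem:MukaiVectorsOfStableObjects}, hence $>0$ as $\HH_W$ is non-isotropic, and a sum of positive-cone classes remains in the positive cone. Consequently the locus $M_{\sigma_+}(\v)\setminus M^s_{\sigma_0}(\v)$ is the union of the substacks $\FF(\v_1,\dots,\v_n)^o$ over the nontrivial decompositions $\v=\sum_i\v_i$ with $\v_i\in C_W$. By the Remark following the effective-cone proposition there are only finitely many such decompositions, since all $\v_i$ and all partial sums lie in the finite set $C_W\cap(\v-C_W)\cap\HH_W$, and the estimate just proved shows each stratum has codimension $>2$; the symmetric argument applies on the $\sigma_-$ side. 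Applying the criterion recalled after the definition of the wall types, a non-fake wall with $\codim(M_{\sigma_\pm}(\v)\setminus M^s_{\sigma_0}(\v))\geq 2$ is a flopping wall, so if $W$ is an actual wall it induces a flopping contraction. The main obstacle I anticipate is the $n=2$ equality analysis --- turning the strict reverse Cauchy--Schwarz inequality into the integral bound $\geq 3$ via non-proportionality --- together with verifying cleanly that the finitely many positive-square strata genuinely exhaust the non-stable locus and not merely the $\sigma_-$-unstable one.
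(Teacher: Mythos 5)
Your proposal is correct and takes essentially the same route as the paper's proof: apply \cref{Prop:HN codim} with $\dim\MM_{\sigma_-}(\v_i)=\v_i^2$ to reduce the estimate to $\sum_{i<j}\langle\v_i,\v_j\rangle$, then use the signature $(1,-1)$ of $\HH_W$ together with evenness of the Mukai lattice to get $\langle\v_i,\v_j\rangle>\sqrt{\v_i^2\v_j^2}\geq 2$ for each pair, whence $\codim\FF(\v_1,\dots,\v_n)^o>n(n-1)\geq 2$ and the flopping conclusion follows from the criterion that a non-fake wall with strictly semistable locus of codimension at least two is a flopping wall. Your explicit non-proportionality argument via distinct $\sigma_-$-phases and the integrality bound $\langle\v_1,\v_2\rangle\geq 3$ in the $n=2$ case merely spell out the strictness the paper asserts without comment, so there is no substantive difference.
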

\begin{proof}
By the proofs of \cref{Bridgeland non-empty,Thm:Non-emptinessOfGieseker}, $\v_i^2>0$ implies that $\dim\MM_{\sigma_-}(\v_i)=\v_i^2$.  Thus by Proposition \ref{Prop:HN codim}, $$\codim\FF(\v_1,\dots,\v_n)^o\geq\sum_{i<j}\langle \v_i,\v_j\rangle.$$  But as $\v_i^2\geq 2$ and $\HH$ has signature $(1,-1)$, we must have $$\langle \v_i,\v_j\rangle>\sqrt{\v_i^2 \v_j^2}\geq 2,$$ for $i<j$.  It follows that $$\codim\FF(\v_1,\dots,\v_n)^o > n(n-1)\geq 2,$$ as $n\geq 2$.

If $W$ is actual wall, then $\Sigma^+$ contracts some curves of $S$-equivalent objects with respect to $\sigma_0$, so it must be a flopping contraction.  
\end{proof}
It follows from the proposition that in order for there to be more interesting wall-crossing behavior, $\HH$ must contain some class $\u$ with $\u^2= 0$, which we investigate next.

\section{Isotropic walls}\label{Sec:IsotropicWalls}
We start with a simple structural result about the cone $C_W$ and the lattice $\HH$.

\begin{Lem}\label{isotropic lattice} Assume that there exists an isotropic class $\u\in\HH$.  Then there are two effective, primitive, isotropic classes $\u_1$ and $\u_2$ in $\HH$, which satisfy $C_W=P_{\HH}=\R_{\geq 0}\u_1+\R_{\geq 0}\u_2$, $M_{\sigma_0}(\u_i)=M^s_{\sigma_0}(\u_i)$, and $\langle\v',\u_i\rangle\geq 0$ for $i=1,2$ and any $\v'\in P_{\HH}$.
\end{Lem}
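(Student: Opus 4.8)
The plan is to use that $\HH$ is a rank two lattice of signature $(1,-1)$, so its null cone $\{\w\in\HH_{\R}\colon \w^2=0\}$ is a union of exactly two distinct lines. The hypothesis gives one isotropic class $\u$, which pins down one of these lines as rational; extending $\u$ to a basis $\{\u,\w\}$ of $\HH$ and solving $(x\u+\w)^2=2x\langle\u,\w\rangle+\w^2=0$ (with $\langle\u,\w\rangle\neq 0$ by nondegeneracy) shows the second line is rational as well. I would then let $\u_1,\u_2$ denote the primitive integral generators of these two lines. They are linearly independent, so they span $\HH_{\R}$, and $\langle\u_1,\u_2\rangle\neq 0$ since the form is nondegenerate while $\u_i^2=0$. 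Replacing $\u_2$ by $-\u_2$ if necessary I arrange $\langle\u_1,\u_2\rangle>0$.

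Next I would make the sign and effectivity normalization. Writing $\v=a_0\u_1+b_0\u_2$, a direct computation gives $\v^2=2a_0b_0\langle\u_1,\u_2\rangle$ together with $\langle\v,\u_1\rangle=b_0\langle\u_1,\u_2\rangle$ and $\langle\v,\u_2\rangle=a_0\langle\u_1,\u_2\rangle$. Since $\v^2>0$ we have $a_0b_0>0$, and after replacing both $\u_i$ by their negatives if needed (which preserves $\langle\u_1,\u_2\rangle>0$) we may assume $a_0,b_0>0$; then $\langle\v,\u_i\rangle>0$, so both $\u_i$ are positive classes. For an arbitrary $\w=a\u_1+b\u_2$ the same computation shows $\w^2\geq 0\iff ab\geq 0$, so $\{\w^2\geq 0\}$ is the union of the two opposite cones $\pm(\R_{\geq 0}\u_1+\R_{\geq 0}\u_2)$, and $\langle\v,\w\rangle=(a_0 b+b_0 a)\langle\u_1,\u_2\rangle>0$ exactly on the cone with $a,b\geq 0$ (not both zero). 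Hence the positive classes are precisely this cone, giving $P_{\HH}=\R_{\geq 0}\u_1+\R_{\geq 0}\u_2$. Moreover $\langle\v',\u_1\rangle=b\langle\u_1,\u_2\rangle\geq 0$ and $\langle\v',\u_2\rangle=a\langle\u_1,\u_2\rangle\geq 0$ for every $\v'=a\u_1+b\u_2\in P_{\HH}$, which is the last asserted inequality. To identify $C_W$ I would invoke the proposition preceding this lemma, which gives $P_{\HH}\subseteq C_W$ and $C_W\subseteq\{\w^2\geq 0\}$, together with the remark that $C_W$ contains no line through the origin: a convex cone contained in $\pm(\R_{\geq 0}\u_1+\R_{\geq 0}\u_2)$ and containing no opposite pair must lie in a single one of these cones, and since it already contains all of $P_{\HH}$ this forces $C_W=P_{\HH}$. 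In particular $\u_1,\u_2\in C_W$, so they are effective.

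The main point, and the step I expect to require the most care, is the stability claim $M_{\sigma_0}(\u_i)=M^s_{\sigma_0}(\u_i)$. Suppose instead some class, say $\u_1$, admitted a strictly $\sigma_0$-semistable object $E$ for generic $\sigma_0\in W$. By \cref{hyperbolic}\,(4) every Jordan--H\"older factor of $E$ has Mukai vector in $\HH_W=\HH$, so writing $\u_1=\sum_k\w_k$ with the $\w_k$ the classes of the (at least two) stable factors, each $\w_k$ is the class of a $\sigma_0$-semistable object and hence lies in $C_W=\R_{\geq 0}\u_1+\R_{\geq 0}\u_2$; moreover $\w_k^2\geq 0$ by \cref{lem:MukaiVectorsOfStableObjects}. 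Writing $\w_k=a_k\u_1+b_k\u_2$ with $a_k,b_k\geq 0$ and comparing coefficients in $\u_1=\sum_k\w_k$ gives $\sum_k a_k=1$ and $\sum_k b_k=0$; since each $b_k\geq 0$ we get $b_k=0$ for all $k$, so every $\w_k$ is a positive integer multiple of the primitive class $\u_1$, and $\sum_k a_k=1$ then forces a single factor equal to $\u_1$. This contradicts the presence of at least two Jordan--H\"older factors, so $E$ was in fact $\sigma_0$-stable. The same argument applies to $\u_2$, proving $M_{\sigma_0}(\u_i)=M^s_{\sigma_0}(\u_i)$ and completing the proof. The only delicate feature is ensuring the decomposition step is legitimate; here the extremality of the $\u_i$ (they generate the boundary rays of $C_W$) is what collapses any would-be nontrivial splitting, so I do not expect genuine obstructions beyond bookkeeping once $C_W=P_{\HH}$ is in hand.
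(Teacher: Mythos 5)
Your proposal is correct, and your construction of $\u_1,\u_2$ is essentially the paper's: the paper likewise produces the second isotropic class by solving $(a\v+b\u)^2=0$ in the rational span of $\v,\u$, normalizes to effectivity, and then asserts $C_W=P_{\HH}=\R_{\geq 0}\u_1+\R_{\geq 0}\u_2$ (with rather less detail than you give). Where you genuinely diverge is the stability claim $M_{\sigma_0}(\u_i)=M^s_{\sigma_0}(\u_i)$. The paper argues purely numerically that $W$ is not a wall for $\u_i$ at all: if $\u_i=\a+\b$ with $\a,\b\in C_W$, then primitivity of $\u_i$ forces $\a,\b$ to be linearly independent, and the reverse Cauchy--Schwarz inequality in the signature-$(1,-1)$ lattice gives $\langle\a,\b\rangle>\sqrt{\a^2\b^2}\geq 0$, whence $0=\u_i^2=\a^2+2\langle\a,\b\rangle+\b^2>0$, a contradiction; this buys the slightly stronger conclusion $M_{\sigma_\pm}(\u_i)=M_{\sigma_0}(\u_i)=M^s_{\sigma_0}(\u_i)$, i.e.\ constancy across the wall, not just stability at $\sigma_0$. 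Your route instead invokes \cref{hyperbolic}(4) and the nonexistence statement for classes outside $C_W$ at generic $\sigma_0$, then does coefficient bookkeeping on the extremal ray; this is equally valid and avoids the hyperbolic Cauchy--Schwarz inequality, but it only rules out decompositions realized by actual Jordan--H\"older factors at the chosen generic $\sigma_0$, whereas the paper's argument excludes \emph{any} decomposition of $\u_i$ into positive classes. One step of yours needs tightening: you assert that $C_W$ is ``contained in $\pm(\R_{\geq 0}\u_1+\R_{\geq 0}\u_2)$,'' but $C_W$ is the cone \emph{generated} by classes in $\{\w^2\geq 0\}$, and a cone generated by elements of that union of two opposite cones need not lie inside it (e.g.\ the cone spanned by $\u_1$ and $-\u_2$ contains $\u_1-\u_2$, which has negative square). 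The correct one-line repair, using exactly the ingredients you cite, is: if some generator $\w$ of $C_W$ lay in $-(\R_{\geq 0}\u_1+\R_{\geq 0}\u_2)\setminus\{0\}$, then $-\w\in P_{\HH}\subseteq C_W$ would place the line $\R\w$ inside $C_W$, contradicting the remark that $C_W$ contains no line through the origin; hence all generators lie in $P_{\HH}$ and $C_W=P_{\HH}$ follows.
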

\begin{proof}Let $\u\in\HH$ be a primitive isotropic class, and up to replacing $\u$ by $-\u$, we can and will assume that $\u_1=\u\in C_{W}$.  As $\v$ and $\u$ must be linearly independent, they form a basis of $\HH_{\Q}$.  Solving $(a\v+b\u)^2=0$ for $a,b\in\Q$ gives the existence of a second integral primitive isotropic class $\u_2$, which can be chosen to be effective.  Then clearly $C_{W}=P_{\HH}=\R_{\geq 0}\u_1+\R_{\geq 0}\u_2$.  The claim $\langle \v',\u_i\rangle\geq 0$ is then clear.  

As $\u_i$ spans an extremal ray of $C_W$, $W$ cannot be a wall for $\u_i$.  Indeed, if for $\u=\u_i$ we have $\u=\a+\b$ with $\a,\b\in P_{W}=C_{W}$ then $\langle \a,\b\rangle>\sqrt{\a^2 \b^2}\geq 0$, since $\u$ primitive forces $\a$ and $\b$ to be linearly independent.  Thus $$0=\u^2=\a^2+2\langle \a,\b\rangle+\b^2\geq 2\langle \a,\b\rangle>0,$$ a contradiction.  Thus $M_{\sigma_\pm}(\u_i)=M_{\sigma_0}(\u_i)=M^s_{\sigma_0}(\u_i)$, as claimed.
\end{proof}

We continue by using \cref{Prop:HN codim} to determine the conditions under which $W$ is a totally semistable wall or the strictly $\sigma_0$-semistable locus has codimension one.
\begin{Prop}\label{Prop:TotallySemistableCodimOne}
For a potential wall $W$ for the Mukai vector $\v\in\Hal(S,\Z)$ with $\v^2>0$, assume that the associated lattice $\HH_W$ is isotropic.  
\begin{enumerate}
    \item\label{enum:TSS} If $W$ is totally semistable, then there is an effective primitive isotropic $\u\in\HH_W$ such that either 
    \begin{enumerate}
        \item\label{enum:HilbertChow} $\langle\v,\u\rangle=1$ and $l(\u)=\o$, or 
        \item\label{enum:TSSRankTwoOrderTwoException} $\langle\v,\u\rangle=2=l(\u)=\o=l(\v)$ and $\v^2=4$.
    \end{enumerate}
    \item\label{enum:CodimOne} If $\codim(M_{\sigma_+}(\v)\backslash M_{\sigma_0}^s(\v))=1$, then there is an effective primitive isotropic $\u\in\HH_W$ such that either 
    \begin{enumerate}
        \item\label{enum:CodimOneRankOne} $\langle\v,\u\rangle=1$ and $l(\u)<\o$, or
        \item\label{enum:CodimOneRankTwo} $\langle\v,\u\rangle=2$ and $l(\u)=\o$, or
        \item\label{enum:CodimOneRankThreeOrderTwoThreeException} $\langle\v,\u\rangle=3=l(\u)=\o$,  and $\v^2=6$.
    \end{enumerate}
    \item In all other cases, $\codim(M_{\sigma_+}(\v)\backslash M_{\sigma_0}^s(\v))\geq 2$.
\end{enumerate}
\end{Prop}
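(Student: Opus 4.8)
The plan is to stratify the complement $M_{\sigma_+}(\v)\setminus M^s_{\sigma_0}(\v)$ by Harder--Narasimhan type with respect to $\sigma_-$ and to estimate the codimension of each stratum via \cref{Prop:HN codim}, following the isotropic analysis of \cite[Theorem 5.7]{BM14a}. First I would apply \cref{isotropic lattice} to fix two effective primitive isotropic generators $\u_1,\u_2$ of $\HH_W$ with $P_\HH=C_W=\R_{\geq 0}\u_1+\R_{\geq 0}\u_2$. By \cref{hyperbolic} every $\sigma_-$-HN factor of an object of class $\v$ lies in $C_W$, and two classes in $\HH_W$ have the same $\sigma_-$-phase exactly when they are proportional; hence each destabilizing type has the form $\v=m_1\u_1+(\text{positive middle factors})+m_2\u_2$, with the only isotropic factors sitting on the two extremal rays. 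Since $M_{\sigma_+}(\v)\setminus M^s_{\sigma_0}(\v)$ is the union of the corresponding strata $\FF(\v_1,\dots,\v_s)^o$, its codimension is the minimum of their codimensions, and the lower bound \eqref{eqn:HNFiltrationCodim} reduces everything to a lattice computation.

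The two numerical inputs are: a positive factor $\v_i$ contributes $\v_i^2-\dim\MM_{\sigma_-}(\v_i)=0$, since $\dim\MM_{\sigma_-}(\v_i)=\v_i^2$ by the proofs of \cref{Bridgeland non-empty,Thm:Non-emptinessOfGieseker}; and an isotropic factor $m\u$ contributes $-\dim\MM_{\sigma_-}(m\u)\geq-\lfloor ml(\u)/\o\rfloor$ by \cref{isotropic}, which is autoequivalence invariant. \cref{Prop:HN filtration all positive classes} already shows that all-positive strata have codimension $>2$, so I only need strata with at least one isotropic factor. For a single isotropic factor $m\u$ (plus a positive complement) the bound is $m\langle\v,\u\rangle-\lfloor ml(\u)/\o\rfloor$ using $\langle\v-m\u,\u\rangle=\langle\v,\u\rangle$; the elementary estimate $\lfloor ml(\u)/\o\rfloor\leq ml(\u)/\o$ shows this is minimized at $m=1$, giving $\langle\v,\u\rangle-\lfloor l(\u)/\o\rfloor$, whose value $0$ gives \textbf{(TSS1)} ($\langle\v,\u\rangle=1$, $l(\u)=\o$), whose value $1$ gives \textbf{(a)} or \textbf{(b)}, and which is otherwise $\geq 2$. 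For two isotropic factors the bound is $m_1m_2\langle\u_1,\u_2\rangle-\lfloor m_1l(\u_1)/\o\rfloor-\lfloor m_2l(\u_2)/\o\rfloor$ with $\v^2=2m_1m_2\langle\u_1,\u_2\rangle$; since each floor is at most $1$, smallness of the bound forces $m_1m_2\langle\u_1,\u_2\rangle\in\{2,3\}$ and hence $\v^2\in\{4,6\}$, which I expect to produce precisely \textbf{(TSS2)} and the exceptional item \textbf{(c)}. Strata with three or more factors only add positive cross terms, and when such a bound is $\leq 1$ one extremal isotropic class is again seen to satisfy \textbf{(b)}; Part~(3) then follows at once, because if none of the listed conditions holds every type has bound $\geq 2$, whence the codimension is $\geq 2$.

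The delicate point, and where I expect the real work to lie, is the arithmetic of the invariant $l(\cdot)$ from \cref{primitive} in the boundary cases $\v^2=4,6$ together with the passage between the lower bound and the actual codimension. Part~(1) is clean: totally semistable means some stratum has codimension $0$, so its bound (always $\geq 0$ here) vanishes, and the equality analysis above pins down \textbf{(TSS1)} or \textbf{(TSS2)}. Part~(2), however, requires knowing that the conditions \textbf{(TSS1)} and \textbf{(TSS2)} genuinely force a codimension-zero stratum --- i.e.\ that the dimension estimate of \cref{isotropic} is attained and the relevant stratum is nonempty --- so that $\codim=1$ excludes them and leaves only \textbf{(a)}, \textbf{(b)}, \textbf{(c)}. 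Establishing this attainability, via the nonemptiness of \cref{Bridgeland non-empty} and the exact dimension count for fully induced isotropic classes in \cref{lem:moduli spaces of fully induced isotropic vectors}, is the main obstacle; I would also check carefully that a configuration flagged through one generator $\u_1$ --- for instance the $\o=2$, $\v^2=6$ case with $\langle\v,\u_1\rangle=3$ --- satisfies a listed condition through the other generator $\u_2$ (here $\langle\v,\u_2\rangle=1$ with $l(\u_2)<\o$, giving \textbf{(a)}), and that the constraints $l(\v)\mid\o$ and $\langle\pi^*\v,\pi^*\u\rangle=\o\langle\v,\u\rangle$ confine the exceptional behaviour to the exact small values of $\o$ and $\v^2$ recorded in the statement.
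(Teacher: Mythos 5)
Your strategy coincides with the paper's own proof: stratify $M_{\sigma_+}(\v)\setminus M_{\sigma_0}^s(\v)$ by $\sigma_-$-Harder--Narasimhan type, feed the inputs $\dim\MM_{\sigma_-}(\v_i)=\v_i^2$ for positive classes and $\dim\MM_{\sigma_-}(m\u)\leq\left\lfloor ml(\u)/\o\right\rfloor$ (from \cref{isotropic}) into the bound of \cref{Prop:HN codim}, dispose of all-positive types via \cref{Prop:HN filtration all positive classes}, and classify when the bound is $0$ or $1$. But the pivotal numerical step in your two-isotropic-ray case is false: you assert that each floor $\left\lfloor m_i l(\u_i)/\o\right\rfloor$ is at most $1$, whereas it can be as large as $m_i$ (it equals $m_i$ whenever $l(\u_i)=\o$). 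Consequently your deduction that a small bound forces $m_1m_2\langle\u_1,\u_2\rangle\in\{2,3\}$, hence $\v^2\in\{4,6\}$, is wrong. Take $\o=2$, $l(\u_1)=1$, $l(\u_2)=2$, $\langle\u_1,\u_2\rangle=1$, and $\v=2\u_1+2\u_2$, so $\v^2=8$: the bound is $4-\left\lfloor 2\cdot 1/2\right\rfloor-\left\lfloor 2\cdot 2/2\right\rfloor=4-1-2=1$, a codimension-one stratum that your argument would incorrectly consign to part (3). The configuration does satisfy the proposition --- via $\u_2$, since $\langle\v,\u_2\rangle=2$ and $l(\u_2)=\o$, i.e.\ case \ref{enum:CodimOneRankTwo} --- but your proof never verifies this; the paper handles it, and the other missed configurations ($\v=3\u_1+\u_2$ and $4\u_1+\u_2$ with $\v^2=6,8$; $\v=\u_1+2\u_2$ with $\langle\u_1,\u_2\rangle=2$ and $\v^2=8$; and so on), by an exhaustive enumeration of the solutions to ``bound $=0$'' and ``bound $=1$'' over all $(m_1,m_2,l(\u_1),l(\u_2),\langle\u_1,\u_2\rangle)$ for each $\o\in\{2,3,4,6\}$ (computer-assisted, per the paper's footnote). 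The same defect infects your single-isotropic case: $m\langle\v,\u\rangle-\left\lfloor ml(\u)/\o\right\rfloor$ is not minimized at $m=1$; with $\langle\v,\u\rangle=1$ and $\o/l(\u)=2$ one gets bound $1$ at $m=2$ (the type $\v=2\u+\a_2$), which the paper records under \ref{enum:CodimOneRankOne} but your argument misses.

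On the other hand, the ``attainability'' issue you single out as the main obstacle is not actually needed for this proposition. All three parts are necessary conditions read off from the stratification: the codimension of the complement is the minimum over the strata, so $\codim=0$ (respectively $=1$) forces some stratum's lower bound to be $0$ (respectively $\leq 1$), and the lattice-theoretic classification of those types is the entire content. The converses --- that the listed numerics genuinely produce totally semistable walls, contracted divisors, or fibrations --- are proved separately in \cref{Lem:HilbertChow,Lem:TSSRankTwoOrderTwoException,Lem:CodimOneRankOne,Lem:LGUDivisorial,Lem:rank3counterexample}, where nonemptiness (\cref{Bridgeland non-empty}) and the exact isotropic dimension counts enter. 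So the genuine work missing from your proposal is precisely the complete floor-function case analysis, not the nonemptiness input.
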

\begin{proof}
For a given $E\in M_{\sigma_+}(\v)$, let the Harder-Narasimhan filtration of $E$ with respect to $\sigma_-$ correspond to a decomposition $\v=\sum_i \a_i$.  Using Proposition \ref{Prop:HN codim}, we shall estimate the codimension of the sublocus $\FF(\a_1,\ldots,\a_n)^o$ of destabilized objects,  which is at least
 \begin{equation}
\sum_i (\a_i^2-\dim \MM_{\sigma_-}(\a_i))+\sum_{i<j}\langle \a_i,\a_j \rangle.
\end{equation}

Suppose first that $\a_1=b_1 \u_1$ and $\a_2=b_2 \u_2$, and for the sake of brevity, assume that $l(\u_1)\leq l(\u_2)$ without loss of generality.  Then our estimate becomes
 \begin{equation}\label{eq:l=2 case II}
\begin{split}
\codim\FF(\a_1,\dots,\a_n)^o\geq& \sum_i (\a_i^2-\dim \MM_{\sigma_-}(\a_i))+\sum_{i<j}\langle \a_i,\a_j \rangle\\
\geq &
-\left\lfloor\frac{b_1l(\u_1)}{\o}\right\rfloor-\left\lfloor\frac{b_2l(\u_2)}{\o}\right\rfloor+b_1 b_2 \langle \u_1,\u_2 \rangle\geq 0,\\
\end{split}
\end{equation}
with equality only if 
\begin{enumerate}
    \item $\o=2$, and either 
    \begin{enumerate}
        \item $\v=\u_1+\u_2$, $l(\u_1)=l(\u_2)=2$, and $\langle\u_1,\u_2\rangle=2$, or
        
        \item $\v=2\u_1+\u_2$, $l(\u_1)=1$, $l(\u_2)=2$, and $\langle\u_1,\u_2\rangle=1$,
    \end{enumerate}
    as in \ref{enum:TSSRankTwoOrderTwoException}; or
    \item $\o$ is arbitrary, and  $\v=\u_1+b_2\u_2$, $l(\u_1)=1$, $l(\u_2)=\o$, and $\langle\u_1,\u_2\rangle=1$, as in \ref{enum:HilbertChow}.
\end{enumerate}
Furthermore, we have \begin{equation}\label{eqn:codim1}
-\left\lfloor\frac{b_1l(\u_1)}{\o}\right\rfloor-\left\lfloor\frac{b_2l(\u_2)}{\o}\right\rfloor+b_1 b_2 \langle \u_1,\u_2 \rangle=1
\end{equation}
only if \begin{enumerate}
    \item $\o=2$, and either 
    \begin{enumerate}
        \item $l(\u_1)=l(\u_2)=1=\langle\u_1,\u_2\rangle$ and $\v=\u_1+\u_2$, $\u_1+2\u_2$, or $2\u_1+\u_2$, as in \ref{enum:CodimOneRankOne}, or 
        \item $l(\u_1)=1=\langle\u_1,\u_2\rangle$, $l(\u_2)=2$, and $\v=2\u_1+2\u_2$, as in \ref{enum:CodimOneRankTwo}, or   $\v=3\u_1+\u_2$, $4\u_1+\u_2$, as in \ref{enum:CodimOneRankOne}, or
        \item $l(\u_1)=1$, $l(\u_2)=2=\langle\u_1,\u_2\rangle$, and $\v=\u_1+\u_2$, as in \ref{enum:CodimOneRankTwo}, or 
        \item $l(\u_1)=l(\u_2)=2=\langle\u_1,\u_2\rangle$ and $\v=\u_1+2\u_2$ or $2\u_1+\u_2$, as in \ref{enum:CodimOneRankTwo}, or 
        \item $l(\u_1)=l(\u_2)=2$, $\langle\u_1,\u_2\rangle=3$, and $\v=\u_1+\u_2$.  But this is impossible as then $6=3\o=\langle l(\u_1),l(\u_2)\rangle=4\langle\bar{\u}_1,\bar{\u}_2\rangle$; or 
    \end{enumerate}
    \item $\o=3$, and either
    \begin{enumerate}
        \item $l(\u_1)=l(\u_2)=1=\langle\u_1,\u_2\rangle$ and $\v=\u_1+\u_2$, as in \ref{enum:CodimOneRankOne}, or
        \item $l(\u_1)=l(\u_2)=3=\langle\u_1,\u_2\rangle$ and $\v=\u_1+\u_2$, as in \ref{enum:CodimOneRankThreeOrderTwoThreeException} or 
        \item $l(\u_1)=1=\langle\u_1,\u_2\rangle$, $l(\u_2)=3$ and $\v=2\u_1+\u_2$ or $3\u_1+\u_2$, as in \ref{enum:CodimOneRankOne}, or
        \item $l(\u_1)=1$, $l(\u_2)=3$, $\langle\u_1,\u_2\rangle=2$, and $\v=\u_1+\u_2$, as in \ref{enum:CodimOneRankTwo}; or
    \end{enumerate}
    \item $\o=4$, and either
    \begin{enumerate}
        \item $l(\u_1)=l(\u_2)=1=\langle\u_1,\u_2\rangle$ and $\v=\u_1+\u_2$, as in \ref{enum:CodimOneRankOne}, or 
        \item $l(\u_1)=1=\langle\u_1,\u_2\rangle$, $l(\u_2)=2$, and $\v=\u_1+\u_2$ or $\u_1+2\u_2$, as in \ref{enum:CodimOneRankOne}, or
        \item $l(\u_1)=l(\u_2)=2$, $\langle\u_1,\u_2\rangle=1$, and $\v=\u_1+\u_2$, $2\u_1+\u_2$, or $\u_1+2\u_2$, as in \ref{enum:CodimOneRankOne}, or 
        \item $l(\u_1)=1=\langle\u_1,\u_2\rangle$, $l(\u_2)=4$, and $\v=2\u_1+\u_2$, as in either \ref{enum:CodimOneRankOne} or \ref{enum:CodimOneRankTwo},  or
        \item $l(\u_1)=1$, $l(\u_2)=4$, $\langle\u_1,\u_2\rangle=2$, and $\v=\u_1+\u_2$, as in \ref{enum:CodimOneRankTwo}, or
        \item $l(\u_1)=2=\langle\u_1,\u_2\rangle$, $l(\u_2)=4$ and $\v=\u_1+\u_2$, as in \ref{enum:CodimOneRankTwo}; or
    \end{enumerate}
    \item $\o=6$, and either
    \begin{enumerate}
        \item $l(\u_1)=l(\u_2)=1=\langle\u_1,\u_2\rangle$ and $\v=\u_1+\u_2$, as in \ref{enum:CodimOneRankOne}, or 
        \item $l(\u_1)=1=\langle\u_1,\u_2\rangle$, $l(\u_2)=2$, and $\v=\u_1+\u_2$, as in \ref{enum:CodimOneRankOne}, or
        \item $l(\u_1)=l(\u_2)=2$, $\langle\u_1,\u_2\rangle=1$, and $\v=\u_1+\u_2$, as in \ref{enum:CodimOneRankOne}, or 
        \item $l(\u_1)=1=\langle\u_1,\u_2\rangle$, $l(\u_2)=3$, and $\v=\u_1+\u_2$ or $\u_1+2\u_2$, as in \ref{enum:CodimOneRankOne}, or 
        \item $l(\u_1)=2$, $l(\u_2)=3$, $\langle\u_1,\u_2\rangle=1$, and $\v=\u_1+\u_2$ or $\u_1+2\u_2$, as in \ref{enum:CodimOneRankOne}, or
        \item $l(\u_1)=1=\langle\u_1,\u_2\rangle$, $l(\u_2)=6$, and $\v=2\u_1+\u_2$, as in \ref{enum:CodimOneRankOne}, or 
        \item $l(\u_1)=1$, $l(\u_2)=6$, $\langle\u_1,\u_2\rangle=2$, and $\v=\u_1+\u_2$, as in \ref{enum:CodimOneRankTwo}, or
        \item $l(\u_1)=2=\langle\u_1,\u_2\rangle$, $l(\u_2)=6$, and $\v=\u_1+\u_2$, as in \ref{enum:CodimOneRankTwo}.
    \end{enumerate}
\end{enumerate}
In all other cases, $\codim\FF(\a_1,\dots,\a_n)^o\geq 2$.\footnote{We have spared the reader with the purely numerical verification of equality in \eqref{eq:l=2 case II} and \eqref{eqn:codim1}.  We used the ``Reduce'' operation in Mathematica to solve these equalities.}

Now we assume that $\a_1=b_1 \u_j$ and $\a_i^2>0$ for $i \geq 2$.
In this case, we also see that
\begin{equation}
\begin{split}
\codim\FF(\a_1,\dots,\a_n)^o\geq&\sum_i (\a_i^2-\dim \MM_{\sigma_-}(\a_i))+\sum_{i<j}\langle \a_i,\a_j \rangle\\
=&-\left\lfloor\frac{b_1l(\u_j)}{\o}\right\rfloor+\sum_{i>1}b_1\langle \u_j,\a_i\rangle+\sum_{1<i<k}\langle \a_i,\a_k\rangle\\
\geq&b_1(\langle \v,\u_j\rangle-1)+\sum_{1<i<k}\langle \a_i,\a_k\rangle\\
\geq&b_1(\langle \v,\u_j\rangle-1)\geq 0.
\end{split}
\end{equation}
Thus $\codim\FF(\a_1,\dots,\a_n)^o=0$ only if $\langle\v,\u_j\rangle=1$, $l(\u_j)=\o$, and $\v=b_1\u_j+\a_2$, as in \ref{enum:HilbertChow}.  Similarly, $\codim\FF(\a_1,\dots,\a_n)^o=1$ only if either $\v=\u_j+\a_2$ with $\langle\v,\u_j\rangle=2$ and $l(\u_j)=\o$, as in \ref{enum:CodimOneRankTwo}, $\v=\u_j+\a_2$ with $\langle\v,\u_j\rangle=1$ and $l(\u_j)<\o$, as in \ref{enum:CodimOneRankOne}, or $\v=2\u_j+\a_2$ with $\langle\v,\u_j\rangle=1$ and $\frac{\o}{l(\u_j)}=2$, as in \ref{enum:CodimOneRankOne}.  

Finally, if $\a_i^2>0$ for all $i$, then $\codim\FF(\a_1,\dots,\a_n)^o\geq 2$ by Proposition \ref{Prop:HN filtration all positive classes}.
\end{proof}

The cases in \cref{Prop:TotallySemistableCodimOne} involving an isotropic $\u\in\HH_W$ with $l(\u)=\o$ can be proven by using the Fourier-Mukai transform $\Xi=\Phi_\EE^{-1}$ associated to the universal family $\EE$ over $M_{\sigma_0}(\u)$, which satisfies $$\Xi:\Db(S)\cong\Db(S)$$ and $\Xi(\u)=(0,0,1)$ by \cref{lem:moduli spaces of fully induced isotropic vectors}.  By construction of $\Stabd(S)$, skyscraper sheaves of points on $S$ are $\Xi(\sigma_0)$-stable. By Bridgeland's Theorem \cite[Proposition 10.3]{Bri08}, there exist divisor classes $\omega,\beta\in\NS(S)_{\Q}$, with $\omega$ ample, such that up to the $\wGL2$-action, $\Xi(\sigma_0)=\sigma_{\omega,\beta}$. In particular, the category $\PP_{\omega,\beta}(1)$ is the extension-closure of skyscraper sheaves of points and the shifts, $F[1]$, of $\mu_{\omega}$-stable torsion-free sheaves $F$ with slope $\mu_{\omega}(F) =\omega\cdot\beta$. Since $\sigma_0$ by assumption does not lie on any other wall with respect to $\v$, the divisor $\omega$ is generic with respect to  $\Xi(\v)$. Under these identifications, we have the following result whose proof is identical to that of \cite[Theorem 3.2.7]{MYY14b}, \cite[Proposition 8.2]{BM14b}, and \cite[Section 5]{LQ14}.

\begin{Prop}\label{Prop:Uhlenbeck morphism}
Suppose that $\HH_W$ contains a primitive isotropic $\u$ such that $l(\u)=\o$.  Then, an object $E$ of class $\v$ is $\sigma_+$-stable if and only if $\Xi(E)$ is the shift $F[1]$ of a $\beta$-twisted $\omega$-stable sheaf $F$ on $S$; therefore $[-1]\circ\Xi$ induces the following identification of moduli spaces: $$M_{\sigma_+}(\v) = M_{\omega}^{\beta}(-\Xi(\v)).$$
Moreover, the contraction morphism $\Sigma^+$ induced by the wall $W$ is the Li-Gieseker-Uhlenbeck (LGU) morphism to the Uhlenbeck compactification.

Similarly, an object $F$ of class $\v$ is $\sigma_-$-stable if and only if it is the shift $F^\vee[1]$ of the derived dual of a $-\beta$-twsited $\omega$-stable sheaf on $S$.
\end{Prop}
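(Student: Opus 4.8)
The plan is to transport the entire wall-crossing picture across the Fourier--Mukai equivalence $\Xi$ and then match it with (twisted) Gieseker stability, following \cite[Theorem 3.2.7]{MYY14b}, \cite[Proposition 8.2]{BM14b}, and \cite[Section 5]{LQ14} adapted to $S$. First I would use that $\Xi$ is an autoequivalence: it identifies the stacks $\MM_{\sigma_\pm}(\v)$ with $\MM_{\Xi(\sigma_\pm)}(\Xi(\v))$ and preserves (semi)stability, so together with the $\wGL2$-action and the normalization $\Xi(\sigma_0)=\sigma_{\omega,\beta}$ this reduces the claim to describing $\sigma_{\omega,\beta,\pm}$-stable objects of class $\v':=\Xi(\v)$ near the potential wall on which the point class $(0,0,1)=\Xi(\u)$ acquires the critical phase. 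Since $Z_{\omega,\beta}((0,0,1))=-1$ has phase $1$ and $\u$ shares its $\sigma_0$-phase with $\v$ by the definition of $\HH_W$, the class $\v'$ also has phase $1$; thus $\Im Z_{\omega,\beta}(\v')=0$, $\Re Z_{\omega,\beta}(\v')<0$, and $-\v'=\v(F)$ for a genuine sheaf $F$.

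Next I would invoke the description of the phase-$1$ category $\PP_{\omega,\beta}(1)$ recalled above: it is the extension-closure of the skyscrapers $\OO_s$ and the shifts $F[1]$ of $\mu_\omega$-stable torsion-free sheaves of slope $\omega\cdot\beta$. By \cref{rem:ObjectsOfInfiniteSlope}, any $\sigma_{\omega,\beta}$-semistable object of class $\v'$ sits in a short exact sequence in $\AA(\omega,\beta)$ whose terms are the shift of a $\mu_{\omega,\beta}$-semistable sheaf and a $0$-dimensional sheaf, and its Jordan--H\"older factors lie in $\PP_{\omega,\beta}(1)$. The heart of the argument is then the standard refinement: because $Z_{\omega,\beta}$ detects the full Mukai vector, including $\ch_2$, for two phase-$1$ objects of equal $\mu_\omega$-slope the comparison of $\sigma_{\omega,\beta,+}$-slopes coincides with the comparison of their $\beta$-twisted reduced Hilbert polynomials. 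Consequently, on the $\sigma_+$ side an object $E$ of class $\v'$ is $\sigma_+$-stable precisely when $E=F[1]$ for a $\beta$-twisted $\omega$-Gieseker stable sheaf $F$ with $\v(F)=-\v'$: the leading $\mu_\omega$-slope comparison places $F[1]$ in $\PP_{\omega,\beta}(1)$, while the subleading $\ch_2$-comparison that distinguishes $\sigma_+$ from $\sigma_0$ recovers the $\beta$-twisted Gieseker comparison and rules out the strictly semistable configurations involving skyscraper factors. This yields the identification $[-1]\circ\Xi\colon M_{\sigma_+}(\v)\isomor M_\omega^\beta(-\Xi(\v))$.

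For the $\sigma_-$ side I would run the dual argument using the derived dual $\RlHom(-,\OO_S)$, which, composed with a shift, is an anti-autoequivalence interchanging the two adjacent chambers, sending $F[1]$-type objects to $F^\vee[1]$-type objects and converting a $\beta$-twist into a $-\beta$-twist; this gives the stated characterization of $\sigma_-$-stable objects. Finally, to identify $\Sigma^+$ with the Li--Gieseker--Uhlenbeck morphism, I would combine \cref{Thm:WallContraction}, which says $\Sigma^+$ contracts exactly the curves of objects that are $S$-equivalent with respect to $\sigma_0$, with the identification above: under $[-1]\circ\Xi$, $S$-equivalence with respect to $\sigma_0=\sigma_{\omega,\beta}$ translates into having the same $\mu_\omega$-polystable reflexive hull together with the same associated $0$-cycle, which is precisely the fiber structure of the LGU morphism to the Uhlenbeck compactification of $M_\omega^\beta(-\Xi(\v))$, so $\overline{M}_+$ is that Uhlenbeck space.

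The main obstacle I anticipate is not the formal transport but checking that every ingredient imported from the K3 and abelian cases survives the passage to a surface with torsion canonical bundle. Concretely, one must verify that the description of $\PP_{\omega,\beta}(1)$ and the $S$-equivalence analysis are unaffected by $\o>1$ --- using that $K_S$ is numerically trivial, so that $\mu_\omega$- and $\beta$-twisted Gieseker stability are insensitive to tensoring by $\OO_S(K_S)$ --- and that the Uhlenbeck compactification together with its expected fiber description is available on $S$. I expect these points to go through because the relevant central charges and stability comparisons depend only on the numerical class and the polarization, both of which behave on $S$ as on an abelian or K3 surface; but they are exactly the places where the cited proofs must be reread with the bielliptic geometry in mind.
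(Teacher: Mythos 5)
Your proposal is correct and follows essentially the same route as the paper, which simply declares the proof identical to \cite[Theorem 3.2.7]{MYY14b}, \cite[Proposition 8.2]{BM14b}, and \cite[Section 5]{LQ14} after the same preparatory normalization $\Xi(\sigma_0)=\sigma_{\omega,\beta}$ and the description of $\PP_{\omega,\beta}(1)$ that you use. Your fleshed-out version --- the phase-$1$ analysis, the subleading $\ch_2$/twisted-Hilbert-polynomial comparison distinguishing $\sigma_+$ from $\sigma_0$, the derived dual for the $\sigma_-$ chamber, and the identification of $\sigma_0$-$S$-equivalence fibers with the Uhlenbeck fibers via \cref{Thm:WallContraction} --- is exactly the content of those cited arguments, including your closing check that numerical triviality of $K_S$ is what makes them carry over.
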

It follows from the above description that a $\sigma_+$-stable object $E$ becomes $\sigma_0$-semistable if and only if $F=\Phi(E)[-1]$ is not locally free or if $F$ is strictly $\mu$-semistable, as these are the sheaves contracted by the Uhlenbeck contraction.  This translates our problem to the equivalent problem of determining the codimensions of the strictly $\mu$-semistable locus and the non-locally free locus.  One can then use the estimates in the proof of \cref{slope stability} to determine the codimensions of these loci.

\begin{Rem}
The reader may notice that in \cref{Prop:TotallySemistableCodimOne}, when $\langle\v,\u\rangle=2=l(\u)=\o$, we simultaneously claim that $W$ is totally semistable and that $\codim(M_{\sigma_+}(\v)\backslash M_{\sigma_0}^s(\v))=1$.  This occurs because in certain cases, $M_{\sigma_+}(\v)$ is reducible with one component $M_0$ consisting entirely of strictly $\sigma_0$-semistable objects and at least one more component $M_1$ which intersects $M_0$ along a divisor.  We can see this most easily by applying  $\Xi$ as in \cref{Prop:Uhlenbeck morphism} to reduce the problem to that of rank two sheaves on $S$ with $\o=2$ as in \cref{half-half}.
\end{Rem}

To initiate the converse of \cref{Prop:TotallySemistableCodimOne}, we take each case on its own, beginning with walls of \emph{Hilbert-Chow type}.
\begin{Lem}\label{Lem:HilbertChow}
Suppose that $\HH_W$ contains a primitive isotropic $\u$ such that $\langle\v,\u\rangle=1$ and $l(\u)=\o$.  Then $W$ is totally semistable, and the contraction $\Sigma^+$ is a divisorial contraction if $\v^2>2$.
\end{Lem}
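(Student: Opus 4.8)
The plan is to reduce everything to the rank-one (Hilbert--Chow) situation via the Fourier--Mukai transform $\Xi$ supplied by \cref{lem:moduli spaces of fully induced isotropic vectors} and exploited in \cref{Prop:Uhlenbeck morphism}. Since $l(\u)=\o$, the autoequivalence $\Xi=\Phi_\EE^{-1}$ satisfies $\Xi(\u)=(0,0,1)$, and up to the $\wGL2$-action we may take $\Xi(\sigma_0)=\sigma_{\omega,\beta}$. Because $\Xi$ preserves the Mukai pairing and $\langle(r,c,s),(0,0,1)\rangle=-r$, the hypothesis $\langle\v,\u\rangle=1$ forces $\rk(\Xi(\v))=-1$; hence the class $\v':=-\Xi(\v)$ has rank one and $\v'^2=\v^2$. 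By \cref{Prop:Uhlenbeck morphism}, $[-1]\circ\Xi$ identifies $M_{\sigma_+}(\v)$ with the Gieseker space $M_\omega^\beta(\v')$, and after twisting by a line bundle to arrange $c_1(\v')=0$ we obtain $M_{\sigma_+}(\v)\cong\Pic^0(S)\times\Hilb^{\ell}(S)$ with $\ell=\v^2/2$.

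For total semistability, I would argue that no $\sigma_0$-stable object of class $\v$ can exist. Such an object would remain $\sigma_+$-stable, so under $\Xi$ it is $(I_Z\otimes L)[1]$ for a line bundle $L$ with $\mu_\omega(L)=\omega\cdot\beta$ and a length-$\ell$ subscheme $Z$. Since $\v^2>0$ we have $\ell>0$, so $Z\neq\varnothing$. The structure sequence $0\to I_Z\otimes L\to L\to\OO_Z\to 0$, shifted by one, becomes a short exact sequence $0\to\OO_Z\to(I_Z\otimes L)[1]\to L[1]\to 0$ in $\AA(\omega,\beta)$; as both $\OO_Z$ and $L[1]$ lie in $\PP_{\omega,\beta}(1)$, the nonzero proper subobject $\OO_Z$ has the same phase, so $(I_Z\otimes L)[1]$ is strictly $\sigma_0$-semistable, a contradiction. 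Hence $M^s_{\sigma_0}(\v)=\varnothing$ and $W$ is totally semistable.

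For the divisorial claim, I would invoke \cref{Prop:Uhlenbeck morphism} to identify $\Sigma^+$ with the Li--Gieseker--Uhlenbeck morphism, which in rank one is precisely the Hilbert--Chow morphism $\Pic^0(S)\times\Hilb^{\ell}(S)\to\Pic^0(S)\times\Sym^{\ell}(S)$, sending $(L,I_Z)$ to $(L,[Z])$, where $[Z]$ denotes the $0$-cycle of $Z$ supported on $\supp(Z)$. When $\v^2>2$, i.e.\ $\ell\geq 2$, this morphism contracts the boundary divisor of non-reduced subschemes (over the big diagonal a generic length-two fat point sweeps out a $\P^1$ of tangent directions collapsing to a point), so it is a genuine divisorial contraction; when $\ell=1$ it is an isomorphism, consistent with the stated hypothesis.

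The main obstacle I anticipate is not the reduction itself but verifying cleanly that $\Sigma^+$ really is the Hilbert--Chow contraction and that its contracted locus is a divisor precisely for $\ell\geq2$; this rests on the rank-one case of \cref{Prop:Uhlenbeck morphism} together with the standard fact that the exceptional locus of the Hilbert--Chow morphism on a smooth surface is an irreducible divisor exactly when $\ell\geq 2$. A secondary point to handle with care is the phase bookkeeping in the destabilizing sequence, namely ensuring $\mu_\omega(L)=\omega\cdot\beta$ so that both $\OO_Z$ and $L[1]$ genuinely sit in phase one on the wall.
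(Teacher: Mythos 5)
Your proposal is correct and takes essentially the same route as the paper: both use the Fourier--Mukai transform $\Xi$ and \cref{Prop:Uhlenbeck morphism} to identify $M_{\sigma_+}(\v)$ with the rank-one Gieseker space $\Pic^0(S)\times\Hilb^{\v^2/2}(S)$, deduce total semistability from the shifted structure sequence $0\to\OO_Z\to (I_Z\otimes L)[1]\to L[1]\to 0$ in $\PP_{\omega,\beta}(1)$ (the paper phrases this as ``no ideal sheaf is locally free''), and recognize $\Sigma^+$ as the Hilbert--Chow morphism contracting the divisor of non-reduced subschemes when $\v^2/2>1$. The only cosmetic difference is that the paper normalizes $\beta=0$ by a line-bundle twist and spells out the $S$-equivalence classes ($I_Z[1]\sim I_{Z'}[1]$ iff $Z,Z'$ share support), whereas you keep $\beta$ general and cite the LGU description directly.
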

\begin{proof}
By \cref{Prop:Uhlenbeck morphism}, it follows that $M_{\sigma_+}(\v)$ is isomorphic to the product of $\Pic^0(S)$ and the Hilbert scheme of points, $$M_{\sigma_+}(\v)\cong M_{\omega}^{\beta}(-\Xi(\v))\cong\Pic^0(S)\times\Hilb^{\v^2/2}(S),$$ where we may assume that $\beta=0$ up to tensoring by a line bundle.  From the description of $W$ and $\Sigma^+$ in \cref{Prop:Uhlenbeck morphism}, we furthermore see that $W$ is totally semistable as no ideal sheaf is locally free.  More explicitly, any ideal sheaf $I_Z$ fits into a short exact sequence in $\PP_{\omega,0}(1)$, $$0\to\OO_Z\to I_Z[1]\to\OO_S[1]\to 0,$$ which expresses $I_Z[1]$ as an extension of $\sigma_0$-semistable objects of the same phase.  Moreover, by taking filtrations of $\OO_Z$ and $\OO_{Z'}$, we see that $I_Z[1]$ and $I_{Z'}[1]$ are $S$-equivalent if and only if $Z$ and $Z'$ have the same support.  Thus, up to the factor $\Pic^0(S)$, the morphism $\Sigma$ is precisely the Hilbert-Chow morphism $\Hilb^{\v^2/2}(S)\to\Sym^{\v^2/2}(S)$ which contracts the divisor of non-reduced $0$-dimensional subschemes of length $\frac{\v^2}{2}>1$.  
\end{proof}
\begin{Rem}\label{Rem:ArithmeticOfHC}
It is worth noting that under the hypothesis of \cref{Lem:HilbertChow}, we may assume that $\u=\u_2$ and $\v=\u_1+\frac{\v^2}{2}\u_2$ with $l(\u_1)=1=\langle\u_1,\u_2\rangle$.  
Indeed, it follows from $\langle\v,\u_2\rangle=1$ that $\v$ and $\u_2$ generate $\HH_W$, and writing $\u_1=x\v+y\u_2$ with $x,y\in\Z$ gives $\u_1=\v-\frac{\v^2}{2}\u_2$ as $\u_1$ is primitive.  
Pairing with $\u_2$ gives that $\langle\u_1,\u_2\rangle=1$.  
Writing $\pi^*\u_i=l(\u_i)\bar{\u_i}$, we see that $$\o=\langle\pi^*\u_1,\pi^*\u_2\rangle=\o l(\u_1)\langle\bar{\u}_1,\bar{\u}_2\rangle$$ forces $l(\u_1)=1$, as claimed.
\end{Rem}

Now we study the converse of the other case of a possible totally semistable wall.
\begin{Lem}\label{Lem:TSSRankTwoOrderTwoException}
Suppose that $\v^2=4$, $l(\v)=2$, and $\HH_W$ contains a primitive isotropic $\u$ such that $\langle\v,\u\rangle=2=l(\u)=\o$.  Then $W$ is totally semistable for at least one component of $M_{\sigma_+}(\v)$ and induces a $\P^1$-fibration.  For a different component of $M_{\sigma_+}(\v)$, $W$ is not totally semistable. 
\end{Lem}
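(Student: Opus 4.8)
The plan is to reduce to the concrete picture of rank-two sheaves studied in \cref{slope stability} and \cref{half-half} by means of the Fourier--Mukai transform attached to $\u$, and then to read off the wall-crossing geometry from the Li--Gieseker--Uhlenbeck morphism. Since $l(\u)=\o$, the hypotheses of \cref{Prop:Uhlenbeck morphism} are met, so first I would apply the autoequivalence $\Xi=\Phi_\EE^{-1}$ of \cref{lem:moduli spaces of fully induced isotropic vectors} to identify $M_{\sigma_+}(\v)$ with a ($\beta$-twisted) Gieseker moduli space $M_\omega^\beta(\v')$, where $\v'=-\Xi(\v)$, and to identify the contraction $\Sigma^+$ with the LGU morphism to the Uhlenbeck compactification; as in \cref{Lem:HilbertChow} I may take $\beta=0$ after tensoring by a line bundle. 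The hypotheses $\langle\v,\u\rangle=2$ and $\v^2=4$ give that $\v'$ has rank two and $\v'^2=4$, while the compatibility of $\Xi$ with $\pi^*$ shows $l(\v')=l(\v)=2=\o$. A short arithmetic argument, reducing $c_1(\v')$ modulo $2\,\NS(S)$ by twists, then pins $\v'$ down (up to twist) as one of the exceptional vectors $(2,0,-1)$ (Types $1,2$) or $(2,B_0,-1)$ (Type $1$) of \cref{slope stability}.

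Next I would invoke \cref{half-half}, which describes $M_\omega(\v')$ for exactly these vectors: it contains a five-dimensional component $M_0$ consisting entirely of non-locally free sheaves, namely the kernels $E$ in $0\to E\to F\to\C_x\to0$ with $F$ ranging over the two-dimensional $M^s_\omega(\u_{\mathrm{iso}})$, $\u_{\mathrm{iso}}=\v'+(0,0,1)$, together with at least one further component $M_1$ containing $\mu$-stable locally free sheaves. On $M_0$ every object is non-locally free, hence is contracted by the LGU morphism, so by the remark following \cref{Prop:Uhlenbeck morphism} every point of $M_0$ is strictly $\sigma_0$-semistable: this is the sense in which $W$ is totally semistable for the component $M_0$. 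I would then show $\Sigma^+|_{M_0}$ is a $\P^1$-fibration by identifying it explicitly with the map $E\mapsto(E^{\vee\vee},\supp(E^{\vee\vee}/E))=(F,[x])$ onto the stratum $M^s_\omega(\u_{\mathrm{iso}})\times S$ of the Uhlenbeck space; the fibre over $(F,[x])$ is the space of kernels of surjections $F\twoheadrightarrow\C_x$, i.e. the projectivisation of the two-dimensional fibre of $F$ at $x$, which is a $\P^1$.

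For the remaining claim I would use that the LGU morphism restricts to an isomorphism on the open locus of $\mu$-stable locally free sheaves. By \cref{half-half} the component $M_1$ contains such sheaves, and a generic one is not contracted by $\Sigma^+$; transporting back through $\Xi$, it corresponds to a $\sigma_0$-stable object of class $\v$. Hence $M_1\cap M^s_{\sigma_0}(\v)\neq\varnothing$, so $W$ is not totally semistable for the component $M_1$, completing the dichotomy.

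The main obstacle I anticipate is the passage from Step two to Step three: making precise that $\Sigma^+|_{M_0}$ is genuinely a $\P^1$-fibration, which requires matching the component $M_0$ with the correct length-one stratum of the Uhlenbeck compactification and checking that \emph{all} fibres (not merely the generic one) are $\P^1$, together with the bookkeeping needed to confirm that $l(\v')=2$ is preserved by $\Xi$ so that $\v'$ lands among the exceptional classes rather than a generic rank-two class (for which the same numerology would instead produce the divisorial LGU wall of \cref{classification of walls}). Care is also needed with the twisted stability parameter $\beta$, but this is handled exactly as in \cref{Lem:HilbertChow}.
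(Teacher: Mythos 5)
Your proposal is correct, and while it lands in the same place as the paper, it gets to the $\P^1$-fibration by a different mechanism. The paper first splits the lattice data into two cases, $\v=\u_1+\u_2$ with $\langle\u_1,\u_2\rangle=2$ (forcing $l(\u_1)=2$) and $\v=2\u_1+\u_2$ with $\langle\u_1,\u_2\rangle=1$ (forcing $l(\u_1)=1$), and in each case builds the totally semistable component \emph{intrinsically} on the Bridgeland side: since $\ext^1(E_2,E_1)=2$ for $E_1\in M^s_{\sigma_0}(\u_1)$ (resp.\ $M^s_{\sigma_0}(2\u_1)$) and $E_2\in M^s_{\sigma_0}(\u_2)$, \cite[Lemma 6.9]{BM14a} produces a $\P^1$ of $S$-equivalent $\sigma_+$-stable extensions, sweeping out a component fibred over $M^s_{\sigma_0}(\u_1)\times M^s_{\sigma_0}(\u_2)$; the transform $[-1]\circ\Xi$ and \cref{half-half} are invoked only for the second, non-totally-semistable component. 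You instead transform first and do everything on the sheaf side, reading \emph{both} components off \cref{half-half}: your $M_0$ of kernels $0\to E\to F\to\C_x\to0$ is precisely the paper's extension component under $\Xi$ (the destabilizing sequence $0\to\C_x\to E[1]\to F[1]\to0$ in $\PP_{\omega,\beta}(1)$ is the transform of the paper's extension), and your map $E\mapsto(E^{\vee\vee},x)$ is the transform of its $\P(\Ext^1(E_2,E_1))$-bundle structure. What your route buys: no case division, and the sharper observation that \emph{every} fibre of $\Sigma^+|_{M_0}$, not just the generic one, is $\P(\Hom(F,\C_x))\cong\P^1$ because $F$ is locally free of rank two with $\Aut(F)=\C^*$ acting trivially on that $\P^1$. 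What the paper's route buys: the extension construction displays the wall data $\bigl(\langle\u_1,\u_2\rangle,\,l(\u_i)\bigr)$ explicitly and is the template reused in \cref{Lem:CodimOneRankOne,Lem:LGUDivisorial,Lem:rank3counterexample}. The one step you assert rather than prove is $l(\Xi(\v))=l(\v)$: this holds because $\Xi$ lifts to an equivariant autoequivalence $\tilde\Xi$ of $\Db(X)$ compatible with $\pi^*$ (by \cite{BM01}), so $\pi^*\Xi(\v)=\tilde\Xi(\pi^*\v)$ with $\tilde\Xi$ an isometry of $\Hal(X,\Z)$, which preserves divisibility; note the paper uses the same invariance implicitly when it deduces $\v=(2,B_0,-1)$ (Type 1) or $(2,0,-1)$ from $l(\v)=2$ \emph{after} transforming, so your arithmetic pinning of $\v'$ to the exceptional vectors of \cref{slope stability} matches the paper's exactly. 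Your treatment of $\beta$ is also fine as stated, since every sheaf you produce is $\mu$-stable, hence $\beta$-twisted Gieseker stable for any $\beta$.
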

\begin{proof}
Observe that $\v^2=4$ and $\langle\v,\u\rangle=2$ imply that $\v-\u$ is isotropic.  Moreover, $\langle\u,\v-\u\rangle=2$, so we may assume that $\u=\u_2$ and either $\v-\u_2=\u_1$ with $\langle\u_1,\u_2\rangle=2$ or $\v-\u_2=2\u_1$ with $\langle\u_1,\u_2\rangle=1$.  

Suppose first that $\v=\u_1+\u_2$ with $\langle \u_1,\u_2\rangle=2$.  Then the condition $l(\v)=2$ implies $l(\u_1)=2$.  
Taking $E_i\in M_{\sigma_0}^s(\u_i)$, we have $$\Hom(E_2,E_1)=\Hom(E_1,E_2(K_S))=0$$ by stability, so from $\langle\u_1,\u_2\rangle=2$, we see that $\ext^1(E_2,E_1)=2$.  
Thus for each $(E_1,E_2)\in M_{\sigma_0}^s(\u_1)\times M_{\sigma_0}^s(\u_2)$, there is a $\P^1$ worth of $S$-equivalent extensions $$0\to E_1\to E\to E_2\to 0,$$ which are $\sigma_+$-stable by \cite[Lemma 6.9]{BM14a}, assuming without loss of generality that $\mu_{\sigma_+}(\u_1)<\mu_{\sigma_+}(\u_2)$.  Varying the $E_i\in M_{\sigma_0}^s(\u_i)$ sweeps out a component of $M_{\sigma_+}(\v)$ which admits a $\P^1$-fibration over $M_{\sigma_0}^s(\u_1)\times M_{\sigma_0}^s(\u_2)$.

To see the existence of a second component of $M_{\sigma_+}(\v)$ for which $W$ is not totally semistable, we apply $[-1]\circ\Xi$ from \cref{Prop:Uhlenbeck morphism}.  Then we can assume that $\v=(r,D,s)$, $\u_2=-(0,0,1)$, $\u_1=(r_1,D_1,s_1)$, and that $\sigma_+$-stability agrees with $\beta$-twisted $\omega$-stability while $W$ induces the LGU morphism.  The equality $\langle\v,\u_2\rangle=2$ implies that $r=2$.

As $\v=\u_1+\u_2$ with $\langle \u_1,\u_2\rangle=2$, we get $r_1=2$, and $D=D_1$.  
As $0=\u_1^2=2ab-4s_1$, we must have $a$ or $b$ even, so tensoring by a line bundle, we may assume that $D=0$, $A_0$ or $B_0$.
From $l(\v)=2$ and $\v-\u_2=\u_1$ is primitive, we must have $\v=(2,B_0,-1)$ and $S$ is of Type 1.
The existence of a second component that is not totally semistable follows from \cref{half-half}.  

If instead $\v=2\u_1+\u_2$ with $\langle\u_1,\u_2\rangle=1$, then we note that $l(\u_1)=1$.  Indeed, writing $\pi^*\u_i=l(\u_i)\bar{\u_i}$, we have  $$2=\langle\pi^*\u_1,\pi^*\u_2\rangle=2l(\u_1)\langle\bar{\u_1},\bar{\u_2}\rangle,$$ which forces $l(\u_1)=1$.  Taking $E_1\in M_{\sigma_0}^s(2\u_1)$, which is nonempty and two-dimensional by \cref{isotropic}, and $E_2\in M_{\sigma_0}^s(\u_2)$, we have $$\Hom(E_2,E_1)=\Hom(E_1,E_2(K_S))=0$$ by stability, so from $\langle2\u_1,\u_2\rangle=2$, we see that $\ext^1(E_2,E_1)=2$.  Thus for each $(E_1,E_2)\in M_{\sigma_0}^s(2\u_1)\times M_{\sigma_0}^s(\u_2)$, there is a $\P^1$ worth of $S$-equivalent extensions $$0\to E_1\to E\to E_2\to 0,$$ which are $\sigma_+$-stable by \cite[Lemma 6.9]{BM14a}, assuming without loss of generality that $\mu_{\sigma_+}(\u_1)<\mu_{\sigma_+}(\u_2)$.\footnote{Otherwise, just switch the direction of the extension.}  Varying the $E_i$ in their respective moduli spaces sweeps out a component of $M_{\sigma_+}(\v)$ which admits a $\P^1$-fibration over $M_{\sigma_0}^s(2\u_1)\times M_{\sigma_0}^s(\u_2)$, as claimed.  

To see the existence of a second component that is not totally semistable, we argue similarly as in the first case.  Then after applying $[-1]\circ\Xi$ and possibly tensoring by a line bundle, we may assume that $\v=(2,0,-1)$, $\u_2=-(0,0,1)$, and $\u_1=(1,0,0)$.  Then the claim follows from \cref{half-half}.  
\end{proof}
We move on to the first case of a divisorial contraction.
\begin{Lem}\label{Lem:CodimOneRankOne}
Suppose that $\HH_W$ contains a primitive isotropic $\u$ with $l(\u)<\o$ such that $\langle\v,\u\rangle=1$.  The wall $W$ is not totally semistable and $D=M_{\sigma_+}(\v)\backslash M_{\sigma_0}^s(\v)$ is a divisor, except when $\HH_W$ contains another primitive isotropic $\u'$ with $l(\u')=\o$ and $\langle\v,\u'\rangle=1$, in which case $W$ is a totally semistable fake wall.  Moreover, the morphism $\Sigma^+$ only contracts $D$ if 
\begin{enumerate}
    \item $\o=2$ and $\HH_W$ also contains another primitive isotropic $\u'$ with $l(\u')=\o$ and $\langle\v,\u'\rangle=3$, or
    \item $\o>2$ is even and $\HH_W$ also contains another primitive isotropic $\u'$ with $l(\u')=\o$ and $\langle\v,\u'\rangle=2$.
\end{enumerate} 
\end{Lem}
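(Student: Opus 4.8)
The plan is to work throughout with the two primitive isotropic generators $\u_1,\u_2$ of the positive cone $P_{\HH_W}$ furnished by \cref{isotropic lattice}. Since $\HH_W$ is a hyperbolic rank-two lattice, its only isotropic rays are those spanned by $\u_1$ and $\u_2$, so the hypothesized class is a positive multiple, and hence (being primitive) equal to, one of them; say it is $\u_2$, so that $\langle\v,\u_2\rangle=1$ and $l(\u_2)<\o$. First I would record the arithmetic consequences: the pairing matrix of $\{\v,\u_2\}$ has determinant $-1$, so these form a basis of the unimodular lattice $\HH_W$, forcing $\u_1=\v-\tfrac{\v^2}{2}\u_2$, $\langle\u_1,\u_2\rangle=1$, and $\langle\v,\u_1\rangle=\tfrac{\v^2}{2}$. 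This identity $\langle\v,\u_1\rangle=\tfrac{\v^2}{2}$ for the complementary class is what drives the entire numerical dichotomy at the end.

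For the divisor and the non-total-semistability, I would describe the locus destabilized across $W$ by $\u_2$ as the image of extensions $0\to A_1\to E\to A_2\to 0$ with $\v(A_1)=\u_2$ and $\v(A_2)=\v-\u_2$; since $\langle\u_2,\v-\u_2\rangle=1$ we have $\ext^1(A_2,A_1)=1$ generically, so there is a unique nonsplit extension for each pair, which is $\sigma_+$-stable by \cite[Lemma 6.9]{BM14a}. Feeding the decomposition $\v=\u_2+(\v-\u_2)$ into \cref{Prop:HN codim} (using $\dim\MM_{\sigma_-}(\u_2)=0$ because $l(\u_2)<\o$, via \cref{isotropic}, and $\dim\MM_{\sigma_-}(\v-\u_2)=(\v-\u_2)^2$) gives codimension exactly one; hence $D=M_{\sigma_+}(\v)\setminus M^s_{\sigma_0}(\v)$ is a divisor and its complement consists of $\sigma_0$-stable objects, so $W$ is not totally semistable. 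The single exception is when $\u_1$ \emph{also} satisfies $\langle\v,\u_1\rangle=1$ and $l(\u_1)=\o$; by the identity above this forces $\v^2=2$, and then \cref{Lem:HilbertChow} identifies $M_{\sigma_+}(\v)\cong\Pic^0(S)\times\Hilb^1(S)=\Pic^0(S)\times S$ with $\Sigma^+$ the Hilbert--Chow morphism $\Hilb^1(S)\to\Sym^1(S)$, which is an isomorphism; thus $W$ is a fake wall, while it is totally semistable because no length-one ideal sheaf is $\sigma_0$-stable.

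For the ``Moreover'', I would use \cref{Thm:WallContraction} to reduce ``$\Sigma^+$ contracts $D$'' to the existence of curves of $\sigma_0$-$S$-equivalent objects through a general point of a component of $D$. Along the $\u_2$-component this forces the general $A_2$ (class $\v-\u_2$) to be strictly $\sigma_0$-semistable with a positive-dimensional $S$-equivalence class, i.e. $W$ must itself be a divisorial-type wall as seen through $\u_1$; since $l(\u_2)<\o$ the class $\u_2$ cannot produce such a contraction, so by the isotropic classification (\cref{Prop:TotallySemistableCodimOne} and \cref{classification of walls}) the contraction can come only from $\u_1$ with $l(\u_1)=\o$. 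When $l(\u_1)=\o$ I would make this rigorous by applying the Fourier--Mukai transform $\Xi$ of \cref{Prop:Uhlenbeck morphism} attached to $\u_1$, turning $\Sigma^+$ into the Li--Gieseker--Uhlenbeck morphism on a moduli of sheaves and reading off the contracted locus from the codimension estimates of \cref{slope stability}. Substituting $\langle\v,\u_1\rangle=\tfrac{\v^2}{2}$ into the divisorial thresholds then yields $\v^2=4$ with $\langle\v,\u_1\rangle=2$ (the Li--Gieseker--Uhlenbeck case, condition (2)) and $\v^2=6$ with $\langle\v,\u_1\rangle=3$ (condition (1)), the parity restriction ``$\o>2$ even'' in (2) being exactly what excludes the non-divisorial $\o=3$ behaviour at $\v^2=4$.

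The hard part will be the purely numerical bookkeeping that turns this geometric criterion into precisely conditions (1) and (2) with the correct hypotheses on $\o$ and on the $l$-invariants. In particular I expect the main obstacle to be disentangling the cases where \emph{both} isotropic generators simultaneously destabilize $\v$: one must reconcile the present statement with the $\P^1$-fibration/totally semistable phenomenon of \cref{Lem:TSSRankTwoOrderTwoException} (where $M_{\sigma_+}(\v)$ becomes reducible, so that ``totally semistable'' and ``$D$ a divisor'' can coexist on different components) and with the $\o=3$ exceptional divisorial walls of \cref{classification of walls}, tracking carefully how $l(\u_1)$, $l(\v)$, and $\langle\bar\u_1,\bar\u_2\rangle$ constrain one another through $\o=l(\u_1)l(\u_2)\langle\bar\u_1,\bar\u_2\rangle$. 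These lattice compatibilities, rather than any single geometric input, are where the real work lies.
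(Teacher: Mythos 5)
Your structural observation is correct and is the cleanest part of the proposal: since $\u^2=0$ and $\langle\v,\u\rangle=1$, the Gram matrix of $(\v,\u)$ has determinant $-1$, which forces $\Z\v+\Z\u=\HH_W$, hence $\langle\u_1,\u_2\rangle=1$ and $\langle\v,\u'\rangle=\tfrac{\v^2}{2}$ for the complementary isotropic generator $\u'$; this is the computation of \cref{Rem:ArithmeticOfHC} transplanted to the present hypothesis, and it does explain at a glance why conditions (1) and (2) pin down $\v^2=6$ and $\v^2=4$. Your generic argument --- the unique extension $0\to G\to E\to F\to 0$ with $\v(G)=\u$ and $F\in M^s_{\sigma_0}(\v-\u)$, $\sigma_+$-stable and sweeping out a codimension-one locus --- and your identification of the exceptional case with $\v^2=2$ agree with the paper's proof; your route to the fake wall through \cref{Lem:HilbertChow} with $\Hilb^1(S)\cong S$ is a harmless variant of the paper's observation that the unique extensions cover a component when $l(\u')=\o$.

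The genuine gap sits exactly where you defer to ``numerical bookkeeping'', and it is not bookkeeping: the extension construction presupposes $M^s_{\sigma_0}(\v-\u)\ne\varnothing$ of the expected dimension, and by \cref{Prop:TotallySemistableCodimOne} this fails in precisely the configurations $\v=2\u+\u'$, and ($\o=2$) $\v=3\u+\u'$, with $l(\u')=\o$ and $\langle\u,\u'\rangle=1$ --- which are precisely the configurations in which the ``Moreover'' dichotomy is decided. The paper settles these with substitute extension families that your sketch never produces: for $\v=2\u+\u'$ and $\o>2$ even, $\P^1$'s of extensions of $F\in M^s_{\sigma_0}(\u')$ by $E_1\in M^s_{\sigma_0}(2\u)$ (nonempty of dimension one by \cref{isotropic}) sweep out a contracted divisor, giving (2); for $\o=3$ one has $M^s_{\sigma_0}(2\u)=\varnothing$ and the unique extensions of $F$ by $G_1\oplus G_2$ with $G_1\ncong G_2\in M^s_{\sigma_0}(\u)$ give a non-contracted divisor; for $\o=2$, $\v=2\u+\u'$ one falls back on \cref{Lem:TSSRankTwoOrderTwoException}; and for $\o=2$, $\v=3\u+\u'$ the $\P^1$'s of extensions of $F$ by $G_1\oplus G_2$, with $G_2\in M^s_{\sigma_0}(2\u)$, give the contracted divisor of (1). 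Relatedly, your one-line dismissal ``since $l(\u_2)<\o$ the class $\u_2$ cannot produce such a contraction'' asserts exactly what must be proved: contracted curves require a Jordan--H\"{o}lder factor with $\ext^1\geq 2$, which can arise from multiples $b\u$ with $b\geq 2$, and since $\dim M^s_{\sigma_0}(b\u)$ jumps to two when $b\,l(\u)=\o$ (\cref{isotropic}) such strata can a priori reach codimension one; excluding them requires the floor-function enumeration in the proof of \cref{Prop:TotallySemistableCodimOne}, not the single inequality $l(\u)<\o$. The same enumeration is also what shows the complement of your one explicit stratum consists of $\sigma_0$-stable objects (non-total-semistability needs \emph{every} HN stratum to have positive codimension, not just the stratum for $\v=\u+(\v-\u)$). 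Finally, the LGU shortcut you propose is unavailable where you would need it most: the transform $\Xi$ of \cref{Prop:Uhlenbeck morphism} exists only when $\HH_W$ contains a class with $l=\o$, so when both isotropic generators have $l<\o$ the direct stack-dimension estimates are indispensable, and indeed the paper never invokes $\Xi$ in this proof.
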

\begin{proof}
Observe that $(\v-\u)^2=\v^2-2\langle\v,\u\rangle=\v^2-2\geq 0$, and suppose first that $(\v-\u)^2>0$.  Then $M_{\sigma_0}^s(\v-\u)\neq\varnothing$ unless $\u=\u_1$, $l(\u_2)=\o$, $\langle\u_1,\u_2\rangle=1$, and either $\v=2\u_1+\u_2$ or $\o=2$ and $\v=3\u_1+\u_2$.  Setting these two cases aside for the moment, we see that $W$ is not totally semistable by part \ref{enum:TSS} of \cref{Prop:TotallySemistableCodimOne}, and furthermore, for $G\in M_{\sigma_0}^s(\u)$ and $F\in M_{\sigma_0}^s(\v-\u)$, we must have $\ext^1(F,G)=1$ by stability.  Thus there exists a unique extension $$0\to G\to E\to F\to 0,$$ which is $\sigma_+$-stable by \cite[Lemma 6.9]{BM14a}, and a dimension count shows that these sweep out a non-contracted divisor as we vary $F,G$ in their respective moduli spaces.

Returning to the two excluded cases, the usual pull-back argument shows that $\langle\u_1,\u_2\rangle=1$ and $l(\u_2)=\o$ force $l(\u_1)=1$.  We begin with the more typical of these two cases, namely $\o$ arbitrary and $\v=2\u_1+\u_2$.  If $2\mid\o$, then taking $E_1\in M_{\sigma_0}^s(2\u_1)$ and $F\in M_{\sigma_0}^s(\u_2)$ gives a $\P^1$ worth of $\sigma_+$-stable extensions $$0\to E_1\to E\to F\to 0$$ by \cite[Lemma 6.1-6.3]{CH15}.  These $\P^1$'s are all contracted by $\Sigma^+$, and varying $(E_1,F)\in M_{\sigma_0}^s(2\u_1)\times M_{\sigma_0}^s(\u_2)$ sweeps out a divisor if $2<\o$ since then $\dim M_{\sigma_0}^s(2\u_1)=1$.  If $\o=3$, then $M_{\sigma_0}^s(2\u_1)=\varnothing$, so taking $G_1\ncong G_2\in M_{\sigma_0}^s(\u_1)$ and $F\in M_{\sigma_0}^s(\u_2)$, there is a unique $\sigma_+$-stable extension
$$0\to G_1\oplus G_2\to E\to F\to 0$$
 by \cite[Lemma 6.1-6.3]{CH15}.  Varying $(G_1,G_2,F)\in M_{\sigma_0}^s(\u_1)^2\times M_{\sigma_0}^s(\u_2)$ sweeps out a non-contracted divisor.
 
 When $\o=2$ and $\v=2\u_1+\u_2$, we are back in the case discussed in \cref{Lem:TSSRankTwoOrderTwoException}, when the strictly semistable locus was a non-contracted divisor in at least one component.  

When $\o=2$ and $\v=3\u_1+\u_2$ (i.e. $\u'=\u_2$), we take $G_1\in M_{\sigma_0}^s(\u_1)$, $G_2\in M_{\sigma_0}^s(2\u_1)$ and $F\in M_{\sigma_0}(\u_2)$, which are nonempty by \cref{isotropic}, and we consider extensions of the form $$0\to G_1\oplus G_2\to E\to F\to 0.$$ By \cite[Lemma 6.1-6.3]{CH15}, for fixed $G_1$,$G_2$, and $F$ there is a $\P^1$ worth of $S$-equivalent distict $\sigma_+$-stable such extensions, so by a dimension count we see that varying $(G_1,G_2,F)\in M_{\sigma_0}^s(\u_1)\times M_{\sigma_0}^s(2\u_1)\times M_{\sigma_0}(\u_2) $ gives a contracted divisor.

Now suppose that $(\v-\u)^2=0$.  Then $\langle\u,\v-\u\rangle=1$ so that we can assume that $\u=\u_1$ and $\v=\u_1+\u_2$ with $\langle\u_1,\u_2\rangle=1$.  This last condition forces $\HH_W=\Z\u_1+\Z\u_2$ so that $\v$ admits no other decomposition into effective classes.  Thus the strictly $\sigma_0$-semistable locus consists of extensions of $G_i\in M_{\sigma_0}^s(\u_i)$.  By stability, $\ext^1(G_2,G_1)=1$, so $\Sigma^+$ does not contract any curves of $S$-equivalent objects in this case.  Varying $(G_1,G_2)\in M_{\sigma_0}^s(\u_1)\times M_{\sigma_0}^s(\u_2)$, a dimension count shows that these extensions sweep out a divisor or cover a component of $M_{\sigma_+}(\v)$ if $l(\u_2)<\o$ or $l(\u_2)=\o$, respectively, as required.
\end{proof}
\begin{Lem}\label{Lem:LGUDivisorial}
Suppose that $\v^2\geq 4$ and $\HH_W$ contains a primitive isotropic $\u$ with $l(\u)=\o$ such that $\langle\v,\u\rangle=2$.  Then (on at least one component) $W$ is not totally semistable, and moreover $\Sigma^+$ is a divisorial contraction unless $\v^2=4$ and $\o=2,3$.   
\end{Lem}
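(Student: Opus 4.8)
The plan is to reduce the statement to a question about rank-two Gieseker sheaves via the Fourier--Mukai transform of \cref{Prop:Uhlenbeck morphism}. Since $l(\u)=\o$, that proposition applies: writing $\Xi$ for the transform with $\Xi(\u)=(0,0,1)$ and setting $\w:=-\Xi(\v)$, the identity $\langle(r,c,s),(0,0,1)\rangle=-r$ together with $\langle\v,\u\rangle=2$ forces $\rk(\w)=2$, and $[-1]\circ\Xi$ identifies $M_{\sigma_+}(\v)$ with the Gieseker moduli space $M_\omega^\beta(\w)$ of $\beta$-twisted $\omega$-semistable sheaves of rank two, under which $\Sigma^+$ becomes the Li--Gieseker--Uhlenbeck morphism. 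Thus $\Sigma^+$ contracts exactly the sheaves that are either non-locally free or strictly $\mu$-semistable, and the whole problem becomes a codimension computation for these two loci, for which the estimates in the proof of \cref{slope stability} are already available.

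First I would dispose of the claim that $W$ is not totally semistable on at least one component. By \cref{slope stability}, every irreducible component of $M_\omega^{\mu ss}(\w)$ contains a $\mu$-stable sheaf, and since $\rk(\w)=2>1$ this sheaf may be taken locally free outside the two exceptional Mukai vectors; even in those exceptional cases \cref{half-half} exhibits a component containing $\mu$-stable locally free sheaves. Such a sheaf is not contracted by the Li--Gieseker--Uhlenbeck morphism, so it corresponds under $[-1]\circ\Xi$ to an object of class $\v$ that remains $\sigma_0$-stable; hence $M_{\sigma_0}^s(\v)\ne\varnothing$ on that component and $W$ is not totally semistable there.

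The heart of the argument is the divisorial claim. The generic non-locally free sheaf in $M_\omega^\beta(\w)$ is $E=\ker(G\twoheadrightarrow\C_x)$, where $G=E^{\vee\vee}$ is a $\mu$-stable locally free sheaf of Mukai vector $\w+(0,0,1)$ and $x\in S$; here $\v(G)^2=\w^2-4=\v^2-4$. For $\v^2>4$ the class $\w+(0,0,1)$ is non-isotropic, so $M^s_\omega(\w+(0,0,1))$ has the expected dimension $\v^2-3$, and the non-locally free locus, fibred by the $\mathbb{P}^1$-bundle $\Quot^1_G\to S$ whose fibre over $x$ is $\mathbb{P}(G_x^\vee)$, has dimension $(\v^2-3)+2+1=\v^2=\dim M_{\sigma_+}(\v)-1$. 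This is a divisor, and since two kernels sharing the same $(G,x)$ are $S$-equivalent with respect to $\sigma_0$, each fibre $\mathbb{P}(G_x^\vee)$ is contracted by $\Sigma^+$; by \cref{Thm:WallContraction} this exhibits $\Sigma^+$ as a divisorial contraction. The strictly $\mu$-semistable locus has codimension at least one by \cref{slope stability} and so does not affect the conclusion.

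The main obstacle is the boundary case $\v^2=4$, where $\w+(0,0,1)$ becomes isotropic and the clean dimension count degenerates. By \cref{isotropic}, the moduli space $M^s_\omega(\w+(0,0,1))$ of the double dual has dimension one \emph{except} when its primitive part $\u'$ satisfies $n'=\o/l(\u')$, which for a rank-two class can occur only when $\o=2$; in that situation the non-locally free sheaves fill an entire component and $\Sigma^+$ degenerates into the $\mathbb{P}^1$-fibration of \cref{Lem:TSSRankTwoOrderTwoException} instead of contracting a divisor. The remaining small-order cases $\o=2,3$ with $\v^2=4$ lie outside the range of these estimates and must be treated separately; they are precisely the exclusions in the statement and correspond to the distinct wall types recorded in \cref{classification of walls}. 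Isolating exactly where the count breaks down, and matching the excluded numerics against \cref{Prop:TotallySemistableCodimOne}, is the delicate step of the proof.
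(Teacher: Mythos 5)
Your route is genuinely different from the paper's. The paper proves \cref{Lem:LGUDivisorial} entirely on the Bridgeland side: it sets $\u=\u_2$, shows $M^s_{\sigma_0}(\v-\u_2)\neq\varnothing$ of dimension $(\v-\u_2)^2+1$, and produces the contracted divisor directly as the family of $\P^1$'s of extensions $0\to F\to E\to E_2\to 0$ with $\ext^1(E_2,F)=\langle\v-\u_2,\u_2\rangle=2$, which are $\sigma_+$-stable by \cite[Lemma 6.9]{BM14a}; the delicate cases are then sorted by the value of $(\v-\u_2)^2$. You instead push everything through $\Xi$ of \cref{Prop:Uhlenbeck morphism} and run the $\Quot^1_{E^{\vee\vee}}$ dimension count from the proof of \cref{slope stability} on the Gieseker side --- which is exactly the strategy the paper itself advertises right after \cref{Prop:Uhlenbeck morphism} but does not carry out for this lemma. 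Your version has a concrete payoff at $\v^2=8$: there the double-dual class $\w+(0,0,1)$ has square $\v^2-4=4>0$, so your uniform count applies, whereas the paper must bootstrap (it invokes the already-established $\v^2=4$ case of the lemma for $\v-\u_2$, since $(\v-\u_2)^2=4$ falls outside the reach of \cref{Prop:TotallySemistableCodimOne}). Your non-totally-semistable argument via \cref{slope stability} and \cref{half-half} is also correct, and for $\v^2>4$ your divisor count is sound (you should add one line that the strata with $k=\chi(E^{\vee\vee}/E)\geq 2$ have codimension $\geq 2$, as in \eqref{non-locally free}).

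There is, however, a genuine gap at $\v^2=4$. The lemma \emph{asserts} divisoriality when $\v^2=4$ and $\o=4,6$, and your final paragraph does not prove it --- you explicitly defer ``isolating exactly where the count breaks down.'' Two things are missing. First, your appeal to \cref{isotropic} gives $\dim M^s_\omega(\v(G))=1$ only \emph{when nonempty}; writing $\v(G)=n'\u'$ with $\u'$ primitive, nonemptiness requires $n'l(\u')\mid\o$. Since $n'=2$ forces $\u'$ to have rank one and hence $l(\u')=1$, one always has $n'l(\u')\in\{1,2\}$, which divides $4$ and $6$; with that observation your count $1+3=\v^2$ closes the $\o=4,6$ case, but as written this verification is absent. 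Second, your diagnosis of the exclusions is off: the dimension jump to $2$ that you cite occurs only for $\o=2$, while $\o=3$ fails for a different reason. When $2\mid\v(G)$ --- on the Bridgeland side, $\v=2\u_1+\u_2$ with $\langle\u_1,\u_2\rangle=1$ --- we get $M^{\mu s}_\omega(\v(G))=\varnothing$ because $2\nmid 3$; the double duals are then polystable $G_1\oplus G_2$ with $G_1\ncong G_2\in M^s_{\sigma_0}(\u_1)$, the generic strictly semistable objects are \emph{unique} extensions (so the resulting divisor is not contracted), and the contracted $\P^1$'s over self-extensions sweep out only a codimension-two locus --- this is precisely how the paper's proof handles $\o=3$ via \cite[Lemma 6.1-6.3]{CH15}, and why the statement excludes it. Without these two verifications your proposal establishes the lemma only for $\v^2>4$.
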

\begin{proof}
Without loss of generality, we may assume that $\u=\u_2$.  We will show that $M_{\sigma_0}^s(\v-\u_2)\neq\varnothing$.  Then given $F\in M_{\sigma_0}^s(\v-\u_2)$ and $E_2\in M_{\sigma_0}^s(\u_2)$, we get a $\P^1$ worth of extensions $$0\to F\to E\to E_2\to 0,$$ which are $S$-equivalent with respect to $\sigma_0$ but $\sigma_+$-stable by \cite[Lemma 6.9]{BM14a}.  A dimension count shows that varying $F$ and $E_2$ in their moduli spaces sweeps out a divisor in $M_{\sigma_+}(\v)$ that gets contracted by $\Sigma^+$, as long as $\dim M_{\sigma_0}^s(\v-\u_2)=(\v-\u_2)^2+1$.  

Let us suppose first that $\v^2\ne 4,8$.  Then $(\v-\u_2)^2>0$ and different from $4$, $\langle\v,\u_2\rangle=2$, and $l(\u_2)=\o$, so it follows from part \ref{enum:TSS} of \cref{Prop:TotallySemistableCodimOne}  that $M_{\sigma_0}^s(\v-\u_2)\neq\varnothing$ and has dimension $(\v-\u_2)^2+1$, as required.  

If instead now $(\v-\u_2)^2=0$ (i.e. $\v^2=4$), then $\langle\v-\u_2,\u_2\rangle=2$ forces  $\v=\u_1+\u_2$ and $\langle\u_1,\u_2\rangle=2$ or $\v=2\u_1+\u_2$ and $\langle\u_1,\u_2\rangle=1$.  
Writing $\pi^*\u_i=l(\u_i)\bar{\u}_i$, we see that $$2\o=\o\langle\u_1,\u_2\rangle=\langle\pi^*\u_1,\pi^*\u_2\rangle=\o l(\u_1)\langle\bar{\u}_1,\bar{\u}_2\rangle,$$ so $l(\u_1)=1$ or 2 in the first case or $l(\u_1)=1$ in the second case.  
In the first case, if $l(\u_1)<\o$ (which is guaranteed if $\o>2$), then $\dim M_{\sigma_0}^s(\u_1)=1=\u_1^2+1$.  Thus the argument in the previous paragraph goes through.  Otherwise, $l(\u_1)=\o=2$. 
But then $l(\v)=2$, so we are in the case described in \cref{Lem:TSSRankTwoOrderTwoException}.  But then we already know the existence of a component whose strictly semistable locus is a non-contracted divisor.  

In the second case, we have $\v=2\u_1+\u_2$, with $\langle\u_1,\u_2\rangle=1$, and $l(\u_1)=1$.  If $\o=2$, then $l(\v)=2$, and we are again in the case described in \cref{Lem:TSSRankTwoOrderTwoException}.  But then we already know the existence of a component whose strictly semistable locus is a non-contracted divisor.  So we may suppose that $\o>2$.  Then $W$ is not totally semistable by \cref{Prop:TotallySemistableCodimOne} and $M_{\sigma_0}^s(2\u_1)\ne\varnothing$ of dimension one unless $\o=3$.  But then again the argument above produces a contracted divisor.  If $\o=3$, then $M_{\sigma_0}^s(2\u_1)=\varnothing$, so taking $G_1\ncong G_2\in M_{\sigma_0}^s(\u_1)$ and $F\in M_{\sigma_0}^s(\u_2)$, there is a unique $\sigma_+$-stable extension $$0\to G_1\oplus G_2\to E\to F\to 0$$ by \cite[Lemma 6.1-6.3]{CH15}.  Varying $(G_1,G_2,F)\in M_{\sigma_0}^s(\u_1)^2\times M_{\sigma_0}^s(\u_2)$ sweeps out a non-contracted divisor.

Finally, suppose that $(\v-\u_2)^2=4$.  We know that $W$ is not totally semistable by \cref{Prop:TotallySemistableCodimOne}. Then by the previous case of the lemma, we know that for at least one component $M$ of $M_{\sigma_+}(\v-\u_2)$, $W$ is not totally semistable, so $M_{\sigma_0}^s(\v-\u_2)\ne\varnothing$ and has the right dimension, so the argument above produces a contracted divisor.  
\end{proof}
\begin{Lem}\label{Lem:rank3counterexample}
Suppose that $\v^2=6$ and $\HH_W$ contains a primitive isotropic $\u$ such that $\langle\v,\u\rangle=3=\o=l(\u)$.  Then $\Sigma^+$ induces a divisorial contraction.
\end{Lem}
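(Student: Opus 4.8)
The plan is to follow the template of \cref{Lem:HilbertChow} and \cref{Lem:LGUDivisorial} and reduce to the Li--Gieseker--Uhlenbeck (LGU) picture. Since $l(\u)=\o$, I would invoke \cref{Prop:Uhlenbeck morphism}: the Fourier--Mukai transform $\Xi$ attached to the universal family over $M_{\sigma_0}(\u)\cong S$ (from \cref{lem:moduli spaces of fully induced isotropic vectors}) satisfies $\Xi(\u)=(0,0,1)$ and identifies, up to the shift $[-1]$, the moduli space $M_{\sigma_+}(\v)$ with a moduli space $M_\omega^\beta(-\Xi(\v))$ of $\beta$-twisted $\omega$-Gieseker stable sheaves on $S$, sending $\Sigma^+$ to the LGU morphism to the Uhlenbeck compactification. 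Because $\Xi(\u)=(0,0,1)$ and $\langle\v,\u\rangle=3$, the transformed sheaves have rank $-\langle-\Xi(\v),(0,0,1)\rangle=\langle\v,\u\rangle=3$, and the non-isomorphism locus of $\Sigma^+$ is exactly the union of the non-locally-free and the strictly $\mu$-semistable loci.

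Next I would isolate the contracted divisor. Since $(\v-\u)^2=\v^2-2\langle\v,\u\rangle=0$, the class $\v-\u$ is isotropic, and under $\Xi$ it is precisely the Mukai vector of the double dual $F^{\vee\vee}$ of a generic non-locally-free sheaf $F=\Xi(E)[-1]$ for which $F^{\vee\vee}/F$ has length one. Granting that $M^s_{\sigma_0}(\v-\u)$ is two-dimensional, such sheaves $F$ are determined by the pair $(F^{\vee\vee},q)$ with $q\in\Quot^1_{F^{\vee\vee}}$, so they sweep out a locus isomorphic to the total space of $\Quot^1$ over $M^s_{\sigma_0}(\v-\u)$, of dimension $2+(3+1)=6=\dim M_{\sigma_+}(\v)-1$; this is a divisor $D$. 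To see that $D$ is genuinely contracted, I would observe that the LGU morphism sends $F$ to $(F^{\vee\vee},\supp(F^{\vee\vee}/F))$, so the fibre of $\Sigma^+|_D$ over a pair $(B,x)$ is the space of length-one quotients of $B$ supported at $x$, namely a $\P^{2}$. These positive-dimensional fibres show that $D$ is contracted, while $\Sigma^+$ is an isomorphism on the dense open stable locus $M^s_{\sigma_0}(\v)$ (dense because the destabilized locus has codimension one, by part~(2)\ref{enum:CodimOneRankThreeOrderTwoThreeException} of \cref{Prop:TotallySemistableCodimOne}). Hence $\Sigma^+$ is a divisorial contraction.

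The main obstacle, and the step I expect to require real care, is the two-dimensionality of $M^s_{\sigma_0}(\v-\u)$, equivalently the claim $l(\v-\u)=\o$. This is a purely lattice-theoretic point about $\HH_W$: writing $\pi^*\u=\o\bar\u$ and pairing on the cover $X$, the divisibility of $\pi^*(\v-\u)$ is controlled by $l(\v)$ and by $\langle\bar\v,\bar\u\rangle$, and once one knows $\pi^*(\v-\u)=\o\,\bar{\w}$ with $\bar{\w}$ primitive, \cref{isotropic}(2) yields $\dim M^s_{\sigma_0}(\v-\u)=2$. The delicate aspect is exactly that this divisorial behaviour occurs here for a class with $\langle\v,\u\rangle=3$, in contrast with the K3 situation, where such an isotropic class produces a small contraction rather than a divisorial one; the special $\o=3$ arithmetic is what keeps the double-dual moduli two-dimensional, and this is the content that makes \cref{classification of walls} list it as the ``$\o=3$ exceptional case.'' Finally, the complementary bound that the strictly $\mu$-semistable locus has codimension at least one, so that it does not interfere with birationality, follows directly from the estimates in the proof of \cref{slope stability}.
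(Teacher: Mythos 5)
Your overall architecture is sound and is a genuinely different route from the paper's. The paper does not pass to the LGU picture for this lemma: it works directly on the Bridgeland side, writing $\u=\u_2$, observing $(\v-\u_2)^2=0$ so that $\v=b_1\u_1+\u_2$, and then producing for each pair $(E_1,E_2)\in M^s_{\sigma_0}(\v-\u_2)\times M^s_{\sigma_0}(\u_2)$ a $\P^2=\P(\Ext^1(E_2,E_1))$ of $S$-equivalent but $\sigma_+$-stable extensions via \cite[Lemma 6.9]{BM14a}, with the count $2+2+2=6=\dim M_{\sigma_+}(\v)-1$ giving the contracted divisor. Your version via \cref{Prop:Uhlenbeck morphism} describes the same divisor from the other side of the transform, and your auxiliary points check out: since the transformed sheaves have rank $3$, any non-locally-free $F$ satisfies $k=\chi(F^{\vee\vee}/F)\leq\v^2/6=1$, so $\v(F^{\vee\vee})$ is isotropic, $\dim\Quot^1_{F^{\vee\vee}}=4$, and $2+4=6$, with $\P^2$ fibres for the LGU morphism. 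Granting the key dimension claim, either argument closes the lemma.

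The genuine gap is exactly the step you deferred: the two-dimensionality of $M^s_{\sigma_0}(\v-\u)$, equivalently that $\pi^*(\v-\u)$ equals $\o$ times a primitive class. You assert this is ``purely lattice-theoretic,'' controlled by $l(\v)$ and $\langle\bar{\v},\bar{\u}\rangle$, but you never carry it out, and it does not follow from such bookkeeping. The paper's mechanism is the dichotomy $3=\langle\v,\u_2\rangle=b_1\langle\u_1,\u_2\rangle$: either $\v=3\u_1+\u_2$ with $\langle\u_1,\u_2\rangle=1$, in which case pulling back the pairing ($\o\langle\u_1,\u_2\rangle=l(\u_1)l(\u_2)\langle\bar{\u}_1,\bar{\u}_2\rangle$) forces $l(\u_1)=1$, so $\pi^*(\v-\u_2)=3\bar{\u}_1$ and \cref{isotropic} gives dimension two; or $\v=\u_1+\u_2$ with $\langle\u_1,\u_2\rangle=3$, where the paper takes $l(\u_1)=3$. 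Note that even there the second case is delicate: the same pull-back computation only yields $l(\u_1)\langle\bar{\u}_1,\bar{\u}_2\rangle=3$, so $l(\u_1)\in\{1,3\}$, and if $l(\u_1)=1$ then $M^s_{\sigma_0}(\v-\u)$ is one-dimensional, your locus has dimension $1+4=5$, i.e.\ codimension two, and the contraction would be small rather than divisorial. So the divisibility you hoped to extract by arithmetic is precisely where the substance of the lemma lies; without an argument pinning down $l(\u_1)$ (or restricting attention to the decomposition $\v=3\u_1+\u_2$), neither your $2+4$ count nor the divisoriality conclusion is established. A smaller slip: the density of $M^s_{\sigma_0}(\v)$ in $M_{\sigma_+}(\v)$ follows from part (1) of \cref{Prop:TotallySemistableCodimOne} (the wall is not totally semistable); part (2) of that proposition is only a necessary condition for the destabilized locus to have codimension one, so it cannot be cited to assert that it does.
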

\begin{proof}
We know that $W$ is not totally semistable by \cref{Prop:TotallySemistableCodimOne}.  We may assume that $\u=\u_2$.  Then $(\v-\u_2)^2=\v^2-6=0$.  Thus $\v=b_1\u_1+\u_2$.  Since $3=\langle\v,\u_2\rangle=b_1\langle\u_1,\u_2\rangle$, we have either $\v=3\u_1+\u_2$ and $\langle\u_1,\u_2\rangle=1$ or $\v=\u_1+\u_2$ and $\langle\u_1,\u_2\rangle=3$.  In particular, it follows that $l(\u_1)=1$ or $l(\u_1)=3$, respectively. 
As $\o=3$, we get that $M_{\sigma_0}^s(\v-\u_2)\ne\varnothing$ and has dimension two, in either case, and the same is true for $M_{\sigma_0}^s(\u_2)$.  There is a $\P^2$-worth of extensions 
$$0\to E_1\to E\to E_2\to 0,$$
for $E_1\in M_{\sigma_0}^s(\v-\u_2)$ and $E_2\in M_{\sigma_0}^s(\u_2)$, and these are all $S$-equivalent with respect to $\sigma_0$ but $\sigma_+$-stable by \cite[Lemma 6.9]{BM14a}.  A dimension count shows that varying $E_1$ and $E_2$ in their moduli spaces sweeps out a divisor in $M_{\sigma_+}(\v)$ that gets contracted by $\Sigma^+$.  
\end{proof}
\section{Flopping walls}\label{Sec:FloppingWalls}
In \cref{Sec:IsotropicWalls}, we gave necessary and sufficient criteria for the wall $W$ to be totally semistable, to induce a $\P^1$ fibration, and to induce a divisorial contraction.  In this section, we discuss the remaining possibility for the contraction morphism $\Sigma^+$.  That is, if $W$ does not induce a fibration or a divisorial contraction, then it must either induce a small contraction, that is, the exceptional locus of $\Sigma^+$ must have codimension at least two, or $W$ is fake wall so that $\Sigma^+$ does not contract any curves.  In the next result, we give precise criteria for when $W$ is a genuine wall inducing a small contraction, at least for $\v$ primitive.  It is the only result in our work so far that has assumed that $\v$ is primitive.
\begin{Prop} \label{prop:flops}
Assume that $\v$ is primitive and that $W$ induces neither a divisorial contraction nor a $\P^1$-fibration.  If $\v^2\geq 4$ and $\v$ can be written as a sum $\v = \a_1 +\a_2$ with $\a_i\in P_\HH$, 
then $W$ induces a small contraction on $M_{\sigma_+}(\v)$.
\end{Prop}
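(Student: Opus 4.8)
The plan is to verify the two conditions that, by the criterion recorded just after the definition of the wall types (a non-fake wall with $\codim\bigl(M_{\sigma_\pm}(\v)\setminus M^s_{\sigma_0}(\v)\bigr)\geq 2$ is flopping), together force $W$ to induce a small contraction: that $W$ is not a fake wall, and that the destabilized locus has codimension at least two. The decomposition $\v=\a_1+\a_2$ with $\a_i\in P_\HH$ already guarantees strictly $\sigma_0$-semistable objects of class $\v$, so $\sigma_0$ genuinely lies on a wall for $\v$; what remains is to exhibit the contracted curves and to bound the codimension of the destabilized locus.

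First I would dispose of the case where $\HH_W$ is non-isotropic. Then every Mukai vector of a $\sigma_-$-Harder--Narasimhan factor of a $\sigma_+$-stable object lies in $\HH_W$ by \cref{hyperbolic}, and, being the class of a semistable object, has non-negative square; since $\HH_W$ contains no isotropic class these squares are in fact $\geq 2$. \cref{Prop:HN filtration all positive classes} then yields $\codim\FF(\v_1,\dots,\v_n)^o>2$ for every such decomposition, so the destabilized locus has codimension $>2$, and that proposition already records that an actual wall of this type is flopping. The decomposition $\v=\a_1+\a_2$ has $\langle\a_1,\a_2\rangle>\sqrt{\a_1^2\a_2^2}\geq 2$ by the signature $(1,-1)$, so $\ext^1(A_2,A_1)\geq 2$ and the associated extensions sweep out contracted curves; hence $W$ is not fake.

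In the isotropic case I would invoke \cref{isotropic lattice} to fix the two primitive effective isotropic generators $\u_1,\u_2$ of $P_\HH=C_W$, and then read off the codimension of the destabilized locus from \cref{Prop:TotallySemistableCodimOne}. The hypothesis that $W$ induces neither a divisorial contraction nor a $\P^1$-fibration, together with $\v^2\geq 4$, excludes every configuration appearing in \cref{Lem:HilbertChow,Lem:TSSRankTwoOrderTwoException,Lem:LGUDivisorial,Lem:rank3counterexample} and in the contracted sub-cases $(1)$--$(2)$ of \cref{Lem:CodimOneRankOne}; these are exactly the cases in which an isotropic $\u$ with small pairing $\langle\v,\u\rangle\in\{1,2,3\}$ produces a contracted locus of codimension $\le 1$. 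Discarding them, part $(3)$ of \cref{Prop:TotallySemistableCodimOne} leaves only contracted loci of codimension $\geq 2$, the surviving codimension-one pieces being the non-contracted divisors of \cref{Lem:CodimOneRankOne}; since these lie outside the exceptional locus of $\Sigma^+$ (the map is generically finite on them by \cref{Thm:WallContraction}) they are harmless for a small contraction.

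The hard part will be establishing non-fakeness in the isotropic case: I must produce a genuinely contracted, positive-dimensional $S$-equivalence class, and the given decomposition $\v=\a_1+\a_2$ may a priori satisfy only $\langle\a_1,\a_2\rangle=1$, which yields unique extensions and no contracted curve. To get around this I would pass to an extremal decomposition: writing $\v=x\u_1+y\u_2$ in $\HH_{W,\Q}$, the exclusion of all the divisorial and fibration configurations forces the invariants $\langle\v,\u_i\rangle$ and $l(\u_i)$ to be large enough that some integral decomposition $\v=\b_1+\b_2$ with $\b_i\in P_\HH\cap\HH_W$ has $\langle\b_1,\b_2\rangle\geq 2$. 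For such a decomposition stability gives $\ext^1(B_2,B_1)=\langle\b_1,\b_2\rangle\geq 2$, so for each $(B_1,B_2)\in M^s_{\sigma_0}(\b_1)\times M^s_{\sigma_0}(\b_2)$ there is at least a $\P^1$ of $S$-equivalent extensions, all $\sigma_+$-stable by \cite[Lemma 6.9]{BM14a}; by \cref{Thm:WallContraction} these $\P^1$'s are contracted by $\Sigma^+$, so $W$ is not fake. Combining this with the codimension bound and the flopping criterion completes the proof. The genuinely delicate point is the existence and integrality of this extremal decomposition with pairing at least two, which I would settle by a direct computation in the rank-two lattice $\HH_W$ using $\v^2\geq 4$ and the primitivity of $\v$.
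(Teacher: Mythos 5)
Your overall architecture is sound, and your treatment of the non-isotropic case matches the paper's (\cref{Prop:HN filtration all positive classes} plus the signature bound $\langle \a_1,\a_2\rangle>2$). But the step you flagged as delicate and deferred --- the existence of an integral decomposition $\v=\b_1+\b_2$ with $\b_i\in P_\HH$ and $\langle\b_1,\b_2\rangle\geq 2$ on which your extension construction runs --- is a genuine gap, because pairing $\geq 2$ is not the only requirement: your argument needs $M^s_{\sigma_0}(\b_i)\neq\varnothing$ for both classes, and by \cref{isotropic} a class $m\u$ with $\u$ primitive isotropic supports $\sigma_0$-stable objects only when $m\,l(\u)\mid\o$. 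This fails inside the hypotheses of \cref{prop:flops}: take $\o=3$, $\HH_W=\Z\u_1\oplus\Z\u_2$ with $\langle\u_1,\u_2\rangle=1$, $l(\u_1)=1$, $l(\u_2)=3$, and $\v=2\u_1+\u_2$, so $\v^2=4$, $\v$ is primitive, and $W$ is neither divisorial (this is exactly the $\v^2=4$, $\o=3$ exception in \cref{Lem:LGUDivisorial}) nor a $\P^1$-fibration (which requires $\o=2$). Here every decomposition into two positive classes is, up to order, either $\u_1+(\u_1+\u_2)$ with pairing $1$ (unique extension, no contracted curve) or $2\u_1+\u_2$ with pairing $2$ but $M^s_{\sigma_0}(2\u_1)=\varnothing$ since $2\nmid 3$. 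So no ``extremal decomposition'' of the kind you posit exists, and your direct lattice computation cannot close this case. The paper's proof gets the contracted curve anyway by using a \emph{strictly semistable} subobject: take $G$ the unique non-split self-extension of $G_1\in M^s_{\sigma_0}(\u_1)$ (of class $2\u_1$, $S$-equivalent to $G_1^{\oplus 2}$) and show the $\P^1$ of extensions $0\to G\to E\to F\to 0$ with $F\in M^s_{\sigma_0}(\u_2)$ consists of $\sigma_+$-stable objects via \cite[Lemmas 6.1--6.3]{CH15} --- note that \cite[Lemma 6.9]{BM14a}, which you invoke, assumes stable subobject and quotient and does not apply here. The same phenomenon (needing $M^s_{\sigma_0}(n\a_1)$ or $M^s_{\sigma_0}(2\a_1)$, whose nonemptiness depends on divisibility of $\o$) is why the paper's proof is a case analysis on $\langle\v,\a_1\rangle$, $l(\a_1)$ and $n=\o/l(\a_1)$ rather than a single uniform extension construction.

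A secondary, fixable point: your two-pronged framework (non-fakeness plus $\codim(M_{\sigma_+}(\v)\setminus M^s_{\sigma_0}(\v))\geq 2$) is both more than you need and not quite what the cited flopping criterion gives when a non-contracted strictly-semistable divisor is present (as in \cref{Lem:CodimOneRankOne}); the criterion as stated requires the destabilized locus itself to have codimension $\geq 2$. The paper avoids this entirely with a cleaner observation: under the standing hypotheses that $W$ is neither divisorial nor a fibration, $\Sigma^+$ is a birational morphism contracting no divisor, so exhibiting a \emph{single} positive-dimensional family of $S$-equivalent $\sigma_+$-stable objects already forces the exceptional locus to have codimension $\geq 2$, i.e.\ a small contraction. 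You should adopt that reduction and then concentrate all effort on producing contracted curves case by case, including the strictly-semistable constructions above.
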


\begin{proof}
Note that it suffices to show that some positive dimensional subvariety of $\sigma_+$-stable objects becomes $S$-equivalent with respect to $\sigma_0$ and thus gets contracted by $\Sigma^+$. 

Write $\v=\a_1+\a_2$ with $\a_i\in P_{\HH}$.  Using \cite[Lemma 9.2]{BM14b}, we may assume that the parallelogram with vertices $0,\a_1,\v,\a_2$ does not contain any lattice point other than its vertices.  In particular, the $\a_i$ are primitive, and without loss of generality, we may assume that $\phi^+(\a_1)<\phi^+(\a_2)$.  By \cref{Bridgeland non-empty}, there exist $\sigma_+$-stable objects $A_i$ with $\v(A_i)=\a_i$.  If $\a_i^2>0$ for each $i$, then the signature of $\HH$ forces $\langle \a_1,\a_2\rangle> 2$ so that $\ext^1(A_2,A_1)\geq 3$.  

Now suppose that, say, $\a_1^2=0$.  We consider first the case that $\langle\v,\a_1\rangle=1$.  If $l(\a_1)=\o$, then by \cref{Lem:HilbertChow} and the assumption $\v^2\geq 4$, we would have a divisorial contraction, contrary to assumption.  Thus we may assume that $l(\a_1)<\o$ and let $n=\frac{\o}{l(\a_1)}\geq 2$.  If $n\leq\frac{\v^2}{2}$, then $\Sigma_+$ contracts a $\P^{n-1}$ worth of nontrivial $\sigma_+$-stable extensions
$$0\to E_1\to E\to E_2\to 0,$$
with $E_1\in M_{\sigma_0}^s(n\a_1)$ and $E_2\in M_{\sigma_0}^s(\v-n\a_1)$.  Varying $E_1$ and $E_2$ in their moduli sweeps out a locus of dimension 
\begin{align*}
&2+n-1+\begin{cases}
2 & \mbox{ if }n=\frac{\v^2}{2},l(\v-n\a_1)=\o\\
(\v-n\a_1)^2+1 & o/w
\end{cases}\\&=\dim M_{\sigma_+}(\v)-\begin{cases}
n-2 & \mbox{ if }n=\frac{\v^2}{2},l(\v-n\a_1)=\o\\
n-1 & o/w
\end{cases}.
\end{align*}
In the first case, $\langle\v,\a_1\rangle=1$ and $l(\v-n\a_1)=\o$ force $l(\a_1)=1$ from the usual pull-back argument.
Then $n=\o$.  If $n=2$ then we get a $\P^1$-fibration, while if $n=3$, then we get a divisorial contraction, both of which are contrary to assumption.  
So we may assume that $n\geq 4$, which gives a contracted locus of codimension at least two.  
In the second case, if $n=2$, then we get a divisor covered by contracted $\P^1$'s, i.e. a divisorial contraction, contrary to assumption.  
So we may assume that $n\geq 3$, again giving a contracted locus of codimension at least two.  

If, instead, $\frac{\o}{l(\a_1)}=n>\frac{\v^2}{2}\geq 2$, then $l(\a_1)=1$.  Suppose first that $\o>3$.  Then since $\o$ is divisible by $2$, we have $M_{\sigma_0}^s(2\a_1)\ne\varnothing$ and has dimension one.  As $\v^2\geq 4$, we get $(\v-2\a_1)^2\geq 0$, so $\Sigma_+$ contracts a $\P^1$ worth of nontrivial $\sigma_+$-stable extensions
$$0\to E_1\to E\to E_2\to 0,$$
with $E_1\in M_{\sigma_0}^s(2\a_1)$ and $E_2\in M_{\sigma_0}^s(\v-2\a_1)$.  Varying $E_1$ and $E_2$ in their moduli sweeps out a locus of dimension 
$$1+(\v-2\a_1)^2+1+1=\dim M_{\sigma_+}(\v)-2,$$
unless $l(\v-2\a_1)=\o$, in which case we get a divisorial contraction, contrary to assumption.  So we get a contracted locus of codimension at least two, as required.
If now $\o=3$, then we must have $\v^2=4$.  Then $(\v-2\a_1)^2=0$, so $\v=2\u_1+\u_2$ with $\langle\u_1\u_2\rangle=1=l(\u_1)$.
As $M_{\sigma_0}^s(2\u_1)=\varnothing$ since $\o=3$, we get the JH-filtration of a strictly $\sigma_0$-semistable $E$ object corresponds to the decomposition 
$$\v=\u_1+\u_1+\u_2.$$
Taking $G_1\ncong G_2\in M_{\sigma_0}^s(\u_1)$ and $F\in M_{\sigma_0}^s(\u_2)$, there is a unique $\sigma_+$-stable extension $$0\to G_1\oplus G_2\to E\to F\to 0$$ by \cite[Lemma 6.1-6.3]{CH15}.  Varying $(G_1,G_2,F)\in M_{\sigma_0}^s(\u_1)^2\times M_{\sigma_0}^s(\u_2)$ sweeps out a non-contracted locus.  Taking $G$ to be the unique non-trivial self-extension of $G_1\in M_{\sigma_0}^s(\u_1)$, we get a $\P^1$-worth of $\sigma_+$-stable extensions
$$0\to G\to E\to F\to 0$$
that gets contracted by $\Sigma^+$.

Otherwise, we may assume that $\langle\v,\a_1\rangle\geq 2$.  
By \cite[Lemma 9.3]{BM14b}, any nontrivial extension 
$$0\to A_1\to E\to A_2\to 0$$
with $A_1\in M_{\sigma_0}^s(\a_1)$ and $A_2\in M_{\sigma_0}^s(\a_2)$ is $\sigma_+$-stable of class $\v$.  All such extensions are non-isomorphic and parametrized by a $\P^{\langle\v,\a_1\rangle-1}$ which is contracted by $\Sigma^+$. 
If $l(\a_1)<\o$, then, varying $A_1$ and $A_2$ in their respective moduli spans a locus of dimension
\begin{align*}
&1+\langle\v,\a_1\rangle-1+\begin{cases}
(\v-\a_1)^2+2 & \mbox{ if }(\v-\a_1)^2=0,l(\v-\a_1)=\o\\
(\v-\a_1)^2+1 & o/w
\end{cases}\\&=\dim M_{\sigma_+}(\v)-\begin{cases}
\langle\v,\a_1\rangle-1 & \mbox{ if }(\v-\a_1)^2=0,l(\v-\a_1)=\o\\
\langle\v,\a_1\rangle & o/w
\end{cases}.
\end{align*}
The result then follows except if $\langle\v,\a_1\rangle=2$, $(\v-\a_1)^2=0$ and $l(\v-\a_1)=\o$.  
In that case, $\v^2\geq 4$ implies that $\v^2=4$, in which case $W$ induces a divisorial contraction by \cref{Lem:LGUDivisorial}, contrary to assumption.  Similarly, if $l(\a_1)=\o$, then we would get a divisorial contraction by \cref{Lem:LGUDivisorial}, contrary to assumption.  This proves the proposition.
\end{proof}

Now we prove the converse to Proposition \ref{prop:flops}, namely that if $\HH_W$ does not fall into any of the above mentioned cases, then $W$ is not a genuine wall.
\begin{Prop}\label{prop: fake or non-walls}
Assume that $\v$ is primitive and that $W$ induces neither a divisorial contraction nor a $\P^1$-fibration.  Assume further that the hypothesis of \cref{prop:flops} is not satisfied.  Then $W$ is either a fake wall, or not a wall at all.
\end{Prop}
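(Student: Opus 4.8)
The plan is to reduce the statement to the single question of whether the contraction $\Sigma^+$ contracts a curve, and to show that under the present hypotheses it cannot. By \cref{Thm:WallContraction}, $\Sigma^+$ contracts a curve if and only if there is a positive-dimensional family of $\sigma_+$-stable objects of class $\v$ that become mutually $S$-equivalent with respect to $\sigma_0$, and such objects are necessarily strictly $\sigma_0$-semistable. Hence if no strictly $\sigma_0$-semistable object of class $\v$ exists, then, $\v$ being primitive, $W$ is not a wall at all by the discussion following \cref{prop:wall-and-chamber}. So I would assume that strictly $\sigma_0$-semistable objects of class $\v$ do exist and aim to prove that $W$ is a fake wall, i.e.\ that no such family is positive-dimensional. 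Throughout, \cref{hyperbolic} confines the Mukai vectors of all Jordan--H\"older factors (with respect to $\sigma_0$) to $\HH_W$, and in fact to the positive cone $C_W=P_{\HH_W}$.

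The first main step is to exploit the failure of the hypothesis of \cref{prop:flops} to eliminate every case but one. Grouping the Jordan--H\"older factors of a strictly $\sigma_0$-semistable object into two nonempty blocks produces a decomposition $\v=\a_1+\a_2$ with $\a_1,\a_2\in P_{\HH_W}$, since $P_{\HH_W}$ is closed under addition. If $\v^2\geq 4$, then this decomposition is exactly the hypothesis of \cref{prop:flops} that we have assumed fails, a contradiction; hence $\v^2<4$, and as $\v^2>0$ is even we get $\v^2=2$. Moreover, if $\HH_W$ were non-isotropic then $\a_i^2\geq 2$, and the signature $(1,-1)$ of $\HH_W$ would force $\langle\a_1,\a_2\rangle>\sqrt{\a_1^2\a_2^2}\geq 2$ as in the proof of \cref{Prop:HN filtration all positive classes}, whence $\v^2=\a_1^2+\a_2^2+2\langle\a_1,\a_2\rangle\geq 4$, again a contradiction. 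Therefore the only surviving possibility is that $\HH_W$ is isotropic with $\v^2=2$, and all remaining configurations (non-isotropic $\HH_W$, or $\v$ admitting no decomposition into two positive classes) give no strictly $\sigma_0$-semistable objects, so $W$ is not a wall.

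The second step is to settle the surviving case $\v^2=2$, $\HH_W$ isotropic, directly. Using \cref{isotropic lattice} I would write $C_W=P_{\HH_W}=\R_{\geq 0}\u_1+\R_{\geq 0}\u_2$ with $\u_1,\u_2$ primitive, effective, isotropic. In the identity $2=\v^2=\a_1^2+\a_2^2+2\langle\a_1,\a_2\rangle$ every summand is a non-negative integer, and $\langle\a_1,\a_2\rangle=0$ would force $\a_1,\a_2$ proportional (hence $\v^2=0$); thus $\a_1^2=\a_2^2=0$ and $\langle\a_1,\a_2\rangle=1$, so by \cref{lem:MukaiVectorsOfStableObjects} and primitivity we must have $\a_1=\u_1$, $\a_2=\u_2$ with $\langle\u_1,\u_2\rangle=1$. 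The same bound rules out any Jordan--H\"older decomposition with more than two factors (a factor of positive square would already equal $\v$), so every strictly $\sigma_0$-semistable object of class $\v$ is an extension of $A_2\in M^s_{\sigma_0}(\u_2)$ by $A_1\in M^s_{\sigma_0}(\u_1)$ with $A_1\not\cong A_2$. By stability $\hom(A_2,A_1)=\ext^2(A_2,A_1)=0$, whence $\ext^1(A_2,A_1)=\langle\u_1,\u_2\rangle=1$ and the extension is unique up to isomorphism. Consequently the strictly semistable locus maps under $\Sigma^+$ to $M^s_{\sigma_0}(\u_1)\times M^s_{\sigma_0}(\u_2)$ with no positive-dimensional fibers; this is precisely the computation in the last paragraph of \cref{Lem:CodimOneRankOne}, and in the sub-case $l(\u_2)=\o$ it agrees with the trivial Hilbert--Chow contraction of $\Hilb^1(S)=S$ in \cref{Lem:HilbertChow}. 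Hence $\Sigma^+$ contracts no curve and $W$ is a fake wall.

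The main obstacle I anticipate lies in the bookkeeping of this isotropic $\v^2=2$ case: one must verify that \emph{every} Jordan--H\"older decomposition of $\v$, including those with repeated factors or with factors that are proper multiples of $\u_1$ or $\u_2$, is controlled by the inequality $\sum_{i<j}\langle\a_i,\a_j\rangle\leq\tfrac12\v^2=1$, so that no pairwise $\Ext^1$ between distinct stable factors can exceed one and no projective space $\P^{\geq 1}$ of extensions can appear. The signature of $\HH_W$ and the integrality of the Mukai pairing do all the work, but they must be combined carefully with \cref{lem:MukaiVectorsOfStableObjects} to guarantee each factor has non-negative square and thereby exclude every configuration except $\v=\u_1+\u_2$. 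Once this is in hand, the dichotomy in the conclusion is immediate: either no strictly semistable object exists and $W$ is not a wall (first paragraph), or they exist but nothing is contracted and $W$ is fake (second and third steps).
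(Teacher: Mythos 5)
Your proposal is correct and takes essentially the same approach as the paper: both arguments split into the case $\v^2\geq 4$, where the failure of the hypothesis of \cref{prop:flops} means $\v$ admits no decomposition into two classes of $P_{\HH}$, so no $\sigma_+$-stable object can become strictly $\sigma_0$-semistable and $W$ is not a wall, and the case $\v^2=2$, where the only Jordan--H\"older decomposition is $\v=\a_1+\a_2$ with $\a_1^2=\a_2^2=0$ and $\langle \a_1,\a_2\rangle=1$, giving a unique nontrivial extension and hence a fake wall. The only cosmetic differences are that the paper obtains the two-term decomposition from the Harder--Narasimhan factors with respect to $\sigma_-$ and settles the $\v^2=2$ case with the single chain $2=\v^2=\sum_i\langle\v,\a_i\rangle\geq n\langle\v,\a_1\rangle\geq n\geq 2$, whereas you group the Jordan--H\"older factors at $\sigma_0$ and run the lattice arithmetic through \cref{isotropic lattice}; the substance is identical.
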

\begin{proof}
We assume for now that $\v^2\geq4$.  We prove that in this case every $\sigma_+$-stable object $E$ of class $\v$ is $\sigma_0$-stable.  If not, then some such $E$ is strictly $\sigma_0$-semistable, and thus $\sigma_-$-unstable.  Let $\a_1,\ldots,\a_n$ be the Mukai vectors of the HN-filtration factors of $E$ with respect to $\sigma_-$.  But all of these $\a_i\in P_{\HH}$, contradicting the assumption that $\v$ cannot be written as a sum of two positive classes.  So $W$ is not a wall at all in this case.

Now let us suppose that $\v^2=2$ .  We will show that though there may be some strictly $\sigma_0$-semistable object $E$, there are no curves of such objects that are $\sigma_+$-stable.  Take a strictly $\sigma_0$-semistable object $E$ and consider its Jordan-H\"{o}lder filtration with respect to $\sigma_0$ with $\sigma_0$-stable factors $E_i$ with $\v(E_i)=\a_i$.  Then we may write $\v=\sum_{i=1}^n\a_i$ with $\langle\v,\a_i\rangle\geq 1$ for all $i$, so we may order the $\a_i$ such that $$1\leq\langle\v,\a_1\rangle\leq\langle\v,\a_2\rangle\leq\dots\leq\langle\v,\a_n\rangle.$$

Observe that for $E$ to be strictly $\sigma_0$-semistable we must have $n\geq 2$, so  
$$2=\v^2=\sum_{i=1}^n\langle\v,\a_i\rangle\geq n\langle\v,\a_1\rangle\geq n\geq 2,$$  
so $n=2$ and $1=\langle\v,\a_1\rangle=\langle\v,\a_2\rangle$.  
It follows that 
$$1=\langle\v,\a_1\rangle=\a_1^2+\langle\a_1,\a_2\rangle=\a_1^2+1,$$ so $\a_1^2=\a_2^2=0$.
Since this is the only decomposition of $\v$ into positive classes, we see that the only strictly $\sigma_0$-semistable objects are the unique nontrivial extensions 
$$0\to A_1\to E\to A_2\to 0$$
with $A_1\in M_{\sigma_0}^s(\a_1)$ and $A_2\in M_{\sigma_0}^s(\a_2)$.  Thus $W$ is a fake wall in this case.
\end{proof}
\section{Proofs of the main theorems}
We have established all of the pieces needed to prove \cref{classification of walls}:
\begin{proof}[Proof of \cref{classification of walls}]
The theorem follows from \cref{prop: fake or non-walls,prop:flops,Prop:TotallySemistableCodimOne,Prop:TotallySemistableCodimOne,Lem:HilbertChow,Lem:TSSRankTwoOrderTwoException,Lem:CodimOneRankOne,Lem:LGUDivisorial,Lem:rank3counterexample}.
\end{proof}
With this wall-crossing classification in hand, we can prove \cref{Thm:MainTheorem1}.
\begin{proof}[Proof of \cref{Thm:MainTheorem1}]  
Connect $\sigma$ and $\tau$ by a path, $\sigma(t)$, with $0\leq t\leq 1$, $\sigma(0)=\sigma$, and $\sigma(1)=\tau$.  
Observe that as the set of walls is locally finite, $\sigma(t)$ only crosses finitely many of them, and by perturbing $\sigma(t)$ slightly, we may assume that $\sigma(t)$ only crosses one wall at a time (that is, if $\sigma(t_0)\in W$ then $\sigma(t_0)$ is a generic point of $W$).  
If $\sigma(t)$ does not cross any totally semistable walls, then $M_\sigma(\v)$ and $M_\tau(\v)$ are clearly birational, and in the second part of the claim, we may take any common open subset $U$ and $\Phi=\Id$ for the (anti-)autoequivalence.  
Otherwise, it suffices consider that $\sigma=\sigma_+$ and $\tau=\sigma_-$ are two sufficiently close stability conditions separated by a single totally semistable wall $W$.  
By \cref{classification of walls}, we may assume that there exists an isotropic $\u\in\HH_W$ such that $l(\u)=\o$ and either $\langle\v,\u\rangle=1$ or  $\langle\v,\u\rangle=2=\o=l(\v)$ and $\v^2=4$.  
We will show that there is an autoequivalence $\Phi$ of $\Db(S)$ inducing an isomorphism $\Phi\colon M_{\sigma_+}(\v)\to M_{\sigma_-}(\v)$.

In the first case we assume that $\HH_W$ contains an isotropic vector $\u$ such that $\langle \v,\u\rangle=1$ and $l(\u)=\o$.  Then by \cref{Prop:Uhlenbeck morphism} and \cref{Lem:HilbertChow} we may, up to the shift by 1, identify $M_{\sigma_+}(\v)$ with the $(\beta,\omega)$-Gieseker moduli space $M^\beta_{\omega}(-\v)$ of stable sheaves of rank one.  Thus $M_{\sigma_+}(\v)$ parametrizes the shifts $I_Z(L)[1]$ of the twists of ideal sheaves of 0-dimensional subschemes $Z\in\Hilb^n(X)$, where $n=\frac{\v^2+1}{2}$, by $L\in\Pic^0(S)$.  Moreover, $\sigma_-$-stable objects are precisely $I_Z(L)^{\vee}[1]$.  But then $\Phi(\blank):=(\blank)^{\vee}[2]$ is the required an autoequivalence.

In the second case, we again use \cref{Prop:Uhlenbeck morphism}.  Then shifting by one identifies $M_{\sigma_+}(\v)$ with the moduli space $M_{\omega}(2,c,s)$ of $\omega$-Gieseker stable sheaves $F$ with $\v(F)=(2,c,s)=-\v$.  Choosing $L\in\Pic(X)$ with $c_1(L)=c$, we get that $\Phi(\blank):=(\blank)^{\vee}\otimes \OO(L)[2]$ is the required autoequivalence, as $\Phi(F[1])=F^{\vee}\otimes \OO(L)[1]$ is an object of $M_{\sigma_-}(\v)$, for any $F\in M_{\omega}(2,c,s)$.
\end{proof}
Putting things together we can finally prove \cref{MainThm4:FM reduction}:
\begin{proof}[Proof of \cref{MainThm4:FM reduction}]
The theorem follows directly from \cref{FM reduction} and \cref{Thm:MainTheorem1}.
\end{proof}
\part*{Appendix: Singularities of moduli spaces of small dimension}

In this appendix we analyze the singularities of small dimensional $\mSs$.  To do so, we need to consider each case separately according to $\o$.  But first recall that by Theorem \ref{global singularities}, $$\v(F_e)^2+2=\dim\Sing(\mSs)\leq\frac{1}{\o}(\v^2+2\o)=\frac{1}{\o}\pi^*\v^2+2,$$ so $\v(F_e)^2\leq\frac{\v^2}{\o}$.  Furthermore, we observe that $$c_1(F_e).c_1(F_{g^{j-i}})=c_1(F_{g^i}).c_1(F_{g^j}),\mbox{ and } c_1(F_e).c_1(F_{g^i})=c_1(F_e).c_1(F_{g^{\o-i}}).$$  Indeed, for the first equality, $$c_1(F_{g^i}).c_1(F_{g^j})=(g^i)^*c_1(F_e).(g^i)^*c_1(F_{g^{j-i}})=c_1(F_e).c_1(F_{g^{j-i}}),$$ and for the second $$c_1(F_e).c_1(F_{g^i})=(g^{\o-i})^*c_1(F_e).(g^{\o-i})^*c_1(F_{g^i})=c_1(F_e).c_1(F_{g^{\o-i}}).$$  Thus if $\v(E)=(r,c_1(E),s)$, then $\v(F_{g^i})=(\frac{r}{\o},c_1(F_{g^i}),s)$ and, \begin{equation}\label{term 1}\o\v^2=(\pi^*\v)^2=\left(\sum_{i=0}^{\o-1}\v(F_{g^i})\right)^2=\o \v(F_e)^2+2\sum_{i<j}\langle \v(F_{g^i}),\v(F_{g^j})\rangle.\end{equation}  If $\o$ is even, then \begin{equation}\label{term 2}\sum_{i<j}\langle \v(F_{g^i}),\v(F_{g^j})\rangle=\o\sum_{i=1}^{\o/2-1}\langle \v(F_e),\v(F_{g^i})\rangle+\frac{\o}{2}\langle \v(F_e),\v(F_{g^{\o/2}})\rangle,\end{equation} while if $\o=3$, then \begin{equation}\label{term 3}\sum_{i<j}\langle \v(F_{g^i}),\v(F_{g^j})\rangle=\o\langle \v(F_e),\v(F_g)\rangle.\end{equation}

Now we break the proof into cases according to $\o$.  
\section*{Order Two}
It remains to consider $\v^2=4,2$, and we note that by \eqref{term 1} and \eqref{term 2} $$\ext^1(F_e,F_g)=\langle \v(F_e),\v(F_g)\rangle=\v^2-\v(F_e)^2.$$  First consider $\v^2=4$.  As $\v(F_e)^2\leq 2$, $\v(F_e)^2=0$ or $2$.  In the latter case, we must have $2\mid\pi^*\v$, $\ext^1(F_e,F_g)=2$, and $\codim\Sing(\mSs)=1$, so $\mSs$ is not even normal.  Otherwise, we have $\v(F_e)^2=0$, so we see that $N\geq\ext^1(F_e,F_g)=4$ and $\codim\Sing(\mSs)=3>2$.  If $\v^2=2$, then $\v(F_e)^2\leq 1$.  So $\v(F_e)^2=0$.  In this case we must have $\ext^1(F_e,F_g)=2$ and $\codim\Sing(\mSs)=1$, so $\mSs$ is again not normal.

\section*{Order Three} We must consider $\v^2=8,6,4,2$, and in this case \eqref{term 1} and \eqref{term 3} give $$\ext^1(F_e,F_g)=\frac{\v^2-\v(F_e)^2}{2}.$$  For $\v^2=8$, $\v(F_e)^2\leq 2$.  As noted above, $\codim\Sing(\mSs)\geq 2$ in any case.  If $\v(F_e)^2=2$, then $\ext^1(F_e,F_g)=3$, so we can only guarantee that $\mSs$ has canonical singularities as in \cite{Yamada}.  Furthermore, this case occurs when $c_1(F_e)^2-c_1(F_e).c_1(F_g)=-1$.  Otherwise, $\v(F_e)^2=0$ and $\ext^1(F_e,F_g)=4$, so $N\geq 4$ and $\mSs$ has terminal singularities.  This occurs when $c_1(F_e)^2-c_1(F_e).c_1(F_g)=-4$.  

For $\v^2=6$, $\v(F_e)^2\leq 2$.  If equality is achieved then $3\mid\pi^*\v$, so as in Proposition \ref{local singularities} $\ext^1(F_e,F_g)=2$, and we can only guarantee that $\mSs$ has torsion $\omega_{\mSs}$.  Otherwise, $\v(F_e)^2=0$ and $\ext^1(F_e,F_g)=3$ so that $\mSs$ has at worst canonical singularities.  This occurs if $c_1(F_e)^2-c_1(F_e).c_1(F_g)=-3$.  When $\v^2=4$, $\v(F_e)^2=0$, so $\ext^1(F_e,F_g)=2$, but $\codim\Sing(\mSs)=3$, so $\mSs$ is at least normal with torsion canonical divisor.  When $\v^2=2$, $\v(F_e)^2=0$, so both $\ext^1(F_e,F_g)$ and $\codim\Sing(\mSs)$ are equal to 1, and $\mSs$ is not even normal.

In the next two examples, $\o$ is large enough to be able to push Yamada's techniques even further.  Taking two elements $g_1,g_2\in G'\backslash\{e\}$ with $g_1 g_2\neq e$ and $\alpha_i\in\im(q_e^*\circ {i_{g_i}}_*)\subset\Ext^1(\pi^*E,\pi^*E)$, we get $\bar{\alpha_i}\in\Ext^1(E,E)$ such that $\pi^*\bar{\alpha_i}=\sum_{g\in G'} g(\alpha_i)$, and we must have $0=F_f(\alpha_1,\alpha_2)$.  Indeed, the proof of \cite[Lemma 2.12]{Yamada} shows that $$F_f(\alpha_1,\alpha_2)=(1-\frac{1}{\lambda_1})\Tr(f\circ\bar{\alpha_1}\circ\bar{\alpha_2})=(1-\lambda_2)\Tr(f\circ\bar{\alpha_1}\circ\bar{\alpha_2}),$$ where $\lambda_i=\frac{g_i(a_1)}{a_1}\in\C=H^0(K_X)=H^0(\OO_X)^{G'}$.  If $\Tr(f\circ\bar{\alpha_1}\circ\bar{\alpha_2})\neq 0$, then we must have $\lambda_1\lambda_2=1$, i.e. $g_1g_2(a_1)=a_1$, so $g_1g_2=e$ by \cite[Lemma 2.8]{Yamada}, which is contrary to our assumption.  Thus even if no individual $\ext^1(F_e,F_g)\geq 4$, as long as the sum of $\ext^1(F_e,F_{g_1})>0,\ext^1(F_e,F_{g_2})>0$ with $g_1g_2\neq e$ is at least 4, Yamada's argument gives $N\geq 4$.

\section*{Order Four}\label{order 4 singular} 
Now \eqref{term 1} and \eqref{term 2} give $$2\ext^1(F_e,F_g)+\ext^1(F_e,F_{g^2})=\v^2-\v(F_e)^2,$$ and the remaining cases to consider are $\v^2=10,8,6,4,2$.  For $\v^2=10$, we have $\v(F_e)^2\leq 2$.  Moreover, since $c_1(F_e)\neq c_1(F_g)$ we must have strict inequality in \cite[(12)]{Yamada}, so $\ext^1(F_e,F_g)>\v(F_e)^2$, and $\ext^1(F_e,F_{g^2})\geq \v(F_e)^2$.  If $\v(F_e)^2=2$, then we must have $\ext^1(F_e,F_g)=3,\ext^1(F_e,F_{g^2})=2$.  As the sum is larger than 4, the argument preceeding this example shows that $\mSs$ has terminal singularities.  Otherwise $\v(F_e)^2=0$, and it is easy to see that at least one of $\ext^1(F_e,F_g)$ or $\ext^1(F_e,F_{g^2})$ is at least 4.  Indeed, if both were at most $3$, then $2\ext^1(F_e,F_g)+\ext^1(F_e,F_{g^2})\leq 9\neq 10$.  So $\mSs$ has terminal singularities.

For $\v^2=8$, again $\v(F_e)^2\leq 2$.  If $\v(F_e)^2=2$, then $4\mid\pi^*\v$, and we must have $\ext^1(F_e,F_g)=\ext^1(F_e,F_{g^2})=2$, which sum to 4, giving $N\geq 4$ and thus terminal singularities.  Otherwise $\v(F_e)^2=0$, so $\ext^1(F_e,F_g)>0,\ext^1(F_e,F_{g^2}\geq 0$.  The only case in which neither of these is at least 4 is when $\ext^1(F_e,F_g)=3,\ext^1(F_e,F_{g^2})=2$, in which case their sum is again at least 4.  Thus $\mSs$ has at worst terminal singularities in either case.

If $2\leq \v^2\leq 6$, then $\v(F_e)^2=0$.  The codimension condition on $\Sing(\mSs)$ is always satisfied except when $\v^2=2$, in which case $\ext^1(F_e,F_g)=1,\ext^1(F_e,F_{g^2})=0=\v(F_e)^2$, $\mSs$ is not normal.  Furthermore, we only have $\Sing(\mSs)\neq\varnothing$ if $c_1(F_{g^2})=c_1(F_e)$ but $c_1(F_e)^2-c_1(F_e).g^*c_1(F_e)=-1$.  When $\v^2=6$, the only solutions are $$(\ext^1(F_e,F_g),\ext^1(F_e,F_{g^2}))=(1,4),(2,2),(3,0).$$
The first two solutions guarantee that $N\geq 4$ and $\mSs$ has at worst terminal singularities.  The final solution guarantees that $\mSs$  has at worst canonical singularities and requires that $c_1(F_e)^2-c_1(F_e).g^*c_1(F_e)=-3$ while $(g^2)^*c_1(F_e)=c_1(F_e)$.  When $\v^2=4$, the only two solutions are $(\ext^1(F_e,F_g),\ext^1(F_e,F_{g^2}))=(1,2),(2,0)$.  The first solution guaranties that $\mSs$ has at worst canonical singularities, while the latter, which requires that $(g^2)^*c_1(F_e)=c_1(F_e)$ and $c_1(F_e)^2-c_1(F_e).g^*c_1(F_e)=-2$, only guarantees that $\mSs$ is normal and Gorenstein with torsion canonical divisor.

\section*{Order Six}\label{order 6 singular}  
We must consider $2\leq \v^2\leq 16$, and \eqref{term 1} and \eqref{term 2} give $$2\ext^1(F_e,F_g)+2\ext^1(F_e,F_{g^2})+\ext^1(F_e,F_{g^3})=\v^2-\v(F_e)^2.$$  Furthermore, $\codim\Sing(\mSs)\geq 2$ as long as $\v^2>2$.  For $\v^2=16$, $\v(F_e)^2\leq 2$, and if $\v(F_e)^2=2$, then as $\ext^1(F_e,F_{g^i})\geq \v(F_e)^2$ (with strict inequality for $i=1$), the only solution which does not give some $i$ for which $\ext^1(F_e,F_{g^i})\geq 4$ is $\ext^1(F_e,F_g)=\ext^1(F_e,F_{g^2})=3,\ext^1(F_e,F_{g^3})=2$, which sums to at least 4, so $\mSs$ still has at worst terminal singularities.  If $\v(F_e)^2=0$, then it is easily seen that $\ext^1(F_e,F_{g^i})\geq 4$ for some $i\neq 0$.  If $\v^2=14$ and $\v(F_e)^2=2$, the only numerical solutions are $\ext^1(F_e,F_g)=3,\ext^1(F_e,F_{g^2})=\ext^1(F_e,F_{g^3})=2=\v(F_e)^2$, which requires that $(g^2)^*c_1(F_e)=c_1(F_e),(g^3)^*c_1(F_e)=c_1(F_e)$.  But as $\gcd(2,3)=1$, we would then have $g^c_1(F_e)=c_1(F_e)$, a contradiction to $\ext^1(F_e,F_g)=3$, so we must have $\v(F_e)^2=0$.  The only solution which does not give some $i$ such that $\ext^1(F_e,F_{g^i})\geq 4$ is when $\ext^1(F_e,F_g)=\ext^1(F_e,F_{g^2})=3,\ext^1(F_e,F_{g^3})=2$, in which case the sum is still at least 4 so the singularities are still terminal.

When $\v^2=12$, equality can be achieved in \eqref{singular}.  But then $6\mid\pi^*\v$ and $2=\ext^1(F_e,F_{g^i})$ for all $i$, giving a sum of $6>4$ and thus terminal singularities.  So $\v(F_e)^2=0$,  and it can be seen that the sum of $\ext^1(F_e,F_{g^i})$ for $i=1,2,3$ is always greater than 4, so $\mSs$ has at worst terminal singularities.

If $2\leq \v^2\leq 10$, then $\v(F_e)^2=0$.  Using the same analysis as above, it is easily checked that the singularities are always at worst terminal if $\v^2=10,8$.  When $\v^2=6$,  the singularities are at worst terminal unless $(g^3)^*c_1(F_e)=c_1(F_e)$ but $(g^i)^*c_1(F_e)\neq c_1(F_e)$ for $i=1,2$, in which case $(\ext^1(F_e,F_g),\ext^1(F_e,F_{g^2}))=(1,2)$ or $(2,1)$, so the singularities are at worst canonical.  

If $\v^2=4$, then $\mSs$ has at worst canonical singularities if $(g^2)^*c_1(F_e)=c_1(F_e)$, but we can only guarantee that it is normal and Gorenstein with torsion canonical divisor if $(g^3)^*c_1(F_e)=c_1(F_e)$.

When $\v^2=2$, the only numerical solution is $(\ext^1(F_e,F_g),\ext^1(F_e,F_{g^2}),\ext^1(F_e,F_{g^3}))=(1,0,0)$, in which case $(g^2)^*c_1(F_e)=c_1(F_e)=(g^3)^*c_1(F_e)$.  But this would force $g^*c_1(F_e)=c_1(F_e)$, contradicting $\ext^1(F_e,F_g)=1$, so $\mSs$ is smooth when $\v^2=2$.

\bibliographystyle{plain}
\bibliography{all}

\end{document}